\newcolumntype{C}{>{$}c<{$}} 
\newcolumntype{L}{>{$}l<{$}} 
\numberwithin{equation}{section}
\theoremstyle{plain}
\newtheorem{theorem}[subsection]{Theorem}
\newtheorem{lemma}[subsection]{Lemma}
\newtheorem{prop}[subsection]{Proposition}
\newtheorem{cor}[subsection]{Corollary}
\theoremstyle{definition}
\newtheorem{defn}[subsection]{Definition}
\newtheorem{remark}[subsection]{Remark}
\def\AA{\mathbb{A}}
\def\CC{\mathbb{C}}
\def\GG{\mathbb{G}}
\def\NN{\mathbb{N}}
\def\PP{\mathbb{P}}
\def\QQ{\mathbb{Q}}
\def\RR{\mathbb{R}}
\def\XX{\mathbb{X}}
\def\ZZ{\mathbb{Z}}
\newcommand\cB{\mathcal{B}}
\newcommand\cE{\mathcal{E}}
\newcommand\cF{\mathcal{F}}
\newcommand\cH{\mathcal{H}}
\newcommand\cK{\mathcal{K}}
\newcommand\cL{\mathcal{L}}
\newcommand\cM{\mathcal{M}}
\newcommand\cN{\mathcal{N}}
\newcommand\cO{\mathcal{O}}
\newcommand\cP{\mathcal{P}}
\newcommand\cQ{\mathcal{Q}}
\newcommand\cT{\mathcal{T}}
\newcommand\cX{\mathcal{X}}
\def\bI{\mathbf{I}}
\def\bP{\mathbf{P}}
\def\bQ{\mathbf{Q}}
\newcommand\frb{\mathfrak{b}}
\newcommand\frg{\mathfrak{g}}
\newcommand\frh{\mathfrak{h}}
\newcommand\frl{\mathfrak{l}}
\newcommand\fm{\mathfrak{m}}
\newcommand\frn{\mathfrak{n}}
\newcommand\frp{\mathfrak{p}}
\newcommand\frq{\mathfrak{q}}
\newcommand\frt{\mathfrak{t}}
\newcommand\frz{\mathfrak{z}}
\newcommand\aff{\textup{aff}}
\newcommand\alg{\textup{alg}}
\newcommand{\codim}{\textup{codim}}
\newcommand\diag{\textup{diag}}
\newcommand\ev{\textup{ev}}
\newcommand{\Fl}{\textup{Fl}}
\newcommand\Gal{\textup{Gal}}
\newcommand{\Gr}{\textup{Gr}}
\newcommand\IC{\textup{IC}}
\newcommand\id{\textup{id}}
\renewcommand{\Im}{\textup{Im}}
\newcommand{\Ind}{\textup{Ind}}
\newcommand\Irr{\textup{Irr}}
\newcommand\Lie{\textup{Lie}\ }
\newcommand\nil{\textup{nil}}
\newcommand{\Nm}{\textup{Nm}}
\newcommand{\Orb}{\textup{Orb}}
\newcommand\pt{\textup{pt}}
\newcommand{\rat}{\textup{rat}}
\newcommand{\red}{\textup{red}}
\newcommand{\reg}{\textup{reg}}
\newcommand{\Res}{\textup{Res}}
\newcommand\sgn{\textup{sgn}}
\newcommand\Span{\textup{Span}}
\newcommand\Spec{\textup{Spec}\ }
\newcommand\std{\textup{std}}
\newcommand\Sym{\textup{Sym}}
\newcommand{\Tr}{\textup{Tr}}
\newcommand\triv{\textup{triv}}
\newcommand\Aut{\textup{Aut}}
\newcommand{\Part}{\textup{Part}}
\newcommand\GL{\textup{GL}}
\newcommand\gl{\mathfrak{gl}}
\newcommand\SL{\textup{SL}}
\renewcommand\sl{\mathfrak{sl}}
\newcommand\SO{\textup{SO}}
\newcommand\so{\mathfrak{so}}
\newcommand\Sp{\textup{Sp}}
\renewcommand\sp{\mathfrak{sp}}
\newcommand{\Gm}{\GG_m}
\def\Ga{\GG_a}
\newcommand{\ad}{\textup{ad}}
\newcommand{\Ad}{\textup{Ad}}
\renewcommand\sc{\textup{sc}}
\newcommand{\der}{\textup{der}}
\newcommand\xcoch{\mathbb{X}_*}
\newcommand{\incl}{\hookrightarrow}
\newcommand{\isom}{\stackrel{\sim}{\to}}
\newcommand{\surj}{\twoheadrightarrow}
\newcommand{\Qlbar}{\overline{\QQ}_\ell}
\newcommand{\const}[1]{\overline{\QQ}_{\ell,#1}}
\renewcommand{\j}[1]{\langle{#1}\rangle}
\newcommand{\wt}[1]{\widetilde{#1}}
\newcommand\quash[1]{}
\newcommand\un{\underline}
\newcommand{\ov}{\overline}
\newcommand{\bs}{\backslash}
\newcommand{\tl}[1]{[\![#1]\!]}
\newcommand{\lr}[1]{(\!(#1)\!)}
\newcommand\sss{\subsubsection}
\newcommand\xr{\xrightarrow}
\newcommand\op{\oplus}
\newcommand\ot{\otimes}
\newcommand{\sslash}{\mathbin{/\mkern-6mu/}}
\renewcommand\c{\circ}
\newcommand{\cohog}[2]{\textup{H}^{#1}({#2})}     
\newcommand\upH{\textup{H}}
\renewcommand\a\alpha
\renewcommand\b\beta
\newcommand\G\Gamma
\newcommand\g\gamma
\renewcommand\d\delta
\newcommand\D\Delta
\newcommand{\e}{\epsilon}
\newcommand{\io}{\iota}
\renewcommand{\th}{\theta}
\newcommand{\ph}{\varphi}
\renewcommand{\r}{\rho}
\renewcommand{\t}{\tau}
\newcommand{\z}{\zeta}
\newcommand{\ep}{\epsilon}
\renewcommand{\l}{\lambda}
\renewcommand{\L}{\Lambda}
\newcommand{\om}{\omega}
\newcommand\sh{\sharp}
\newcommand\vn{\varnothing}
\newcommand\nb{\nabla}
\newcommand\Hod{\textup{Hod}}
\newcommand\dR{\textup{dR}}
\newcommand\RT{\textup{RT}}
\newcommand\cle{\preccurlyeq}
\newcommand\cge{\succcurlyeq}
\newcommand\Cone{\textup{Cone}}
\title{A Deligne-Simpson problem for irregular $G$-connections over $\PP^{1}$}
\author{Konstantin Jakob}
\thanks{K.J. acknowledges support by the Deutsche Forschungsgemeinschaft (DFG, German Research Foundation) Individual Research Grant 566801746 and (through Timo Richarz) by the European Research Council (ERC) under Horizon Europe (grant agreement no 101040935), by the Deutsche Forschungsgemeinschaft (DFG, German Research Foundation) TRR 326 \textit{Geometry and Arithmetic of Uniformized Structures}, project number 444845124 and the LOEWE professorship in Algebra, project number LOEWE/4b//519/05/01.002(0004)/87}
\author{Zhiwei Yun}
\thanks{Z.Y. is partially supported by the Simons Foundation.}
\date{}
\keywords{Deligne-Simpson problem, moduli spaces of meromorphic connections, irregular singularities, homogeneous affine Springer fibers, rational Cherednik algebras}
\subjclass[2020]{Primary: 14D20, 34M50; Secondary: 14M15, 20C08}
\begin{document}

\begin{abstract} We give an algebraic and a geometric criterion for the existence of $G$-connections on $\PP^{1}$ with prescribed irregular type with equal slope at $\infty$ (isoclinic) and with regular singularity of prescribed residue at $0$. The algebraic criterion is in terms of an irreducible module of the rational Cherednik algebra, and the geometric criterion is in terms of affine Springer fibers. We use these criteria to give complete solutions to the isoclinic Deligne-Simpson problem for classical groups, and for arbitrary $G$ when the slope at $\infty$ has Coxeter number as the denominator. Among our solutions, we classify the cohomologically rigid connections, and obtain new cases in types $B,D$ and $F_4$. 
\end{abstract}

\maketitle
\tableofcontents


\section{A Deligne-Simpson problem and its solution}\label{s:cond on conn}

\subsection{Deligne-Simpson problem} In general, the Deligne-Simpson problem asks for criteria for the existence of a local system (in the topological, $\ell$-adic or de Rham setting) on an algebraic curve with prescribed monodromy/ramification at finitely many given punctures. The problem was first phrased for $G=\GL_n$ and the curve $\PP^1_{\CC}$ in the topological setting by P. Deligne in the eighties, and C. Simpson was the first to systematically study it \cite{Simpson}. Aside from the existence of solutions to the Deligne-Simpson problem, one can ask for uniqueness of the solution, i.e. (physical) rigidity, and for the existence and properties of moduli spaces classifying local systems with prescribed ramification. 

Rigid local systems have been studied prominently by N. Katz, who devised a construction algorithm for them \cite{Katz}. This algorithm was used by W. Crawley-Boevey \cite{CB}, to give a complete solution to the Deligne-Simpson problem for irreducible topological local systems. In his solution, the existence problem is reinterpreted in terms of representations of a quiver attached to the conjugacy classes of local monodromy. Crawley-Boevey's solution is given in terms of the Kac-Moody root system attached to a quiver, and he also classifies the cases in which the solution is unique (physically rigid). 

Following the construction of moduli spaces of irregular singular differential equations with prescribed behaviour at the singularities due to P. Boalch \cite{Boalch2} and K. Hiroe and D. Yamakawa \cite{HY}, Hiroe gave a complete solution to the Deligne-Simpson problem for irregular differential equations of integral slope \cite{Hiroe}. Again, his solution reinterprets the problem in terms of representations of quivers attached to the fixed ramification behaviour. 

For non-integral slope, little is known about the Deligne-Simpson problem. In addition, all of the above results deal with the case $G=\GL_n$, and next to nothing was known about the Deligne-Simpson problem for general connected reductive groups. One of the problems when trying to generalize is the absence of an analogue of the quiver theoretic formulation outside of type $A$. For a more thorough exposition of the history of the Deligne-Simpson problem we refer to V. Kostov's survey article \cite{Kostov} (who formulated the additive version of the Deligne-Simpson problem), and to D. Sage's article \cite{Sage}. 

Recently, M. Kulkarni, N. Livesay, J. Matherne, B. Nguyen and D. Sage solved the Deligne-Simpson problem for $\GL_n$ over $\PP^{1}$ in a special case with non-integral slope, called maximally ramified \cite{KLMNS}, with two singularities.  In this paper, we broadly generalize their setup, keeping the two singularities.

\subsection{Main result (special case)}
Let $G$ be an arbitrary connected reductive group. This paper gives a solution to the Deligne-Simpson problem for $G$-connections on $\PP^{1}$ with two punctures, $0$ and $\infty$, and specific ramification constraints. We give a purely algebraic criterion for the existence of such connections. Our strategy revolves around the geometry of non-abelian Hodge moduli spaces first considered in \cite{BBAMY}. We define moduli spaces of connections with prescribed local behaviour, and, using ideas from non-abelian Hodge theory, relate them to moduli spaces of Higgs bundles. On the other hand, these spaces are homotopy equivalent to affine Springer fibers, and their cohomology carries an action of the rational Cherednik algebra of type $W$ (the Weyl group of $G$) by \cite{OY}.

We state here a special and weaker version of our main results, leaving the precise definitions and the stronger versions to the rest of the introduction.
\begin{theorem}[Special case of Theorems \ref{th:DS} and \ref{th:DS Gr}]\label{th:intro main} Let $\nu=d/m\in \QQ_{>0}$ (in lowest terms) where $m$ is a regular elliptic number of $W$. Let $\cO$ be a nilpotent orbit in $\frg$. Then the following are equivalent:
\begin{itemize}
\item There exists an algebraic $G$-connection $\nb$ on $\PP^{1}\setminus\{0,\infty\}$ that is isoclinic of slope $\nu$ at $\infty$ and has regular singularity at $0$ with residue in $\cO$;
\item The representation $E_{\cO}$ of the Weyl group $W$ (attached to the nilpotent orbit $\cO$ via the Springer correspondence) appears in the irreducible module $L_{\nu}(\triv)$ for the rational Cherednik algebra of type $W$.
\item There exists an element $\psi\in \ov\cO+t\frg\tl{t}\subset \frg\tl{t}$ that is homogeneous of slope $\nu$ (here $\ov{\cO}$ is the closure of $\cO$). 
\end{itemize}
\end{theorem}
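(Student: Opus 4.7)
The plan is to prove the equivalences by establishing the chain (3)$\Leftrightarrow$(1)$\Leftrightarrow$(2) through geometric bridges rather than directly, so that the representation-theoretic statement (2) is eventually cashed out via the Oblomkov--Yun description of the cohomology of homogeneous affine Springer fibers. First I would set up, for $\nu=d/m$ with $m$ a regular elliptic number of $W$, the moduli space $\calM_\nu(\cO)$ of $G$-connections on $\PP^1\setminus\{0,\infty\}$ with the prescribed local behavior (isoclinic slope $\nu$ at $\infty$, regular singularity with nilpotent residue in $\ov\cO$ at $0$). Condition (1) is exactly non-emptiness of $\calM_\nu(\cO)$.

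Next I would implement the non-abelian Hodge side: following \cite{BBAMY}, construct the companion Higgs moduli space $\calM_\nu^{\Hod}(\cO)$ parametrizing Higgs bundles with matching local data, and argue that its non-emptiness is equivalent to non-emptiness of $\calM_\nu(\cO)$; this is the content of the existence part of non-abelian Hodge for irregular connections in this setting. On the Higgs side, a local framing at $\infty$ identifies $\calM_\nu^{\Hod}(\cO)$ with a global quotient of the affine Springer fiber $\Fl_\psi$ for a homogeneous element $\psi\in\frg\lr{t}$ of slope $\nu$; more concretely, to give a point is to give such a $\psi$ (up to loop-group conjugacy) whose leading term at $0$ lies in $\ov\cO$. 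Hence (1)$\Leftrightarrow$(3), and the contents of (3) can be rephrased as: the homogeneous affine Springer fiber $\Fl_\psi$ (for some, equivalently any, homogeneous $\psi$ of slope $\nu$) contains a point whose associated nilpotent orbit, via taking a top-degree leading term, is $\ov\cO$.

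For (3)$\Leftrightarrow$(2) I would invoke the theorem of Oblomkov--Yun \cite{OY}: the cohomology of the homogeneous affine Springer fiber $\Fl_\psi$ carries an action of the rational Cherednik algebra $\HH_\nu(W)$, and its spherical piece realizes the irreducible module $L_\nu(\triv)$. The Springer correspondence $\cO\mapsto E_\cO$ then relates the appearance of $E_\cO$ inside $L_\nu(\triv)$ as a $W$-subrepresentation to the presence, inside $\Fl_\psi$, of an irreducible component (or a stratum) indexed by $\cO$; this is the geometric--algebraic matching that makes the Springer correspondence compatible with the Cherednik action. Combining these, $E_\cO\subset L_\nu(\triv)$ if and only if the affine Springer fiber has a point with nilpotent leading term in $\ov\cO$, which is precisely (3).

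The main obstacle is the compatibility assertion in the last step: one must show that the geometric Springer correspondence on the fiber $\Fl_\psi$ (through irreducible components, orbital stratification, or a specialization map from $\ov\cO+t\frg\tl{t}$ to the nilpotent cone) matches the decomposition of $L_\nu(\triv)$ as a $W$-representation provided by Oblomkov--Yun. Everything else is either formal (the definitions of moduli and of $\Fl_\psi$), or a direct appeal to \cite{BBAMY} (non-abelian Hodge for the irregular/isoclinic case with a regular singularity) and to \cite{OY} (the Cherednik action). The easier direction (3)$\Rightarrow$(1) can be checked by writing down a global connection on $\PP^1\setminus\{0,\infty\}$ whose expansion at $\infty$ is $\psi\, dt/t$ and whose residue at $0$ is in $\ov\cO$, using homogeneity of $\psi$ to ensure isoclinicity; the reverse direction (1)$\Rightarrow$(3) requires the NAH-type comparison to produce $\psi$ out of a given $\nabla$.
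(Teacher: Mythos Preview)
Your plan is essentially the paper's own strategy: pass from (1) to (3) through the Hodge moduli of \cite{BBAMY}, and from (3) to (2) through the Oblomkov--Yun realization of $L_\nu(\triv)$ on $\upH^*(\Fl_\psi)$. Two points deserve sharpening. For (1)$\Leftrightarrow$(3), the paper does not invoke non-abelian Hodge as a black-box correspondence; the actual mechanism is the one-parameter family $\cN_{\Hod,\psi}\to\AA^1_\l$ with its contracting $\Gm$-action (so every de Rham point limits to the central Hitchin fiber, which is homeomorphic to $\Gr_\psi$), together with smoothness of the residue map $\r=(\l,\r_0,\r_\infty)$, which is what lets one pass between non-emptiness over $\ov\cO$ and over $\cO$ and propagate non-emptiness across fibers by flatness. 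In particular your direct construction for (3)$\Rightarrow$(1) via $d+\psi\,dt/t$ only places the residue in $\ov\cO$; upgrading to $\cO$ requires exactly this smoothness. For (3)$\Leftrightarrow$(2), the compatibility you correctly flag as the main obstacle is resolved via the fibration $\pi:\Fl_\psi\to\Gr_\psi$, whose fiber over a point of $\Gr_{\psi,\cO'}$ is the Springer fiber $\cB_{e'}$: one direction shows that $E_\cO$ already appears in the image of $\upH^*(\cB)\to\upH^*(\cB_{e'})$ whenever $\cO'\subset\ov\cO$, and the converse uses the Leray spectral sequence for $\pi$ together with the support property $[\upH^*(\cB_{e'}):E_\cO]\ne 0\Rightarrow\cO'\subset\ov\cO$ of Springer representations.
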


We also illustrate by examples that our criterion can be used to effectively compute those pairs $(\nu, \cO)$ for which the above Deligne-Simpson problem has a positive solution. We fully solve the case when the slope $\nu$ has denominator $h$ (the Coxeter number) for arbitrary $G$, see Theorem \ref{th:h main}. For classical types, we give complete solutions for arbitrary slope $\nu$ in Theorem \ref{th:cl}. We partially prove \cite[Conjecture 5.7.]{Sage} in classical types, and generalize it to arbitrary slopes $\nu$, see Theorem \ref{th:hq main}. Additionally, we illustrate how to solve the Deligne-Simpson problem for more general slopes in exceptional types, by explicating the solution in type $F_4$, see Section \ref{s:F4soln}. In conclusion, we solve the isoclinic Deligne-Simpson problem up to finitely many cases in the exceptional types.

Among our solutions, we classify the cohomologically rigid cases in Section  \ref{s:rig}, which include most of the rigid connections that appear in recent works influenced by the work of Frenkel and Gross \cite{FG}, such as \cite{Ch}, \cite{JKY} and \cite{KS}. Our classification proves \cite[Conjecture 5.9.]{Sage}, and additionally includes new rigid connections in types $B,D$ and $F_4$;  it would be interesting to construct their $\ell$-adic counterparts.

\subsection{Setup}
Let $G$ be a connected reductive group over $\CC$. Let $T\subset G$ be a maximal torus. Let $\frg$ and $\frt$ be the Lie algebras of $G$ and $T$, and $W$ be the Weyl group. 

For the curve $X:=\PP^{1}$ over $\CC$ with affine coordinate $t$, we use $\t=t^{-1}$ as the affine coordinate on $\PP^{1}-\{\infty\}$, so that $\t=0$ corresponds to $t=\infty$. We will consider meromorphic $G$-connections $(\cE,\nb)$ on $X$ that are regular over $X\bs \{0,\infty\}$ and have specific types of singularities at $0$ and $\infty$.

\subsection{Isoclinic irregular type}\label{ss:isoc irr type}
Let $D^{\times}_{\infty}=\Spec \CC\lr{\t}$ be the formal punctured disk at $\infty\in X$. For $m\in\NN$, let 
$D^{(m),\times}_{\infty}=\Spec \CC\lr{\t^{1/m}}$ be the degree $m$ ramified cover of $D^{\times}_{\infty}$.  

For a $G$-connection $(\cE,\nb)$ over $D^{\times}_{\infty}$, it is well-known that for some $m\in\NN$,  the pullback $(\cE, \nb)|_{D_{\infty}^{(m),\times}}$ is gauge equivalent to 
\begin{equation*}
d+\left(A(\t^{1/m})+\cdots\right)\frac{d\t}{\t},
\end{equation*}
where
\begin{equation}\label{irr type A}
A(\t^{1/m})=\frac{A_{-d/m}}{\t^{d/m}}+\frac{A_{-(d-1)/m}}{\t^{(d-1)/m}}+\cdots+\frac{A_{-1/m}}{\t^{1/m}}\in \frt[\t^{-1/m}]
\end{equation}
and the $\cdots$ part lies in $\frg\tl{\t^{1/m}}$. 

We call $(\cE,\nb)$ {\em isoclinic of slope $\nu=d/m$} if the leading coefficient $A_{-d/m}$ above is regular semisimple (this condition is independent of the choice of $m$ and the gauge transformation). Isoclinic refers to the fact that for the adjoint connection $\Ad(\cE,\nb)$, all nonzero slopes are equal to $\nu$. In this case, we say that $A=A(\t^{1/m})$ is an {\em isoclinic irregular type} of slope $\nu$, and that $(\cE,\nb)$ is {\em isoclinic of irregular type $A$}. 

In Lemma \ref{l:A} we will give an intrinsic characterization of isoclinic irregular types of slope $\nu$. In particular, we shall see that the denominator of $\nu$ in lowest terms is a {\em regular number}, i.e., the order of a regular element $w$ in the Weyl group $W$ in the sense of Springer \cite{Spr}.

\begin{defn} Let $A$ be an isoclinic irregular type of slope $\nu>0$. Let $\cO\subset\frg$ be an adjoint orbit. 
\begin{enumerate}
\item A meromorphic algebraic $G$-connection $(\cE,\nb)$ on $X$ is of {\em type $(A,\cO)$} if:
\begin{itemize}
\item $(\cE, \nb)$ has no singularities on $X\bs \{0,\infty\}$;
\item $(\cE, \nb)|_{D_{\infty}^{\times}}$ is isoclinic of irregular type $A$;
\item $(\cE, \nb)|_{D_{0}^{\times}}$ has regular singularities with residue in the adjoint orbit $\cO$.
\end{itemize}
We say $(\cE,\nb)$ is of {\em type $(\nu,\cO)$} if it is of type $(A,\cO)$ for some isoclinic irregular type of slope $\nu$.
\item The {\em isoclinic Deligne-Simpson problem} of type $(A,\cO)$ (resp. type $(\nu,\cO)$) asks whether there exists a meromorphic algebraic $G$-connection $(\cE,\nb)$ on $X$ of type $(A,\cO)$ (resp. type $(\nu,\cO)$). We abbreviate these problems as $DS(A,\cO)$ and $DS(\nu,\cO)$, or $DS_{G}(A,\cO)$ and $DS_{G}(\nu,\cO)$ if we want to emphasize the group $G$. 
\end{enumerate}
\end{defn}

\subsection{Nilpotent orbit from $\cO$}\label{ss:Onil} Let $\cO\subset\frg$ be an adjoint orbit.  Let $x\in \cO$ with Jordan decomposition $x=x_{s}+x_{n}$. Let $L=C_{G}(x_{s})$, then $x_{n}$ is a nilpotent element in $\frl=\Lie L$. Let $\cO^{L}_{x_{n}}$ be the nilpotent orbit in $\frl$ of $x_{n}$. Let $\cO^{\nil}$ be the Lusztig-Spaltenstein induction $\Ind_{\frl}^{\frg}(\cO^{L}_{x_{n}})$, see \cite{LS}. By definition, $\cO^{\nil}$ is characterized as follows. Let $P$ be a parabolic subgroup of $G$ with Levi $L$. Let $\frp=\Lie  P$ with nilpotent radical $\frn_{P}$. Then $\cO^{\nil}$ is the unique nilpotent orbit of $\frg$ such that $\cO^{\nil}\cap \frp$ is dense in $\cO^{L}_{x_{n}}+\frn_{P}$. It is shown in \cite{LS} that  $\Ind_{\frl}^{\frg}(\cO^{L}_{x_{n}})$ is independent of the choice of $P$ with Levi $L$.

In Lemma \ref{l:limit nil} we will give another description of $\cO^{\nil}$ in terms of the algebraic asymptotic cone of $\cO$ in the sense of Adams-Vogan \cite[Def. 3.6]{AV}. 

Let $E_{\cO^{\nil}}$ be the irreducible representation of $W$ attached to the trivial local system on the nilpotent orbit $\cO^{\nil}$ under the Springer correspondence. Our convention is that when $\cO^{\nil}$ is the regular orbit (which happens if and only if $\cO$ consists of regular elements in $\frg$), $E_{\cO^{\nil}}$ is the trivial representation of $W$.

\subsection{An irreducible module of the rational Cherednik algebra} To state the algebraic solution to the Deligne-Simpson problem for connections of type $(A,\cO)$, we need to consider a simple module over the rational Cherednik algebra.

For the Weyl group $W$ and $\nu\in \QQ$, let $\cH^{\rat}_{\nu}(W)$ be the rational Cherednik algebra with central charge $\nu$ introduced in \cite{EG}.  When the denominator of $\nu$ in lowest terms is an {\em elliptic regular number} for $W$ (i.e., the order of an elliptic regular element in $W$), $\cH^{\rat}_{\nu}(W)$ has a unique finite-dimensional irreducible module $L_{\nu}(\triv)$ that is the quotient of the Verma module $M_{\nu}(\triv)$ attached to the trivial representation of $W$. Since  $\cH^{\rat}_{\nu}(W)$ contains the group algebra of $W$ as a subalgebra, $L_{\nu}(\triv)$ is in particular a $W$-module.

We can now state our algebraic solution to the Deligne-Simpson problem under the ellipticity assumption.

\begin{theorem}[Algebraic criterion, elliptic case]\label{th:DS} Let $A$ be an isoclinic irregular type of slope $\nu>0$ whose denominator is an elliptic regular number of $W$. Let $\cO$ be an adjoint orbit of $\frg$. Then $DS(A,\cO)$ has an affirmative answer if and only if $E_{\cO^{\nil}}$ appears in $L_{\nu}(\triv)$ as a $W$-module. 
\end{theorem}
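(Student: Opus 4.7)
The plan is to prove Theorem \ref{th:DS} by establishing the chain of equivalences in Theorem \ref{th:intro main}, namely between (1) the existence of a connection of type $(A,\cO)$, (3) the existence of a homogeneous element $\psi\in\ov\cO+t\frg\tl{t}$ of slope $\nu$, and (2) the appearance of $E_{\cO^{\nil}}$ in $L_{\nu}(\triv)$. The hypothesis that the denominator of $\nu$ is an elliptic regular number is what guarantees the existence of the finite-dimensional simple quotient $L_{\nu}(\triv)$ of the Verma module of $\cH^{\rat}_{\nu}(W)$, and simultaneously lets us invoke the homogeneous affine Springer fibers that furnish the geometric link.

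For (1) $\Leftrightarrow$ (3), I would introduce a moduli stack $\cM_{\dR}(A,\cO)$ of $G$-connections of type $(A,\cO)$ on $\PP^{1}$ together with a Dolbeault counterpart $\cM_{\textup{Dol}}$ of Higgs bundles whose Higgs field is isoclinic at $\infty$ of the homogeneous form determined by $A$ and has polar part at $0$ in $\ov\cO$. Non-abelian Hodge theory adapted to this irregular setting, in the spirit of \cite{BBAMY}, provides a diffeomorphism between the two moduli spaces, so $\cM_{\dR}(A,\cO)\neq\varnothing$ iff $\cM_{\textup{Dol}}\neq\varnothing$; an evaluation map on the local Higgs datum at $0$ and $\infty$ then identifies the latter non-emptiness with the existence of a $\psi$ as in (3).

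For (2) $\Leftrightarrow$ (3), the central input is the homogeneous affine Springer fiber $\Spr_\gamma\subset\Fl$ attached to a homogeneous semisimple element $\gamma\in\frg\lr{t}$ of slope $\nu$. By \cite{OY}, the cohomology $\cohog{*}{\Spr_\gamma}$ carries a natural action of $\cH^{\rat}_{\nu}(W)$ realizing $L_{\nu}(\triv)$. The Higgs moduli $\cM_{\textup{Dol}}$ is homotopy equivalent to (a piece of) $\Spr_\gamma$, and its stratification by the nilpotent orbit of the residue at $0$ produces, via the Springer correspondence on top-degree cohomology of each stratum, the $W$-isotypic decomposition of $L_{\nu}(\triv)$. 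Non-emptiness of the stratum labelled by $\cO^{\nil}$ is therefore equivalent to the appearance of $E_{\cO^{\nil}}$ in $L_{\nu}(\triv)$.

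The main obstacle, I expect, is bridging the residue condition "in $\cO$" on the original connection with the condition "in $\ov\cO$" on the constant term of $\psi$ and its eventual incarnation as the induced nilpotent orbit $\cO^{\nil}$. This calls on Lemma \ref{l:limit nil} — the description of $\cO^{\nil}$ via the algebraic asymptotic cone of $\cO$ in the sense of Adams-Vogan \cite{AV} — to show that degenerating $\psi$ under the $\Gm$-scaling (coming from Hitchin symmetry and non-abelian Hodge) collapses the semisimple part of the residue while preserving exactly the Lusztig-Spaltenstein induced nilpotent orbit. Tracking this compatibility and upgrading the Oblomkov-Yun cohomology calculation from the Coxeter or regular semisimple setting to a full elliptic regular context with an arbitrary regular-singular residue orbit is the technically substantive piece of the argument.
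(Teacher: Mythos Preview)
Your overall architecture matches the paper's: reduce $DS(A,\cO)$ to a statement about the affine Springer fiber $\Gr_{\psi}$, then connect that to $L_{\nu}(\triv)$ via \cite{OY}. The two halves, however, are executed differently in the paper and your sketch has a soft spot in each.

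For $(1)\Leftrightarrow(3)$, you invoke a non-abelian Hodge diffeomorphism between de Rham and Dolbeault moduli. The paper does not do this and it is unclear that such a diffeomorphism would carry the residue-at-$0$ condition ``$\in\cO$'' to the condition you want on the Higgs side. Instead, the paper works with the full Hodge family $\cN_{\Hod,\psi}\to\AA^{1}_{\l}$ and proves that the combined residue map $\r=(\l,\r_{0},\r_{\infty}):\cN_{\Hod,\psi}\to \AA^{1}\times[\frg/G]\times[V_{\psi}/K]$ is \emph{smooth} (Proposition~\ref{p:res sm}). Smoothness, together with the $\Gm$-contraction to the central Hitchin fiber (Proposition~\ref{p:Gm}) and a flatness argument, gives the chain of equivalences in Lemma~\ref{l:N eq Gr}: non-emptiness over $(\l=1,\cO)$ $\Leftrightarrow$ over $(\l=1,\ov\cO)$ $\Leftrightarrow$ over $(\l=0,\ov{\cO^{\nil}})$ $\Leftrightarrow$ $\Gr_{\psi,\ov{\cO^{\nil}}}\ne\varnothing$. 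The passage $\cO\rightsquigarrow\cO^{\nil}$ happens purely on the base via Lemma~\ref{l:limit nil}, because $\Gm$ acts on $[\frg/G]$ by scaling and the closure of $\Gm\cdot\cO$ meets $\cN$ in $\ov{\cO^{\nil}}$; smoothness of $\r$ is what lets you pass freely between $\cO$ and $\ov\cO$ on the fiber side. This is the precise mechanism you were groping for in your last paragraph.

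For $(2)\Leftrightarrow(3)$, your idea that the stratification of $\Gr_{\psi}$ by evaluation orbit produces the $W$-isotypic decomposition of $L_{\nu}(\triv)$ via ``top-degree cohomology of each stratum'' is not quite the argument. The paper works in $\Fl_{\psi}$ rather than $\Gr_{\psi}$ and splits the two directions asymmetrically. If $\Gr_{\psi,\ov\cO}\ne\varnothing$, pick $y\in\Gr_{\psi,\cO'}$ with $\cO'\subset\ov\cO$; the fiber of $\Fl_{\psi}\to\Gr_{\psi}$ over $y$ is a finite Springer fiber $\cB_{e'}$, and the image of $\cohog{*}{\cB}\to\cohog{*}{\cB_{e'}}$ is a $W$-subquotient of $\cohog{*}{\Fl_{\psi}}$ containing $E_{\cO}$ (Lemma~\ref{l:Spr}(2)). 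Conversely, if $E_{\cO}$ appears in $\cohog{*}{\Fl_{\psi}}$, a Leray spectral sequence for $\Fl_{\psi}\to\Gr_{\psi}$ stratified by $\cO'$ forces $E_{\cO}$ to appear in some $\cohog{*}{\cB_{e'}}$ with $\Gr_{\psi,\cO'}\ne\varnothing$, whence $\cO'\subset\ov\cO$ by Lemma~\ref{l:Spr}(1). The identification with $L_{\nu}(\triv)$ then goes through the perverse filtration on $\upH^{*}_{\Gm}(\Fl_{\psi})$ from \cite{OY}, not through a direct stratum-by-stratum correspondence.
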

In particular, the answer to $DS(A,\cO)$ depends only on the slope $\nu$ of $A$, not $A$ itself, i.e., $DS(A,\cO)$ has the same answer as $DS(\nu,\cO)$.

\subsection{The non-elliptic case}\label{ss:non ell} Now we consider the more general case where the denominator $m$ of $\nu$ is only assumed to be a regular number of $W$ but not necessarily elliptic. Then there is a parabolic subgroup $W'\subset W$ such that $m$ is an elliptic regular number of $W'$. Indeed, if $w$ is a regular element in $W$ of order $m$, we let $W'$ be the pointwise stabilizer of $\frt^{w}$ under $W$, then $w$ is a regular elliptic element in $W'$. Let $M$ be the Levi subgroup of $G$ containing $T$ with Weyl group $W'$.

\begin{theorem}[Reduction to the elliptic case]\label{th:DS general} Let $A$ be an isoclinic irregular type of slope $\nu>0$.   Let $\cO$ be an adjoint orbit of $\frg$. Then $DS_{G}(A,\cO)$ has an affirmative answer if and only if for some nilpotent orbit $\cO'$ of $M$ such that $\cO'\subset \ov{\cO^{\nil}}$, $DS_{M}(\nu,\cO')$ has an affirmative answer.
\end{theorem}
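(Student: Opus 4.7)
The plan is to combine the geometric criterion of Theorem~\ref{th:DS Gr} with a parabolic reduction from $G$ to the Levi $M$ (whose Weyl group is $W'$). The setup already ensures that $A\in\frt[\t^{-1/m}]\subset\fm[\t^{-1/m}]$ defines an isoclinic $M$-irregular type of slope $\nu$, since $A_{-d/m}$, being $\frg$-regular semisimple, is \emph{a fortiori} $\fm$-regular semisimple, and $m$ is by construction an elliptic regular number of $W'$, so Theorem~\ref{th:DS} applies on the $M$-side. The geometric criterion, which I expect to hold uniformly for all regular $w$ (not only elliptic ones, since the homogeneous affine Springer fiber is defined in this generality), yields: $DS_G(A,\cO)$ is equivalent to the existence of a homogeneous $\psi\in\ov\cO+t\frg\tl{t}$ of slope $\nu$, and $DS_M(A,\cO')$ is equivalent to the existence of a homogeneous $\psi_M\in\ov{\cO'}+t\fm\tl{t}$ of slope $\nu$. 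The theorem thus reduces to matching these two existence statements.

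Fix a regular $w\in W'$ of order $m$. The slope-$\nu$ grading on the loop algebra is induced by a rational cocharacter $\eta$ of $T$ whose $\ZZ$-grading on $\frg$ satisfies $\frg(\eta,0)=\fm$, while its strictly positive part forms the nilpotent radical $\frn_P$ of a parabolic $P\supset M$. The key technical step is a Kostant-section-type reduction on the loop algebra: any homogeneous $\psi\in\frg\tl{t}$ of slope $\nu$ can be brought, by conjugation in the parahoric group attached to $\eta$, into the form $\psi_M+\psi_+$ with $\psi_M\in\fm\tl{t}$ an $M$-homogeneous element of slope $\nu$ and $\psi_+\in\frn_P\tl{t}$. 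Applying the elliptic case of the geometric criterion to $\psi_M$, the value $\psi_M(0)$ is nilpotent in $\fm$, so it belongs to some nilpotent $M$-orbit $\cO'$.

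The constraint $\psi(0)\in\ov\cO$ now reads $\psi_M(0)+\psi_+(0)\in\ov\cO\cap\frp$. Combined with the characterization of $\cO^{\nil}=\Ind_\frl^\frg(\cO^L_{x_n})$ as the unique $G$-orbit dense in $\cO^L_{x_n}+\frn_Q$, this forces $\Ind_\fm^\frg(\cO')\subseteq\ov{\cO^{\nil}}$, and hence $\cO'\subseteq\ov{\cO^{\nil}}$. For the converse direction, given an $M$-solution $\psi_M$ with $\psi_M(0)\in\cO'\subseteq\ov{\cO^{\nil}}$, one produces a $G$-solution by adjoining a suitably generic element of $\frn_P$ to $\psi_M(0)$, again invoking density in the appropriate orbit closure. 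The main obstacle is precisely the decomposition $\psi=\psi_M+\psi_+$ in the second paragraph: this is the non-elliptic analogue of structural results on homogeneous affine Springer fibers from \cite{OY}, and it requires choosing $\eta$ together with the conjugating parahoric so that the conjugation preserves both the slope-$\nu$ homogeneity and the leading $\frt^w$-component.
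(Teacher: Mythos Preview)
Your overall strategy---reduce to the geometric criterion and then pass from $\Gr_\psi$ to $\Gr_{M,\psi}$---is the right one, and the easy direction (an $M$-solution gives a $G$-solution via the embedding $\Gr_{M,\psi}\hookrightarrow\Gr_\psi$) is fine. But the hard direction, going from a point of $\Gr_{\psi,\ov{\cO^{\nil}}}$ to a point of $\Gr_{M,\psi,\fm\cap\ov{\cO^{\nil}}}$, is not handled correctly.

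The difficulty is that your ``Kostant-section-type reduction'' conflates two different objects. The element $\psi$ is \emph{fixed} once and for all (and, as the paper does, may already be chosen in $\fm\lr{t}$); what varies is the lattice point $gG\tl{t}\in\Gr_\psi$. What you need is: given $g\in G\lr{t}$ with $\Ad(g^{-1})\psi\in\ov{\cO^{\nil}}+t\frg\tl{t}$, produce $h\in M\lr{t}$ with $\Ad(h^{-1})\psi\in\fm\tl{t}$ and $\Ad(h^{-1})\psi(0)\in\ov{\cO^{\nil}}$. Your proposed decomposition $\Ad(g^{-1})\psi=\psi_M+\psi_+$ does not give this: even if you can conjugate $\Ad(g^{-1})\psi$ into $\frp\tl{t}$ via the parahoric, there is no reason the $\fm$-component $\psi_M$ should be an $M\lr{t}$-conjugate of $\psi$, i.e., should correspond to a point of $\Gr_{M,\psi}$. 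This is precisely the gap you flag as ``the main obstacle,'' and it is real.

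The paper resolves this with a one-line torus fixed-point argument that your cocharacter $\eta$ is pointing toward but does not reach. Let $A_M=Z(M)^\circ$, a torus acting on $\Gr_\psi$ (since $\psi\in\fm\lr{t}$ is $A_M$-fixed). The locus $\Gr_{\psi,\ov{\cO^{\nil}}}$ is closed in $\Gr_\psi$, hence ind-proper, and $A_M$-stable since $\ov{\cO^{\nil}}$ is $G$-stable. Therefore $(\Gr_{\psi,\ov{\cO^{\nil}}})^{A_M}\neq\varnothing$. But $(\Gr_\psi)^{A_M}=\Gr_{M,\psi}$, and the evaluation map is $A_M$-equivariant, so
\[
(\Gr_{\psi,\ov{\cO^{\nil}}})^{A_M}=\Gr_{M,\psi}\cap\Gr_{\psi,\ov{\cO^{\nil}}}=\Gr_{M,\psi,\,\fm\cap\ov{\cO^{\nil}}}.
\]
Any point here lies over some $M$-nilpotent orbit $\cO'\subset\ov{\cO^{\nil}}$, and the geometric criterion for $M$ finishes. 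This replaces your unproven decomposition lemma entirely: the limit under the $A_M$-contraction is what produces the $M$-lattice, and properness is what guarantees the limit exists inside the stratum.
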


Again, the answer to $DS(A,\cO)$ depends only on the slope $\nu$ of $A$, not $A$ itself.

\subsection{Affine Springer fiber} The proof of the main results relies on constructing moduli spaces of connections with prescribed singularity types and relating such moduli spaces to affine Springer fibers.  Write $\nu=d/m$ in lowest terms. The leading term $A_{-\nu}\t^{-\nu}=A_{-d/m}t^{d/m}\in \frt[t^{1/m},t^{-1/m}]$ of an isoclinic irregular type $A$ of slope $\nu$ can be $G^{\ad}\lr{t^{1/m}}$-conjugated to a regular semisimple element $\psi\in \frg[t,t^{-1}]$. The element $\psi$ is {\em homogeneous of slope $\nu$} in the sense that for $s\in \CC^{\times}$, $\psi(s^{m}t)$ is in the same $G^{\ad}\lr{t}$-orbit of $s^{d}\psi(t)$. We consider its affine Springer fiber in the affine Grassmannian
\begin{equation}\label{ASF}
\Gr_{\psi}=\{gG\tl{t}\in \Gr_{G}:=G\lr{t}/G\tl{t}|\Ad(g^{-1})\psi\in\frg\tl{t}\}.
\end{equation}

The following geometric criterion for the isoclinic Deligne-Simpson problem is an intermediate step towards the proofs of Theorems \ref{th:DS} and \ref{th:DS general}. 
 \begin{theorem}[Geometric criterion]\label{th:DS Gr} Let $A$ be an isoclinic irregular type of slope $\nu>0$. Let $\psi\in \frg\lr{t}$ be in the same $G^{\ad}\lr{t^{1/m}}$-orbit as the leading term $A_{-\nu}t^{\nu}$. Let $\cO$ be an adjoint orbit of $\frg$.  Then $DS(A,\cO)$ has an affirmative answer if and only if there exists a point $gG\tl{t}\in\Gr_{\psi}$ such that $\Ad(g^{-1})\psi\in \ov{\cO^{\nil}}+t\frg\tl{t}$.
 \end{theorem}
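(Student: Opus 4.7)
I plan to prove the equivalence by means of a moduli-theoretic reformulation grounded in non-abelian Hodge theory. I would introduce the de Rham moduli $\calM_{\dR}(A,\cO)$ parametrizing $G$-connections on $\PP^{1}$ of type $(A,\cO)$, and the Dolbeault moduli $\calM_{\textup{Dol}}(\psi,\ov{\cO^{\nil}})$ parametrizing Higgs bundles on $\PP^{1}$ with matching irregular type $\psi$ at $\infty$ and residue at $0$ lying in $\ov{\cO^{\nil}}$. The theorem splits into two equivalences: first, $\calM_{\dR}(A,\cO)\neq\varnothing$ iff $\calM_{\textup{Dol}}(\psi,\ov{\cO^{\nil}})\neq\varnothing$, via wild non-abelian Hodge theory; and second, $\calM_{\textup{Dol}}(\psi,\ov{\cO^{\nil}})\neq\varnothing$ iff the subspace of $\Gr_{\psi}$ described in the theorem is non-empty, via uniformization.

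For the Higgs/affine-Springer identification: every $G$-bundle on $\AA^{1}=\PP^{1}\setminus\{\infty\}$ is trivial (for connected reductive $G$), so by Beauville-Laszlo a Higgs bundle on $\PP^{1}$ of the prescribed type is given, up to gauge, by a Higgs field $\xi\cdot dt/t$ with $\xi\in\frg\tl{t}$ whose leading behavior at $\infty$ matches $\psi$. A gauge element $g\in G\lr{t}$ bringing this behavior to the canonical form $\psi$ gives $\xi=\Ad(g^{-1})\psi$, and the condition $\xi\in\frg\tl{t}$ (regular at $t=0$) is exactly $gG\tl{t}\in\Gr_{\psi}$. The residue at $0$ of the Higgs field is then $\xi(0)=\Ad(g^{-1})\psi\bmod t$, required to lie in $\ov{\cO^{\nil}}$.

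The heart of the argument is the non-abelian Hodge equivalence. Wild non-abelian Hodge theory (after Biquard-Boalch, Sabbah, and Mochizuki) gives a diffeomorphism between $\calM_{\dR}$ and $\calM_{\textup{Dol}}$ with compatible singularity data. The key residue translation is that a tame connection with residue in the adjoint orbit $\cO$ corresponds to a Higgs bundle whose residue lies in $\ov{\cO^{\nil}}$, where $\cO^{\nil}$ is the induced nilpotent orbit of \S\ref{ss:Onil}, equivalently the asymptotic cone of $\cO$ by Lemma \ref{l:limit nil}. This accounts for the replacement of $\cO$ (on the de Rham side) by $\ov{\cO^{\nil}}$ (on the affine-Springer side), matching the passage from the Jordan decomposition of the residue $x=x_{s}+x_{n}$ to its Lusztig-Spaltenstein induction.

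The main obstacle is the wild non-abelian Hodge correspondence with the precise residue dictionary as stated; verifying this requires either an appeal to the general framework of Biquard-Boalch and Mochizuki, or a direct construction tailored to the two-puncture isoclinic setting. A secondary technical point is that the identification between $\calM_{\textup{Dol}}$ and the $\ov{\cO^{\nil}}$-subspace of $\Gr_{\psi}$ should detect non-emptiness rigorously (not merely up to homotopy); this uses the Beauville-Laszlo uniformization together with the homogeneity of $\psi$ to organize the Higgs data, and the density of $\cO\cap\frp$ in $\cO^{L}_{x_{n}}+\frn_{P}$ to lift a generic constant term back to the open orbit.
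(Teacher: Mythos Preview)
Your overall architecture---reduce to a Higgs-side statement, then identify the Higgs moduli with a piece of $\Gr_{\psi}$---matches the paper's. The second equivalence (Dolbeault $\Leftrightarrow$ affine Springer) is essentially right and is what the paper does via Proposition~\ref{p:central fiber} together with the $\Gm$-contraction of Proposition~\ref{p:Gm}.

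The gap is in your first equivalence. You invoke analytic wild non-abelian Hodge (Biquard--Boalch, Mochizuki) and assert that a connection with residue in $\cO$ corresponds to a Higgs bundle with residue in $\ov{\cO^{\nil}}$. This is not the Simpson/Biquard--Boalch dictionary: that correspondence tracks parabolic weights and eigenvalue data, and the Higgs residue it produces is not simply the Lusztig--Spaltenstein induction of $\cO$. Moreover, the analytic correspondence requires (poly)stability, which you never address, and in any case is transcendental while the problem is algebraic. The appearance of $\ov{\cO^{\nil}}$ is \emph{not} a harmonic-bundle phenomenon.

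The paper avoids this entirely. It works with the algebraic Hodge moduli $\cN_{\Hod,\psi}$ of $\l$-connections (Section~\ref{s:mod}) carrying a $\Gm$-action of weight $d$ on $\l$. The key inputs are: (i) the residue map $\r=(\l,\r_{0},\r_{\infty})$ is \emph{smooth} (Proposition~\ref{p:res sm}), so images over strata are open and one can pass between $\cO$ and $\ov\cO$; (ii) one builds a closed $\Gm$-stable family $\cN_{\Hod,C}$ over $\AA^{1}_{\l}$ by taking $C=\ov{\Gm\cdot(\{1\}\times\cO_{A'}\times\cO)}$, and Lemma~\ref{l:limit nil} identifies the $\l=0$ slice of $C$ with $\{0\}\times\cO_{\psi}\times\ov{\cO^{\nil}}$---this is where $\cO^{\nil}$ enters, as the asymptotic cone under scaling, not via any harmonic-bundle dictionary; (iii) $\Gm$-contraction (Proposition~\ref{p:Gm}) gives $(2)\Rightarrow(4)$ in Lemma~\ref{l:N eq Gr}, and flatness of $\l_{C}$ (a consequence of smoothness of $\r$) gives $(4)\Rightarrow(2)$. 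So the de Rham/Dolbeault passage is achieved by a purely algebraic openness-plus-degeneration argument, and the replacement $\cO\rightsquigarrow\ov{\cO^{\nil}}$ is exactly the $\Gm$-limit of the residue orbit, not a NAH artifact.
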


This geometric criterion has some immediate consequences.

\begin{cor}\label{c:larger O} Let $A$ be an isoclinic irregular type of slope $\nu>0$. Let $\cO'$ be a nilpotent orbit and $\cO$ an arbitrary adjoint orbit of $\frg$. Suppose $\cO'$ is in the closure of $\Gm\cdot\cO$ (scalings of elements in $\cO$), and $DS(A,\cO')$ has an affirmative answer, then so does $DS(A,\cO)$.
\end{cor}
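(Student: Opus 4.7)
The plan is to deduce this from the geometric criterion (Theorem \ref{th:DS Gr}) by comparing the two closures $\ov{(\cO')^{\nil}}$ and $\ov{\cO^{\nil}}$ and showing the former is contained in the latter. First, since $\cO'$ is itself nilpotent, unpacking the definition in Section \ref{ss:Onil} shows that $(\cO')^{\nil}=\cO'$: for any $x\in\cO'$ the Jordan decomposition has $x_{s}=0$, so $L=G$ and the Lusztig--Spaltenstein induction from $G$ to $G$ is the identity on nilpotent orbits.

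Next, I would invoke the intrinsic characterization of $\cO^{\nil}$ via the algebraic asymptotic cone that is announced in Lemma \ref{l:limit nil}: $\ov{\cO^{\nil}}$ is precisely the intersection of $\ov{\Gm\cdot\cO}$ with the nilpotent cone of $\frg$. Consequently, any nilpotent orbit contained in $\ov{\Gm\cdot\cO}$ must lie in $\ov{\cO^{\nil}}$. Applied to the hypothesis $\cO'\subset\ov{\Gm\cdot\cO}$, this gives $\ov{\cO'}\subset\ov{\cO^{\nil}}$, hence
\[
\ov{\cO'}+t\frg\tl{t}\ \subset\ \ov{\cO^{\nil}}+t\frg\tl{t}.
\]

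With these two facts in hand the conclusion is immediate by applying Theorem \ref{th:DS Gr} twice. The assumption that $DS(A,\cO')$ has an affirmative answer, together with the identification $(\cO')^{\nil}=\cO'$, yields some $gG\tl{t}\in\Gr_{\psi}$ with $\Ad(g^{-1})\psi\in\ov{\cO'}+t\frg\tl{t}$. The inclusion just established places this element in $\ov{\cO^{\nil}}+t\frg\tl{t}$, so the geometric criterion gives an affirmative answer to $DS(A,\cO)$ using the same $g$. The only nontrivial input is Lemma \ref{l:limit nil}, i.e.\ the identification of $\cO^{\nil}$ with the open nilpotent orbit in the asymptotic cone of $\cO$; once that is available, the corollary reduces to a chain of inclusions and a reapplication of Theorem \ref{th:DS Gr}.
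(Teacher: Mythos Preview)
Your proof is correct and is exactly the argument the paper has in mind: the paper's own proof reads simply ``Corollary \ref{c:larger O} clearly follows from Theorem \ref{th:DS Gr}'', and you have made that ``clearly'' explicit by invoking Lemma \ref{l:limit nil} to get $\ov{\cO'}\subset\ov{\cO^{\nil}}$ and noting $(\cO')^{\nil}=\cO'$ for nilpotent $\cO'$.
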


\begin{cor}\label{c:sp case} Let $A$ be an isoclinic irregular type of slope $\nu>0$, and $\cO$ be an adjoint orbit of $\frg$. Then $DS(A,\cO)$ has an affirmative answer in any of the following cases:
\begin{enumerate}
\item $\nu$ arbitrary and $\cO$ is a regular adjoint orbit. 
\item $\nu\ge1$ and $\cO$ arbitrary.
\end{enumerate}
\end{cor}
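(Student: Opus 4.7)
The plan is to deduce both statements directly from the geometric criterion of Theorem \ref{th:DS Gr}.

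For part (1), the first step is to identify $\cO^{\nil}$. Picking $x\in\cO$ with Jordan decomposition $x=x_{s}+x_{n}$ and $L=C_{G}(x_{s})$, the element $x_{n}$ is regular nilpotent in $\frl=\Lie L$, and it is a standard property of Lusztig-Spaltenstein induction that $\Ind_{\frl}^{\frg}$ of the regular nilpotent orbit of $\frl$ is the regular nilpotent orbit of $\frg$. Hence $\ov{\cO^{\nil}}=\cN$ is the entire nilpotent cone of $\frg$. The affine Springer fiber $\Gr_{\psi}$ is nonempty by Kazhdan-Lusztig, so by Theorem \ref{th:DS Gr} it remains to show that for any $g\in\Gr_{\psi}$, the element $X:=\Ad(g^{-1})\psi\in\frg\tl{t}$ satisfies $X(0)\in\cN$. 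I would argue this via the Chevalley map: for any homogeneous $f\in\CC[\frg]^{G}$ of degree $d$, $G$-invariance combined with $\psi=\Ad(h)(A_{-\nu}t^{\nu})$ for some $h\in G^{\ad}\lr{t^{1/m}}$ gives $f(X)=f(\psi)=f(A_{-\nu})\,t^{\nu d}$. Since $f(X)\in\CC\tl{t}$ has only integer powers of $t$, either $\nu d\in\ZZ_{>0}$, in which case $f(X)(0)=0$, or $\nu d\notin\ZZ$, forcing $f(A_{-\nu})=0$ and hence $f(X)=0$. In either case $f(X(0))=0$ for every invariant $f$, so $X(0)\in\cN$.

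For part (2), I would construct an explicit $\psi\in t\frg\tl{t}$ and take $g=1$. Write $\nu=a/m$ in lowest terms and $a=km+r$ with $0\le r<m$. The hypothesis $\nu\ge 1$ forces $k\ge 1$: if $r=0$ then $\gcd(a,m)=1$ gives $m=1$, hence $\nu=a=k\ge 1$; if $r>0$ then $k=0$ would imply $\nu=r/m<1$. Factoring the leading term as $A_{-\nu}t^{\nu}=t^{k}\cdot(A_{-\nu}t^{r/m})$, let $\psi'\in\frg\lr{t}$ be a representative of the $G^{\ad}\lr{t^{1/m}}$-orbit of $A_{-\nu}t^{r/m}$. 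The standard construction of homogeneous elements of slope $<1$ (a generalized Coxeter-type element built from the Kac grading attached to the elliptic regular $w\in W$, with descent from a Levi in the non-elliptic case) allows us to take $\psi'\in\frg\tl{t}$. Setting $\psi:=t^{k}\psi'\in t\frg\tl{t}$, the point $1\in\Gr_{\psi}$ trivially satisfies $\Ad(1)\psi=\psi\in t\frg\tl{t}\subset\ov{\cO^{\nil}}+t\frg\tl{t}$, because $0\in\ov{\cO^{\nil}}$ for every adjoint orbit $\cO$. Theorem \ref{th:DS Gr} then yields the conclusion.

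The main technical obstacle is the construction in part (2) of a slope-$r/m$ representative $\psi'$ lying in $\frg\tl{t}$; this is a concrete calculation with Kac coordinates and the explicit form of homogeneous elements, and should not pose serious difficulty. The Chevalley-invariants vanishing in part (1) is routine. Once both are in hand, each case reduces to a direct application of the geometric criterion.
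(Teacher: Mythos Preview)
Your argument for part (1) is correct and essentially the paper's: once $\cO^{\nil}$ is the regular nilpotent orbit, $\Gr_{\psi,\ov{\cO^{\nil}}}=\Gr_{\psi}$ and nonemptiness is immediate. Your Chevalley-invariants computation is really a verification that the evaluation map $\ev_{\psi}:\Gr_{\psi}\to[\cN/G]$ in \eqref{ev} lands in the nilpotent cone (i.e.\ that $\psi$ is topologically nilpotent); the paper takes this for granted, so you are just filling in a detail.

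For part (2) your approach works but is more elaborate than necessary. You factor $\psi=t^{k}\psi'$ with $\psi'$ homogeneous of slope $r/m<1$, and then need to produce $\psi'\in\frg\tl{t}$; you propose to do this via an explicit Kac-coordinate construction. The paper's argument is a one-liner that avoids any construction: since $\nu\ge 1$, the invariants satisfy $f_{i}(t^{-1}\psi)=t^{-d_{i}}f_{i}(\psi)$ with $t$-valuation $d_{i}(\nu-1)\ge 0$, so $t^{-1}\psi$ is integral and the standard fact (integral regular semisimple elements are $G\lr{t}$-conjugate into $\frg\tl{t}$) yields $g$ with $\Ad(g^{-1})\psi\in t\frg\tl{t}$. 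In other words, rather than stripping off $t^{k}$ and rebuilding $\psi'$, one simply strips off a single $t$ and invokes integrality directly. Note that your ``technical obstacle'' of getting $\psi'\in\frg\tl{t}$ is, unwound, exactly the same integrality principle applied to $\psi'$ instead of $t^{-1}\psi$; the explicit Moy--Prasad construction you allude to would also work but is not needed.
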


The proofs of the above results are carried out in Sections \ref{s:mod} and \ref{s:proof}.

\subsection{Complete solutions in the Coxeter cases}
Let $G$ be almost simple and $h$ be its Coxeter number. In Section \ref{s:ex} we use Theorem \ref{th:DS} to determine for which pairs $(d/h,\cO)$, where $\gcd(d,h)=1$ and $\cO$ is an adjoint orbit, $DS(d/h,\cO)$ has an affirmative answer. The result can be summarized as:

\begin{theorem}[See Theorem \ref{th:h main}]\label{th:intro h} Let $d\in \NN$ be coprime to $h$. Then there is a nilpotent orbit $\cO_{d/h}$ of $\frg$, explicitly determined in all cases, such that $DS(d/h,\cO)$ has an affirmative answer if and only if $\cO_{d/h}\subset \ov{\Gm\cdot \cO}$. 
\end{theorem}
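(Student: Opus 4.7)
The plan is to invoke Theorem \ref{th:DS}. The Coxeter element of $W$ has order $h$ and is a regular elliptic element in the sense of Springer, so $h$ is an elliptic regular number of $W$. Since $\gcd(d,h)=1$, the fraction $\nu=d/h$ is already in lowest terms and Theorem \ref{th:DS} applies: $DS(d/h,\cO)$ is solvable if and only if the Springer representation $E_{\cO^{\nil}}$ occurs in the finite-dimensional irreducible module $L_{d/h}(\triv)$ of $\cH^{\rat}_{d/h}(W)$.

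The next step is to define $\cO_{d/h}$ intrinsically from $W$-representation theory. Put
\[
\cS_{d/h}:=\{\cO'\subset\frg \text{ nilpotent}:E_{\cO'} \text{ appears in } L_{d/h}(\triv)\}.
\]
Since $L_{d/h}(\triv)$ is a nonzero quotient of the Verma module $M_{d/h}(\triv)$, it contains the trivial $W$-representation, and $E_{\cO_{\reg}}=\triv$ under the Springer correspondence; hence $\cO_{\reg}\in\cS_{d/h}$ and $\cS_{d/h}\neq\varnothing$. I would then show that $\cS_{d/h}$ has a unique smallest element under the closure order on nilpotent orbits and take this element to be $\cO_{d/h}$.

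Granting such a $\cO_{d/h}$, the equivalence in Theorem \ref{th:h main} follows formally. If $\cO_{d/h}\subset\ov{\Gm\cdot\cO}$, then $DS(d/h,\cO_{d/h})$ is solvable by Theorem \ref{th:DS} applied to $\cO_{d/h}\in\cS_{d/h}$ (note that $\cO_{d/h}^{\nil}=\cO_{d/h}$ since $\cO_{d/h}$ is already nilpotent), so Corollary \ref{c:larger O} yields solvability of $DS(d/h,\cO)$. Conversely, if $DS(d/h,\cO)$ is solvable then $\cO^{\nil}\in\cS_{d/h}$ by Theorem \ref{th:DS}, hence $\cO_{d/h}\subset\ov{\cO^{\nil}}$ by minimality; combining with Lemma \ref{l:limit nil}, which identifies $\cO^{\nil}$ as (a component of) the algebraic asymptotic cone of $\cO$ and in particular places $\cO^{\nil}\subset\ov{\Gm\cdot\cO}$, we conclude $\cO_{d/h}\subset\ov{\Gm\cdot\cO}$.

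The main obstacle is the explicit identification of $\cO_{d/h}$ in each type together with the verification that $\cS_{d/h}$ indeed has a unique minimum. Here I would exploit the geometric side: by Theorem \ref{th:DS Gr} and the results of \cite{OY}, the $W$-module $L_{d/h}(\triv)$ is isomorphic to (a regrading of) the cohomology of the homogeneous affine Springer fiber $\Gr_{\psi}$ associated to a regular semisimple homogeneous element $\psi$ of slope $d/h$, which exists precisely because $h$ is a regular number of $W$. In type $A_{n-1}$ (where $h=n$), this reduces to the combinatorics of $(d,n)$-cores and hooks, and $\cO_{d/h}$ comes out as an explicit hook-type nilpotent orbit. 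For the remaining classical types, one uses the analogous bipartition combinatorics for $L_{d/h}(\triv)$. In the exceptional types one consults the known $W$-character of $L_{d/h}(\triv)$ at Coxeter-number parameter together with tables of the Springer correspondence to read off $\cO_{d/h}$ directly; a finite check in each type confirms that $\cS_{d/h}$ has a unique minimum and produces the list of $\cO_{d/h}$ recorded in Theorem \ref{th:h main}.
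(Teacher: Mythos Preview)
Your formal reduction (the first three paragraphs) is correct and matches the paper's strategy: invoke Theorem~\ref{th:DS}, define $\cS_{d/h}$, and argue that a unique minimum $\cO_{d/h}$ in $\cS_{d/h}$ gives the desired equivalence via Corollary~\ref{c:larger O} and Lemma~\ref{l:limit nil}. Note that you need a genuine \emph{minimum} (below every element of $\cS_{d/h}$), not merely a unique minimal element; this is exactly what the case-by-case analysis must verify.

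Where your proposal diverges sharply from the paper is in the identification of $\cO_{d/h}$, and here there is a genuine gap. The paper does \emph{not} go through affine Springer fiber cohomology or $(d,n)$-core combinatorics. Instead it uses Rouquier's character formula (Proposition~\ref{p:Ld/m from L1/m}) together with Sommers' description of the $W$-module $\CC[\L/d\L]$ (Proposition~\ref{p:Sommers char}) to prove that, since $L_{1/h}(\triv)=\triv$, one has $L_{d/h}(\triv)\cong \CC[\L/d\L]$ as $W$-modules. This reduces the question to: for which $\cO$ does $E_\cO$ have a nonzero $W_J$-invariant for some \emph{$d$-allowable} $J\subset\D$ (Definition~\ref{d:d allowable}, Lemma~\ref{l:stab d allowable})? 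Lemma~\ref{lem:parabolictype} translates this into $\cO_{J,\reg}\preccurlyeq\cO$, yielding Corollary~\ref{c:crit h}. The minimum $\cO_{d/h}$ is then found by listing the $\cO_{J,\reg}$ for $d$-allowable $J$ and checking they have a common lower bound among themselves.

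Your claim that in type $A_{n-1}$ the answer is a ``hook-type nilpotent orbit'' is incorrect: the paper shows $\cO_{d/h}$ has Jordan type $\l^{n,d}$, the partition of $n$ into $d$ parts as evenly as possible, which is not a hook for $d>2$. Your appeal to the cohomology of $\Gr_\psi$ is also imprecise: the relation in \cite{OY} goes through $\Fl_\psi$ and a perverse filtration, and in any case extracting the Springer constituents from that description is no easier than the Rouquier--Sommers route. The substantive content of Theorem~\ref{th:h main} lies precisely in the $d$-allowable combinatorics and the verification that the $\cO_{J,\reg}$ have a unique minimum in each type; your proposal does not supply this.
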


The proof uses the algebraic criterion (Theorem \ref{th:DS}) and information about $L_{d/h}(\triv)$ proved in \cite{BEG}.

\subsection{Complete solutions for classical groups} For $G$ of classical type, we solve the isoclinic Deligne-Simpson problem for any slope $\nu$ completely:

\begin{theorem}[See Theorem \ref{th:cl}]\label{th:intro cl} Let $G$ be an almost simple classical group. For any slope $\nu$ with regular number $m$ as denominator,   there is a nilpotent orbit $\cO_{\nu}$ of $\frg$, explicitly determined in all cases,  such that for any adjoint orbit $\cO$, $DS(\nu,\cO)$ has an affirmative answer if and only if $\cO_{\nu}\subset \ov{\Gm\cdot \cO}$. 
\end{theorem}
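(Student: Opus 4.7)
The strategy is to reduce to the elliptic case via Theorem~\ref{th:DS general}, then invoke the algebraic criterion of Theorem~\ref{th:DS} and combine it with the explicit decomposition of the simple module $L_{\nu}(\triv)$ of the rational Cherednik algebra for the Weyl groups of classical type. First, I would observe that when the denominator $m$ of $\nu$ is not elliptic-regular in $W$, the Levi subgroup $M\subset G$ produced in Section~\ref{ss:non ell} is again (up to a central torus) a product of classical simple groups of lower rank: in type $A_{n-1}$ the regular numbers are the divisors of $n$ and the non-elliptic case reduces to a block-diagonal Levi $\GL_{m}^{n/m}$; in types $B_{n},C_{n}$ the regular numbers are divisors of $2n$ and the relevant Levis involve factors $\GL_{k}$ and a smaller classical group; in type $D_{n}$ the same is true. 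Thus, by Theorem~\ref{th:DS general}, after an inductive reduction the problem is to identify, for each elliptic-regular pair $(W,\nu)$, the minimal Springer-theoretic orbit $\cO_{\nu}$ appearing.

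Second, for the elliptic case, Theorem~\ref{th:DS} converts the question into: determine the set $\Irr_{\nu}(W)$ of irreducible $W$-modules appearing in $L_{\nu}(\triv)$, and locate the nilpotent orbits to which they correspond under Springer. For classical $W$ there are explicit descriptions of $L_{\nu}(\triv)$: the character was computed in type $A$ by Berest–Etingof–Ginzburg~\cite{BEG} and in types $B,C,D$ by Dunkl–Griffeth and by Chmutova–Etingof via Jack polynomial combinatorics attached to rectangular-like partitions. I would extract from these formulas the bipartitions (or partitions) indexing $\Irr_{\nu}(W)$, and then use the Springer correspondence in classical types (Shoji's tables) to translate these labels into a family of nilpotent orbits.

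Third, I would show that $\Irr_{\nu}(W)$, viewed through Springer as a set of nilpotent orbits, admits a unique minimum $\cO^{\min}_{\nu}$ in the closure order. This is the key structural point; it should follow because the partitions indexing the constituents of $L_{\nu}(\triv)$ form an interval in dominance order with a distinguished minimum (of ``staircase'' shape determined by writing $\nu=d/m$). I would then define $\cO_{\nu}$ to be the adjoint orbit whose Lusztig–Spaltenstein-induced nilpotent orbit $\cO_{\nu}^{\nil}$, as in Section~\ref{ss:Onil}, equals $\cO^{\min}_{\nu}$; concretely one can take a single orbit with an explicit Jordan form. Combining Theorem~\ref{th:DS} with Corollary~\ref{c:larger O} and the asymptotic cone description of $\cO^{\nil}$ (Lemma~\ref{l:limit nil}), the condition ``$E_{\cO^{\nil}}$ occurs in $L_{\nu}(\triv)$'' is equivalent to $\cO^{\min}_{\nu}\subset\ov{\cO^{\nil}}$, which in turn is equivalent to $\cO_{\nu}\subset\ov{\Gm\cdot\cO}$.

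\textbf{Expected main obstacle.} The routine parts are the reduction to elliptic $(W,\nu)$ and the Springer translation. The technically delicate step is the explicit decomposition of $L_{\nu}(\triv)$ for every elliptic regular $\nu$ in types $B,C,D$ and, more critically, verifying that its Springer-theoretic support has a unique minimum in the closure order and matching that minimum with the asymptotic-cone picture of Lemma~\ref{l:limit nil}; in type $D$ one must additionally track the $\Out(G)$-action on very even orbits and the parity constraints built into the regular numbers $2n$ versus $n$. I expect most of the case-work to revolve around tabulating these minima and checking that the resulting $\cO_{\nu}$ is insensitive to the particular isoclinic irregular type $A$ with slope $\nu$, as already implied by Theorem~\ref{th:DS}.
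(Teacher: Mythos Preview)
Your route is genuinely different from the paper's. The paper does \emph{not} prove Theorem~\ref{th:cl} via the algebraic criterion (Theorem~\ref{th:DS}) and the $W$-module structure of $L_{\nu}(\triv)$. Instead it uses the geometric criterion (Theorem~\ref{th:DS Gr}): by Lemma~\ref{l:N eq Gr}, $DS(\nu,\cO)$ is affirmative if and only if $\cO^{\nil}\cge\cO'$ for some $\cO'\in\RT_{\min}(\psi)$, so the task becomes computing the minimal reduction type $\RT_{\min}(\psi)$ of a homogeneous $\psi$ of slope $\nu$. The paper then invokes the \emph{skeleton} $\cX^{\cT}_{G}\subset\Gr_{G}$ from \cite{Yun-KL}: the key fact is that every orbit in $\RT_{\min}(\psi)$ is already hit on the skeleton, which for classical groups is a small and explicitly parametrized space of lattices. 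The proof then consists of elementary Jordan-type computations of $\psi$ acting on $\L/t\L$ for lattices $\L$ in the skeleton, case by case in types $A,B,C,D$; the only nontrivial lemma beyond this bookkeeping is a rank-minimization statement for self-adjoint maps on quadratic spaces (Lemma~\ref{l:min s}). The reduction to elliptic $m$ is also done on the geometric side, via \cite[Lemma 3.2]{Yun-KL} on $\RT_{\min}$ rather than via Theorem~\ref{th:DS general}.

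Your proposal is internally coherent and would, if executed, give an alternative proof; the logical equivalence between the two criteria is exactly Theorem~\ref{th:DS}. The trade-off is this: the paper's lattice computation is completely elementary once one accepts the skeleton machinery, and it produces $\cO_{\nu}$ directly as a Jordan type without ever touching $L_{\nu}(\triv)$. Your approach requires as input the full $W$-module decomposition of $L_{\nu}(\triv)$ for \emph{every} elliptic regular $\nu$ in types $B,C,D$, and then a separate argument that the Springer support of this decomposition has a unique closure-minimum. The references you cite do not quite deliver this off the shelf: \cite{BEG} handles type $A$ cleanly, but for $B,C,D$ at arbitrary elliptic regular $\nu$ the available descriptions (Griffeth, Chmutova--Etingof, etc.) are typically for specific parameter ranges or give characters rather than the closure-order minimum directly, so extracting the single orbit $\cO_{\nu}$ from them is itself a case analysis comparable in length to the paper's---and arguably less transparent, since you must also verify that the ``staircase'' minimum you expect is actually realized and is unique. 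In short, your plan is viable but the step you label routine (the $L_{\nu}(\triv)$ decomposition in $BCD$) is where the real work hides, whereas the paper's geometric approach sidesteps Cherednik-algebra representation theory entirely for this theorem.
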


The proof uses the geometric criterion (Theorem \ref{th:DS Gr}) and the {\em skeleton} of affine Grassmannian introduced by one of us in \cite{Yun-KL}.

In Theorem \ref{th:hq main} we make the condition $\cO_{\nu}\subset \ov{\Gm\cdot \cO}$ more precise in terms of the generalized eigenspaces of $\cO$, recovering results of \cite{KLMNS} and we partially solve a conjecture in \cite{Sage}. 

\subsection{Other results}
In Section  \ref{s:F4soln}, we give the explicit answer to $DS(\nu,\cO)$ for  more cases of $\nu$ in type $F_{4}$. The solutions follow the same pattern as in Theorem \ref{th:intro h}. For other exceptional types, analagous calculations are doable in principle but we do not carry them out.

In Section \ref{s:rig}, for classical groups, we list those pairs $(\nu,\cO)$ that give rise to (cohomologically) rigid connections; for exceptional groups, we list pairs $(\nu,\cO)$ that would give rigid connections if $DS(\nu,\cO)$ is affirmative.

\subsection*{Acknowledgment}  The moduli spaces of connections that are used in the proof of the main result  came from joint work of Z.Y. with R. Bezrukavnikov, Pablo Boixeda Alvarez and Michael McBreen \cite{BBAMY}. Z.Y. would like to thank his coauthors for inspiring discussions. We also thank P. Etingof for answering our questions on representations of rational Cherednik algebras. K.J. would like to thank T. Richarz for comments that helped improve a previous version of this article. We thank an anonymous referee for various helpful comments. Moreover, we thank Masoud Kamgarpour and Bailey Whitbread for pointing out an error in Table \ref{t:cl index rig} in a previous version.

\section{Moduli spaces of connections and Higgs bundles}\label{s:mod}

\subsection{Isoclinic irregular types}
Let $\CC[\t^{\QQ_{<0}}]$ be the set of finite $\CC$-linear combinations of $\t^{a}$ where $a\in \QQ_{<0}$. Let $\frt[\t^{\QQ_{<0}}]=\frt\ot_{\CC}\CC[\t^{\QQ_{<0}}]$. An element $A=\sum_{a\in \QQ_{<0}}A_{a}\t^{a}\in\frt[\t^{\QQ_{<0}}]$ is called an {\em isoclinic irregular type} of slope $\nu\in \QQ_{>0}$, if
\begin{itemize}
\item The lowest degree term of $A$ is $A_{-\nu}\t^{-\nu}$, with $A_{-\nu}$ regular in $\frt$.
\item There exists $n\in\NN$ and a connection $(\cE,\nb)$ over $D^{\times}_{\infty}$ such that $A\in \frt[\t^{-1/n}]$ and the pullback of $(\cE,\nb)$ to $D^{(n),\times}_{\infty}$ (the punctured formal disk with coordinate $\t^{1/n}$) is $G\lr{\t^{1/n}}$-gauge equivalent to a connection of the form
\begin{equation*}
d+(A+\frg\tl{\t^{1/n}})\frac{d\t}{\t}.
\end{equation*}
\end{itemize}

\begin{lemma}\label{l:A}
Let $A=\sum_{a\in \QQ_{<0}}A_{a}\t^{a}\in \frt[\t^{\QQ_{<0}}]$ with lowest degree $-\nu$ and coefficient $A_{-\nu}$ regular in $\frt$. Then 
\begin{enumerate}
\item $A$ is an isoclinic irregular type of slope $\nu$ if and only if there exists $w\in W$ (necessarily unique) such that $wA_{a}=e^{2\pi ia}A_{a}$ for all $a\in \QQ_{<0}$. 
\item Let $m$ be the minimal positive integer such that $A\in \frt[\t^{-1/m}]$. Then the element $w$ is regular of order $m$. Moreover, $m$ is the denominator of $\nu$ in lowest terms. 
\end{enumerate}
\end{lemma}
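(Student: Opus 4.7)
My plan is to prove (1) by Galois descent along the totally ramified $n$-fold cover $\pi\colon D^{(n),\times}_\infty \to D^\times_\infty$, and then to extract (2) from the equivariance relation of (1) together with the regularity of $A_{-\nu}$. For the forward direction of (1), I view a connection on $D^\times_\infty$ as a $\mu_n$-equivariant connection on the cover. After pulling back and gauge-transforming to the form $d+(A+X)\frac{d\t}{\t}$ with $X\in\frg\tl{\t^{1/n}}$, the $\mu_n$-equivariant structure is recorded by an element $g_\sigma\in G\tl{\t^{1/n}}$ satisfying
\[
\sigma(A+X)=\Ad(g_\sigma)(A+X)-D(g_\sigma)g_\sigma^{-1},
\]
with $\sigma$ acting on $\t^{a}$ by $e^{2\pi i a}$ and $D=\t\pl_\t$. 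The coefficient of the lowest power $\t^{-\nu}$ only involves $\sigma(A_{-\nu})$ and $\Ad(g_\sigma(0))A_{-\nu}$ (the $D$- and $X$-contributions being holomorphic), giving $\Ad(g_\sigma(0))A_{-\nu}=e^{-2\pi i\nu}A_{-\nu}$. Since $A_{-\nu}$ is regular in $\frt$, this forces $g_\sigma(0)\in N_G(T)$ and picks out a unique $w\in W$ satisfying $wA_{-\nu}=e^{2\pi i(-\nu)}A_{-\nu}$. An induction on the exponents $a$ with $-\nu<a<0$, using the gauge freedom to reduce $g_\sigma$ to its leading representative $\dot w$, then propagates the relation $wA_a=e^{2\pi i a}A_a$ to every $a$.

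The converse of (1) I would prove by explicit construction: given $w\in W$ satisfying the relation, equip the trivial bundle on $D^{(n),\times}_\infty$ with the connection $d+A\frac{d\t}{\t}$ and the $\mu_n$-equivariant structure whose generator acts on $G$ by any lift $\dot w\in N_G(T)$. With $g_\sigma=\dot w$ constant and $X=0$, the equivariance equation above collapses to $\sigma(A)=w\cdot A$, which coefficientwise reads $e^{2\pi i a}A_a=wA_a$; so Galois descent yields a connection on $D^\times_\infty$ of the required shape. Uniqueness of $w$ is immediate, since the stabilizer in $W$ of the regular element $A_{-\nu}$ is trivial. For (2), write $\nu=d/m_0$ in lowest terms. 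The eigenvalue $e^{2\pi i(-\nu)}$ of $w$ on $A_{-\nu}$ is primitive of order $m_0$, so $A_{-\nu}$ is a regular eigenvector of $w$; this is Springer's definition of a regular element. Triviality of the stabilizer of $A_{-\nu}$ also gives $w^{m_0}=1$, and iterating the defining relation produces $w^{m_0}A_a=e^{2\pi i m_0 a}A_a=A_a$, so $m_0 a\in\ZZ$ whenever $A_a\neq 0$; thus $A\in\frt[\t^{-1/m_0}]$. Minimality of $m$ forces $m\mid m_0$, while $m_0\mid m$ because $-\nu=-d/m_0$ is itself an exponent of $A$, so $m=m_0$. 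Combining $w^{m_0}=1$ with the fact that $w$ has an eigenvalue of order $m_0$ gives $|w|=m$.

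The main obstacle I anticipate is the inductive step in (1): one must check carefully that at each exponent $a$ with $-\nu<a<0$, the equivariance equation can be simplified, by modifying $X$ and perturbing $g_\sigma$ by elements of $G\tl{\t^{1/n}}$ close to the identity, so that the $\t^{a}$-piece reduces to the clean identity $wA_a=e^{2\pi i a}A_a$. This I would handle either by an explicit valuation-by-valuation recursion, or, more efficiently, by first invoking the Levelt--Turrittin--Babbitt--Varadarajan canonical form for isoclinic formal $G$-connections, which allows one to assume from the outset that $X\in\frt\tl{\t^{1/n}}$ and that $g_\sigma$ is a constant lift in $N_G(T)$.
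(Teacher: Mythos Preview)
Your approach is essentially the paper's: both prove (1) by analysing the $\mu_n$-descent datum for the pulled-back connection on $D^{(n),\times}_\infty$, and the paper likewise passes (implicitly) through the Levelt--Turrittin canonical form to reduce to a $T$-connection before reading off $w$ from the action of a generator of $\mu_n$. Two small points deserve care. First, your assertion that $g_\sigma\in G\tl{\t^{1/n}}$ is not automatic: the descent datum lives a priori in $G\lr{\t^{1/n}}$. This is harmless once you take your own suggested shortcut and put the pulled-back connection in canonical $T$-form $d+(A+X_0)\frac{d\t}{\t}$ with $X_0\in\frt$; then $g_\sigma\in N_G(T)\lr{\t^{1/n}}$, and writing $g_\sigma=\dot w\cdot t(\t^{1/n})$ with $t\in T\lr{\t^{1/n}}$ one checks $D(g_\sigma)g_\sigma^{-1}\in\frt\tl{\t^{1/n}}$ has no polar part, so the polar part of the equivariance equation gives $\sigma(A)=wA$ directly. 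Second, in the converse you cannot take \emph{any} lift $\dot w\in N_G(T)$: the cocycle condition for $\mu_n$-descent forces $\dot w^n=1$, and such a lift need not exist for the given $n$; the paper handles this by enlarging $n$.

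Your argument for (2) is correct and in fact slightly more direct than the paper's. The paper invokes Springer's theorem \cite[Theorem 4.2(i)]{Spr} to conclude that the order of $w$ equals the order of the eigenvalue $e^{-2\pi i\nu}$; you instead observe that $w^{m_0}$ fixes the regular element $A_{-\nu}$ and hence is trivial, which gives $|w|\mid m_0$ immediately, while $|w|\ge m_0$ follows from the eigenvalue. Both then finish identically by showing $A\in\frt[\t^{-1/m_0}]$ and using minimality of $m$.
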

\begin{proof}
(1) The uniqueness of $w$ is clear: comparing lowest coefficients we have $wA_{-\nu}=e^{-2\pi i\nu}A_{-\nu}$. Since $A_{-\nu}$ is regular, $w$ is unique if it exists.
 
Suppose $A$ is an isoclinic irregular type of slope $\nu$. By definition, for some $n\in\NN$ such that $A\in \frt[\t^{-1/n}]$, there exists a $G$-connection $\nb^{(n)}$ of the form $d+(A+\frg\tl{\t^{1/n}})d\t/\t$  over $D^{(n),\times}_{\infty}$ that descends to a $G$-connection $\nb$ on $D_{\infty}$. It is standard to see that $\nb^{(n)}$ is gauge equivalent to a $T$-connection, i.e., one of the form $d+(A+\frt\tl{\t^{1/n}})d\t/\t$, which we assume from now (see for example \cite[\S 9]{BV}). Let $\G$ (resp. $\G^{(n)}$) be the differential Galois group of the disk $D^{\times}_{\infty}$ (resp. $D^{(n),\times}_{\infty}$), then $\nb^{(n)}$ corresponds to a homomorphism of pro-algebraic groups $\r: \G^{(n)}\to T$. Since $A_{-\nu}$ is regular in $\frt$, the image of $\r$ is regular (not lying in $\ker(\a)$ for any root $\a$). The descent datum of $\nb^{(n)}$ to $D^{\times}_{\infty}$ gives an extension of $\r$ to  $\wt\r: \G\rtimes\mu_{n}\to G$. Note that $\G/\G^{(n)}=\mu_{n}=\Aut(D^{(n),\times}_{\infty}/D^{\times}_{\infty})$.  Since $\r$ has regular image in $T$, the image of $\wt\r$ must normalize $T$. The generator $\z_{n}=e^{2\pi i/n}\in \mu_{n}$ then gives an element $w\in W$ by taking the image of $\wt\r(\wt\z_{n})$ in $N_{G}(T)/T=W$, where $\wt\z_{n}$ is any lifting of $\z_{n}$ to $\G$. By construction, we have $\r\c\Ad(\wt\z_{n})=w\r: \G^{(n)}\to T$. The homomorphism $\r\c\Ad(\wt\z_{n}): \G^{(n)}\to T$ gives a $T$-connection with irregular part $A(\z_{n}\t^{1/n})d\t/\t$, while $w\r$ gives a $T$-connection with irregular part $wAd\t/\t$. Comparing irregular parts we conclude that $wA_{a}=e^{2\pi ia}A_{a}$ for each coefficient $A_{a}$ of $A$. 

Conversely, if $A\in \frt[\t^{-1/n}]$ satisfies $wA(\t^{1/n})=A(e^{2\pi i/n}\t^{1/n})$, we consider the connection $\nb^{(n)}=d+Ad\t/\t$ on $D^{(n),\times}_{\infty}$. Multiply $n$ by another positive integer if necessary, we may assume there exists a lifting $\dot w\in N_{G}(T)$ of $w$ such that $\dot w^{n}=1$. Then gauge by the constant element $\dot w$ gives an isomorphism of $G$-connections $\z_{n}^{*}\nb^{(n)}\cong \nb^{(n)}$ which gives a descent datum for $\nb^{(n)}$ from $D^{(n),\times}_{\infty}$ to $D^{\times}_{\infty}$. This shows that $A$ is the irregular part of the pullback of some $G$-connection on $D^{\times}_{\infty}$, hence an isoclinic irregular type of slope $\nu$.

(2) Let $d=m\nu\in \NN$. Since $wA_{-\nu}=e^{-2\pi i\nu}A_{-\nu}$, $A_{-\nu}\in\frt$ is a regular eigenvector of $w$, $w$ is regular in the sense of Springer \cite{Spr}. Let $m_{1}$ be the order of $w$. By \cite[Theorem 4.2(i)]{Spr}, $m_{1}$ is the order of $e^{2\pi id/m}$ (as a root of unity), hence $m_{1}|m$. On the other hand, for $1\le j\le d$, if $A_{-j/m}\ne0$, then it is an eigenvector of $w$ with eigenvalue $e^{2\pi ij/m}$, which means that $e^{2\pi ij/m}$ has order divisible by $m_{1}$ whenever $A_{-j/m}\ne0$. This implies $A\in \frt[\t^{-1/m_{1}}]$. Therefore the minimality of $m$ implies $m_{1}=m$. Since $e^{2\pi id/m}$ has order $m_{1}=m$, we conclude that $\gcd(d,m)=1$.
\end{proof}

\subsection{Regular gradings and parahoric subgroups}
Fix a Borel subgroup $B$ containing $T$. Let $\Phi$ be the set of roots of $G$ with respect to $T$, and for $\a\in \Phi$, let $\frg_{\a}$ be the corresponding root space. The affine real roots of the loop Lie algebra $\frg\lr{\t}$ with respect to $T$ are $\a+n$ for $\a\in \Phi$ and $n\in\ZZ$. Let $\bI_{\infty}\subset G\tl{\t}$ be the standard Iwahori subgroup determined by $B$. 

A point $x\in \xcoch(T)_{\QQ}$ determines a {\em Moy-Prasad grading} 
$$\frg[\t,\t^{-1}]=\op_{r\in \QQ}\frg\lr{\t}_{x,r}$$
on $\frg[\t,\t^{-1}]$ defined as follows. For $r\in \QQ$, $\frg\lr{\t}_{x,r}$ is the span of $\frg_{\a}t^{n}$ for those affine roots $\a+n$ such that $\a(x)+n=r$, together with $\frt \t^{r}$ if $r\in \ZZ$. 

Let $\frg\lr{\t}_{x,\ge r}$ be the $\t$-adic completion of $\op_{r'\ge r}\frg\lr{\t}_{x,r'}$. Similarly define $\frg\lr{\t}_{x,>r}$. Then $\frg\lr{\t}_{x,\ge 0}$ is a parahoric subalgebra of $\frg\lr{\t}$, which corresponds to a parahoric subgroup $\bP_{x}\subset G\lr{\t}$; $\frg\lr{\t}_{x,> 0}$ is the Lie algebra of the pro-unipotent radical $\bP_{x}^{+}$ of $\bP_{x}$. The Lie algebra of the Levi quotient $L_{x}=\bP_{x}/\bP^{+}_{x}$ is identified with $\frg\lr{\t}_{x,0}=\frg_{x,\un 0}$.

Evaluating at $\t=1$, we get an embedding $\frg\lr{\t}_{x,r}\incl \frg$ whose image depends only on $\un r=r\mod \ZZ\in \QQ/\ZZ$. We denote this image by $\frg_{x,\un\r}\subset \frg$. We easily see that $\frg_{x,\un\r}$ for various $\un\r\in\QQ/\ZZ$ give a $\QQ/\ZZ$-grading of $\frg$
\begin{equation*}
\frg=\op_{\un r\in \QQ/\ZZ}\frg_{x,\un r}.
\end{equation*}
Let $\frac{1}{m}\ZZ/\ZZ$ (where $m\in\NN$) be the smallest subgroup of $\QQ/\ZZ$ containing those $\un r$ such that $\frg_{x,\un r}\neq 0$. Then $m$ is the smallest positive integer such that $mx$ lies in the coweight lattice (i.e., $\a(x)\in \frac{1}{m}\ZZ$ for all $\a\in \Phi$). We call $m$ is {\em order} of $x$.
  
\begin{defn} Let $m\in \NN$. A point $x\in \xcoch(T)_{\QQ}$ is called $m$-regular, if the order of $x$ is $m$, and $\frg_{x,\un{1/m}}$ contains a regular semisimple element of $\frg$.
\end{defn} 

Identify $\frt$ with $\xcoch(T)_{\CC}$ such that the exponential map $\exp_{T}: \frt\cong\xcoch(T)_{\CC}\to T$ has kernel $\xcoch(T)$.

\begin{lemma}\label{l:mreg} Let $x\in \xcoch(T)_{\QQ}$ and let $m$ be the order of $x$. The following are equivalent:
\begin{enumerate}
\item $x$ is $m$-rergular.
\item For some $d$ prime to $m$, $\frg_{x,\un{d/m}}$ contains a regular semisimple element of $\frg$.
\item For all $d$ prime to $m$, $\frg_{x,\un{d/m}}$ contains a regular semisimple element of $\frg$.
\item $\exp_{T}(x)$ is $G$-conjugate to a lifting $\dot w\in N_{G}(T)$ of a regular element $w\in W$ of order $m$. 
\end{enumerate}
\end{lemma}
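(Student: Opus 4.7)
The plan is to reduce the entire lemma to Springer's theorem of regular elements \cite{Spr}. I would first establish the identification
\begin{equation*}
\frg_{x,\un r} = \{y \in \frg : \Ad(\exp_T(x))(y) = e^{2\pi i r} y\},
\end{equation*}
which falls out of the definitions once one notes that on a root space $\frg_\a$ the adjoint action of $\exp_T(x)$ is scalar multiplication by $e^{2\pi i \a(x)}$, and that $\frt$ sits in degree $\un 0$. By the definition of the order of $x$, $\Ad(\exp_T(x))$ is thus a semisimple automorphism of $\frg$ of order exactly $m$. Granting this identification, the implications $(3) \Rightarrow (1) \Rightarrow (2)$ are tautological, so it remains to close the loop via (4).

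For $(2) \Rightarrow (4)$, I would set $\tau = \exp_T(x)$ and $\zeta = e^{2\pi i d/m}$; this is a primitive $m$-th root of unity since $\gcd(d,m)=1$. A regular semisimple $y \in \frg_{x,\un{d/m}}$ is a $\zeta$-eigenvector of $\Ad(\tau)$, and its centralizer $\frt_0 = C_\frg(y)$ is a Cartan subalgebra preserved by $\Ad(\tau)$ (since $C_\frg(\zeta y) = C_\frg(y)$). Hence $\tau$ normalizes the corresponding maximal torus $T_0$, and its image $w_0 \in N_G(T_0)/T_0$ acts on $\frt_0$ with $y$ as a regular $\zeta$-eigenvector. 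Springer's theorem then forces $w_0$ to be a regular element of order $m$; conjugating $T_0$ to $T$ yields the required $\dot w \in N_G(T)$.

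For $(4) \Rightarrow (3)$, $G$-conjugation preserves the eigenspace decomposition of $\Ad$ on $\frg$, so it suffices to show that for every $d$ coprime to $m$, the $e^{2\pi i d/m}$-eigenspace of $\Ad(\dot w)$ on $\frg$ contains a regular semisimple element. Letting $k$ be the inverse of $d$ modulo $m$, the element $w^k$ is still regular of order $m$, so Springer's theorem supplies a regular vector $v \in \frt$ with $w^k v = e^{2\pi i /m} v$; equivalently, $w v = e^{2\pi i d/m} v$. Such a $v$ is a regular semisimple element of $\frg$ lying in the desired eigenspace of $\Ad(\dot w)$.

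The main subtlety is in $(2) \Rightarrow (4)$: the Cartan subalgebra $C_\frg(y)$ produced from a regular semisimple eigenvector is a priori unrelated to the fixed $\frt$, so one has to conjugate $T_0$ to $T$ explicitly and transport the Weyl group element back. Once this is done, the lemma is simply a matter of combining the Moy--Prasad/Kac grading with Springer's characterization of regular elements of Weyl groups.
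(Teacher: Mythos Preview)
Your proof is correct and follows essentially the same route as the paper: identify $\frg_{x,\un r}$ with the $e^{2\pi i r}$-eigenspace of $\Ad(\exp_T(x))$ on $\frg$, then for $(2)\Rightarrow(4)$ pass to the Cartan centralizing the given regular semisimple eigenvector and read off a regular Weyl element there, and for $(4)\Rightarrow(3)$ produce regular eigenvectors of $w$ on $\frt$ via Springer's theory.

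One remark on your $(4)\Rightarrow(3)$: the detour through $w^k$ is harmless but does not actually buy anything. Your claim that ``Springer's theorem supplies a regular vector $v$ with $w^{k}v=e^{2\pi i/m}v$'' already presumes that a regular element of order $m$ admits a regular eigenvector for the \emph{specific} primitive root $e^{2\pi i/m}$, not merely for \emph{some} primitive $m$-th root. That is true, but its standard justification is precisely the Galois argument the paper uses directly: the eigenspaces of $w$ on $\frt\cong\xcoch(T)\otimes\CC$ for the various primitive $m$-th roots are Galois-conjugate, and regularity is preserved because the root hyperplanes are defined over $\QQ$. Once you grant this, you can apply it straight to $w$ and the eigenvalue $e^{2\pi i d/m}$, bypassing $w^{k}$ entirely; this is what the paper does.
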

\begin{proof}
(3)$\Rightarrow$ (1)$\Rightarrow$ (2) are clear.

(2)$\Rightarrow$ (4). Let $g\in \exp_{T}(x)$, then $\frg_{x,\un r}$ is the eigenspace of $\Ad(g)$ with eigenvalue $e^{2\pi i r}$. Let $s\in \frg_{x,\un{d/m}}$ be regular semisimple for some $d$ prime to $m$. Let $H=C_{G}(s)$, a maximal torus of $G$, and $\frh=\Lie H$, a Cartan subalgebra. Since $s$ is an eigenvector of $\Ad(g)$,  $\Ad(g)$ normalizes $H$, hence $g\in N_{G}(H)$. 
The subalgebra $\frh$ inherits a $\frac{1}{m}\ZZ/\ZZ$-grading $\frh=\op_{\un r} \frh_{\un r}$ where $\frh=\frh\cap\frg_{x,\un r}$. Now $\Ad(g)$ has a regular eigenvector on $\frh$ with eigenvalue a primitive $m$th root of unity, hence its image in $W(H,G)$ is a regular element $w$ of order $m$.   

(4) $\Rightarrow$ (3): When (4) holds, there exists a maximal torus $H\subset G$ such that $g:=\exp_{T}(x)\in N_{G}(H)$ and maps to a regular element $w\in W(H,G)$ of order $m$. Since the Galois action on $\frh\cong \xcoch(H)\ot\CC$ permutes eigenspaces of $w$ with eigenvalues of the same order, $w$ has regular eigenvectors with eigenvalue $e^{2\pi id/m}$ for all $d$ prime to $m$, which give regular semisimple elements in $\frg_{x, \un{d/m}}$ for all $d$ prime to $m$.
\end{proof}  
  
\begin{remark} By \eqref{l:mreg}, if $x$ is $m$-regular, then $m$ must be a regular number of $W$. Conversely, if $m$ is a regular number of $W$, then there exists $x\in \xcoch(T)_{\QQ}$ that is $m$-regular. Indeed, by \cite[Theorem 3.2.5]{OY}, one can take $x=\r^{\vee}/m$.
\end{remark}  
 
\begin{remark} When $G$ is semisimple and $m$ is an elliptic regular number of $W$, the $m$-regular points $x\in \xcoch(T)_{\QQ}$ are in the same orbit for the affine Weyl group action, because any two liftings of an elliptic element in $W$ are in the same $G^{\sc}$-orbit. In this case, the Moy-Prasad filtration on $\frg\lr{\t}$ corresponding to a $m$-regular $x$ was first considered by Reeder and Yu in their construction of epipelagic representations \cite{RY}.
 
When $m$ is not elliptic, there may be $m$-regular points $x,x'\in \xcoch(T)_{\QQ}$ in different orbits under the extended affine Weyl group. For example, when $\frg=\so_{2n}$ and $m$ is an odd divisor of $n-1$ (letting $n-1=m\ell$), the isomorphic type of $\frg_{x,\un 0}$ can be either $\gl(2\ell)^{(m-1)/2}\times \so(2\ell+2)$, or $\gl(2\ell+1)\times \gl(2\ell)^{(m-3)/2}\times \so(2\ell)$. 
\end{remark}

For $x\in \xcoch(T)_{\QQ}$ of order $m$, let $\Ad(\t^{x})$ denote the adjoint action of $\l(\t^{1/m})$ in $G\lr{\t^{1/m}}$, where we write $x=\l/m$ for some $\l$ in the coweight lattice. Note that $\Ad(\t^{x})$ restricts to a linear isomorphism
\begin{equation*}
\Ad(\t^{x}): \frg\lr{\t}_{x,r}\isom \frg_{x,\un r}\t^{r}.
\end{equation*}

\begin{defn}\label{d:A adapt} Let $m$ be a regular number of $W$ and let $x\in \xcoch(T)_{\QQ}$ be $m$-regular. Let $A$ be an isoclinic irregular type of slope $\nu=d/m$ (in lowest terms). An element $A'=\sum_{j=1}^{d}A'_{-j/m}\in \op_{j=1}^{d}\frg\lr{\t}_{x, -j/m}$ is said to be {\em adapted to $A$}, if there exists $g\in G$ such that
\begin{equation*}
\Ad(g\t^{x})A'_{-j/m}=A_{-j/m}\t^{-j/m}, \quad j=1,2,\cdots, d.
\end{equation*}
\end{defn}

\begin{lemma}\label{l:A adapt} Let $m$ be a regular number of $W$ and let $x\in \xcoch(T)_{\QQ}$ be $m$-regular. Let $A$ be an isoclinic irregular type of slope $\nu=d/m$ (in lowest terms). Then there exists $A'\in \op_{j=1}^{d}\frg\lr{\t}_{x, -j/m}$ adapted to $A$.
\end{lemma}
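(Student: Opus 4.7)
The plan is to transport the eigenvector structure of $A$ under the element $w \in W$ of Lemma \ref{l:A}(1) to the Moy-Prasad grading at $x$. Set $\zeta := e^{2\pi i/m}$. By Lemma \ref{l:A}, there is a unique $w \in W$, regular of order $m$, such that each $A_{-j/m}$ lies in the $\zeta^{-j}$-eigenspace of $w$ acting on $\frt$. By Lemma \ref{l:mreg}(4) applied to $x$, there exist $h \in G$ and a regular element $w' \in W$ of order $m$ with $h \exp_T(x) h^{-1} = \dot{w'} \in N_G(T)$, a lifting of $w'$. A direct check from the definition of the Moy-Prasad grading shows that $\frg_{x, \un r}$ is precisely the $e^{2\pi ir}$-eigenspace of $\Ad(\exp_T(x))$ on $\frg$.

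The next step is to show $w$ and $w'$ are $W$-conjugate. By Springer's theorem \cite[Thm.~4.2]{Spr}, for each primitive $m$-th root of unity $\xi$, the set $C_\xi$ of regular elements of $W$ admitting a regular $\xi$-eigenvector forms a single conjugacy class. A Galois automorphism $\sigma$ of $\QQ(\xi)$ with $\sigma(\xi) = \xi^a$ (for $a$ coprime to $m$) transforms a regular eigenvector $v$ of $w_0 \in C_\xi$ into a regular eigenvector of $w_0$ with eigenvalue $\xi^a$, since regularity is the non-vanishing of $\alpha \in \xch(T)$ on $v$ and $\alpha(\sigma v) = \sigma(\alpha v) \ne 0$. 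Hence $w_0 \in C_{\xi^a}$, so all $C_{\xi^a}$ coincide with the conjugacy class of $w_0$. In particular, $w$ and $w'$ lie in a single $W$-conjugacy class. Choose $u \in W$ with $uwu^{-1} = w'$, a lift $\dot u \in N_G(T)$, and set $g := \dot u^{-1} h$; then $g \exp_T(x) g^{-1} = \dot u^{-1} \dot{w'} \dot u \in N_G(T)$ has image $w$ in $W$, so $\Ad(g \exp_T(x) g^{-1})$ acts on $\frt$ as $w$.

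Since $A_{-j/m} \in \frt$ is a $\zeta^{-j}$-eigenvector of $\Ad(g \exp_T(x) g^{-1})$ on $\frg$, the element $\Ad(g^{-1}) A_{-j/m}$ lies in the $\zeta^{-j}$-eigenspace of $\Ad(\exp_T(x))$, which is $\frg_{x, \un{-j/m}}$. Define
\begin{equation*}
A'_{-j/m} := \Ad(\t^x)^{-1}\bigl(\Ad(g^{-1}) A_{-j/m} \cdot \t^{-j/m}\bigr) \in \frg\lr\t_{x, -j/m},
\end{equation*}
so that $\Ad(g\t^x) A'_{-j/m} = \Ad(g)\bigl(\Ad(g^{-1}) A_{-j/m}\bigr)\t^{-j/m} = A_{-j/m}\t^{-j/m}$, the desired adaptation (reading the right-hand side of the equation in Definition \ref{d:A adapt} as the monomial $A_{-j/m}\t^{-j/m}$ of $A$). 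The main subtlety is the Springer/Galois argument linking $w$ to $w'$; the remaining verification is bookkeeping of eigenspaces under the isomorphism $\Ad(\t^x)$.
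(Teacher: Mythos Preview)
Your proof is correct and follows essentially the same strategy as the paper's: both produce $g \in G$ with $g\exp_T(x)g^{-1} \in N_G(T)$ lifting the element $w$ of Lemma~\ref{l:A}, then set $A'_{-j/m} = \Ad(g\t^x)^{-1}A_{-j/m}$. The paper routes through an auxiliary Cartan $\frh$ (the centralizer of a chosen regular semisimple element in $\frg_{x,\un{1/m}}$) and then asserts the existence of $g$ in \eqref{Adg} without spelling out the Springer conjugacy argument you make explicit; your version bypasses $\frh$ by invoking Lemma~\ref{l:mreg}(4) directly.
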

\begin{proof}
Choose any $s\in \frg\lr{\t}_{x,1/m}$ such that its image in $\frg_{x,\un{1/m}}$ is regular semisimple. Let $\frh\lr{\t}$ be the centralizer of $s$ in $\frg\lr{\t}$. Then $\frh\lr{\t}$ inherits a Moy-Prasad grading from $\frh\lr{\t}_{x,r}:=\frh\lr{\t}\cap \frg\lr{\t}_{x,r}$ so that $\frh\lr{\t}$ is the $\t$-adic completion of $\op\frh\lr{\t}_{x,r}$. Let $\frh_{x,\un r}\subset \frg_{x,\un r}$ be the image of $\frh\lr{\t}_{x,r}$ under the evaluation at $\t=1$. Then $\frh:=\op_{\un r\in \frac{1}{m}\ZZ/\ZZ} \frh_{x,\un r}$ is a Cartan subalgebra of $\frg$. The map $\Ad(\t^{x})$ maps $\frh\lr{\t}_{x,r}$ isomorphically to $\frh_{x,\un r}\t^{r}\subset \frh \t^{r}$. 

By construction, $\frh_{x,\un r}$ is the eigenspace of $\Ad(\exp_{T}(x))$ on $\frh$ with eigenvalue $e^{2\pi ir}$. The proof of Lemma \ref{l:mreg} shows that $\exp_{T}(x)$ maps to a regular element in $W(H,G)$ of order $m$ (where $H$ is the maximal torus of $G$ with Lie algebra $\frh$). In particular, there exists $g\in G$ such that 
\begin{equation}\label{Adg}
\Ad(g): (H, \exp_{T}(x))\isom (T, w)
\end{equation}
namely $\Ad(g)H=T$ compatibly with their automorphisms given by $\exp_{T}(x)$ and $w$. For $j\in \ZZ/m\ZZ$, let $\frt^{w,[j]}$ be the eigenspace of $w$ on $\frt$ with eigenvalue $e^{2\pi ij/m}$. Then $\Ad(g)$ restricts to a linear isomorphism $\frh_{x,\un{j/m}}\isom\frt^{w, [j]}$ for all $j\in \ZZ/m\ZZ$. Composing with $\Ad(\t^{x})$ we get a linear isomorphism
\begin{equation*}
\Ad(g\t^{x}): \frh\lr{\t}_{x,j/m}\isom \frh_{x,\un{j/m}}\isom \frt^{w, [j]}.
\end{equation*}
Finally take $A'_{-j/m}\in \frh\lr{\t}_{x,j/m}$ to be $\Ad(g\t^{x})^{-1}A_{-j/m}$ for $j=1,\cdots,d$. 
\end{proof}

\begin{lemma}\label{l:irr type A gauge} Let $m$ be a regular number of $W$ and let $x\in \xcoch(T)_{\QQ}$ be $m$-regular. Let $A$ be an isoclinic irregular type of slope $\nu=d/m$ (in lowest terms). Let $A'\in \op_{j=1}^{d}\frg\lr{\t}_{x,-j/m}$ be adapted to $A$.  Then a $G$-connection $(\cE,\nb)$ on $D^{\times}_{\infty}$ is isoclinic of irregular type $A$ if and only if $\nb$ is $G\lr{\t}$-gauge equivalent to a connection of the form
\begin{equation*}
d+(A'+\frg\lr{\t}_{x,\ge0})d\t/\t.
\end{equation*}
\end{lemma}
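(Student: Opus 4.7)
The plan is to handle the two directions separately. For the ``if'' direction, pull the given connection $d + (A' + X)\tfrac{d\t}{\t}$ (with $X \in \frg\lr{\t}_{\ge 0}$) back to $D^{(m),\times}_\infty$ and apply the gauge transformation $h := g\t^x \in G\lr{\t^{1/m}}$ from Definition \ref{d:A adapt}. Using the identity $h^{-1}dh = x\,\tfrac{d\t}{\t}$ (which holds because $x \in \frt$) together with the adapted relation $\Ad(h)A' = A$, one obtains
\[
h\bigl(d + (A' + X)\tfrac{d\t}{\t}\bigr)h^{-1} = d + \bigl(A + \Ad(h)X - \Ad(g)x\bigr)\tfrac{d\t}{\t}.
\]
Since $\Ad(\t^x)$ restricts to an isomorphism $\frg\lr{\t}_{x,r} \isom \frg_{x,\un r}\t^r \subset \frg\tl{\t^{1/m}}$ for $r \geq 0$, the correction $\Ad(h)X - \Ad(g)x$ lies in $\frg\tl{\t^{1/m}}$, which exhibits the connection as isoclinic of irregular type $A$.

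For the ``only if'' direction, a connection $\nb$ isoclinic of irregular type $A$ admits, by definition, an element $u \in G\lr{\t^{1/m}}$ such that $u\nb u^{-1}|_{D^{(m)}} = d + (A + Z)\tfrac{d\t}{\t}$ for some $Z \in \frg\tl{\t^{1/m}}$. Setting $v := h^{-1}u$, an analogous computation yields $v\nb v^{-1} = d + (A' + Y)\tfrac{d\t}{\t}$ with $Y := \Ad(h^{-1})Z + x \in \frg\lr{\t^{1/m}}$. What remains is to arrange for $v$ to lie in $G\lr{\t}$ (rather than merely $G\lr{\t^{1/m}}$); once this is achieved, $v\nb v^{-1}$ descends to $D^\times_\infty$, so $A' + Y \in \frg\lr{\t}$ and therefore $Y \in \frg\lr{\t}$. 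The decomposition $\frg\lr{\t} = \bigl(\bigoplus_{r<0}\frg\lr{\t}_{x,r}\bigr) \oplus \frg\lr{\t}_{\ge 0}$, together with $A' \in \bigoplus_{j=1}^d \frg\lr{\t}_{x,-j/m}$, then forces $Y \in \frg\lr{\t}_{\ge 0}$.

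The hard part will be the $\mu_m$-descent of $v$. The ambiguity in $u$ is right-multiplication by the $G\lr{\t^{1/m}}$-stabilizer of the model form $d + (A + \frg\tl{\t^{1/m}})\tfrac{d\t}{\t}$, which by the regular semisimplicity of $A_{-\nu}$ is contained in $T\lr{\t^{1/m}}$. A direct computation, using $\sigma_\z^*(h)h^{-1} = \Ad(g)(\z^\l)$ together with the identification $\Ad(g)\exp_T(x) = \dot w$ from Lemma \ref{l:mreg}(4), identifies $\z \mapsto \sigma_\z^*(v)v^{-1}$ as a $1$-cocycle with values in the $w$-twisted copy of $T\lr{\t^{1/m}}$. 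Its cohomology class vanishes in characteristic zero---the pro-unipotent part carries no $\mu_m$-cohomology, and the constant part is split by the explicit choice of $g$---which allows us to absorb the cocycle into the $T\lr{\t^{1/m}}$-ambiguity of $u$ and produce $v \in G\lr{\t}$. The regular semisimplicity of the leading coefficient $A_{-\nu}$ is the decisive ingredient in this final cocycle-killing step.
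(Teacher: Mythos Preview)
Your ``if'' direction is correct and essentially coincides with the paper's argument.

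For the ``only if'' direction, however, there are two problems.

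First, your description of the stabilizer is wrong as stated. The set of $p\in G\lr{\t^{1/m}}$ that gauge some connection of the form $d+(A+Z)\tfrac{d\t}{\t}$ (with $Z\in\frg\tl{\t^{1/m}}$) to another one of the same form is \emph{not} contained in $T\lr{\t^{1/m}}$: any $p$ in a sufficiently deep congruence subgroup (of level at least $\nu$, roughly speaking) also lies in it, since then $[\,\cdot\,,A]$ and $(dp)p^{-1}$ only contribute to the regular part. So the ambiguity in $u$ is $T\lr{\t^{1/m}}\cdot K$ for a pro-unipotent $K$, not $T\lr{\t^{1/m}}$ alone. This is not fatal---the $K$-part of the cocycle can indeed be killed since $H^1(\mu_m,K)=0$ in characteristic zero---but it is not what you wrote.

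Second, and more seriously, the assertion that ``the constant part is split by the explicit choice of $g$'' is not a proof, and in the non-elliptic case it is precisely the difficult point. After disposing of the pro-unipotent part you are left with a class in (a twist of) $H^1(\langle w\rangle, T)$; when $w$ is elliptic this vanishes because $t\mapsto w(t)t^{-1}$ is surjective on $T$, but when $w$ is not elliptic the map $1-w$ on $\frt$ has kernel $\frt^{w}\neq 0$ and the corresponding $H^1$ need not vanish. You give no mechanism to show that the \emph{particular} cocycle arising from $\nb$ is a coboundary.

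The paper sidesteps this by a genuinely different strategy. In the elliptic case (after reducing to $G$ semisimple) it proves that an isoclinic connection of irregular type $A$ on $D^{\times}_{\infty}$ is \emph{unique} up to isomorphism: one shows that after pullback to $D^{(n),\times}_\infty$ the connection is gauge equivalent to $d+A\,d\t/\t$ with no residue term (using $\frt^{w}=0$), and that the descent datum to $D^{\times}_\infty$ is unique because all elements of the coset $wT\subset N_G(T)$ are $T$-conjugate when $w$ is elliptic. Since $\nb_{A'}$ is one such connection, every isoclinic connection of type $A$ is isomorphic to it. For the non-elliptic case, the paper does not attempt any cocycle computation; instead it observes that the monodromy of $\nb$ factors through the Levi $M=C_G(\frt^{w})$, so $\nb$ admits an $M$-reduction $\nb_{M}$, and then applies the already-proved elliptic case to $M$ (where $w$ is elliptic in $W_M$). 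This Levi reduction is the missing structural ingredient in your argument.
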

\begin{proof} By definition, there exists $g\in G$ such that $\Ad(g\t^{x})(A'_{-j/m})=A_{-j/m}$ for $j=1,2,\cdots, d$. Let $\nb_{A'}$ be the $G$-connection on $D^{\times}_{\infty}$ given by $d+A'd\t/\t$. Gauging by $g\t^{x}$ we transform $\nb_{A'}$ to $d+(A-\Ad(g)x)d\t/\t$ (here we view $x\in \xcoch(T)_{\QQ}$ as a $\QQ$ form of $\frt$), which has irregular part $A$. We conclude that $\nb_{A'}$ is isoclinic of irregular type $A$.

Now consider the other implication: if $\nb$ is isoclinic of irregular type $A$, then $\nb$ is $G\lr{\t}$-gauge equivalent to a connection of the form $d+(A'+\frg\lr{\t}_{x,\ge0})d\t/\t$. Let $w$ be the regular element attached to $A$ as in Lemma \ref{l:A}(1).

We first consider the case $w$ is elliptic (i.e. $\frt^{w}$ is the center of $\frg$). Let $G'=G^{\der}\times Z(G)^{\c}$ which is isogenous to $G$. We can view the connection form of $\nb$ as giving a $G'$-connection on $D^{\times}_{\infty}$. It is therefore sufficient to show that $\nb$ is $G'\lr{\t}$-gauge equivalent to a connection of the form $d+(A'+\frg\lr{\t}_{\ge0})d\t/\t$. This allows us to replace $G$ by $G'$, and write $\nb=\nb_{G^{\der}}+\nb_{Z(G)^{\c}}$ according to the decomposition $\frg=\frg^{\der}\op \frz$ (where $\frz=\Lie Z(G)^{\c}$). The condition on $\nb$ implies $\nb_{Z(G)^{\c}}$ has the form $d+(A_{\frz}+\frz\tl{\t})d\t/\t$, where $A_{\frz}\in \frz[\t^{-1}]$ is the projection of $A$ to $\frz[\t^{-1/m}]$, which lies in $\frz[\t^{-1}]$. Note that $A_{\frz}+\frz\tl{\t}\subset \frg\lr{\t}_{x,\ge -\nu}$. We therefore reduce to the same problem for the $G^{\der}$-connection $\nb_{G^{\der}}$.  Henceforth we may assume $G$ is semisimple, so that $\frt^{w}=0$.

We claim that when $G$ is semisimple and $w$ is elliptic, there is up to isomorphism only one isoclinic $G$-connection on $D^{\times}_{\infty}$ of the given irregular type $A$. Indeed, let $\nb$ be such a connection, then for some $n$ divisible by $m$, its pullback $\nb^{(n)}$ to $D^{(n),\times}_{\infty}$ is gauge equivalent to one of the form $d+(A+\frt\tl{\t^{1/n}})d\t/\t$. It is easy to use gauge transformations in $T$ to eliminate the part in $\t^{1/n}\frt\tl{\t^{1/n}}$, hence we may assume $\nb^{(n)}$ is gauge equivalent to $d+(A+A_{0})d\t/\t$ for some $A_{0}\in\frt$. The same argument as in Lemma \ref{l:A}(1) shows that $wA_{0}=A_{0}$, hence $A_{0}\in \frt^{w}=0$. Therefore $\nb^{(n)}$ is isomorphic to $d+Ad\t/\t$ over $D^{(n),\times}_{\infty}$. The descent datum of $\nb^{(n)}$ to $D^{\times}_{\infty}$ is also unique: they correspond to extensions of $\r:\G^{(n)}\to T$ (monodromy representation of $\nb^{(n)}$) to $\wt\r: \G\to G$. Let $\g\in \G$ be a lifting of $\z_{n}\in \mu_{n}=\G/\G^{(n)}$, then $\wt\r(\g)\in wT$. Since $w$ is elliptic, all elements in $wT$ are conjugate under $T$, hence the uniqueness of $\wt\r$ up to $G$-conjugacy. This proves the uniqueness of the descent of $d+Ad\t/\t$ to $D^{\times}_{\infty}$, hence the isomorphism class of $\nb$ is unique.

With the uniqueness statement above, and the fact that $\nb_{A'}$ is isoclinic of irregular type $A$, we see that any isoclinic $G$-connection of irregular type $A$ on $D^{\times}_{\infty}$ is isomorphic to $\nb_{A'}$. This proves the lemma when $w$ is elliptic.

Now consider the general case. Let $M\subset G$ be the centralizer of $\frt^{w}$, so that $w$ is an {\em elliptic} regular element in $W_{M}=W(T,M)$. Since the image of the monodromy representation $\r: \G\to G$ of $\nb$ lies in $Tw^{\ZZ}$ (up to $G$-conjugacy), in particular, it lies in $M$. Therefore we get an $M$-reduction $\nb_{M}$ of $\nb$ that is isoclinic of irregular type $A$ (viewed as an irregular type for $M$). 

Let $\frh\lr{\t}\subset \frg\lr{\t}$ be the centralizer of $A'_{-\nu}$, and let $\frh\subset\frg$ be the centralizer of the image of $A'_{-\nu}$ in $\frg_{x,-\un\nu}$. We freely use the notations from the proof of Lemma \ref{l:A adapt}.  Then $\frh\lr{\t}_{x,0}\cong \frh_{x,\un 0}\subset \frg_{x,\un 0}$ is an abelian subalgebra consisting of semisimple elements. Up to $G_{x,\un 0}=L_{x}$-action, we may assume $\frh_{x,\un 0}$ is contained in $\frt$. Let $M'=C_{G}(\frh_{x,\un 0})$, a Levi subgroup of $G$ containing $T$. Since $\Ad(g)\frh_{x,\un 0}=\frt^{w}$, we have $\Ad(g)M'=M$. Using $\Ad(g^{-1})$ we transport $\nb_{M}$ to a $M'$-connection $\nb_{M'}$ that is isoclinic of irregular type $\Ad(g^{-1})A$. The same point $x$ gives a Moy-Prasad grading of $\fm'\lr{\t}$ and $\frh\lr{\t}\subset \fm'\lr{\t}$ compatible with the gradings. The element $A'$ is adapted to $\Ad(g^{-1})A$ when the ambient group is considered to be $M'$. By the elliptic case that is already proved, $\nb_{M'}$ is $M'\lr{\t}$-gauge equivalent to an $M'$-connection of the form $d+(A'+\fm'\lr{\t}_{x,\ge0})d\t/\t$. Since $\nb$ is isomorphic to the induced $G$-connection of $\nb_{M'}$, $\nb$ is $G\lr{\t}$-gauge equivalent to a $G$-connection of the form $d+(A'+\fm'\lr{\t}_{x,\ge0})d\t/\t$, and we are done since $\fm'\lr{\t}_{x,\ge0}\subset \frg\lr{\t}_{x,\ge0}$.
\end{proof}

\subsection{The moduli space}
From now on fix a regular number $m$ of $W$ and an $m$-regular point $x\in \xcoch(T)_{\QQ}$. We let $\bP_{\infty}=\bP_{x}$ be the parahoric subgroup of $G\lr{\t}$ with Lie algebra $\frg\lr{\t}_{x,\ge0}$, $\bP^{+}_{\infty}$ its pro-unipotent radical, and $L_{\bP}$ its Levi subgroup containing $T$.

{\bf Convention:} Below we shall write $\frg\lr{\t}_{r}, \frg\lr{\t}_{\ge r}$ and $\frg\lr{\t}_{>r}$ for $\frg\lr{\t}_{x,r}, \frg\lr{\t}_{x,\ge r}$ and $\frg\lr{\t}_{x,>r}$.

\begin{remark} Below we will use results from \cite{BBAMY}, in which we study geometric properties of moduli spaces adapted to the level structure $\bP_{\infty}$ for the fixed choice $x=\rho^{\vee}/m$. However, all arguments in {\em loc.cit.} work equally well for any $m$-regular point $x\in \xcoch(T)_{\QQ}$.
\end{remark}
  
Let $A$ be an isoclinic irregular type of slope $\nu=d/m$, where $d\in \NN$ is prime to $m$. Let $A'=\sum_{j=1}^{d}A'_{-j/m}\in \op_{j=1}^{d}\frg\lr{\t}_{-j/m}$ be adapted to $A$. By Lemma \ref{l:irr type A gauge}, the existence of connections with isoclinic irregular type $A$ at $\infty$ is the same as the existence of connections of the form $d+(A'+\frg\lr{\t}_{\ge0})d\t/\t$ at $\infty$. We shall define a moduli space of such connections. 

Let $\psi=A'_{-\nu}\in \frg\lr{\t}_{-\nu}$. Then $\psi$ is regular semisimple and is homogeneous of slope $-\nu$ as an element in $\frg\lr{\t}$. We refer to \cite[\S3]{OY} for detailed discussion of homogeneous elements in the loop Lie algebra. It will turn out to be convenient to consider all $A'$ with the same lowest term $\psi$. Therefore we fix $\psi\in \frg\lr{\t}_{-\nu}$ that is regular semisimple as an element in $\frg\lr{\t}$ (equivalently, its image in $\frg_{x,-\un\nu}$ is regular semisimple).

In \cite{BBAMY} we introduced the de Rham moduli space $\cM_{\dR, \psi}$. It is the $1$-fiber of a one-parameter family $\l: \cM_{\Hod, \psi}\to \AA^{1}$ whose $0$-fiber is a symplectic moduli space of Higgs bundles. Here we consider a slightly large moduli space that is Poisson but not symplectic, which suffices for our purposes.

Let $\cN_{\Hod, \psi}$ be the moduli stack of triples $(\l, \cE, \nb)$ where
		\begin{itemize}
			\item $\l\in \AA^{1}$.
			\item $\cE$ is a $G$-bundle over $X$ with $\bP^{+}_{\infty}$-level structure at $\infty$. 
			\item $\nb$ is a $\l$-connection on $\cE|_{X\bs \{0,\infty\}}$ satisfying the following conditions: 
			\begin{enumerate}
				\item[(i)] Under any (equivalently, some) trivialization of $\cE|_{D_{\infty}}$ together with its $\bP^{+}_{\infty}$-level structure, $\nb|_{D_{\infty}^{\times}}$ takes the form 
				\begin{equation*}
					\nb|_{D_{\infty}^{\times}}\in \l d+(\psi+\frg\lr{\t}_{>-\nu})d\t/\t.
				\end{equation*}
				\item[(ii)] $(\cE,\nb)|_{D_{0}^{\times}}$ has at most a regular singularity (i.e. at most a simple pole at $0$). 
							\end{enumerate}
		\end{itemize}

Let $\l: \cN_{\Hod,\psi}\to \AA^{1}$ be the function given by the $\l$-coordinate of the data. Let
\begin{equation*}
\cN_{\psi}=\l^{-1}(0), \quad \cN_{\dR, \psi}=\l^{-1}(1).
\end{equation*}
Then $\cN_{\psi}$ is a moduli space of Higgs bundles. We use $(\cE,\ph)$ rather than $(\cE,\nb)$ to denote a point in $\cN_{\psi}$. Let $(f_{1},\cdots, f_{r})$ be homogeneous generators of the polynomial ring $\CC[\frg]^{G}$. Define the Hitchin base
\begin{equation*}
\cB_{\psi}\subset\prod_{i=1}^{r} \cohog{0}{X, \cO([d_{i}\nu]\cdot \infty)}
\end{equation*}
consisting of sections $(a_{1},\cdots, a_{r})$ such that the Laurent expansion of $a_{i}$ at $\infty$ takes the form $f_{i}(\psi)+\t^{-[\nu d_{i}]+1}\CC\tl{\t}$. Then  we have the Hitchin map
\begin{equation*}
f: \cN_{\psi}\to \cB_{\psi}
\end{equation*}
sending a Higgs bundle $(\cE,\ph)$ to $(f_{i}(\ph))_{1\le i\le r}$. Note $f_{i}(\ph)$ is a meromorphic section of $\om_{X}$ with pole order $1$ at $0$ and order $[d_{i}\nu]+1$
at $\infty$. Using $dt$ as a canonical trivialization of $\om(0+\infty)$, we view $f_{i}(\ph)$ as a section of $\cO([d_{i}\nu]\cdot\infty)$.

\subsection{The $\Gm$-action} There is a $\Gm$-action on $\cN_{\Hod,\psi}$ with weight $d$ on the function $\l$.

When restricted to the Higgs bundle moduli space $\cN_{\psi}$, the Hitchin map $f$ is $\Gm$-equivariant with respect to the action on $\cB_{\psi}$ given by $s\cdot a_{i}(\t)=s^{d_{i}d}a_{i}(s^{m}\t)$. The action on $\cB_{\psi}$ contracts to the point $a_{\psi}:=(f_{i}(\psi))_{1\le i\le r}\in \cB_{\psi}$. Let $\cN_{a_{\psi}}=f^{-1}(a_{\psi})$ be the corresponding Hitchin fiber.

\begin{prop}\label{p:Gm} The $\Gm$-action on $\cN_{\Hod, \psi}$ contracts every point to a point in $\cN_{a_{\psi}}$.
\end{prop}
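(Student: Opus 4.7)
My strategy is to construct a $\Gm$-equivariant extension of the Hitchin map to the full Hodge moduli space and then use properness to promote a contraction on the base to one upstairs.

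First I extend the Hitchin map to the Hodge side. For any point $(\lambda,\cE,\nb) \in \cN_{\Hod,\psi}$, after trivializing $\cE$ locally we may write $\nb = \lambda\,d + \Phi\,d\t/\t$ and form the invariants $f_i(\Phi)$; by conditions (i)--(ii) these are sections of $\cO([d_i\nu]\cdot\infty)$ whose Laurent expansion at $\infty$ begins with $f_i(\psi)$. Bundling them I obtain
\begin{equation*}
F \colon \cN_{\Hod,\psi} \longrightarrow \AA^1 \times \cB_\psi, \qquad (\lambda,\cE,\nb) \mapsto (\lambda, (f_i(\Phi))_{1 \le i \le r}),
\end{equation*}
whose restriction to $\lambda = 0$ recovers $f$.

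Second, I verify $\Gm$-equivariance and show that the target contracts. The $\Gm$-action on $\cN_{\Hod,\psi}$ combines the loop rotation $\t \mapsto s^{-m}\t$ (which preserves $\bP^+_\infty$ and hence the level structure) with rescaling of $\nb$ and weight $d$ scaling of $\lambda$. Under this $F$ is equivariant with $s\cdot(\lambda, a_i(\t)) = (s^d\lambda,\, s^{d_id}a_i(s^{-m}\t))$; the homogeneity of $\psi$ of slope $d/m$ ensures $(0, a_\psi)$ is a $\Gm$-fixed point, and a direct weight check on the coordinates shows that all tangent weights there are strictly positive. Therefore, as $s \to 0$, the $\Gm$-action on the base contracts every point of the image of $F$ to $(0, a_\psi)$.

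The main obstacle is properness of $F$. Granted this, the existence of $\lim_{s \to 0} s\cdot p$ for each $p \in \cN_{\Hod,\psi}$, and its location in $F^{-1}(0, a_\psi) = \cN_{a_\psi}$, follow from the valuative criterion applied to the $\Gm$-orbit closure of $p$ together with the base contraction from the previous step. For $\lambda = 0$ this is the standard properness of the Hitchin map $f$ for parahoric Higgs bundles with fixed regular semisimple irregular part at $\infty$, as used in \cite{BBAMY}. For general $\lambda$ I plan to deduce it by a Rees-type scaling from $\lambda = 1$ to $\lambda = 0$: multiplying the $\lambda$-connection form by $s$ over $\AA^1_s$ degenerates the family to the $\lambda = 0$ fiber, where properness already holds, while the $\bP^+_\infty$-level structure at $\infty$ rigidifies the underlying bundle enough to control the limit. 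This is the step where care is required; everything else is formal once $F$ is proven proper.
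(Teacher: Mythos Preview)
There is a genuine gap at your very first step: the map $F$ is not well-defined. For $\lambda \neq 0$, writing $\nabla = \lambda\,d + \Phi\,\frac{d\t}{\t}$ in a local trivialization, the form $\Phi$ is not gauge-covariant by conjugation alone: under a change of trivialization by $g$ it transforms as
\[
\Phi \longmapsto \Ad(g)\Phi - \lambda\,\t\,\frac{dg}{d\t}\,g^{-1}.
\]
Hence the invariant polynomials $f_i(\Phi)$ depend on the choice of trivialization and do not glue to global sections of any line bundle on $X$. There is no Hitchin map on the de Rham side; this is not a technical wrinkle but the structural feature that distinguishes connections from Higgs fields. Consequently your target $\AA^1 \times \cB_\psi$ does not receive a well-defined map from $\cN_{\Hod,\psi}$, and the rest of the argument (equivariance of $F$, contraction on the base, properness of $F$) never gets off the ground.

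The paper itself does not give an independent proof here; it defers to \cite{BBAMY}. Your instinct that the contraction should ultimately come from properness over a base that itself contracts, together with the rigidification supplied by the $\bP^+_\infty$-level structure at $\infty$, is in the right spirit; but the base has to be something other than $\cB_\psi$ once $\lambda \neq 0$, and constructing the correct replacement (or bypassing it via a direct presentation of $\cN_{\Hod,\psi}$ with visibly positive $\Gm$-weights) is where the actual work lies.
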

\begin{proof}
Same argument as in \cite[Lemma 3.1.3]{BBAMY}.
\end{proof}

Recall the affine Springer fiber of $\psi$ in \eqref{ASF}.

\begin{prop}\label{p:central fiber} There is a canonical homeomorphism $\Gr_{\psi}\to \cN_{a_{\psi}}$.
\end{prop}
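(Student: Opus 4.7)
The plan is to build the homeomorphism by uniformization at $\infty$, matching transition functions on both sides. A point of $\cN_{a_\psi}$ is a Higgs bundle $(\cE,\varphi)$ on $X=\PP^1$ with $\bP_\infty^+$-level structure at $\infty$, regular singularity at $0$, leading term $\psi$ at $\infty$, and Hitchin image $a_\psi$. Since $\AA^1 = X\setminus\{\infty\}$ admits only the trivial $G$-bundle, choose a trivialization $\sigma$ of $\cE$ over $\AA^1$ together with a trivialization $\sigma_\infty$ over $D_\infty$ lifting the $\bP_\infty^+$-level structure. The transition $\sigma^{-1}\sigma_\infty$ is an element of $G\lr{\t}/\bP_\infty^+$, well-defined up to the left action of $G[t]$; this is the standard uniformization also used in the description of $\Gr_G$.

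The forward map $\Gr_\psi \to \cN_{a_\psi}$ sends $gG\tl{\t}$ to the pair $(\cE_g,\varphi_g)$, where $\cE_g$ is obtained by gluing the trivial bundles on $\AA^1$ and $D_\infty$ via $g$ (with its canonical $\bP_\infty^+$-level structure), and $\varphi_g$ is declared to be $\psi\,d\t/\t$ in the $D_\infty$-trivialization. In the $\AA^1$-trivialization, the Higgs field reads $\Ad(g^{-1})\psi\,d\t/\t$: the defining condition $\Ad(g^{-1})\psi\in\frg\tl{\t}$ kills any singularity at $t=\infty$, while the identity $d\t/\t = -dt/t$ produces only a simple pole at $t=0$. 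By construction $f_i(\varphi_g) = f_i(\psi)$ as global sections, so $f(\varphi_g) = a_\psi$. For the inverse, start with $(\cE,\varphi)\in\cN_{a_\psi}$ and use the uniformization to extract a transition $g$. In the $D_\infty$-trivialization, $\varphi$ equals $\psi\,d\t/\t$ modulo terms in $\frg\lr{\t}_{>-\nu}\,d\t/\t$; since $\psi$ is homogeneous regular semisimple (in the sense of \cite{OY}), the infinitesimal adjoint action $[\cdot,\psi]:\frg\lr{\t}_{\ge 0}\to\frg\lr{\t}_{>-\nu}$ is surjective, so the $\bP_\infty^+$-gauge freedom in the level structure can be used to normalize $\varphi|_{D_\infty}$ to exactly $\psi\,d\t/\t$. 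The regularity of $\varphi$ on $\AA^1\setminus\{0\}$ in the $\AA^1$-trivialization then becomes the affine Springer condition $\Ad(g^{-1})\psi\in\frg\tl{\t}$.

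The two constructions are patently mutually inverse at the level of points, and both sides are presented as loci in ind-schemes of the form $G\lr{\t}/K$ cut out by closed conditions, so the bijection is bicontinuous. Combined with Proposition \ref{p:Gm}, which identifies $\cN_{a_\psi}$ as the $\Gm$-attractor (hence a closed subspace inheriting the natural topology), one obtains the sought homeomorphism. The main technical obstacle is the gauge-normalization in the inverse direction: one must verify that the $\bP_\infty^+$-orbit of the leading-term datum has a unique representative with $\varphi|_{D_\infty} = \psi\,d\t/\t$, and reconcile the passage from $\bP_\infty^+$-level structures on the Higgs side to the $G\tl{\t}$-cosets that appear in \eqref{ASF}. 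The homogeneity and regular semisimplicity of $\psi$ — encoded in the isoclinicity assumption — are precisely what ensure the requisite infinitesimal surjectivity and uniqueness.
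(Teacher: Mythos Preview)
Your uniformization idea is right, but you are gluing at the wrong puncture. The affine Springer fiber \eqref{ASF} lives in $G\lr{t}/G\tl{t}$, the affine Grassmannian at the point $0$; you instead glue trivial bundles on $\AA^1=X\setminus\{\infty\}$ and $D_\infty$, which produces a coset in $G\lr{\t}/\bP_\infty^+$. This is not a harmless relabelling. In your forward map you set $\varphi_g=\psi\,d\t/\t$ on $D_\infty$ and read it in the $\AA^1$-chart as $\Ad(g^{-1})\psi\,d\t/\t$ with $\Ad(g^{-1})\psi\in\frg\tl{\t}$; but an element of $\frg\tl{\t}$ is only a formal power series in $\t=t^{-1}$, so it has no reason to extend to a regular section over $\AA^1_t$, and it certainly does not force a simple pole at $t=0$ as you assert. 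Your forward map therefore does not produce a global Higgs bundle at all.

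The correct gluing is at $0$, and it works precisely because of the global nature of $\psi$ emphasized in \S1.8: $\psi\in\frg[t,t^{-1}]$ is a Laurent \emph{polynomial}, so $\psi\,dt/t$ is already a regular Higgs field on all of $X\setminus\{0\}$, with the prescribed leading term and the tautological $\bP_\infty^+$-level at $\infty$. Gluing to $D_0$ via $g\in G\lr{t}$, the condition $\Ad(g^{-1})\psi\in\frg\tl{t}$ is exactly the regular singularity at $0$. For the inverse, your formal $\bP_\infty^+$-normalization on $D_\infty$ is insufficient: one must use the Hitchin condition $f_i(\varphi)=f_i(\psi)$ \emph{globally} on $X\setminus\{0\}$ (together with the leading term and the regular semisimplicity of $\psi$) to force $\varphi=\psi\,dt/t$ there, not merely as a formal jet at $\infty$. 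The $\bP_\infty^+$ versus $G\tl{t}$ mismatch you flag as a ``technical obstacle'' is in fact the signal that the uniformization point is wrong.
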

\begin{proof}
Same proof as \cite[Theorem 2.8.1(4)]{BBAMY}.
\end{proof}

\subsection{Residue maps} 
We have a residue map 
\begin{equation*}
\r_{0}: \cN_{\Hod,\psi}\to [\frg/G]
\end{equation*}
taking $(\l,\cE,\nb)$ to the residue of $\nb$ at $0$. This map is $\Gm$-equivariant with respect to the weight $d$ $\Gm$-action on $\frb$.

On the other hand, let $V_{\psi}$ be the affine space $\psi+\op_{j=1}^{d-1}\frg\lr{\t}_{-j/m}$. Let $\bP_{\infty,\ge\nu}\subset \bP^{+}_{\infty}$ be the Moy-Prasad subgroup with Lie algebra $\frg\lr{\t}_{\ge\nu}$, and  $K=\bP^{+}_{\infty}/\bP_{\infty,\ge\nu}$. We have a map
\begin{equation*}
\r_{\infty}: \cN_{\Hod,\psi}\to [V_{\psi}/K]
\end{equation*}
taking $(\l,\cE,\nb)$ to the polar terms of $\nb|_{D^{\times}_{\infty}}$. This action is equivariant with respect to the $\Gm$-action on $V_{\psi}$ and on $K$ such that $s\in \Gm$ acts on $\frg\lr{\t}_{j/m}$ by $s^{d+j}$.

\begin{prop}\label{p:res sm}
The map $\r:=(\l, \r_{0},\r_{\infty}): \cN_{\Hod,\psi}\to \AA^{1}\times[\frg/G]\times[V_{\psi}/K]$ is smooth and equidimensional.
\end{prop}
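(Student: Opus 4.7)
The plan is to verify formal smoothness of $\r$ at an arbitrary point $(\l,\cE,\nb)\in\cN_{\Hod,\psi}$ by deformation theory, and then deduce equidimensionality from a locally constant Euler characteristic. Let $\cF=\ad(\cE)$. First, I would identify the tangent complex of $\cN_{\Hod,\psi}$ at this point with the hypercohomology on $X$ of a two-term complex
\begin{equation*}
C^{\bu}\colon\ \cF_{>0}\ \xrightarrow{\ \l d+\ad(\nb)\ }\ \cF_{<\nu}\ot\om_{X}(0+\infty)
\end{equation*}
placed in degrees $0$ and $1$, together with a $\CC$ summand in degree $0$ accounting for deformations of $\l$. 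Here $\cF_{>0}$ is the subsheaf of $\cF$ cut out by the $\bP^{+}_{\infty}$-level condition at $\infty$ (germs lying in $\frg\lr{\t}_{>0}$), while the degree one sheaf encodes the allowed poles: a simple pole at $0$ with free residue, and polar terms at $\infty$ filling out $\psi+V_{\psi}$ modulo the Moy-Prasad piece $\frg\lr{\t}_{\ge\nu}$.

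Next, I would realize the derivative of $\r=(\l,\r_{0},\r_{\infty})$ as a morphism of complexes $C^{\bu}\to L^{\bu}$, where $L^{\bu}$ is a two-term ``local'' complex supported at $\{0,\infty\}$ whose hypercohomology computes the tangent complex of $\AA^{1}\times[\frg/G]\times[V_{\psi}/K]$ at the image point. Concretely, $L^{\bu}$ decomposes as $\CC\op[\frg\xrightarrow{\ad_{x}}\frg]_{0}\op[\Lie K\xrightarrow{\textup{inf.}}V_{\psi}]_{\infty}$, and the map $C^{\bu}\to L^{\bu}$ is the natural restriction to the residue at $0$ and the polar tail at $\infty$ modulo $\frg\lr{\t}_{\ge\nu}$.

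The main step is to identify the fiber $C^{\bu}_{\textup{rel}}:=\textup{fib}(C^{\bu}\to L^{\bu})$ with a two-term complex whose hypercohomology is computed on the affine curve $X\setminus\{0,\infty\}=\Gm$. This is essentially an excision argument: $L^{\bu}$ exactly accounts for the formal-neighborhood contribution of $C^{\bu}$ at $0$ and $\infty$. Since $\Gm$ is affine of dimension one, the hypercohomology of any two-term complex of quasi-coherent sheaves on it is concentrated in degrees $0$ and $1$. Hence the obstruction space $\HH^{2}(C^{\bu}_{\textup{rel}})$ vanishes, giving formal smoothness of $\r$.

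For equidimensionality, the relative dimension of $\r$ at each point equals $\chi(C^{\bu}_{\textup{rel}})=\dim\HH^{0}-\dim\HH^{1}$, which by Riemann-Roch depends only on the rank of $\ad(\cE)$ and numerical invariants of the Moy-Prasad filtration at $\infty$ --- all locally constant on $\cN_{\Hod,\psi}$. The main obstacle is the careful setup of the complex $C^{\bu}$, particularly the degree one sheaf at $\infty$ that must simultaneously realize the isoclinic constraint $\nb|_{D_{\infty}^{\times}}\in\l d+(\psi+\frg\lr{\t}_{>-\nu})d\t/\t$ and the residual gauge freedom by $K=\bP^{+}_{\infty}/\bP_{\infty,\ge\nu}$; once this is pinned down correctly, the $\HH^{2}$ vanishing follows immediately from affineness of $\Gm$.
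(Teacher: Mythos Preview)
There is a genuine gap in your excision step. The local complex $L^\bullet$ does \emph{not} account for the full formal-neighborhood contribution of $C^\bullet$ at $0$ and $\infty$: it only records the residue at $0$ and finitely many polar coefficients at $\infty$, whereas the formal completion of $C^\bullet$ at either point involves full power series. Consequently the fiber $C^\bullet_{\textup{rel}}$ is still a two-term complex of sheaves that are nonzero over all of $\PP^1$, and its top hypercohomology group is a quotient of an $H^1(\PP^1,-)$, which has no reason to vanish on cohomological-dimension grounds alone. A quick sanity check: your argument makes no use of the fact that the level structure at $\infty$ is at the \emph{pro-unipotent} radical $\bP^+_\infty$. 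If one replaced $\bP^+_\infty$ by the full parahoric $\bP_\infty$, automorphisms of $(\cE,\ph)$ could contain a nontrivial torus and smoothness of $\r$ would genuinely fail, yet your excision argument would go through verbatim.

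The paper supplies the missing input by a different route. It first uses the contracting $\Gm$-action (Proposition~\ref{p:Gm}) to reduce to points $(\cE,\ph)$ of the Higgs locus $\cN_\psi$. There the relative tangent complex is written explicitly as a two-term complex $\cK_{(\cE,\ph)}$ on $X=\PP^1$, and the obstruction $\cohog{1}{X,\cK_{(\cE,\ph)}}$ is attacked by \emph{Serre duality}: one checks that $\cK_{(\cE,\ph)}^\vee\cong\cK^\sh_{(\cE,\ph)}\ot\cO(-1)$, where $\cK^\sh_{(\cE,\ph)}$ is the tangent complex of $\cN_{\Hod,\psi}$ relative to $\AA^1_\l$. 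Thus the obstruction vanishes iff $\cohog{-1}{X,\cK^\sh_{(\cE,\ph)}}=\Lie\Aut(\cE,\ph)=0$. That last vanishing is the substantive point: $\Aut(\cE,\ph)$ is diagonalizable by Ng\^o's result \cite[Cor.~4.11.3]{NgoFL}, and it embeds in the pro-unipotent group $\bP^+_\infty$ via the level structure, hence is trivial. Your equidimensionality argument via locally constant Euler characteristic is correct and matches the paper's.
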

\begin{proof}
The map $\r$ is  $\Gm$-equivariant. Let $Z\subset \cN_{\Hod,\psi}$ be the non-smooth locus of $\r$, then $Z$ is closed and $\Gm$-stable. By Proposition \ref{p:Gm}, if $Z\ne\vn$, then $Z$ contains a fixed point, hence in particular $Z\cap \cN_{\psi}\ne\vn$. It therefore suffices to show that $\r$ is smooth at every geometric point $(\cE, \ph)\in \cN_{\psi}$. 

The relative tangent complex of $\l$ at $(\cE,\ph)\in \cN_{\psi}$ is $R\G(X, \cK^{\sh}_{(\cE,\ph)})$ where $\cK^{\sh}_{(\cE,\ph)}$ is the two-term complex in degrees $-1$ and $0$:
\begin{equation*}
\Ad(\cE; \frg\lr{\t}_{\ge 1/m})\xr{[-,\ph]}\Ad(\cE; \frg\lr{\t}_{\ge -(d-1)/m})\ot \om_{X}(\{0\}+\{\infty\}).
\end{equation*}
Here, for a lattice $\L\subset \frg\lr{\t}$ that is stable under $\bP^{+}_{\infty}$, we denote by $\Ad(\cE; \L)$ the subsheaf of $j_{*}\Ad(\cE)$ ($j:\AA^{1}\incl \PP^{1}$) consisting of sections that lies in $\L$ near $\infty$ under any trivialization of $\cE$. We can use $dt/t$ to trivialize $\om_{X}(\{0\}+\{\infty\})$.

Similarly, the relative tangent complex of $\r$ at $(\cE,\ph)\in \cN_{\psi}$ is $R\G(X, \cK_{(\cE,\ph)})$ where $\cK_{(\cE,\ph)}$ is the two-term complex in degrees $-1$ and $0$:
\begin{equation*}
\Ad(\cE; \frg\lr{\t}_{\ge d/m})\ot \cO_{X}(-\{0\})\xr{[-,\ph]}\Ad(\cE; \frg\lr{\t}_{\ge 0})\ot \om_{X}(\infty).
\end{equation*}
Here the twisting by $\cO_{X}(-\{0\})$ on both terms correspond to the relative tangent over $[\frg/G]$; changing the lattices at $\infty$ to $\frg\lr{\t}_{\ge d/m}$ and $\frg\lr{\t}_{\ge 0}$ correspond to the relative tangent over $[V_{\psi}/K]$. The Serre dual of $\cK_{(\cE,\ph)}$ is the following two step complex concentrated in degrees $-1$ and $0$
\begin{equation*}
\cK^{\vee}_{(\cE,\ph)}: \Ad(\cE; \frg\lr{\t}_{\ge 1/m})\ot \cO_{X}(-1)\xr{[-,\ph]}\Ad(\cE; \frg\lr{\t}_{\ge -(d-1)/m})\ot \cO_{X}(-1).
\end{equation*}
To show $\r$ is smooth at $(\cE,\ph)$ it suffices to show that the obstruction group  $\cohog{1}{X,\cK_{(\cE,\ph)}}=0$. By Serre duality, it is equivalent to showing $\cohog{-1}{X,\cK^{\vee}_{(\cE,\ph)}}=0$. Note that $\cK_{\cE,\ph}^{\vee}\cong \cK^{\sh}_{(\cE,\ph)}\ot\cO(-1)$, hence it suffices to show $\cohog{-1}{X,\cK^{\sh}_{(\cE,\ph)}}=0$. Equivalently,  it is equivalent to saying that $\Lie\Aut(\cE,\ph)=0$. By \cite[Cor 4.11.3]{NgoFL}, $\Aut(\cE,\ph)$ is isomorphic to a subgroup of a maximal torus of $G$, hence diagonalizable (this is proved in the case without level structure but the argument works with level structure). On the other hand, restricting to $D_{\infty}$, $\Aut(\cE,\ph)$ is a subgroup of the pro-unipotent group $\bP^{+}_{\infty}$, hence itself unipotent. Therefore, $\Aut(\cE,\ph)$ is the trivial algebraic group over $\CC$. This implies that $\r$ is smooth. 

The paragraph above shows that $\cN_{\psi}$ is an algebraic space. We show that $\cN_{\Hod,\psi}$ is also an algebraic space. Indeed, let $Z'\subset \cN_{\Hod,\psi}$ be the locus where the automorphism group is nontrivial. Then $Z'$ is a closed substack stable under $\Gm$. Using Proposition \ref{p:Gm}, if $Z'\ne\vn$, then $Z'$ contains a fixed point, hence in particular $Z'\cap \cN_{\psi}\ne\vn$. However we have shown that $\cN_{\psi}$ is an algebraic space. Therefore $Z'=\vn$, and hence  $\cN_{\Hod, \psi}$ is an algebraic space. 

It remains to show that $\r$ is equidimensional. The relative tangent complex of $\r$ at an arbitrary geometric point $(\l, \cE,\nb)\in \cN_{\Hod,\psi}$ is of the form $R\G(X,\cK_{(\cE,\nb)})$ where $\cK_{(\cE,\nb)}$ is the two step de Rham complex in degrees $-1$ and $0$:
\begin{equation*}
\Ad(\cE; \frg\lr{\t}_{\ge d/m})\ot \cO_{X}(-\{0\})\xr{\nb_{\Ad}}\Ad(\cE; \frg\lr{\t}_{\ge 0})\ot \om_{X}(\infty).
\end{equation*}
Here $\nb_{\Ad}$ is the connection on the adjoint bundle $\Ad(\cE)$ induced from $\nb$. By the smoothness of $\r$ and the vanishing of automorphism groups, $R\G(X,\cK_{(\cE,\nb)})$ is concentrated in degree $0$. Therefore the relative dimension of $\r$ at $(\l,\cE,\nb)$ is
\begin{equation*}
-\chi(\Ad(\cE; \frg\lr{\t}_{\ge d/m})\ot \cO_{X}(-1))+\chi(\Ad(\cE; \frg\lr{\t}_{\ge 0})\ot \cO_{X}(-1))=\dim(\frg\lr{\t}_{\ge 0}/\frg\lr{\t}_{\ge d/m})
\end{equation*}
which does not depend on $(\l,\cE,\nb)$. This shows that $\r$ is equidimensional.
 \end{proof}

The following fact will be used in the proof of the main results.
\begin{lemma}\label{l:good quotient Vpsi}
Let $\frq_{\psi}:=V_{\psi}\sslash K$ and let $\ov f: V_{\psi}\to \frq_{\psi}$ be the natural quotient map. Then for any $\ov a\in \frq_{\psi}(\CC)$, the preimage $\ov f^{-1}(\ov a)$ is a single $K$-orbit. 
\end{lemma}
\begin{proof}
The argument is given in the proof of \cite[Proposition 2.13.1]{BBAMY}, see the paragraphs after equation (2.13) in {\em loc.cit}.
\end{proof}

\section{Proof of the main results}\label{s:proof}

For two nilpotent orbits $\cO_{1}$ and $\cO_{2}$ of $\frg$, we write $\cO_{1}\cle\cO_{2}$ (equivalently $\cO_{2}\cge\cO_{1}$) to mean $\cO_{1}\subset \ov\cO_{2}$.

\subsection{$\cO^{\nil}$ as a limit}
In \S\ref{ss:Onil} we defined a nilpotent orbit $\cO^{\nil}$ out of any adjoint orbit $\cO$ using Lusztig-Spaltenstein induction. There is another characterization of $\cO^{\nil}$ that we use directly in the proof which we spell out here.

Let $\Cone(\cO)\subset \frg$ be the closure of $\Gm\cdot \cO$ (action by scaling). Consider the intersection $\Cone(\cO)\cap \cN$. This is the same as the algebraic asymptotic cone $\mathrm{Cone}_{\alg}(\cO)$ as defined in \cite[Definition 3.6]{AV}.

\begin{lemma}\label{l:limit nil} Let $\cN\subset \frg$ be the nilpotent cone.  Then $\Cone(\cO)\cap \cN=\ov{\cO^{\nil}}$.
\end{lemma}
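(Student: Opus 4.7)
The plan is to prove the two set-theoretic inclusions separately.

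For $\overline{\cO^{\nil}} \subset \Cone(\cO) \cap \cN$, fix a parabolic $P \subset G$ with Levi factor $L = C_G(x_s)$ and unipotent radical $N_P$, so $\frp = \frl \oplus \frn_P$. The crucial observation is that, since $L = C_G(x_s)$, every root $\alpha$ whose root space lies in $\frn_P$ satisfies $\alpha(x_s) \neq 0$, so $\ad(x_s) : \frn_P \to \frn_P$ is a linear automorphism. Exponentiating and using closedness of unipotent orbits, this gives $N_P \cdot (x_s + n) = x_s + n + \frn_P$ for any $n \in \frl$, and combined with $L$ fixing $x_s$ one obtains
$$ P \cdot x = x_s + \cO^{L}_{x_n} + \frn_P \subset \cO. $$
For $t \in \Gm$, rescaling and using that $\cO^{L}_{x_n}$ is stable under scalar multiplication (via a Jacobson--Morozov cocharacter) while $\frn_P$ is linear yields $t\cO \supset tx_s + \cO^{L}_{x_n} + \frn_P$. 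Passing to the Zariski closure in $\frg$ and letting $t \to 0$ forces $\cO^{L}_{x_n} + \frn_P \subset \Cone(\cO)$; $G$-invariance of $\Cone(\cO)$ combined with the Lusztig--Spaltenstein characterization of $\cO^{\nil}$ as the dense orbit in $G \cdot (\cO^{L}_{x_n} + \frn_P)$ then gives $\overline{\cO^{\nil}} \subset \Cone(\cO) \cap \cN$.

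For the reverse inclusion $\Cone(\cO) \cap \cN \subset \overline{\cO^{\nil}}$, assume $\cO$ is non-nilpotent (else $L = G$ and the lemma is trivial). Then $\Cone(\cO) = \overline{\Gm \cdot \cO}$ is irreducible of dimension $\dim \cO + 1$, since distinct $t \in \Gm$ yield distinct orbits $t\cO$ (detected by any homogeneous $G$-invariant $\phi$ with $\phi(x) \neq 0$). Under the Chevalley map $\chi : \frg \to \frg/\!/G$, $\Cone(\cO)$ surjects onto the closure of the $\Gm$-ray $\overline{\Gm \cdot \chi(x)}$, a one-dimensional affine line, and $\Cone(\cO) \cap \cN = \Cone(\cO) \cap \chi^{-1}(0)$ is the special fiber of this dominant morphism. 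Chevalley's fiber dimension theorem then forces this fiber to have pure dimension $\dim \cO = \dim \overline{\cO^{\nil}}$, so $\overline{\cO^{\nil}}$ is a full-dimensional irreducible component of $\Cone(\cO) \cap \cN$.

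The main obstacle is ruling out additional components, which by $G$-invariance would be closures of other nilpotent orbits of dimension $\dim \cO$ (and such orbits do exist in general, e.g.\ in $\SL_{6}$, where the partitions $(4,1,1)$ and $(3,3)$ give orbits of the same dimension). I would settle this by invoking the classical theorem of Borho--Kraft identifying the algebraic asymptotic cone of an adjoint orbit with its Lusztig--Spaltenstein induced nilpotent orbit closure, matching the definition $\Cone_{\alg}(\cO)$ of Adams--Vogan referenced above the lemma. Alternatively, a self-contained argument proceeds by studying the $G$-equivariant family $G \times^{P} (\AA^{1} \cdot x_s + \cO^{L}_{x_n} + \frn_P) \to \AA^{1}$, whose generic fiber maps onto $t\cO$ and whose special fiber maps onto a dense subset of $\overline{\cO^{\nil}}$; a flatness and dimension comparison with the naive family $\overline{\bigcup_{t \in \Gm} \{t\} \times t\cO} \subset \AA^{1} \times \frg$ then forces the $t \to 0$ limit to coincide with $\overline{\cO^{\nil}}$.
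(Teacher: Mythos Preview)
Your proof of the inclusion $\overline{\cO^{\nil}}\subset \Cone(\cO)\cap\cN$ is correct and matches the paper's argument: both identify $P\cdot x = x_{s}+\cO^{L}_{x_{n}}+\frn_{P}$ via the invertibility of $\ad(x_{s})$ on $\frn_{P}$ and then pass to the limit $t\to 0$.

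For the reverse inclusion, your dimension argument is sound as far as it goes: $\Cone(\cO)\cap\cN$ is indeed pure of dimension $\dim\cO$, so $\overline{\cO^{\nil}}$ is a component. But the step where you rule out further components is not really carried out. Citing Borho--Kraft is not a proof here, since that result is precisely the content of the lemma (this is what the Adams--Vogan reference above the statement is pointing to). Your ``alternative'' family argument is the right idea, but the phrase ``flatness and dimension comparison'' hides the actual mechanism, and flatness of the naive family in $\AA^{1}\times\frg$ is not obvious and not what is needed.

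The paper avoids the two-inclusion structure entirely by exploiting \emph{properness}. It works inside the partial Grothendieck resolution $\wt\frg_{P}=\{(gP,y)\in G/P\times\frg\mid \Ad(g^{-1})y\in\frp\}$, where the projection $\pi_{P}:\wt\frg_{P}\to\frg$ is proper. The $\Gm$-closure of the lift of $\cO$ is the irreducible closed set $D=\{(gP,y)\mid \Ad(g^{-1})y\in \CC\cdot x_{s}+\overline{\cO^{L}_{x_{n}}}+\frn_{P}\}$. Properness gives $\pi_{P}(D)=\Cone(\cO)$ as an \emph{equality}, not merely an inclusion; intersecting with $\cN$ then yields $\Cone(\cO)\cap\cN=\pi_{P}(D_{0})$ where $D_{0}$ is the fiber of $D$ over $0\in\CC$. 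Since $D_{0}$ is visibly irreducible (it is a bundle over $G/P$ with irreducible fiber $\overline{\cO^{L}_{x_{n}}}+\frn_{P}$), its image $\pi_{P}(D_{0})$ is irreducible, and the Lusztig--Spaltenstein definition identifies it with $\overline{\cO^{\nil}}$. This is exactly your ``alternative'' family, but the point is that properness of $\pi_{P}$ is what converts the limit into an equality and delivers irreducibility for free, bypassing any need to enumerate or exclude extra components.
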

\begin{proof}
Let $P$ be a parabolic subgroup containing $L$ as a Levi subgroup, with unipotent radical $N_{P}$ and nilpotent radical $\frn_{P}$. 

We first show that $x+\frn_{P}\subset \Ad(N_{P})x$. Indeed, let $B_{L}\subset L$ be a Borel subgroup with maximal torus $T$, and let $B=B_{L}N_{P}$ be the Borel subgroup in $P$.  We then have positive roots in $\Phi^{+}$ defined using $(T,B)$. Let $\frn_{P}(i)$ be the direct sum of root spaces of $\frn_{P}$ of height $i$, and $\frn_{P}(\ge i)=\op_{i'\ge i}\frn_{P}(i')$. Consider the (non-linear) map $\ph: \frn_{P}\to \frn_{P}$ given by $y\mapsto \Ad(e^{y})x-x$. We claim $\ph$ is an isomorphism. Indeed, $\ph$ restricts to $\ph_{\ge i}: \frn_{P}(\ge i)\to \frn_{P}(\ge i)$, and the induced map $\ph_{i}: \frn_{P}(i)\to \frn_{P}(i)$ is $y\mapsto [y,x_{s}]$, which is a linear isomorphism because $\a(x_{s})\ne0$ for any root $\a$ that appears in $\frn_{P}$. Given $z=z_{1}+z_{2}+\cdots\in \frn_{P}$ where $z_{i}\in \frn_{P}(i)$, we can solve for $y_{1}, y_{2},\cdots$ (where $y_{i}\in \frn_{P}(i)$) inductively such that $\ph(y_{1}+y_{2}+\cdots)=z_{1}+z_{2}+\cdots$ using that $\ph_{i}$ is an isomorphism.

The above shows that $\Ad(N_{P})x=x+\frn_{P}$, hence 
\begin{equation}\label{Porb}
\Ad(P)x=\Ad(N_{P})(\Ad(L)x)=x_{s}+\cO^{L}_{x_{n}}+\frn_{P}.
\end{equation}
Let $\wt\frg_{P}=\{(gP, y)\in G/P\times \frg|\Ad(g^{-1})y\in \frp\}$. Let $\pi_{P}:\wt\frg_{P}\to \frg$ be the second projection, which is proper. Let $\wt\cO_{P}\subset \wt\frg_{P}$ be the locally closed subscheme consisting of $(gP,y)$ such that $\Ad(g^{-1})y\in x_{s}+\cO^{L}_{x_{n}}+\frn_{P}$. Then \eqref{Porb} implies $\pi_{P}(\wt\cO_{P})= \cO$. 

Consider the $\Gm$-action on $\wt\frg_{P}$ that scales $y$. Let $D=\{(gP,y)\in  G/P\times \frg|\Ad(g^{-1})y\in \CC\cdot x_{s}+\ov\cO^{L}_{x_{n}}+\frn_{P}\}$. Then  $D\subset \wt\frg_{P}$ is the closure of the $\Gm$-orbit of $\wt\cO_{P}$. Now $\pi_{P}$ is proper, $\Gm$-equivariant and maps $\wt\cO_{P}$ onto $\cO$, we conclude that $\pi_{P}(D)=\Cone(\cO)$. In particular, $\Cone(\cO)\cap \cN=\pi_{P}(D_{0})$ where $D_{0}=\{(gP,y)\in  G/P\times \frg|\Ad(g^{-1})y\in \ov\cO^{L}_{x_{n}}+\frn_{P}\}$ which is irreducible. By the definition of Lusztig-Spaltenstein induction, $\cO^{\nil}$ is the open dense orbit in $\pi_{P}(D_{0})$, hence $\Cone(\cO)\cap \cN=\pi_{P}(D_{0})=\ov{\cO^{\nil}}$.
\end{proof}

\subsection{Proof of Theorem \ref{th:DS Gr}}
Recall that $\frq_{\psi}=V_{\psi}\sslash K$. Let $\r^{\flat}_{\infty}$ be the composition $\cN_{\Hod, \psi}\xr{\r_\infty}[V_{\psi}/K]\xr{\ov f} \frq_{\psi}$. Let 
\begin{equation*}
\r^{\flat}=(\l, \r^{\flat}_{\infty}, \r_{0}): \cN_{\Hod, \psi}\to  \AA^{1}_{\l}\times \frq_{\psi}\times [\frg/G].
\end{equation*}
For $A'\in V_{\psi}$, let $[A']\in \frq_{\psi}$ be its image. Let $\cN_{\dR, [A'], \cO}$ be the preimage of $\{1\}\times \{[A']\}\times \cO$ under $\r^{\flat}$. Similarly define $\cN_{\dR, [A'], \ov\cO}$. We need to show $\cN_{\dR, [A'], \cO}\ne\vn$. 

The $\Gm$-action on $V_{\psi}$ induces an action on $\frq_{\psi}$ which is contracting to $[\psi]$. 

On the other hand, let $\cN_{[\psi]}$ be the preimage of $\{0\}\times\{[\psi]\}\times[\frg/G]$ under $\r^{\flat}$. Then $\cN_{[\psi]}$ is closely related to what is denoted by $\cM_{\psi}$ in \cite{BBAMY}: the only difference is that $\cN_{[\psi]}$ does not have Iwahori level structure at $0$. Let $\cN_{[\psi],\cO^{\nil}}$ be the preimage of $\{0\}\times\{[\psi]\}\times\cO^{\nil}$ under $\r^{\flat}$. Similarly define a closed subspace $\cN_{[\psi],\ov{\cO^{\nil}}}\subset \cN_{[\psi]}$.

Finally, we have an evaluation map 
\begin{equation}\label{ev}
\ev_{\psi}: \Gr_{\psi}\to [\cN/G]
\end{equation}
that sends $g G\tl{t}\in \Gr_{\psi}$ to the image of $\Ad(g^{-1})\psi$ under the projection $\frg\tl{t}\to \frg$ (up to the adjoint action of $G$). For a union of nilpotent orbits $U\subset \cN$, let $\Gr_{\psi, U}\subset \Gr_{\psi}$ be the preimage of $[U/G]$ under $\ev_{\psi}$. In particular, $\Gr_{\psi,\cO^{\nil}}$ and $\Gr_{\psi,\ov{\cO^{\nil}}}$ are defined.

Recall from \cite{Yun-KL} that the {\em reduction type} $\RT(\psi)$ of $\psi$ is the set of nilpotent orbits that are in the image of the map $\ev_{\psi}$. The {\em minimal reduction type} $\RT_{\min}(\psi)$ of $\psi$ are the minimal elements of $\RT(\psi)$ under the closure partial order of nilpotent orbits.

The following lemma implies Theorem \ref{th:DS Gr}.
\begin{lemma}\label{l:N eq Gr} The following are equivalent:
\begin{enumerate}
\item $\cN_{\dR, [A'], \cO}\ne\vn$;
\item $\cN_{\dR, [A'], \ov\cO}\ne\vn$;
\item $\cN_{[\psi],\cO^{\nil}}\ne\vn$;
\item $\cN_{[\psi], \ov{\cO^{\nil}}}\ne\vn$;
\item $\Gr_{\psi, \ov{\cO^{\nil}}}\ne \vn$;
\item $\cO^{\nil}\cge\cO$ for some $\cO\in \RT_{\min}(\psi)$.
\end{enumerate}
\end{lemma}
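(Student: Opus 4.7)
My plan is to establish the cycle $(1) \Rightarrow (2) \Rightarrow (4) \Rightarrow (3) \Rightarrow (1)$ together with the equivalences $(4) \Leftrightarrow (5) \Leftrightarrow (6)$. The trivial implications $(1) \Rightarrow (2)$ and $(3) \Rightarrow (4)$ follow from the closure inclusions $\cO \subset \ov\cO$ and $\cO^{\nil} \subset \ov{\cO^{\nil}}$, and $(5) \Leftrightarrow (6)$ is immediate from unwinding the definitions of $\RT(\psi)$ and $\RT_{\min}(\psi)$: $\Gr_{\psi, \ov{\cO^{\nil}}} \neq \vn$ iff some nilpotent orbit in $\RT(\psi)$ is contained in $\ov{\cO^{\nil}}$, iff (by passing to a minimal element) some $\cO' \in \RT_{\min}(\psi)$ satisfies $\cO' \cle \cO^{\nil}$.

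For $(2) \Rightarrow (4)$, I will apply the $\Gm$-contraction of Proposition \ref{p:Gm} to $P \in \cN_{\dR, [A'], \ov\cO}$: the limit $P_0 = \lim_{s \to 0} s \cdot P$ lies in $\cN_{a_\psi}$, and along this flow the $\l$-coordinate is sent to $0$ (weight $d$), the polar class contracts from $[A']$ to the $\Gm$-fixed class $[\psi]$ (since $\psi$ has $\Gm$-weight zero in $V_\psi$), and the residue class, scaled by weight $d$, has limit in $\Cone(\cO) \cap \cN = \ov{\cO^{\nil}}$ by Lemma \ref{l:limit nil}. Hence $P_0 \in \cN_{[\psi], \ov{\cO^{\nil}}}$. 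For $(4) \Leftrightarrow (5)$, the subspace $\cN_{[\psi], \ov{\cO^{\nil}}}$ is $\Gm$-stable, and $\Gm$-contraction sends it into $\cN_{a_\psi} \cap \cN_{[\psi], \ov{\cO^{\nil}}}$; under the identification $\Gr_\psi \cong \cN_{a_\psi}$ of Proposition \ref{p:central fiber} the residue map matches $\ev_\psi$, so this corresponds to $\Gr_{\psi, \ov{\cO^{\nil}}}$. The reverse $(5) \Rightarrow (4)$ follows from the containment $\cN_{a_\psi} \subset \cN_{[\psi]}$, which holds because the regular semisimplicity of $\psi$ ensures that $a_\psi$ determines the polar class $[\psi]$ modulo $K$.

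The implication $(4) \Rightarrow (3)$ is obtained from Proposition \ref{p:res sm}: since $\r$ is smooth and equidimensional, the preimage of the open orbit $\cO^{\nil}$ under $\r_0$ is an open dense substack of the preimage of $\ov{\cO^{\nil}}$, so $\cN_{[\psi], \cO^{\nil}}$ is nonempty whenever $\cN_{[\psi], \ov{\cO^{\nil}}}$ is.

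The main obstacle will be $(3) \Rightarrow (1)$, since the $\Gm$-contraction flows only from $\l = 1$ toward $\l = 0$ and cannot be reversed. My plan is to combine the smoothness of $\r$ with its $\Gm$-equivariance: by Proposition \ref{p:res sm}, the image $U$ of $\r$ in $\AA^1 \times [\frg/G] \times [V_\psi/K]$ is an open substack, and $\Gm$-equivariance makes $U$ stable under the $\Gm$-action on the target. Given $P_0 \in \cN_{[\psi], \cO^{\nil}}$, we have $\r(P_0) = (0, \bar{x_0}, \bar\psi) \in U$ for some $x_0 \in \cO^{\nil}$, and openness yields a product neighborhood $V_\AA \times V_\frg \times V_K$ of this point inside $U$. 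By Lemma \ref{l:limit nil}, $\cO^{\nil} \subset \ov{\Gm \cdot \cO}$, so the $G$-stable open $V_\frg$ containing $\cO^{\nil}$ meets $\Gm \cdot \cO$ and hence contains $s_0^d \cO$ for some $s_0 \in \Gm$ small; for $s_0$ sufficiently small one also has $s_0^d \in V_\AA$ and $s_0 \cdot \bar{A'} \in V_K$ (the latter using that $[V_\psi/K]$ contracts to $\bar\psi$ under $\Gm$). Thus $(s_0^d, \overline{s_0^d y}, s_0 \cdot \bar{A'}) \in U$ for any $y \in \cO$, and the $\Gm$-stability of $U$ then gives $(1, \bar y, \bar{A'}) \in U$, producing a point of $\cN_{\dR, [A'], \cO}$.
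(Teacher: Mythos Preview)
Your cycle works fine for all implications except $(3)\Rightarrow(1)$, where there is a genuine gap. The claim ``openness yields a product neighborhood $V_{\AA}\times V_{\frg}\times V_{K}$ of this point inside $U$'' is not valid: Zariski open subsets of products do not in general contain product open neighborhoods (e.g.\ $\AA^{2}\setminus\{\textup{diagonal}\}$ contains no box $V_{1}\times V_{2}$ around any of its points). Even granting such a product box, the step ``$V_{\frg}$ meets $\Gm\cdot\cO$ and hence contains $s_{0}^{d}\cO$ for some $s_{0}\in\Gm$ \emph{small}'' is unjustified: the argument only produces \emph{some} $s_{0}$ with $s_{0}^{d}\cO\subset V_{\frg}$, but you then also need $s_{0}^{d}\in V_{\AA}$ and $s_{0}\cdot[A']\in V_{K}$, which forces $s_{0}$ to be small. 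Showing that the set $\{s:s^{d}\cO\subset V_{\frg}\}$ accumulates at $0$ is not immediate from $\cO^{\nil}\subset\ov{\Gm\cdot\cO}$; indeed the naive limit of $s^{d}x$ for $x\in\cO$ is $0$, not a point of $\cO^{\nil}$.

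The paper bypasses both issues by packaging the one-parameter degeneration into a closed subvariety $C\subset\AA^{1}_{\l}\times V_{\psi}\times\frg$, defined as the closure of the $\Gm$-orbit of $\{1\}\times\cO_{A'}\times\cO$, whose fiber $C_{0}$ is precisely $\{0\}\times\cO_{\psi}\times\ov{\cO^{\nil}}$ by Lemma~\ref{l:limit nil}. Then $\cN_{\Hod,C}:=\r^{-1}([C/K\times G])$ is smooth over $[C/K\times G]$ (base change of Proposition~\ref{p:res sm}), and since $C\to\AA^{1}_{\l}$ is flat by construction, the composite $\l_{C}:\cN_{\Hod,C}\to\AA^{1}_{\l}$ is flat, hence has open image. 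Nonemptiness of the fiber over $0$ (which is $\cN_{[\psi],\ov{\cO^{\nil}}}$) and $\Gm$-equivariance then force the image to contain $1$, giving $\cN_{\dR,[A'],\ov\cO}\ne\vn$. This flatness argument is exactly the missing ingredient that your openness-of-image approach cannot supply on its own. (Two minor remarks: in your $(2)\Rightarrow(4)$ the residue limit is actually $0$, not merely in $\ov{\cO^{\nil}}$, so Lemma~\ref{l:limit nil} is not needed there; and in $(4)\Rightarrow(3)$ you should argue, as the paper does, that the \emph{image} of $\r_{0}$ is open rather than that the preimage of $\cO^{\nil}$ is dense.)
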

\begin{proof}
We construct a one-parameter  family $\cN_{\Hod,C}$ degenerating $\cN_{\dR, [A'], \ov\cO}$ to $\cN_{[\psi], \ov{\cO^{\nil}}}$.

Let $C\subset \AA^{1}_{\l}\times \frq_{\psi}\times \frg$ be the closure of the $\Gm$-orbit of $\{1\}\times\{[A']\}\times \cO$. This is a $\Gm\times G$-stable (where $\Gm$ acts on each factor and $G$ only acts on $\frg$) closed subset of $\AA^{1}_{\l}\times \frq_{\psi}\times \frg$. Let $\cN_{\Hod, C}$ be the preimage of $[C/G]\subset \AA^{1}_{\l}\times \frq_{\psi}\times [\frg/G]$ under $\r^{\flat}$. 

Let $C_{\l}$ be the fiber of $C$ over a point $\l\in \AA^{1}_{\l}$. We have $C_{1}=\{1\}\times \{[A']\}\times \ov \cO$. Since  the action of $\Gm$ on $\frq_{\psi}$ is contracting to $[\psi]$, and by Lemma \ref{l:limit nil}, $C_{0}=\{0\}\times \{[\psi]\}\times \ov{\cO^{\nil}}$.


Let $\l_{C}: \cN_{\Hod, C}\to \AA^{1}_{\l}$ be the $\l$-coordinate. Then from the above descriptions of $C_{1}$ and $C_{0}$ we have
$$\l_{C}^{-1}(1)=\cN_{\dR, [A'], \ov \cO}, \quad  \l_{C}^{-1}(0)=\cN_{[\psi], \ov {\cO^{\nil}}}.$$ 

Let $\cO_{A'}\subset V_{\psi}$ (resp. $\cO_{\psi}\subset V_{\psi}$) be the $K$-orbit of $A'$ (resp. $\psi$). By Lemma \ref{l:good quotient Vpsi}, $\cO_{A'}$ is the preimage of $[A']\in\frq_{\l}$.  Let $\wt C\subset \AA^{1}\times V_{\psi}\times \frg$ be the preimage of $C$. Then $\wt C$ is a closed $\Gm\times K\times G$-stable subset of $\AA^{1}\times V_{\psi}\times \frg$ with fibers $\wt C_{1}=\{1\}\times \cO_{A'}\times \ov\cO$ and $\wt C_{0}=\{0\}\times \cO_{\psi}\times \ov{\cO^{\nil}}$. It is easy to see that $\wt C$ is the closure of $\Gm\cdot \wt C_{1}$ (using that $\wt C_{0}$ contains a dense $K\times G$-orbit that is clearly in the closure of $\Gm\cdot \wt C_{1}$), hence the projection $\wt C\to \AA^{1}_{\l}$ is flat.

(1)$\Rightarrow$ (2) and (3)$\Rightarrow$ (4) are clear. 

(2)$\Rightarrow$ (1): By the smoothness of $\r$ proved in Proposition \ref{p:res sm}, $\cN_{\dR, [A'], \ov\cO}$ is smooth over $[\wt C_{1}/(K\times G)]=[\cO_{A'}/K]\times [\ov\cO/G]$, hence its image is open. Now  $[\cO_{A'}/K]$ is a classifying space, so the image of $\r_{0}: \cN_{\dR, [A'], \ov\cO}\to [\ov\cO/G]$ is open. If the image is non-empty, it must intersect the dense subset $[\cO/G]$, hence $\cN_{\dR, [A'], \cO}\ne\vn$.

(4) $\Rightarrow$ (3) is proved in the similar way as  (2) $\Rightarrow$ (1), using the smoothness of $\cN_{[\psi], \ov{\cO^{\nil}}}$ over $[\wt C_{0}/(K\times G)]=[\cO_{\psi}/K]\times [\ov{\cO^{\nil}}/G]$.

(2) $\Rightarrow$ (4): For a point in $\cN_{\dR, [A'], \ov\cO}$, viewed as the fiber of $\cN_{\Hod,C}$ over $\l=1$, has a limit point under the $\Gm$-action by Proposition \ref{p:Gm}. This limit point must lie in the $\l=0$ fiber of $\cN_{\Hod,C}$, which is $\cN_{[\psi], \ov{\cO^{\nil}}}$.

(4) $\Rightarrow$ (2): By Proposition \ref{p:res sm}, $\cN_{\Hod, C}\to [\wt C/K\times G]$ is smooth hence flat. On the other hand, the projection $\wt C\to \AA^{1}_{\l}$ is flat, therefore $\l_{C}: \cN_{\Hod, C}\to \AA^{1}_{\l}$ is flat, hence it has open image. By assumption the image of $\l_{C}$ contains $0$ and it is $\Gm$-stable because $\l_{C}$ is $\Gm$-equivariant, hence the image of $\l_{C}$ also contains $1$, therefore $\l_{C}^{-1}(1)=\cN_{\dR, [A'],\ov\cO}\ne\vn$.  

(4) $\Rightarrow$ (5) because under the map in Proposition \ref{p:central fiber}, $\Gr_{\psi,\ov{\cO^{\nil}}}$ maps homeomorphically into $\cN_{a_{\psi}}\cap \cN_{[\psi], \ov{\cO^{\nil}}}$.

(5)  $\Rightarrow$ (4): If $\cN_{[\psi], \ov{\cO^{\nil}}}\ne\vn$, then its limit under the $\Gm$ action (which exists by Proposition \ref{p:Gm}) is a $\Gm$-fixed point hence lying in $\cN_{a_{\psi}}\cap \cN_{[\psi], \ov{\cO^{\nil}}}$, which is homeomorphic to $\Gr_{\psi,\ov{\cO^{\nil}}}$  by Proposition \ref{p:central fiber}. Hence $\Gr_{\psi,\ov{\cO^{\nil}}}\ne\vn$. 

(5) $\iff$ (6) by the definition of $\RT_{\min}(\psi)$.
\end{proof}

\subsection{Proof of Corollaries}
Corollary \ref{c:larger O} clearly follows from Theorem \ref{th:DS Gr}. For Corollary \ref{c:sp case}, we need to show that in both cases $\Gr_{\psi,\ov{\cO^{\nil}}}\ne\vn$:

(1) When $\cO$ is regular, $\cO^{\nil}$ is the regular nilpotent orbit. The condition $\Gr_{\psi,\ov{\cO^{\nil}}}=\Gr_{\psi}\ne\vn$ is automatic. 

(2) When $\nu\ge1$, $t^{-1}\psi$ is an integral element in $\frg\lr{t}$, hence there existd $g\in G\lr{t}$ be such that $\Ad(g^{-1})t^{-1}\psi\in \frg\tl{t}$. This implies then $gG\tl{t}\in \Gr_{\psi,\{0\}}$. In particular, $\Gr_{\psi,\ov{\cO^{\nil}}}\ne\vn$.

\qed

\subsection{Proof of Theorem \ref{th:DS general} assuming Theorem \ref{th:DS}}

Let $W'\subset W$ and $M\subset G$ be defined as in \S\ref{ss:non ell}. Let $\psi$ be homogeneous of slope $\nu$ adapted to $A$. We may assume $\psi\in \fm\lr{t}_{\nu}$. 

Suppose $\cO'$ is a nilpotent orbit of $\fm$ such that $DS_{M}(\nu,\cO')$ is affirmative and $\cO'\subset \ov{\cO^{\nil}}$. Then by Theorem \ref{th:DS}, $DS_{M}(A,\cO')$ is affirmative for any isoclinic irregular type $A$  for $M$ with slope $\nu$. By Lemma \ref{l:N eq Gr}, this implies $\Gr_{M,\psi, \ov\cO'}\ne\vn$. The embedding $\Gr_{M,\psi}\subset \Gr_{\psi}$ restricts to an embedding $\Gr_{M,\psi, \ov\cO'}\subset \Gr_{\psi, G\cdot \ov\cO'}$. Therefore $\Gr_{\psi, G\cdot \ov\cO'}\ne\vn$. Since $G\cdot\ov\cO'\subset \ov{\cO^{\nil}}$, we see that $\Gr_{\psi, \ov{\cO^{\nil}}}\ne\vn$, which implies $DS(A, \cO)$ is affirmative by Lemma \ref{l:N eq Gr}.

Conversely, suppose $DS(A, \cO)$ is affirmative, hence  for $\psi$ adapted to $A$, $\Gr_{\psi, \ov{\cO^{\nil}}}\ne\vn$ by Lemma \ref{l:N eq Gr}. Let $A_{M}$ be the neutral component of the center of $M$. Since $\Gr_{\psi, \ov{\cO^{\nil}}}$ is ind-proper and non-empty, the fixed point locus  $(\Gr_{\psi, \ov{\cO^{\nil}}})^{A_{M}}\ne\vn$.  Note that $(\Gr_{\psi})^{A_{M}}=\Gr_{M, \psi}$. Hence 
$$\vn\ne (\Gr_{\psi, \ov{\cO^{\nil}}})^{A_{M}}=\Gr_{M,\psi}\cap \Gr_{\psi, \ov{\cO^{\nil}}}=\Gr_{M,\psi, \fm\cap \ov{\cO^{\nil}}}. $$  
This implies that for some nilpotent orbit $\cO'$ of $\fm$ such that $\cO'\subset \ov{\cO^{\nil}}$, we have $\Gr_{M,\psi,\cO'}\ne\vn$. Apply Lemma \ref{l:N eq Gr} to $M$, we see that $DS_{M}(\nu,\cO')$ is an affirmative.

It remains to prove Theorem \ref{th:DS}, which follows from the following theorem combined with Lemma \ref{l:N eq Gr}.

\begin{theorem} Let $G$ be almost simple. Let $\cO$ be a nilpotent orbit of $\frg$, and let $\nu=d/m \in \QQ$ written in lowest terms with $m$ a regular elliptic number for $G$. For homogeneous $\psi\in \frg\lr{t}$ of slope $\nu$, $\Gr_{\psi, \ov\cO}\ne \vn$ if and only if $[L_{\nu}(\triv):E_{\cO}]\ne0$.
\end{theorem}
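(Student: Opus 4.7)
The plan is to reduce the statement to the Oblomkov--Yun theorem \cite{OY}, which provides a $W$-equivariant isomorphism $H^*(\Gr_\psi) \cong L_\nu(\triv)$ when $\psi$ is elliptic homogeneous of slope $\nu$. Granting this input, the claim becomes the purely geometric assertion
\begin{equation*}
[H^*(\Gr_\psi) : E_\cO]_W \ne 0 \iff \Gr_{\psi, \ov\cO} \ne \vn,
\end{equation*}
which I would attack by comparing the $W$-structure of the cohomology with the stratification $\Gr_\psi = \bigsqcup_{\cO'' \in \RT(\psi)} \Gr_{\psi,\cO''}$ coming from the evaluation map $\ev_\psi$ of \eqref{ev}. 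Note that by the equivalence (5) $\iff$ (6) of Lemma \ref{l:N eq Gr}, the right-hand side simply records that $\cO \cge \cO''$ for some $\cO'' \in \RT(\psi)$.

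The guiding principle is the classical Springer decomposition $R\pi_*\QQ_\ell[\dim\cN] \cong \bigoplus_\cO \IC(\ov\cO) \ot E_\cO$ on the nilpotent cone $\cN$. I would pull this decomposition back via $\ev_\psi$, producing a $W$-equivariant decomposition of an auxiliary sheaf on $\Gr_\psi$ whose $E_\cO$-component is supported on $\ev_\psi^{-1}(\ov\cO) = \Gr_{\psi,\ov\cO}$. Concretely, the Springer-type resolution to use is the variety of pairs $\wt{\Gr}_\psi = \{(gG\tl{t}, B) \in \Gr_\psi \times (G/B) : \Ad(g^{-1})\psi \bmod t\frg\tl{t} \in \Lie B\}$, obtained by base-changing the Grothendieck--Springer map along $\ev_\psi$. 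The proper pushforward along $\wt{\Gr}_\psi \to \Gr_\psi$ then carries a $W$-action whose $E_\cO$-isotypic piece is supported on $\Gr_{\psi,\ov\cO}$. One direction of the equivalence is immediate from this support condition: if $\Gr_{\psi,\ov\cO} = \vn$, then the $E_\cO$-component vanishes and hence $E_\cO$ cannot appear. For the other direction, if $\Gr_{\psi,\ov\cO} \ne \vn$, the non-vanishing of the $E_\cO$-component provides a non-zero $W$-subspace of $E_\cO$-type in the cohomology of $\wt{\Gr}_\psi$ that descends to $H^*(\Gr_\psi)$.

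The main obstacle is the last compatibility claim: one must verify that the Springer $W$-action produced by the auxiliary resolution $\wt{\Gr}_\psi \to \Gr_\psi$ is the same as the $W$-action on $H^*(\Gr_\psi)$ arising from the \cite{OY} Cherednik algebra module structure. A natural approach is to show that both $W$-actions are characterized by their compatibility with the $\Gm$-weight filtration on $H^*(\Gr_\psi)$ and the residual torus action on $\Gr_\psi$, or alternatively to identify both via a common geometric construction in terms of the affine flag variety of the centralizer $C_{G\lr{t}}(\psi)$. This compatibility is the technically delicate point; an additional subtlety is that $\Gr_\psi$ is ind-proper rather than proper, so the decomposition theorem must be applied with care to the closure of each stratum rather than to $\Gr_\psi$ as a whole.
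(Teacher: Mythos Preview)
Your overall architecture --- use \cite{OY} to link $L_\nu(\triv)$ to Springer theory on an affine Springer fiber, then analyze the stratification by $\ev_\psi$ --- matches the paper's, but two concrete problems undermine the execution.

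First, the input from \cite{OY} is misquoted. There is no natural $W$-action on $\cohog{*}{\Gr_\psi}$, and \cite{OY} does not produce an isomorphism $\cohog{*}{\Gr_\psi}\cong L_\nu(\triv)$. The $W$-action lives on $\cohog{*}{\Fl_\psi}$ (the Iwahori-level affine Springer fiber), and what \cite{OY} actually gives is that a certain subspace $V\subset \upH^*_{\Gm}(\Fl_\psi)$ (invariants under the centralizer and monodromy), after specializing the equivariant parameter and passing to the associated graded of a perverse filtration, is $L_\nu(\triv)$. Your auxiliary space $\wt{\Gr}_\psi$ is nothing other than $\Fl_\psi$, so you are implicitly working there already; once you recognize this, the ``compatibility of two $W$-actions'' obstacle you flag disappears, because there is only one $W$-action in play --- the Springer action on $\cohog{*}{\Fl_\psi}$ --- and \cite{OY} is a statement about that action. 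The paper makes this passage explicit: one compares the two specializations $V_0$ and $V_1$ of $V$, which are isomorphic as $W$-modules, and reduces to showing $\Gr_{\psi,\ov\cO}\ne\vn \iff [V_0:E_\cO]\ne 0$.

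Second, and more seriously, your argument for the direction $\Gr_{\psi,\ov\cO}\ne\vn \Rightarrow [E_\cO]\ne 0$ has a gap. You say that the $E_\cO$-isotypic summand of the pushforward is a sheaf supported on $\Gr_{\psi,\ov\cO}$, and that non-emptiness of this support gives a non-zero contribution. But non-empty support does not imply non-vanishing global cohomology: the pulled-back $\IC(\ov\cO)$ on $\Gr_{\psi,\ov\cO}$ could in principle have trivial hypercohomology, and you give no mechanism ruling this out. The paper sidesteps this entirely by a pointwise argument: pick a single point $y\in\Gr_{\psi,\cO'}$ with $\cO'\subset\ov\cO$, restrict the Chern-class map $\Sym(\frt^*[-2])\to V_0\subset\cohog{*}{\Fl_\psi}$ along $i_y^*$ to land in $\cohog{*}{\cB_{e'}}$, and then show (Lemma~\ref{l:Spr}(2)) that $E_\cO$ already appears in the image $V_{e'}=\Im(i^*_{e'}:\cohog{*}{\cB}\to\cohog{*}{\cB_{e'}})$. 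This last step is a non-obvious fact about restriction of $\IC$ sheaves on the nilpotent cone to stalks, and is the real content you are missing. The converse direction (your ``immediate'' one) is handled in the paper by a Leray spectral sequence for $\Fl_\psi\to\Gr_\psi$ (Lemma~\ref{l:GrO}) together with Lemma~\ref{l:Spr}(1); your sketch of that direction is essentially correct.
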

\begin{proof} 
Let $V\subset \upH^{*}_{\Gm}(\Fl_{\psi})$ be the part invariant under the action of the centralizer $C_{G\lr{t}}(\psi)$ and the monodromy action as $\psi$ varies in the regular semisimple open subset of $\frg\lr{t}_{\nu}$. This is a free module over  $\upH^{*}_{\Gm}(\pt)=\Qlbar[\e]$ (with $\ep$ in  degree $2$). Let $V_{0}$ and $V_{1}$ be the specialization of $V$ to $\ep=0$ and $\ep=1$.  Now $V$ carries a Springer action of $W$ as $\Qlbar[\ep]$-module automorphisms, hence $V_{0}$ and $V_{1}$ are isomorphic as $W$-modules. Note that $V_{0}\subset \cohog{*}{\Fl_{\psi}}$ is a graded $\Qlbar$-subalgebra. 

By \cite[Theorem 1.2.7.]{OY}, there is a $W$-stable ``perverse filtration'' $P_{\le i}V_{1}$ on $V_{1}$ such that $\Gr^{P}_{*}V_{1}\cong L_{\nu}(\triv)$ as $W$-modules. Therefore
\begin{equation*}
[L_{\nu}(\triv):E_{\cO}]\ne0\iff [V_{1}:E_{\cO}]\ne0\iff [V_{0}:E_{\cO}]\ne0.
\end{equation*}
Therefore we need to show that 
\begin{equation*}
\Gr_{\psi, \ov\cO}\ne \vn \iff [V_{0}:E_{\cO}]\ne0.
\end{equation*}

We have the tautological line bundles $\cL(\xi)$ on $\Fl$ indexed by $\xi\in \xcoch(T)$. The Chern polynomials of these line bundles give a map of graded $\Qlbar$-algebras
\begin{equation*}
\Sym(\frt^{*}[-2])\to V_{0}.
\end{equation*}

First assume $\Gr_{\psi, \ov\cO}\ne \vn$. Let $y\in \Gr_{\psi}$ be a point whose image in $[\cN/G]$ lies in a nilpotent orbit $\cO'\subset \ov \cO$. Let $\pi: \Fl_{\psi}\to \Gr_{\psi}$ be the projection. Then $\pi^{-1}(y)\cong \cB_{e'}$ for $e'\in \cO'$. Let $i_{y}^{*}: V_{0}\subset \cohog{*}{\Fl_{\psi}}\to\cohog{*}{\cB_{e'}}$ be the restriction map. Then we have a commutative diagram
\begin{equation*}
\xymatrix{ \Sym(\frt^{*}[-2]) \ar@{->>}[d]\ar[r] & V\ar[d]^{i_{y}^{*}}\\
\cohog{*}{\cB}\ar[r]^{i^{*}_{e'}} & \cohog{*}{\cB_{e'}}
}
\end{equation*} 
Here, $i^{*}_{e'}$ is the restriction map induced by the inclusion $i_{e'}: \cB_{e'}\incl \cB$, and the left vertical map is given by Chern polynomials of tautological line bundles, which is a surjection.
Let $V_{e'}=\Im(i^{*}_{e'}: \cohog{*}{\cB}\to \cohog{*}{\cB_{e'}})$. The above diagram shows that $V_{e'}$ is a subquotient of $V$ as a $W$-module. Therefore it suffices to show $[V_{e'}: E_{\cO}]\ne0$. This follows from Lemma \ref{l:Spr}(2).

Conversely, if $[V_{0}: E_{\cO}]\ne0$, hence $[\upH^{*}(\Fl_{\psi}): E_{\cO}]\ne0$, we show $\Gr_{\psi, \ov\cO}\ne0$. This is true more generally for any topologically nilpotent element $\g\in \frg\lr{t}$ in place of $\psi$. The proof is given in Lemma \ref{l:GrO}.
\end{proof}

\begin{lemma}\label{l:Spr} Let $\cO,\cO'$ be nilpotent orbits of $\frg$. Let $e'\in \cO'$ and let $\cB_{e'}$ be the corresponding Springer fiber. 
\begin{enumerate}
\item If $E_{\cO}$ appears in $\cohog{*}{\cB_{e'}}$, then $\cO'\subset \ov\cO$.
\item If $\cO'\subset \ov\cO$, then $E_{\cO}$ appears in $V_{e'}:=\Im(i^{*}_{e'}: \cohog{*}{\cB}\to \cohog{*}{\cB_{e'}})$.
\end{enumerate}
\end{lemma}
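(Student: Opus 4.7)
Both parts rest on the Borho--MacPherson decomposition theorem applied to the Springer resolution $\pi\colon\tilde\cN\to\cN$:
\begin{equation*}
\pi_{*}\Qlbar \;\cong\; \bigoplus_{(\cO,\cL)} \IC(\ov\cO,\cL)\otimes V_{\cO,\cL}
\end{equation*}
in $D^{b}_{c}(\cN)$, where $\cL$ ranges over $G$-equivariant irreducible local systems on $\cO$, the $\IC$ factors carry trivial $W$-action, and the nonzero $V_{\cO,\cL}$ exhaust the irreducible $W$-representations via the Springer correspondence, with $V_{\cO,\Qlbar}=E_{\cO}$. Taking the stalk at $e'\in\cO'$ yields
\begin{equation*}
\cohog{*}{\cB_{e'}}\;\cong\;\bigoplus_{(\cO,\cL)}(\IC(\ov\cO,\cL))_{e'}\otimes V_{\cO,\cL}.
\end{equation*}
Part (1) is then immediate: injectivity of the Springer correspondence forces $E_{\cO}$ to appear only in the $(\cO,\Qlbar)$-summand, so its occurrence requires $(\IC(\ov\cO,\Qlbar))_{e'}\neq 0$, whence $e'\in\ov\cO$.

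For part (2), the plan is to realize $i_{e'}^{*}$ itself as a componentwise restriction under the same decomposition. Since $p\colon\tilde\cN\to\cB$ is the cotangent bundle, pullback $p^{*}\colon\cohog{*}{\cB}\isom\cohog{*}{\tilde\cN}=\Gamma(\cN,\pi_{*}\Qlbar)$ is an isomorphism, and the inclusion $\cB_{e'}\incl\cB$ appearing in the lemma factors as $\cB_{e'}\incl\tilde\cN\xr{p}\cB$. Under $p^{*}$, $i_{e'}^{*}$ identifies with the natural restriction map $\Gamma(\cN,\pi_{*}\Qlbar)\to(\pi_{*}\Qlbar)_{e'}$, which is $W$-equivariant (it arises from a morphism in $D^{b}_{c}(\cN)$ carrying the Springer action) and decomposes as $\bigoplus\mathrm{res}^{\cO,\cL}\otimes\id_{V_{\cO,\cL}}$, with
\begin{equation*}
\mathrm{res}^{\cO,\cL}\colon IH^{*}(\ov\cO,\cL)\to(\IC(\ov\cO,\cL))_{e'}.
\end{equation*}
For $\cO\cge\cO'$ and $\cL=\Qlbar$, the $E_{\cO}$-isotypic component of $V_{e'}$ will contain a full copy of $E_{\cO}$ provided $\mathrm{res}^{\cO,\Qlbar}$ is nonzero, which I would verify via the ``constant class": the sheaf $\cH^{0}\IC(\ov\cO,\Qlbar)$ restricts to $\Qlbar_{\cO}$ on the smooth locus, and the global section $1\in IH^{0}(\ov\cO,\Qlbar)\cong\Qlbar$ has nonzero image in the degree-$0$ part of the stalk at any $e'\in\ov\cO$.

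The main obstacle is precisely the last step: establishing nonvanishing of $\mathrm{res}^{\cO,\Qlbar}$ uniformly for all $e'\in\ov\cO$, together with the compatibility of the classical Chevalley $W$-action on $\cohog{*}{\cB}$ with the Springer action on $\Gamma(\cN,\pi_{*}\Qlbar)$ under $p^{*}$. The latter is standard (cf.\ Chriss--Ginzburg), and the former is a general feature of IC sheaves of irreducible varieties -- provable via normalization plus proper base change, or directly from the characterization of $\cH^{0}\IC$ -- but these are the only nonformal inputs into the argument.
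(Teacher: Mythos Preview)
Your proposal is correct and follows essentially the same route as the paper's proof: both use the Borho--MacPherson decomposition of $\pi_*\Qlbar$, identify $i_{e'}^*$ with the stalk-restriction map on global sections of this complex, and establish nonvanishing of the $(\cO,\Qlbar)$-component of that restriction via the ``constant class'' in lowest degree. The paper makes the last step precise by writing down the map $c:\const{\ov\cO}\to\IC(\ov\cO)[-\dim\cO]$ and a commutative square comparing global sections with stalks, which is exactly the argument you sketch; your flagged ``obstacles'' are handled there with no surprises.
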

\begin{proof}
Let $\nu: \wt \cN\to \cN$ be the Springer resolution. Let $\cF=R\nu_{*}\Qlbar$, then $\cF[N]$ is a perverse sheaf ($N=\dim\cN$). Let $\cF=\op_{E\in \Irr(W)}\cF_{E}\ot E$ be the decomposition of $\cF$ under the $W$-action, such that $\cF_{E}$ is a shifted the intermediate extension of a local system $\cL_{E}$ on a nilpotent orbit $\cO_{E}$. Then $\cF_{E_{\cO}}=\IC(\ov\cO)[-N]$.

(1) If $E_{\cO}$ appears in $\cohog{*}{\cB_{e'}}=\cF_{e'}=\op_{E\in \Irr(W)}\cF_{E,e}\ot E$, then $\cF_{E,e}=\IC(\ov\cO)_{e}\ne0$, hence $e'\in \ov\cO$.

(2) Note that $\cohog{*}{\cN,\cF}=\cohog{*}{\wt\cN}$ is isomorphic to $\cohog{*}{\cB}$ via pullback along $\wt\cN\to \cB$. Therefore the restriction map $i^{*}_{e'}: \cohog{*}{\cB}\to \cohog{*}{\cB_{e'}}$ can be identified with map restricting global sections of $\cF$ to its stalk at $e'$:
\begin{equation*}
R_{e'}: \cohog{*}{\cN,\cF}\to \cF_{e'}
\end{equation*}
Decompose into isotypic components under $W$, it is the direct sum $\op_{E\in \Irr(W)}\id_{E}\ot r_{E}$of
\begin{equation*}
R_{E,e'}: \cohog{*}{\cN,\cF_{E}}\to \cF_{E,e'}.
\end{equation*}
We need to show that $R_{E_{\cO},e'}\ne0$ whenver $e'\in \ov\cO$. Let $d_{\cO}=\dim \cB_{e'}=(\codim\cO)/2$. We have a map $c: \const{\ov\cO}\to \IC(\ov\cO)[-\dim \cO]=\cF_{E_{\cO}}[2d_{\cO}]$ that induces an isomorphism on the lowest cohomology in degree $2d_{\cO}$. We have a commutative diagram
\begin{equation*}
\xymatrix{ \cohog{0}{\ov\cO, \Qlbar}\ar[d]^{r_{e'}} \ar[r]^-{\upH^{0}(c)} & \cohog{2d_{\cO}}{\ov\cO, \IC}\ar[d]^{R_{E_{\cO},e'}}\\
\Qlbar \ar[r]^-{\upH^{0}(c_{e'})} & \upH^{0}(\IC(\ov\cO)_{e'})
}
\end{equation*}
where $r_{e'}$ is the restriction of the constant sheaf to the stalk at $e'$. Now $\upH^{0}(c)$ and $r_{e'}$ are isomorphisms (since $e'\in \ov\cO$), $\upH^{0}(c_{e'})$ is injective (again using $e'\in \ov\cO$). Therefore $R_{E_{\cO},e'}\ne0$.
\end{proof}

\begin{lemma}\label{l:GrO} Let $\g\in\frg\lr{t}$ be a topologically nilpotent element. 
Let $\cO$ be a nilpotent orbit of $\frg$.
If $E_{\cO}$ appears as a direct summand of the $W$-module $\upH^{*}(\Fl_{\g})$, then $\Gr_{\g,\ov\cO}\ne\vn$.
\end{lemma}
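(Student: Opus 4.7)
The plan is to exploit the projection $\pi: \Fl_{\g}\to \Gr_{\g}$, whose fibers over points of $\Gr_{\g,\cO'}$ are classical Springer fibers $\cB_{e'}$ for $e'\in \cO'$, and transfer the appearance of $E_{\cO}$ in $\upH^{*}(\Fl_{\g})$ down to some $\cohog{*}{\cB_{e'}}$, so that Lemma \ref{l:Spr}(1) delivers the conclusion.

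First I would stratify $\Gr_{\g}=\bigsqcup_{\cO'}\Gr_{\g,\cO'}$ by the nilpotent orbits appearing as image of the evaluation map $\ev_{\g}: \Gr_{\g}\to [\cN/G]$, and pull this stratification back through $\pi$ to obtain a locally closed stratification $\Fl_{\g}=\bigsqcup_{\cO'}\pi^{-1}(\Gr_{\g,\cO'})$. Since $\g$ is topologically nilpotent, only finitely many $\cO'$ occur. The stratification yields a finite filtration on $\upH^{*}(\Fl_{\g})$ whose associated graded is built out of $\upH^{*}(\pi^{-1}(\Gr_{\g,\cO'}))$, which in turn is computed by a Leray-type spectral sequence with $E_{2}$-terms involving $\cohog{*}{\cB_{e'}}$ locally constant over $\Gr_{\g,\cO'}$ (since all fibers over a fixed stratum are isomorphic Springer fibers).

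The key point will be that every step in this filtration and spectral sequence is $W$-equivariant for the Lusztig/affine Springer action on $\upH^{*}(\Fl_{\g})$, and that this action restricts to the classical Springer action on each fiber $\cohog{*}{\cB_{e'}}$. This compatibility between the Lusztig action on affine Springer fibers and the classical Springer action on the fiberwise Springer fibers of $\pi$ is standard; I would invoke it (for example via the monodromic construction of the $W$-action realized uniformly in families of parabolic resolutions, which specializes to the classical construction on each $\cB_{e'}$). Granting this, if $E_{\cO}$ is a direct summand of $\upH^{*}(\Fl_{\g})$, then by $W$-equivariance of the filtration $E_{\cO}$ must appear in some stratum-level graded piece, and hence in $\cohog{*}{\cB_{e'}}$ for some $e'\in \cO'$ with $\Gr_{\g,\cO'}\ne\vn$. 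Lemma \ref{l:Spr}(1) then forces $\cO'\subseteq \ov{\cO}$, so $\Gr_{\g,\ov{\cO}}\supseteq \Gr_{\g,\cO'}\ne\vn$, as required.

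The main obstacle I anticipate is the careful justification of $W$-equivariance of the stratification filtration on $\upH^{*}(\Fl_{\g})$ — in particular that the affine Springer $W$-action does not mix the graded pieces indexed by different orbits $\cO'$, and that its restriction to the fiber over a point of $\Gr_{\g,\cO'}$ is the classical Springer action on $\cohog{*}{\cB_{e'}}$. Once this compatibility is in hand, the remainder of the argument is a formal combination of the stratification spectral sequence with Lemma \ref{l:Spr}(1). (One minor technical point to address is finite-dimensionality and properness issues for $\Fl_{\g}$ when $\g$ is only assumed topologically nilpotent rather than regular semisimple; this can be handled by restricting to the finite-type pieces that exhaust $\Fl_{\g}$ and noting that direct summands of the inverse limit cohomology are detected at some finite level.)
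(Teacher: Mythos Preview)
Your proposal is correct and follows essentially the same route as the paper: stratify $\Gr_{\g}$ by the evaluation map, pull back to $\Fl_{\g}$, run the Leray spectral sequence for $\pi$ over each stratum to see that every $W$-type in $\upH^{*}(\Fl_{\g})$ already occurs in some $\cohog{*}{\cB_{e'}}$ with $\Gr_{\g,\cO'}\ne\vn$, and then invoke Lemma~\ref{l:Spr}(1). The paper is a bit more explicit than you are about one point: the higher direct images $R^{i}\pi_{\cO'*}\Qlbar$ are decomposed using the component group $A_{e'}=\pi_{0}(C_{G}(e'))$ as $\bigoplus_{\chi}\cL_{\chi}\otimes V^{i}_{\cO',\chi}$, which cleanly separates the $W$-module content (carried by $V^{i}_{\cO',\chi}\subset\cohog{i}{\cB_{e'}}$) from the geometric monodromy on the base (carried by $\cL_{\chi}$); this makes the $W$-equivariance of the Leray $E_{2}$-page immediate rather than something to be argued.
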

\begin{proof} According to the image of $\ev_{\g}$, $\Gr_{\g}$ is decomposed into the disjoint of locally closed subschemes $\Gr_{\g,\cO'}$ for various nilpotent orbits $\cO'$. Let $e'\in \cO'$, and $A_{e'}=\pi_{0}(C_{G}(e'))$. Then $\ev_{\g}$ induces a map
\begin{equation*}
\ev_{\g, e'}: \Gr_{\g,\cO'}\to [\cO'/G]\cong [\pt/C_{G}(e')]\to [\pt/A_{e'}].
\end{equation*}
For any representation $\chi$ of $A_{e'}$, viewed as a local system on $\pt/A_{e'}$, let $\cL_{\chi}$ be the local system on $\Gr_{\g,\cO'}$ that is the pullback of $\chi$ by $\ev_{\g, e'}$.

Let
\begin{equation*}
\pi_{\cO'}: \Fl_{\g,\cO'}:=\pi^{-1}(\Gr_{\g,\cO'})\to \Gr_{\g,\cO'}
\end{equation*}
be the projection. This is a fibration with fibers isomorphic to $\cB_{e'}$. The sheaves $R^{i}\pi_{\cO'*}\Qlbar$ are local systems with $W$-actions. Decompose the cohomology of $\cB_{e'}$ under the $A_{e'}$-action
\begin{equation*}
\cohog{i}{\cB_{e'}}=\op_{\chi\in \Irr(A_{e'})}\chi\ot V^{i}_{\cO', \chi}.
\end{equation*}
Then each $V^{i}_{\cO', \chi}$ is a $W$-module. We have a $W_{\bQ_{0}}$-equivariant decomposition
\begin{equation*}
R^{i}\pi_{\cO'*}\Qlbar\cong \op_{\chi\in \Irr(A_{e'})}\cL_{\chi}\ot V^{i}_{\cO',\chi}.
\end{equation*}
The Leray spectral sequence for the fibration $\pi_{\cO'}$ abutting to $\cohog{*}{\Fl_{\g,\cO'}}$ has $E_{2}$ page 
$$E_{2}^{j,i}=\cohog{j}{\Gr_{\g,\cO'}, R^{i}\pi_{\cO'*}\Qlbar}=\op_{\chi\in \Irr(A_{e'})}\cohog{j}{\Gr_{\g,\cO'}, \cL_{\chi}}\ot V^{i}_{\cO',\chi}.$$ 
This implies that any irreducible representations $E\in \Irr(W)$ that appear in $\cohog{*}{\Fl_{\g}}$ has to appear in one of the following $W$-modules
\begin{equation*}
V^{i}_{\cO',\chi} \mbox{ for $\Gr_{\g,\cO'}\ne\vn$, $i\in \ZZ_{\ge0}$ and $\chi\in\Irr(A_{e'})$.}
\end{equation*}
Since $E_{\cO}$ appears in $\cohog{*}{\Fl_{\g}}$, it must appear in $V^{i}_{\cO',\chi}$ for some triple $(\cO',\chi, i)$ above, i.e., $[\cohog{*}{\cB_{e'}}:E_{\cO}]\ne0$ for $e'$ in a nilpotent orbit $\cO'$ such that $\Gr_{\g,\cO'}\ne\vn$. By Lemma \ref{l:Spr}(1), $[\cohog{*}{\cB_{e'}}:E_{\cO}]\ne0$ implies $\cO'\subset\ov\cO$. Hence $\Gr_{\g,\ov\cO}\ne\vn$.
\end{proof}

\section{Complete solutions in the Coxeter cases}\label{s:ex}

Let $G$ be almost simple.  In this section, using the criterion in Theorem \ref{th:intro main}, we solve the Deligne-Simpson problem for isoclinic $G$-connections with slope $\nu=d/h$ (where $h$ is the Coxeter number and $d$ is coprime to $h$) and regular with residue in $\cO$ at $0$. The answer is summarized as follows.

\begin{theorem}\label{th:h main} Let $d\in\NN$ be coprime to the Coxeter number $h$ of $G$, and $\nu=d/h$. Then there is a (necessarily unique) nilpotent orbit $\cO_{\nu}$ in $\frg$ such that $DS(d/h,\cO)$ has an affirmative answer if and only if $\cO_{\nu}\preccurlyeq \cO^{\nil}$ (equivalently, $\cO_{\nu}\subset \ov{\Gm\cdot \cO}$).

For $d>h$, we have $\cO_{\nu}=\{0\}$. For $1\le d\le h-1$, the nilpotent orbit $\cO_{\nu}$ is given by:

\begin{enumerate}
\item For classical types, see Table \ref{t:clCox}. Here, for $m\ge s\in \NN$, $\l^{m,s}$ denotes the partition of $m$ with $s$ parts such that each part is either $\lfloor \frac{m}{s}\rfloor$ or $\lceil \frac{m}{s}\rceil$ (most evenly distributed). In Table \ref{t:clCox}, we give the Jordan types of $\cO_{\nu}$ as a partition, which uniquely determines $\cO_{\nu}$ in these cases. 

\item For exceptional types, see Table \ref{t:excCox}. The nilpotent orbits in exceptional types are given using the Bala-Carter notation.
\end{enumerate}
The column labeled by $\D_{\nu}$ records the index of rigidity, to be defined in \eqref{Dnu}.
\end{theorem}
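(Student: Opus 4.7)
The strategy is to apply the algebraic criterion (Theorem \ref{th:DS}), which reduces the Deligne--Simpson problem $DS(d/h, \cO)$ to determining whether the Springer representation $E_{\cO^{\nil}}$ appears in the irreducible $\cH^{\rat}_{d/h}(W)$-module $L_{d/h}(\triv)$, and then to exploit the explicit description of $L_{d/h}(\triv)$ due to Berest--Etingof--Ginzburg \cite{BEG}.

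First we dispose of the case $d > h$: here $\nu = d/h > 1$, so by Corollary \ref{c:sp case}(2) the problem $DS(d/h, \cO)$ is affirmative for every adjoint orbit $\cO$. Setting $\cO_\nu = \{0\}$ makes the condition $\cO_\nu \preccurlyeq \cO^{\nil}$ vacuously true, matching the stated answer in this range.

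The heart of the argument is the range $1 \le d \le h-1$. By Theorem \ref{th:DS}, we must understand the set $S_{d/h}$ of $W$-irreducibles occurring in $L_{d/h}(\triv)$. The plan is to invoke \cite{BEG}, which for $h$ the Coxeter number describes $L_{d/h}(\triv)$ concretely (for example as an explicit quotient of the polynomial algebra $\CC[\frt]$ by a $W$-stable ideal), yielding a computable graded $W$-character. Translating via the Springer correspondence, we identify in each classical type the unique Springer representation $E_{\cO_\nu}$ that is minimal in the closure order among those appearing. In type $A_{n-1}$ (where $h = n$), the multiplicities of irreducibles $E_\lambda$ in $L_{d/n}(\triv)$ are governed by the dominance order and concentrated on partitions with at most $d$ parts, so the minimal one is $\lambda^{n,d}$, giving the Jordan type recorded in Table \ref{t:clCox}. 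For types $B$, $C$, $D$ one carries out an analogous but more intricate analysis, tracking parities and the precise shape of the partitions $\lambda^{m,s}$ appearing in the tables. The uniqueness of the minimal element $\cO_\nu$---which, via Theorem \ref{th:DS Gr}, is equivalent to $\RT_{\min}(\psi)$ being a singleton for $\psi$ homogeneous of slope $d/h$---emerges once the explicit multiplicities are laid out.

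For exceptional types the argument is a finite computation: for each of $G_2, F_4, E_6, E_7, E_8$, and each $d$ with $\gcd(d,h) = 1$ and $1 \le d \le h-1$, one computes the character of $L_{d/h}(\triv)$ from \cite{BEG}, decomposes it into $W$-irreducibles, and reads off the minimal Springer representation using the Springer correspondence tables, producing the Bala--Carter label in Table \ref{t:excCox}. The main obstacle is extracting enough explicit information from \cite{BEG} to pin down the minimal Springer representation uniformly in $d$: the exceptional computations are finite but tedious, while the classical-type description demands careful matching between the Cherednik module structure and partition combinatorics, and one must also verify the uniqueness of the minimal reduction type in each case.
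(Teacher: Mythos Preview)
Your starting point (reduce via Theorem \ref{th:DS} to determining which Springer representations $E_{\cO}$ appear in $L_{d/h}(\triv)$) and your treatment of $d>h$ match the paper. The route to the $W$-decomposition of $L_{d/h}(\triv)$, however, is genuinely different.

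Rather than extracting the decomposition directly from \cite{BEG}, the paper proceeds via Rouquier's character formula (Proposition \ref{p:Ld/m from L1/m}), which expresses $\chi_{L_{d/h}(\triv)}$ in terms of $\chi_{L_{1/h}(\triv)}$; since $L_{1/h}(\triv)$ is the trivial $W$-module, this reduces to the factor $\det_{\frh}(1-wt^{d})/\det_{\frh}(1-wt)$. Sommers' result (Proposition \ref{p:Sommers char}) identifies its value at $t=1$ with the character of the permutation module $\CC[\L/d\L]$, which decomposes as a sum of $\Ind_{W_{J}}^{W}(\triv)$ over \emph{$d$-allowable} subsets $J\subset\D$ (Definition \ref{d:d allowable}, Lemma \ref{l:stab d allowable}). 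Frobenius reciprocity together with Lemma \ref{lem:parabolictype} then yields Corollary \ref{c:crit h}: $E_{\cO}$ appears in $L_{d/h}(\triv)$ if and only if $\cO_{J,\reg}\preccurlyeq\cO$ for some $d$-allowable $J$. The orbit $\cO_{\nu}$ is simply the unique minimum of $\{\cO_{J,\reg}:J\text{ $d$-allowable}\}$, verified type by type.

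This buys a clean structural criterion: instead of decomposing a polynomial-ring quotient and matching multiplicities against Springer tables, one only has to list minimal $d$-allowable $J$'s and read off $\cO_{J,\reg}$, which in classical types is elementary partition combinatorics and in exceptional types a short Bala--Carter lookup. Your direct \cite{BEG} approach is not wrong in principle, but the extraction step you flag as the main obstacle is exactly what the paper's $d$-allowable framework circumvents; that framework is the missing structural ingredient in your sketch.
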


\begin{table}
\begin{center}
\begin{tabular}[t]{|C|C|C|C|}
\hline
\mbox{type} & \cO_{\nu} 	& \D_{\nu} &  \mbox{notation} \\ 	
\hline

A_{n-1} & \l^{n,d} & (d'-1)(d-d'-1)  & n=dk+d', 0\le d'\le d-1\\
\hline
B_{n} & \l^{2n+1,d} & \begin{cases} \frac{1}{4}(d'-1)(d-d') & k \mbox{ even} \\  \frac{1}{4}d'(d-d'-1) & k \mbox{ odd} \end{cases} & 2n+1=dk+d', 0\le d'\le d-1\\
\hline
C_{n}  & \l^{2n,d} & \begin{cases} \frac{1}{4}d'(d-d'-1) & k \mbox{ even} \\  \frac{1}{4}(d'-1)(d-d') & k \mbox{ odd} \end{cases}  & 
2n=dk+d', 0\le d'\le d-1\\
\hline
D_{n}  & \l^{2n-1,d}\cup\{1\} & \begin{cases} \frac{1}{4}(d'-1)(d-d') & k \mbox{ even} \\  \frac{1}{4}d'(d-d'-1) & k \mbox{ odd} \end{cases}  & 
2n-1=dk+d', 0\le d'\le d-1\\
\hline
 \end{tabular} 
\end{center}
\caption{Coxeter solutions in the classical types}
\label{t:clCox}
\end{table}

\begin{table}
\begin{center}
\begin{tabular}[t]{|C|C|C|C|}
\hline
$type$ 	& d 	& \cO_{\nu} 	& \D_{\nu}\\ 
\hline

E_6		& 1	& E_6	& 0 \\
		& 5 	& A_2+2A_1	& 1	\\
		& 7	& 3A_1	&  2	\\
 		& 11 	& A_1	& 5\\
		\hline
E_7		& 1& E_7 & 0 \\
		& 5	& A_3+A_2+A_1	& 1	\\
		& 7 	& A_2+3A_1		& 0	\\
		& 11	& (3A_1)'			& 4	\\
		& 13	& 2A_1			&  5	\\
		& 17	& A_1			& 10 \\
		\hline
 \end{tabular} 	\quad	
 \begin{tabular}[t]{|C|C|C|C|}
 \hline
 $type$ 	& d 	& \cO_{\nu} 	& \D_{\nu}\\

\hline
E_8		& 1		& E_8 			& 0	\\
		& 7		& A_4+A_2+A_1	& 2	\\	
		& 11		& 2A_2+2A_1		& 4	\\
		& 13		& A_2+3A_1		& 5	\\
		& 17		& 4A_1			& 8	\\
		& 19		& 3A_1			& 8	\\
		& 23		& 2A_1 			& 14 	\\
		& 29		& A_1			&  21 	\\
\hline
G_2 		& 1		& G_2		& 0	\\
		& 5		& A_1		& 1	\\
\hline
F_4		& 1		& F_4		& 0 \\
		& 5		& A_2+\tilde{A}_1 & 1 \\
		& 7		& A_1+\tilde{A}_1 &	2\\
		& 11	        & A_1 	& 4 \\
		\hline
		
 \end{tabular} 
\end{center}
\caption{Coxeter solutions in the exceptional types}
 \label{t:excCox}
\end{table}

To prove the theorem, by Theorem \ref{th:DS}, we need to determine which $W$-representations $E_{\cO}$ appear in $L_{d/h}(\triv)$. We will prove a general result in this direction that works for more general slopes (Corollary \ref{cor:commonsummand}). In the case of $\nu=d/h$, this boils down to listing certain subsets of simple roots called {\em $d$-allowable} (Corollary \ref{c:crit h}). We then carry out the case-by-case calculations to determine these $d$-allowable subsets.

\subsection{Preliminaries on $L_{\nu}(\triv)$}
For a graded $W$-representation $M=\op_{n}M_{n}$, where $\dim M_{n}<\infty$ and $M_{n}=0$ for $n\ll0$, we write
\begin{equation*}
\chi_{M}(w,t)=\sum_{n\in \ZZ}\Tr(w,M_{n})t^{n}\in \CC\lr{t}.
\end{equation*}
Similarly, for a finite dimensional $W$-module $M$, we write $\chi_{M}(w)$ for $\Tr(w,M)$.
 
Let $m$ be a regular number for $G$, and $d$ a positive integer coprime to $m$. To compute $L_{d/m}(\triv)$ as a $W$-module, we recall the following graded character formula proved by Rouquier. 
\begin{prop}[Rouquier {\cite[Proposition 5.14.]{Rouq}}]\label{p:Ld/m from L1/m} Let $\nu=d/m>0$ be in lowest terms. Let $\frh$ be the reflection representation of $W$. 
Then we have
\[\chi_{L_{d/m}(\triv)}(w,t) = \frac{\det_{\frh}(1-wt^d)}{\det_{\frh}(1-wt)}\chi_{L_{1/m}(\triv)}(w,t^{r}).\]
\end{prop}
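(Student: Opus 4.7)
The plan is to derive the identity from a Koszul-type resolution of $L_{d/m}(\triv)$ in category $\cO$ of the rational Cherednik algebra $\cH^{\rat}_{d/m}(W)$, compared with an analogous structure for $L_{1/m}(\triv)$.

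First I would invoke the basic Verma character formula: for any parameter $c$ and any finite-dimensional $W$-module $\tau$, the Verma module $M_c(\tau)=\cH^{\rat}_c(W)\otimes_{\Sym(\frh)\rtimes W}\tau$ is isomorphic to $\Sym(\frh^*)\otimes\tau$ as a graded $W$-module with $\frh^*$ in degree one. Consequently
\[
\chi_{M_c(\tau)}(w,t) \;=\; \frac{\chi_\tau(w)}{\det_\frh(1-wt)},
\]
which in particular is independent of $c$. Multiplying the desired identity through by $\det_\frh(1-wt)$, the target becomes the equivalent polynomial identity
\[
\det_\frh(1-wt)\cdot \chi_{L_{d/m}(\triv)}(w,t) \;=\; \det_\frh(1-wt^d)\cdot \chi_{L_{1/m}(\triv)}(w,t),
\]
where $\det_\frh(1-wt^d)/\det_\frh(1-wt)=\prod_i(1+\zeta_i t+\cdots+\zeta_i^{d-1}t^{d-1})$ (with $\zeta_i$ the eigenvalues of $w$ on $\frh$) is manifestly a polynomial in $t$.

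The substantive input is the existence of a BGG/Koszul-type resolution of $L_c(\triv)$ in category $\cO(\cH^{\rat}_c(W))$, available when the denominator of $c$ is an elliptic regular number of $W$. The critical structural point — extracted from Rouquier's analysis of Dunkl-type operators in the Cherednik algebra at such parameters — is that the Grothendieck class $[L_{d/m}(\triv)]\in K_0(\cO_{d/m})$ admits a presentation whose ``shape'' is obtained from the analogous presentation of $[L_{1/m}(\triv)]\in K_0(\cO_{1/m})$ by tensoring with a Koszul complex on $\frh^*$ whose differentials have internal degree $d$ rather than $1$. Since such a Koszul twist contributes $\det_\frh(1-wt^d)$ to the graded Euler characteristic in place of the $\det_\frh(1-wt)$ contributed by the standard (degree-$1$) Koszul complex, the graded Euler characteristic of the resolution at $c=d/m$ equals that at $c=1/m$ multiplied precisely by the ratio $\det_\frh(1-wt^d)/\det_\frh(1-wt)$. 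Translating to the character level via the Verma formula of the first paragraph yields the proposition.

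The main obstacle is exactly this structural input: constructing the Koszul-type resolution of $L_c(\triv)$ and verifying the ``degree-$d$ twist'' behaviour as the parameter moves from $c=1/m$ to $c=d/m$. This is the technical heart of Rouquier's argument and depends on a careful analysis of Dunkl-type operators at elliptic regular parameters, together with the Koszul-type self-duality of the Cherednik algebra in this regime.
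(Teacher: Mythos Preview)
The paper does not prove this proposition; it is quoted from Rouquier with only a citation, so there is no in-paper argument to compare against. What I can assess is whether your outline constitutes an independent proof, and here there is a genuine gap you yourself flag in the last paragraph: you assert that $[L_{d/m}(\triv)]$ is obtained from $[L_{1/m}(\triv)]$ by a ``degree-$d$ Koszul twist,'' but you neither define this operation precisely at the level of Grothendieck groups nor supply any mechanism (a functor, an explicit resolution, a bijection of composition factors with controlled grading shifts) that would establish it. Everything in your write-up before that point is the easy reduction---the Verma character formula and the polynomial reformulation---while the step that actually carries the weight is asserted and then deferred to ``the technical heart of Rouquier's argument.'' As written, this is a restatement of the proposition in different language rather than a proof of it.

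I would also caution that your description of Rouquier's method may not match what is in \cite{Rouq}. His Proposition~5.14 is proved in the setting of highest weight categories and the KZ functor relating category $\cO$ to the Hecke algebra, rather than via Dunkl operators or an explicit Koszul resolution of $L_c(\triv)$; the BGG-resolution picture you sketch is closer in spirit to \cite{BEG}, and even there one must separately justify the exact grading shifts at the two parameters $1/m$ and $d/m$. If you want to pursue your route, the concrete task is: write down the BGG resolution of $L_{c}(\triv)$ for $c=1/m$ and $c=d/m$ (with the precise grading shifts on each $M_c(\wedge^k\frh^*)$), and check term-by-term that the ratio of the two Euler characteristics is $\det_\frh(1-wt^d)/\det_\frh(1-wt)$. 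Until that computation is on the page, the argument is incomplete.
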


Recall from \cite[Definition 3.5]{Som} that $d$ is called {\em good} for $G$ (or rather the root system of $G$) if $d$ is prime to the coefficients of the highest root in terms of simple roots; $d$ is called {\em very good} for $G$ if it is good for $G$ and it is prime to the order of $\pi_{1}(G^{\ad})$.  Let $\L$ be the coroot lattice of $G$, and view $\CC[\L/d\L]$ as the $W$-module induced by the permutation action of $W$ on $\L/d\L$. Sommers proved:

\begin{prop}[Sommers {\cite[Proposition 3.9.]{Som}}]\label{p:Sommers char} When $d$ is very good for $G$, 
\begin{equation*}
\chi_{\CC[\L/d\L]}(w)=d^{\dim(\frh^{w})}, \quad \forall w\in W. 
\end{equation*}
\end{prop}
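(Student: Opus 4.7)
The plan is to express the character as a fixed-point count and then understand when the reduction of $1-w$ modulo $d$ is as non-degenerate as possible. Since $\CC[\L/d\L]$ is the permutation representation on a finite set,
$$\chi_{\CC[\L/d\L]}(w) = |(\L/d\L)^w| = |\ker\bigl((1-w):\L/d\L\to\L/d\L\bigr)|.$$
I would then apply the Smith normal form to $1-w$ viewed as an endomorphism of the free $\ZZ$-module $\L$. Set $k=\dim\frh^w$; then $1-w$ has exactly $k$ zero elementary divisors (accounting for the $w$-fixed directions) together with $r-k$ nonzero elementary divisors $a_1,\dots,a_{r-k}$, where $r=\dim\frh$. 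A direct calculation in the Smith basis yields
$$|\ker\bigl((1-w):\L/d\L\bigr)|=d^{k}\cdot\prod_{i=1}^{r-k}\gcd(a_i,d).$$
So the proposition is reduced to showing that, when $d$ is very good, $\gcd(d,a_i)=1$ for every $i$, equivalently that $d$ is coprime to the order of the torsion subgroup of $\coker(1-w:\L\to\L)$.

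Next I would reduce to the elliptic case by parabolic descent. Choose a standard parabolic $W_I\subset W$ in which $w$ is elliptic, and let $L\subset G$ be the corresponding Levi with coroot lattice $\L_L$. Using the rational decomposition $\L\otimes\QQ=\frh_I\oplus\frh_I^{\perp}$ and the fact that $w$ acts trivially on $\frh_I^{\perp}$, I would compare the torsion of $\coker(1-w:\L\to\L)$ with that of $\coker(1-w:\L_L\to\L_L)$. The discrepancy is controlled by the finite group $\L/(\L_L\oplus(\L\cap\frh_I^{\perp}))$, whose order divides $|\pi_1(G^{\ad})|$ times a product of coefficients of the highest root; this is precisely the list of primes forbidden by the ``very good'' hypothesis, so the reduction goes through.

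For the elliptic case itself, $k=0$ and $\prod_i a_i=|\det(1-w\mid\L)|$, so the task becomes: for every elliptic $w$ in every parabolic subgroup of $W$, no prime divisor of $|\det(1-w\mid\L)|$ is very good. This is the main obstacle, and I would approach it by Carter's classification of conjugacy classes in Weyl groups (elliptic classes parametrized by admissible diagrams). For classical types one can write $\det(1-w)$ explicitly in terms of cycle types of signed permutations and check the claim by inspection. For exceptional types one falls back on the finite list of elliptic conjugacy classes and compares the resulting integers with the explicit list of bad primes in Bourbaki; I expect this case-by-case step, and in particular the careful bookkeeping of how $|\pi_1(G^{\ad})|$ intervenes when passing between the coroot lattice of $G$ and that of a Levi, to be where essentially all the effort lies.
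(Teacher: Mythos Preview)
The paper does not supply a proof of this proposition---it is quoted directly from Sommers---so I evaluate your argument on its own.

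Your opening is fine: the Smith normal form reduces the claim to showing that $d$ is coprime to the order of the torsion of $\coker(1-w:\L\to\L)$, and your discrepancy group $Q=\L/\bigl(\L_L\oplus(\L\cap\frh_I^{\perp})\bigr)$ is exactly the right object. Indeed, running the snake lemma on $0\to\L_L\to\L\to\L/\L_L\to 0$ against the endomorphism $1-w$ (which vanishes on the free quotient $\L/\L_L$) gives the precise relation
\[
\bigl|\mathrm{tors}\,\coker(1-w:\L\to\L)\bigr|\cdot|Q|=\bigl|\det(1-w\mid\L_L)\bigr|.
\]

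The gap is in what you conclude from this. You assert that $|Q|$ is supported on primes dividing $|\pi_1(G^{\ad})|$ or the coefficients of the highest root, and separately that $|\det(1-w\mid\L_L)|$ is supported on the same set of primes. Both assertions are false. Take $G$ of type $A_2$ and $w=s_1$, so the Levi is of type $A_1$: one computes $\L\cap\frh_I^{\perp}=\ZZ(\a_1^{\vee}+2\a_2^{\vee})$, hence $|Q|=2$, and $|\det(1-w\mid\L_{A_1})|=|\det(1-(-1))|=2$. Yet $2$ \emph{is} very good for $A_2$ (only $3$ is excluded). The two factors of $2$ cancel and the torsion on $\L$ is trivial, as it must be---but your plan of bounding each factor separately cannot see this cancellation. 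The underlying issue is that ``very good for $G$'' does not imply ``very good for a Levi $L$'', so an elliptic case-check at the level of $\L_L$ cannot by itself close the argument for $G$; your proposed list of elliptic determinants will contain primes that are very good for $G$.

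What is actually needed is a statement intrinsic to $\L$: for every prime $p$ that is very good for $G$, the reduction $(1-w)\otimes\FF_p$ has the same rank as $(1-w)\otimes\QQ$. Sommers' argument establishes this directly (via the characterization of torsion primes for root systems together with the index of connection), without parabolic descent to an elliptic element.
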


\begin{cor}\label{cor:charform} Assume that $d$ is very good for $G$, then we have
\[\chi_{L_{d/m}(\triv)}(w) = \chi_{\CC[\L/d\L]}(w) \chi_{L_{1/m}(\triv)}(w), \quad \forall w\in W.\]
\end{cor}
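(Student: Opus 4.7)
The plan is to specialize Rouquier's graded identity in Proposition~\ref{p:Ld/m from L1/m} at $t=1$ and identify the resulting prefactor with $\chi_{\CC[\L/d\L]}(w)$ via Sommers' formula (Proposition~\ref{p:Sommers char}).

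First I would decompose the reflection representation as $\frh = \frh^w \oplus V$, where $V$ is the $w$-stable complement on which $w$ has no fixed vectors. This factorizes the prefactor of Proposition~\ref{p:Ld/m from L1/m} as
\[
\frac{\det_\frh(1 - wt^d)}{\det_\frh(1 - wt)} = \left(\frac{1-t^d}{1-t}\right)^{\dim \frh^w} \cdot \frac{\det_V(1-wt^d)}{\det_V(1-wt)}.
\]
Next I would evaluate both factors at $t=1$: since $(1-t^d)/(1-t) = 1 + t + \cdots + t^{d-1}$ specializes to $d$, the first factor tends to $d^{\dim \frh^w}$; on $V$ no eigenvalue of $w$ equals $1$, so both the numerator and the denominator of the second factor specialize to the common nonzero quantity $\det_V(1 - w)$, yielding $1$. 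Thus the prefactor specializes to $d^{\dim \frh^w}$, which by Proposition~\ref{p:Sommers char} equals $\chi_{\CC[\L/d\L]}(w)$ under the very-good hypothesis.

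The one point that requires care, which I would address at the end, is the legitimacy of specializing $t=1$. In the setting of interest to this paper $L_{1/m}(\triv)$ and $L_{d/m}(\triv)$ are finite-dimensional, so their graded characters are polynomials in $t$; multiplying Proposition~\ref{p:Ld/m from L1/m} through by the common denominator turns it into a polynomial identity, and setting $t = 1$ converts graded traces into ordinary traces. Combining this specialization with the prefactor computation above gives the asserted equality. The principal subtlety is the apparent $0/0$ indeterminacy of the prefactor at $t = 1$, but this is resolved cleanly by the decomposition $\frh = \frh^w \oplus V$ and accounts exactly for the factor $d^{\dim \frh^w}$ predicted by Sommers.
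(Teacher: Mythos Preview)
Your proof is correct and follows essentially the same approach as the paper's: specialize Rouquier's graded identity at $t=1$, compute the limit of the prefactor to be $d^{\dim\frh^{w}}$, and invoke Sommers. The paper simply asserts the limit $\lim_{t\to1}\det_{\frh}(1-wt^{d})/\det_{\frh}(1-wt)=d^{\dim\frh^{w}}$ without justification, whereas you supply the eigenspace decomposition $\frh=\frh^{w}\oplus V$ that makes this transparent; your remark about finite-dimensionality is also a welcome clarification that the paper leaves implicit in the notation $\chi_{M}(w)$.
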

\begin{proof} Note that we always have 
\[\lim_{t\to 1}\frac{\det_{\frh}(1-wt^d)}{\det_{\frh}(1-wt)} = d^{\dim(\frh^{w})}.\]
The statement then follows from Proposition \ref{p:Sommers char} and Proposition \ref{p:Ld/m from L1/m}.
\end{proof}

\subsection{The $W$-module $\CC[\L/d\L]$}\label{ss:stab}
Our next task is to understand which irreducible representations of $W$ appear in $\CC[\Lambda/d\Lambda]$. Clearly $\CC[\L/d\L]$ decomposes into the direct sum of $\Ind_{W_{x}}^{W}(\triv)$ where $x$ runs over $W$-orbits on $\L/d\L$ and $W_{x}$ is the stabilizer of $x$ under $W$. In the following we give a recipe for the subgroups $W_{x}$ that can occur. 

We denote by $\D$ the set of simple roots of $G$ (with respect to a chosen $B$ and $T$), and $\Delta^{\aff}=\D\cup \{\a_{0}\}$ the set of affine simple roots for $G\lr{t}$. Let $\L\subset \XX_{*}(T)$ be the coroot lattice of $G$. We view $\a\in \D$ as a linear function on $\L\ot\RR$ and $\a_{0}$ as an affine function on $\L\ot\RR$ defined by $\a_{0}(x)=1-\vartheta(x)$. Let $\{n_{\a}\}_{\a\in \D^{\aff}}$ be coprime positive integers satisfying $\sum_{\a\in \D^{\aff}} n_{\a} \alpha =1$ as functions on $\L\ot\RR$.  Then $n_{\a_{0}}=1$ and for $\a\in \D$, $n_{\a}$ is the coefficient of $\alpha$ in the highest root $\vartheta$. 

For a proper subset $J\subset \D^{\aff}$, let $W^{\sh}_{J}$ be the subgroup of the affine Weyl group $W^{\aff}$ generated by reflections across those $\alpha\in J$. Let $W_{J}$ be the image of the injection $W^{\sh}_{J}\subset W^{\aff}\to W$. Note that in general, $W_J$ may be not be a parabolic subgroup of $W$.

\begin{defn}\label{d:d allowable} Let $d\in\NN$. A proper subset $J\subset \D^{\aff}$ is called {\em $d$-allowable} if there exist positive integers $k_{\a}$ for $\a\in \D^{\aff}\setminus J$ such that $\sum_{\a\in\D^{\aff}\setminus J} k_{\a} n_{\a} = d$. 
\end{defn}

\begin{lemma}\label{l:stab d allowable} Let $d\in\NN$. Then the stabilizers $W_{x}$ of $x\in \L/d\L$ are $W$-conjugate to $W_{J}$  for a $d$-allowable subset $J\subset \D^{\aff}$ (see Definition \ref{d:d allowable}). Conversely, for any  $d$-allowable subset $J\subset \D^{\aff}$, $W_{J}=W_{x}$ for some $x\in \L/d\L$. 

\end{lemma}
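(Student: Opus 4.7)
The plan is to parametrize $W$-orbits on $\L/d\L$ via the geometry of the fundamental alcove $\mathbf{a}$ of the affine Weyl group $W^{\aff} = W \ltimes \L$ acting on $\frt_\RR$. The rescaling $x \mapsto y := x/d$ identifies $\L/d\L$ with $\tfrac{1}{d}\L/\L$ as $W$-sets, so $W$-orbits on $\L/d\L$ correspond bijectively to $W^{\aff}$-orbits on $\tfrac{1}{d}\L \subset \frt_\RR$, and the stabilizer $W_x \subset W$ equals the image in $W = W^{\aff}/\L$ of the $W^{\aff}$-stabilizer of $y$. Since $\mathbf{a}$ is a fundamental domain for $W^{\aff}$, I may assume $y \in \mathbf{a}$; then the $W^{\aff}$-stabilizer of $y$ is the finite parabolic subgroup $W^{\sh}_{J(y)}$ generated by the affine simple reflections fixing $y$, where $J(y) := \{\alpha \in \D^{\aff} : \alpha(y) = 0\}$. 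Consequently $W_x = W_{J(y)}$.

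For the forward direction, I would write $\alpha(y) = k_\alpha/d$ with $k_\alpha \in \ZZ_{\ge 0}$, using that $y \in \tfrac{1}{d}\L$ together with $\alpha(\L) \subset \ZZ$ for $\alpha \in \D$ and $\alpha_0(\lambda) = -\vartheta(\lambda)$ on translations; by construction $k_\alpha = 0$ precisely when $\alpha \in J(y)$. Evaluating the affine identity $\sum_{\alpha \in \D^{\aff}} n_\alpha \alpha = 1$ at $y$ yields $\sum_\alpha n_\alpha k_\alpha = d$, verifying that $J(y)$ is $d$-allowable.

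For the converse, given a $d$-allowable $J$ with a witness $(k_\alpha)_{\alpha \notin J}$, I would construct $y \in \mathbf{a}$ by the linear equations $\alpha(y) = k_\alpha/d$ for $\alpha \in \D \setminus J$ and $\alpha(y) = 0$ for $\alpha \in \D \cap J$; the allowability sum forces the prescribed value of $\alpha_0(y)$, so that $J(y) = J$ and the stabilizer is $W^{\sh}_J$ with image $W_J$ in $W$.

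The main obstacle is ensuring this $y$ actually lies in $\tfrac{1}{d}\L$ rather than merely in $\tfrac{1}{d}P^\vee$; the explicit formula $y = \sum_{\alpha \in \D}(k_\alpha/d)\,\varpi_\alpha^\vee$ places the obstruction class in $\Om := P^\vee/\L$. The plan to resolve this is to exploit the diagram-automorphism action of $\Om$ on $\D^{\aff}$, which permutes $d$-allowable subsets: a representative of $\omega \in \Om$ in $\tilde{W}^{\aff} = W \ltimes P^\vee$ conjugates $W^{\sh}_J$ to $W^{\sh}_{\omega(J)}$ by an element that projects trivially to $W$, so $W_{\omega(J)} = W_J$ as subgroups of $W$. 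By varying the witness $(k_\alpha)$ among the allowable tuples and passing to a suitable $\Om$-translate of $J$, one arranges for the lattice integrality condition to hold, thereby producing an $x \in \L/d\L$ with $W_x = W_J$.
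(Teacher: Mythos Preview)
Your forward direction coincides with the paper's: pass to $\tfrac{1}{d}\L/\L$, use the closed fundamental alcove for $W^{\aff}=W\ltimes\L$, and read off $d$-allowability of $J(y)$ from the affine identity $\sum_\alpha n_\alpha\alpha=1$.

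For the converse you have correctly identified a real gap---one that the paper's own proof simply elides by asserting ``there is a unique $x\in D$'' without checking that $dy=\sum_{\alpha\in\D}k_\alpha\varpi_\alpha^\vee$ lies in $\L$ rather than merely in $P^\vee$. However, your proposed $\Omega$-fix does not close it. First, a lift of $\omega\in\Omega$ to $W\ltimes P^\vee$ projects to a \emph{nontrivial} element of $W$ in general, so conjugation by $\omega$ only shows $W_{\omega(J)}$ is $W$-conjugate to $W_J$, not equal (harmless for the lemma, but your claim as written is inaccurate). More seriously, the obstruction is not always removable by varying the witness or passing to an $\Omega$-translate. Take type $A_1$ and $d=2$: the subset $J=\varnothing$ is $2$-allowable with the \emph{unique} witness $k_\alpha=k_{\alpha_0}=1$, and $\Omega\cong\ZZ/2$ fixes $\varnothing$, so there is nothing to vary; meanwhile $y=\tfrac12\varpi^\vee=\tfrac14\alpha^\vee\notin\tfrac12\L$. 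On the other side, $\L/2\L=\{0,\alpha^\vee\}$ and $s_\alpha$ fixes both points, so every stabilizer equals $W$ and there is no $x$ with $W_x=\{1\}=W_\varnothing$. Thus the converse of the lemma is actually \emph{false} for general $d$, and neither your argument nor the paper's is complete as stated. In the paper's downstream use (Corollaries~\ref{cor:commonsummand} and~\ref{c:crit h}) the standing hypothesis that $d$ is very good, together with Proposition~\ref{p:stab par}, restricts attention to $J\subset\D$; that is the regime in which the converse is actually invoked, and where one should seek a correct argument.
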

\begin{proof} Note that $\L/d\L \cong \frac{1}{d}\L/\L$, therefore $W$-orbits on $\L/d\L$ are in natural bijection with $W^{\aff}$-orbits on $\frac{1}{d}\L$. The set 
\begin{equation*}
D=\{ x \in \frac{1}{d}\L \mid  \a(x)\ge0 \mbox{ for all } \a\in \D^{\aff}\}
\end{equation*}

is a fundamental domain for the $W^{\aff}$-action on $\frac{1}{d}\L$. 
The lemma follows from the following statement: the stabilizers $W^{\aff}_{x}$ of $x\in D$ under $W^{\aff}$ are exactly $W_{J}^{\sh}$ for $d$-allowable $J$.

For $x\in D$, let $J$ be the set of $\a\in \D^{\aff}$ such that $\a(x)=0$. Then $W^{\aff}_{x}=W^{\sh}_{J}$. Let $k_{\a}=\a(x)\cdot d\in \ZZ$, then $k_{\a}$ are positive integers for $\a\notin J$. We have $\sum_{\a\notin J}n_{\a}k_{\a}/d=\sum_{\a\in \D^{\aff}}n_{\a}k_{\a}/d=\sum_{\a\in \D^{\aff}}n_{\a}\a(x)=1$. This shows that $J$ is $d$-allowable.

Conversely, assume $J\subset \D^{\aff}$ is $d$-allowable, so there are positive integers $k_{\a}$ (for $\a\in \D^{\aff}\setminus J$) such that $\sum_{\a\notin J} k_{\a} n_{\a} = d$. Define $k_{\a}=0$ if $\a\in J$. Then there is a unique $x\in D$ with $\a(x)= k_\a/d$ for all $\a\in \D^{\aff}$. We have $W^{\aff}_{x}=W^{\sh}_{J}$.
\end{proof}

Also recall the following result of Sommers.

\begin{prop}[Sommers {\cite[Proposition 4.1.]{Som}}]\label{p:stab par} If $d$ is good (i.e., $d$ is prime to all $n_{\a}$, $\a\in \D^{\aff}$), then the stabilizers $W_{x}$ for $x\in \L/d\L$ are  parabolic subgroups of $W$.
\end{prop}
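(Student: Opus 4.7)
The plan is to translate the proposition into a structural claim about centralizers of finite-order elements in $G$, and then verify it combinatorially. By Lemma \ref{l:stab d allowable}, every stabilizer $W_x$ for $x\in\L/d\L$ is $W$-conjugate to $W_J$ for some $d$-allowable $J\subset\D^{\aff}$, so it suffices to show that, when $d$ is good, $W_J$ is a parabolic subgroup of $W$ for every $d$-allowable $J$. The case $\a_0\notin J$ is trivial since then $J\subset\D$ and $W_J$ is already a standard parabolic; the substantive case is $\a_0\in J$.

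For that case, I would pick the representative $x$ of the $W^{\aff}$-orbit lying in the closure of the fundamental alcove (so $dx\in\L$ and $\a(x)\ge 0$ for all $\a\in \D^{\aff}$) and pass to the finite-order element $t_x:=\exp(2\pi i x)\in T$, which has order dividing $d$. Standard Kac theory identifies $W_J^{\sh}$ with the Weyl group of the connected centralizer $C_G(t_x)^{\circ}$, so the proposition reduces to the classical claim: when $d$ is good, $C_G(t_x)^{\circ}$ is in fact a Levi subgroup of $G$ (equivalently, its root system is a Levi subsystem of $\Phi$), rather than merely a pseudo-Levi. Under this dictionary, the simple roots of $C_G(t_x)^{\circ}$ are exactly $\D^{\aff}\setminus J$, and the Kac coordinates are $(k_{\a}/d)_{\a\notin J}$.

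A first attempt is via the diagram automorphism group $\Omega=\widetilde{W}^{\aff}/W^{\aff}$: if some $\a_i\in\D^{\aff}\setminus J$ has mark $n_{\a_i}=1$, then an element of $\Omega$ carries $\a_0$ to such an $\a_i$, producing a $J'\subset\D$ with $W_{J'}=W_J$ as subgroups of $W$, whence $W_J$ is standard parabolic. The main obstacle is that this approach does not suffice even when $d$ is good: for instance in $E_6$ with $d=5$, the decomposition $5=2+3$ gives a $d$-allowable $J$ whose complement in $\D^{\aff}$ consists only of vertices of marks $\geq 2$, so $\Omega$ cannot move $\a_0$ out of $J$. In those residual cases, the resulting reflection subgroup (of type $A_1\times A_2\times A_2$ in the $E_6$ example) is nonetheless $W$-conjugate to a standard parabolic. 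Handling such cases uniformly amounts to a finite diagrammatic check, organized by the arithmetic constraint $\gcd(d,n_{\a})=1$: for each simple Lie type, one enumerates the closed subsystems arising from $d$-allowable $J$'s, matches their abstract Dynkin type to a sub-diagram of $\D$, and invokes the fact (automatic in simply-laced types, requiring extra care in $B,C,F,G$) that closed subsystems of matching Dynkin type are $W$-conjugate to the standard one. The goodness hypothesis $\gcd(d,n_{\a})=1$ is exactly what excludes the non-Levi pseudo-Levi subsystems — these are tabulated in the Borel--de Siebenthal classification and are precisely the ones whose appearance as $\D^{\aff}\setminus J$ forces $d$ to share a factor with some $n_{\a}$.
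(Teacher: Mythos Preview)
The paper gives no proof of this proposition; it simply cites Sommers. So there is no in-paper argument to compare your attempt against.

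Your overall strategy is correct: reduce via Lemma~\ref{l:stab d allowable} to showing that each $d$-allowable $J$ gives a parabolic $W_J$; handle $\a_0\notin J$ trivially; and for $\a_0\in J$ identify $W_J$ with the Weyl group of the pseudo-Levi $C_G(t_x)^\circ$ and argue that goodness of $d$ forces this to be an honest Levi. Two points need fixing. First, a slip of the pen: the simple roots of $C_G(t_x)^\circ$ correspond to $J$, not to $\D^{\aff}\setminus J$ (this is consistent with your correct identification $W_J^\sh\cong W(C_G(t_x)^\circ)$ one line earlier, and with your $E_6$ example, so the error does not propagate). Second, and more substantively, the verification you propose --- ``match the abstract Dynkin type of $J$ to a sub-diagram of $\D$ and use that closed subsystems of matching type are $W$-conjugate to the standard one'' --- does not work as stated, even in simply-laced types: already in $E_7$ there are two non-$W$-conjugate Levi subsystems of type $3A_1$ (witnessed by the distinct nilpotent orbits $(3A_1)'$ and $(3A_1)''$), and similar phenomena occur in type $D$. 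What actually has to be checked is the \emph{saturation} condition $\Phi_J=\Phi\cap\Span_{\QQ}(\Phi_J)$, i.e., that the pseudo-Levi root system is a Levi subsystem; it is this --- not type-matching --- that the Borel--de Siebenthal tables, combined with the hypothesis $\gcd(d,n_\a)=1$ for all $\a$, establish type by type. Your final sentence indicates you understand this is the real content; the type-matching shortcut in the preceding sentence should be dropped or replaced by the saturation check.
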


\begin{cor}\label{cor:commonsummand} Assume $d$ is very good for $G$. Let $\chi\in \Irr(W)$, and assume $\chi_{L_{1/m}(\triv)} = \sum_{i\in I} k_{i} \chi_i$ for some positive integers $\{k_i\}_{i\in I}$, is the decomposition of the $W$-module $L_{1/m}(\triv)$ into irreducible characters. Then $\chi$ occurs in $L_{d/m}(\triv)$ if and only if there is a $d$-allowable subset $J\subset \D$ and an $i\in I$ such that
\begin{equation}\label{ResWJ}
\langle \Res_{W_{J}}^{W} \chi , \Res_{W_{J}}^{W} \chi_{i} \rangle > 0,
\end{equation}
where $\langle - , - \rangle$ denotes the natural inner product for class functions (i.e., when restricted to $W_J$, $\chi$ and $\chi_i$ have a common irreducible summand). 
\end{cor}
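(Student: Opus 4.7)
My plan is to turn Corollary \ref{cor:charform} into a $W$-module isomorphism and then reduce the multiplicity count to Frobenius reciprocity. First, Corollary \ref{cor:charform} identifies the characters of $L_{d/m}(\triv)$ and $\CC[\L/d\L]\otimes L_{1/m}(\triv)$ on all of $W$, so these finite-dimensional $W$-modules are isomorphic. Decomposing $\CC[\L/d\L]$ into its $W$-orbits gives
\[
\CC[\L/d\L] \cong \bigoplus_{O \in W\bs (\L/d\L)} \Ind_{W_{x_O}}^{W}(\triv),
\]
with $x_O\in O$ an arbitrary representative and $W_{x_O}$ its stabilizer.

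Next I would unpack the stabilizers using the structural results already in the paper. By Lemma \ref{l:stab d allowable}, every $W_{x_O}$ is $W$-conjugate to $W_J$ for some $d$-allowable $J \subset \D^{\aff}$; and by Proposition \ref{p:stab par} (applicable since $d$ very good implies $d$ good), each such $W_J$ is in fact a parabolic subgroup of $W$, so after $W$-conjugation we may take it to be the standard parabolic $W_J$ attached to some $J \subset \D$. Substituting $L_{1/m}(\triv) \cong \bigoplus_{i \in I} k_i \chi_i$ into the tensor product, then applying the projection formula $\Ind_{W_J}^W(\triv)\otimes \chi_i \cong \Ind_{W_J}^W(\Res_{W_J}^W \chi_i)$ and Frobenius reciprocity, yields
\[
\langle \chi, L_{d/m}(\triv)\rangle_W \;=\; \sum_{J}\sum_{i \in I} k_i\, \langle \Res_{W_J}^W \chi,\, \Res_{W_J}^W \chi_i\rangle_{W_J},
\]
where the outer sum ranges over a system of representatives of $W$-orbits on $\L/d\L$, i.e.\ over (conjugacy classes of) $d$-allowable $J$. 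Since every $k_i$ is a positive integer and every inner product on the right is a nonnegative integer, the left-hand side is positive iff some summand on the right is positive, which is precisely the stated criterion \eqref{ResWJ}.

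The one minor subtlety, which is where I expect the bookkeeping to be most delicate, is reconciling the indexing: Lemma \ref{l:stab d allowable} naturally parametrizes orbit-stabilizers by $d$-allowable subsets of $\D^{\aff}$, whereas the statement of the corollary uses $J\subset\D$. This is resolved by Proposition \ref{p:stab par} (to replace $W_J$ by a standard parabolic after conjugation) together with the observation that the inner products $\langle \Res_{W_J}^W\chi,\Res_{W_J}^W\chi_i\rangle$ depend only on the $W$-conjugacy class of $W_J$; no genuine obstacle arises beyond this identification.
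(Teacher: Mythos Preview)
Your proof is correct and follows essentially the same route as the paper: use Corollary \ref{cor:charform} to identify $L_{d/m}(\triv)$ with $\CC[\L/d\L]\otimes L_{1/m}(\triv)$, decompose the permutation module via Lemma \ref{l:stab d allowable} into inductions from $W_J$ for $d$-allowable $J$, invoke Proposition \ref{p:stab par} to pass to $J\subset\D$, and finish with the projection formula and Frobenius reciprocity. Your explicit positivity argument and your remark on the $\D^{\aff}$ versus $\D$ bookkeeping are exactly the points the paper's proof leaves implicit.
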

\begin{proof} By Corollary \ref{cor:charform} and Lemma \ref{l:stab d allowable}, $L_{d/m}(\triv) $ is a direct sum of $\Ind_{W_{J}}^{W}(\triv)\ot L_{1/m}(\triv)$ for $d$-allowable $J\subset \D^{\aff}$ (possibly with multiplicities more than one). By Proposition \ref{p:stab par}, we only need to consider those $d$-allowable $J$ contained in $\D$. Therefore, $\chi$ appears in $L_{d/m}(\triv)$ if and only if $\j{\chi, \Ind_{W_{J}}^{W}(\triv)\ot\chi_{i}}>0$ for some $i\in I$ and some $d$-allowable $J\subset \D$. By the projection formula and Frobenius reciprocity,

\[
\j{\chi, \Ind_{W_{J}}^{W}(\triv)\ot\chi_{i}}=\langle \chi, \Ind_{W_J}^{W} \Res_{W_J}^{W} \chi_{i} \rangle = \langle \Res_{W_J}^{W} \chi, \Res_{W_J}^{W} \chi_{i} \rangle \]
and the claim follows.
\end{proof}

\begin{remark} \label{rem:minpara} Let $J'\subset J\subset \D$. Then if \eqref{ResWJ} holds for $W_{J}$, it also holds for $W_{J'}$ (wiht the same $\chi$ and $\chi_{i}$).  Therefore, to apply the criterion in Corollary \ref{cor:commonsummand}, it suffices to check for {\em minimal} $d$-allowable $J\subset \D$.

\end{remark}

For $J\subset \D$, let $\cO_{J,\reg}$ be the nilpotent orbit of $\frg$ containing a regular nilpotent element of the Levi group of type $J$.

\begin{lemma}\label{lem:parabolictype} Let $J\subset \D$. Let $P_{J}\subset G$ be the standard parabolic subgroup whose Levi $L_{J}$ has simple roots $ J$. Denote by $\frl_{J}$ the Lie algebra of $L_{J}$ and by $\frn^{J}$ the Lie algebra of the unipotent radical of $P_{J}$. Let $\cO$ be a nilpotent orbit of $\frg$. 
The following are equivalent:
\begin{enumerate}
\item  $E_{\cO}^{W_{J}}\neq 0$. 
\item  $\cO$ contains an element of the form $e_{J,\reg}+e'$ where $e_{J,\reg}$ is a regular nilpotent element in $\frl_{J}$ and $e'\in \frn^{J}$.
\item $\cO_{J,\reg} \preccurlyeq\cO$. 
\end{enumerate}
\end{lemma}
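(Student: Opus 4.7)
The plan is to prove the three conditions equivalent via the cycle $(2) \Rightarrow (3) \Rightarrow (1) \Rightarrow (2)$, blending a geometric degeneration argument with Springer theory.

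For $(2) \Rightarrow (3)$, I would use a $\Gm$-contraction. Choose a cocharacter $\chi \in \xcoch(Z(L_J)^{\c})_{\QQ}$ with $\langle \alpha, \chi \rangle = 0$ for all $\alpha \in J$ and $\langle \alpha, \chi \rangle > 0$ for all $\alpha \in \D \setminus J$ (the case $J = \D$ is trivial). Under $\Ad(\chi(t))$, the element $e_{J,\reg} \in \frl_J$ has weight $0$, while $\frn^J$ carries only strictly positive weights. Hence for $x = e_{J,\reg} + e' \in \cO$ we have $\lim_{t \to 0}\Ad(\chi(t))x = e_{J,\reg}$, which lies in $\ov\cO$, giving $\cO_{J,\reg} \preccurlyeq \cO$.

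For $(3) \Rightarrow (1)$, I would invoke Springer theory. By the Springer decomposition $R\pi_*\Qlbar = \bigoplus_{(\cO',\chi)} \IC(\ov{\cO'}, \cL_{\chi}) \otimes V_{(\cO',\chi)}$ with $V_{(\cO',\triv)} = E_{\cO'}$, the stalk $\upH^*(\cB_x)$ decomposes as a $W$-module as $\bigoplus_{E} E \otimes \IC(\ov{\cO_E})_x$ on the trivial-coefficient part. Condition (3) gives $e_{J,\reg} \in \ov\cO$, hence $\IC(\ov\cO)_{e_{J,\reg}} \neq 0$, so $E_\cO$ occurs as a constituent of $\upH^*(\cB_{e_{J,\reg}})$. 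The Borho--MacPherson identification $\upH^*(\cB^J_x) \cong \upH^*(\cB_x)^{W_J}$ for the partial Springer fiber $\cB^J_x = \{gP_J : \Ad(g^{-1})x \in \frn^J\}$, together with Lusztig's description $E_{\cO_{J,\reg}} = j^W_{W_J}(\sgn_{W_J})$ via truncated induction, enables one to track $W_J$-invariants and conclude $E_\cO^{W_J} \neq 0$.

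For $(1) \Rightarrow (2)$, first apply $(1) \Rightarrow (3)$ to get $\cO_{J,\reg} \preccurlyeq \cO$, then argue geometrically that $\cO$ meets $e_{J,\reg} + \frn^J$. The Lusztig--Spaltenstein induction identifies $G \cdot (\cO^{L_J}_{J,\reg} + \frn^J)$ with the union of $G$-orbits having an element in $\cO^{L_J}_{J,\reg} + \frn^J$; combined with the $\Gm$-contraction from $(2) \Rightarrow (3)$, this union is exactly $\{\cO' : \cO_{J,\reg} \preccurlyeq \cO'\}$. Since $\cO$ lies in this family, one concludes $\cO \cap (e_{J,\reg} + \frn^J) \neq \vn$.

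The main obstacle is the Springer-theoretic implication $(3) \Rightarrow (1)$. Knowing $\IC(\ov\cO)_{e_{J,\reg}} \neq 0$ only tells us that $E_\cO$ is a constituent of $\upH^*(\cB_{e_{J,\reg}})$ as a $W$-module; extracting nonvanishing of the $W_J$-invariant subspace $E_\cO^{W_J}$ requires finer input from Lusztig's $j$-induction formalism or a careful analysis of the parabolic Springer fiber cohomology.
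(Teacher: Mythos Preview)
Your $(2)\Rightarrow(3)$ is fine, and your $(3)\Rightarrow(1)$ is closer than you think: the ``finer input'' you are missing is simply Lusztig's induction theorem $\cohog{*}{\cB_{e_{J,\reg}}}\cong \Ind_{W_J}^{W}(\triv)$ as $W$-modules. Once $E_{\cO}$ appears in $\cohog{*}{\cB_{e_{J,\reg}}}$, Frobenius reciprocity gives $E_{\cO}^{W_J}\ne 0$ immediately---no $j$-induction or partial Springer fiber analysis is needed. This is exactly how the paper closes $(3)\Rightarrow(1)$.

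The real gap is in your $(1)\Rightarrow(2)$. First, there is a circularity: you invoke $(1)\Rightarrow(3)$, but in your cycle $(2)\Rightarrow(3)\Rightarrow(1)\Rightarrow(2)$ that implication is not yet available. More seriously, even granting $(3)$, your assertion that $G\cdot(\cO^{L_J}_{J,\reg}+\frn^J)$ equals $\{\cO':\cO_{J,\reg}\preccurlyeq\cO'\}$ is unjustified. The $\Gm$-contraction shows only one inclusion: every orbit meeting $e_{J,\reg}+\frn^J$ contains $\cO_{J,\reg}$ in its closure. The reverse---that every orbit with $\cO_{J,\reg}$ in its closure actually meets $e_{J,\reg}+\frn^J$---is precisely the content of $(3)\Rightarrow(2)$, and it does not follow from Lusztig--Spaltenstein induction (which identifies the \emph{open} orbit in that image, not the whole image).

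The paper handles $(1)\Leftrightarrow(2)$ by a different, direct argument: it uses that the partial Springer resolution $\pi_P:\wt\cN_P\to\cN$ is semismall, so $E_{\cO}^{W_J}\ne 0$ iff $\cO$ is relevant for $\pi_P$, i.e., $\dim\pi_P^{-1}(e)=\frac{1}{2}\codim_{\cN}\cO$. A fiber dimension comparison via $\cB_e\to\cP_e$ (whose fibers are Springer fibers for $L_J$) shows this holds iff the evaluation map $\cP_e\to[\cN_{L_J}/L_J]$ hits the regular locus, which is $(2)$. You should replace your $(1)\Rightarrow(2)$ with this semismallness argument.
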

\begin{proof}
We abbreviate $P_{J}, L_{J}, \frp_{J}$ and $\frl_{J}$ by $P,L,\frp$ and $\frl$. Write $\frn_{P}=\frn^{J}$.

We first prove equivalence of the first two statements. 
 
Let $\pi:\wt\cN\to \cN$ and $\pi_{L}: \wt\cN_{L}\to \cN_{L}$ be the Springer resolutions for $G$ and $L$ respectively. Let  
\[ \widetilde{\cN}_{P} = \{(gP,e) \in G/P \times \cN \mid \Ad(g^{-1})e\in \frp \}. \]
We have a diagram in which the square is Cartesian
\begin{equation*}
\xymatrix{ \widetilde{\cN}\ar[d]^{\nu_{P}} \ar[r]& [\widetilde{\cN}_{L} / L ] \ar[d]^{\pi_{L}} \\
\widetilde{\cN}_{P} \ar[d]^{\pi_{P}} \ar[r]^-{\ev}& [\cN_{L} / L ] \\
\cN & 
}
\end{equation*}
Here, $\nu_{P}$ is induced by the projection $G/B \to G/P$, $\pi_{P}$ is the sends  $(gP,e)$ to $e\in \cN$,  and $\ev(gP,e)=\Ad(g^{-1})e \mod \frn_{P}$. By Springer theory, $E_{\cO}$ is the multiplicity space of $\IC(\ov{\cO})$ in the perverse sheaf $R\pi_*\CC[\dim\cN]$. Therefore, since $(R\pi_{*}\CC)^{W_{P}} = R\pi_{P,*}\CC$, we know that $E_{\cO}^{W_{P}} \neq 0$ if and only if $\IC(\ov{\cO})$ appears in $R\pi_{P,*}\CC[\dim\cN]$. The map $\pi_{P}$ is semismall and $\widetilde{\cN}_{P}$ is rationally smooth, hence this is equivalent to $\cO$ being a relevant stratum for $\pi_{P}$ in the sense that $\dim \pi_{P}^{-1}(e) = \frac{1}{2}\codim_{\cN}\cO$ for any $e\in \cO$. 

Choose $e\in \cO$ and let $\cP_{e}=\pi_{P}^{-1}(e), \cB_{e}=\pi^{-1}(e)$. We prove the following claim: $\dim \cP_{e} = \frac{1}{2}\codim\, \cO$ if and only if the image of $\ev:\cP_{e}\to [\cN_{L} / L ]$ intersects the regular locus $\cN_{L}^{\reg}$ in $\cN_{L}$. 

For the only if direction, note that the fiber of $\nu_{P,e}: \cB_{e} \to \cP_{e}$ above $h\in \cP_{e}$ is isomorphic to the Springer fiber $\cB^{L}_{\ev(h)}$ of $L$ at $\ev(h)\in\cN_{L}/L$. Therefore, if $\dim \cP_{e} = \frac{1}{2}\codim_{\cN}\cO = \dim \cB_{e}$,  then for $x$ in a non-empty open subset of $\cP_{e}$ , the fiber $\nu_{P,e}^{-1}(x)$ must be zero-dimensional, which happens only if $\ev(x)$ is regular in $\cN_{L}$.

Let us prove the if direction. If $\Im(\ev)$ meets the regular locus, then $\cP_{e}^{\reg} := \cP_{e}\cap\ev^{-1}(\cN_{L}^{\reg})$ is open in $\cP_{e}$ and non-empty. Now, $\nu_{P,e}^{\reg}: \nu_{P}^{-1}(\cP_{e}^{\reg})\to \cP^{\reg}_{e}$ is an isomorphism (since the fibers are regular Springer fibers in $L$), and $\nu_{P}^{-1}(\cP_{e}^{\reg})$ is open in the Springer fiber $\cB_{e}$. Since Springer fibers are equidimensional, hence 
\begin{equation*}
\dim\cP_{e} \ge \dim\cP_{e}^{\reg} =\dim \cB_{e}.
\end{equation*}
Since $\dim\cP_{e} \le \dim \cB_{e}$ anyway, we conclude that $\dim\cP_{e} = \dim \cB_{e}$. This proves the claim. 

Finally, this finishes the proof of $(1)\iff(2)$, since $\Im(\ev)$ meets the regular locus if and only if there is $g\in G$ such that $\Ad(g^{-1})e \mod \frn_{P} \in \cN_{L}^{\reg}$. 

Now (2) implies (3) is clear. To show (3) implies (1),  by Lemma \ref{l:Spr}, it suffices to show that $\cohog{*}{\cB_{e_{J, \reg}}}\cong \Ind^{W}_{W_{J}}(\triv)$ as $W$-modules. By \cite[Theorem 1.3]{L-ind}, we have
\[\Ind_{W_{J}}^{W} (\triv) = \Ind_{W_{J}}^{W} \cohog{*}{\cB^{L}_{e_{J,\reg}}}\cong \cohog{*}{\cB_{e_{J,\reg}} }.\]
as (ungraded) $W$-modules. This finishes the proof.
\end{proof}

\begin{cor}\label{c:crit h} Let $d\in \NN$ be coprime to $h$ and $\cO$ be a nilpotent orbit of $\frg$. Then $E_{\cO}$ appears in $L_{d/h}(\triv)$ if and only if $\cO_{J,\reg}\preccurlyeq \cO$ for some $d$-allowable subset $J\subset \D$. 
\end{cor}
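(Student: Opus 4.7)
The plan is to combine Corollary \ref{cor:commonsummand} with Lemma \ref{lem:parabolictype}, using as the key input the identification of $L_{1/h}(\triv)$ as a $W$-module in the Coxeter case.

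First, I would verify that the hypothesis $\gcd(d,h) = 1$ actually implies $d$ is very good for $G$ in the sense of Sommers, so that Corollary \ref{cor:commonsummand} applies. This is a type-by-type inspection: one checks that for each simple $G$, every prime dividing a coefficient $n_\alpha$ of the highest root, and every prime dividing $|\pi_1(G^{\ad})|$, already divides $h$. This is essentially immediate (e.g.\ in type $A_{n-1}$, $h=n$ and all $n_\alpha = 1$; in types $B,C,D$, $h$ is even and the only nontrivial coefficients are $2$; in the exceptional cases one inspects the short table).

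Second, I would invoke the Berest--Etingof--Ginzburg theory \cite{BEG}: at a slope $d/h$ coprime in lowest terms, the finite-dimensional irreducible module $L_{d/h}(\triv)$ has dimension $d^{\,\rank W}$. In particular $\dim L_{1/h}(\triv) = 1$, so as a $W$-module one must have $L_{1/h}(\triv) \cong \triv$, since it is a quotient of the polynomial Verma module whose lowest piece is $\triv$.

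Third, I would plug $m = h$ and the one-term decomposition $\chi_{L_{1/h}(\triv)} = \triv$ into Corollary \ref{cor:commonsummand}, applied to the irreducible character $\chi = E_\cO$. The inner-product criterion
\[
\langle \Res_{W_J}^W E_\cO,\; \Res_{W_J}^W \triv\rangle > 0
\]
then collapses to the condition $(E_\cO)^{W_J} \ne 0$ for some $d$-allowable $J \subset \Delta$. Finally, I would invoke the equivalence $(E_\cO)^{W_J} \ne 0 \iff \cO_{J,\reg} \preccurlyeq \cO$ proved in Lemma \ref{lem:parabolictype} to deliver the stated criterion.

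The only real obstacle is the second step, namely pinning down $L_{1/h}(\triv)$ precisely; every other ingredient has already been assembled in the preceding pages. Everything else is then a mechanical specialization of two already-proved criteria, and Remark \ref{rem:minpara} tells us in practice we only need to check minimal $d$-allowable subsets $J$ when applying the corollary to compute $\cO_\nu$.
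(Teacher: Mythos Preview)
Your proposal is correct and follows essentially the same route as the paper's proof: verify $d$ is very good, identify $L_{1/h}(\triv)$ with the trivial $W$-module, then apply Corollary \ref{cor:commonsummand} and Lemma \ref{lem:parabolictype} in succession. The only difference is that you supply more justification for $L_{1/h}(\triv)\cong\triv$ via the BEG dimension formula, which the paper simply asserts.
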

\begin{proof} If $d$ is coprime to $h$, then $d$ is very good by a case-by-case checking. Since $L_{1/h}(\triv)$ is the trivial representation of $W$ (see \cite[\S 2]{BEG}), Corollary \ref{cor:commonsummand} implies that $E_{\cO}$ appears in $L_{d/h}(\triv)$ if and only if $E_{\cO}^{W_{J}}\ne0$ for some $d$-allowable $J\subset \D$. Then Lemma \ref{lem:parabolictype} implies that $E_{\cO}^{W_{J}}\ne0$ if and only if $\cO_{J,\reg}\preccurlyeq \cO$.
\end{proof}

\subsection{Type $A_{n-1}$} \label{ss:typeA}
Let $\Part(n)$ be the set of partitions of $n$. We identify nilpotent orbits of 
$\sl(n)$ with $\Part(n)$ by taking the sizes of Jordan blocks. For each $d\in\NN$, there is a unique partition $\l^{n,d}$ with at most $d$ parts, such that the parts are distributed as evenly as possible, i.e., each part is either $\lfloor n/d\rfloor$ or $\lceil n/d\rceil$.

To apply Corollary \ref{c:crit h}, we need to list all $d$-allowable $J\subset \D$ and compute the nilpotent orbits $\cO_{J,\reg}$. The subgroups $W_{J}$ are up to conjugacy the stabilizers of $x\in \L/d\L$ under $S_{n}$. For $\gcd(d,n)=1$, this is the same as stabilizers of the permutation action of  $S_{n}$ on $(\ZZ/d\ZZ)^{n}$. Therefore for $d$-allowable subsets $J\subset \D$, $W_{J}$ are exactly those standard parabolic subgroups of $W=S_{n}$ that are isomorphic to $S_{n_{1}}\times\cdots\times S_{n_{\ell}}$ for $\ell\le d$. The corresponding $\cO_{J,\reg}$ are exactly those with at most $d$ Jordan blocks. Among such nilpotent orbits,  the smallest one is $\cO_{\nu}$ that corresponds to $\l^{n,d}$. This proves Theorem \ref{th:h main} in type $A$.

\subsection{Type $B_n$}\label{ss:typeB} Let $h=2n$ be the Coxeter number and $d$ be coprime to $2n$. Nilpotent orbits in $\so(2n+1)$ are in bijection with the subset $\Part_{B}(2n+1)\subset \Part(2n+1)$ consisting of partitions in which even parts occur with even multiplicity. To apply Corollary \ref{c:crit h}, we determine $d$-allowable $J\subset \D$, which is the same thing as determining stabilizers $W_{x}$ for $x\in \L/d\L\cong (\ZZ/d\ZZ)^{n}$ by Lemma \ref{l:stab d allowable} and Proposition \ref{p:stab par}.

Let us describe the stabilizer $W_{x}$ for $x\in (\ZZ / d\ZZ)^n$. For $0\le i\le \frac{d-1}{2}$,  let $n_i$ be the number of coordinates of $x$ equal to $\pm i\in \ZZ/ d\ZZ$. Then the stabilizer of $x$ is 
\[W_{x} = S_{n_1} \times \cdots \times S_{n_{\frac{d-1}{2}}} \times W(B_{n_0}) \]
where $W(B_{n_0})$ is a Weyl group of type $B_{n_0}$ and $S_{n_i}$ denotes a symmetric group. Since $x\in (\ZZ / d\ZZ)^n$, the group $W_x$ has at most $\frac{d+1}{2}$ factors. The parabolic subgroup $W_x$ of $W$ corresponds to a parabolic subgroup $P$ of $\SO(2n+1)$ whose Levi factor is isomorphic to
\[\GL(n_1) \times \cdots\times \GL(n_{\frac{d-1}{2}}) \times \SO(2n_{0}+1). \]
The regular nilpotent orbit in this Levi factor corresponds to the partition 
\[(n_1,n_1, \cdots , n_{\frac{d-1}{2}}, n_{\frac{d-1}{2}}, 2n_0+1 ) \]
of $2n+1$. Such partitions in $\Part_{B}(2n+1)$ are characterized as follows: they have at most $d$ parts, and, except for one odd part, all parts appear an even number of times. Denote the set of such partitions by $\Part_{B, \nu}(2n+1)$.

Let $2n+1=kd+d'$ with $0 \le d' < d$. Since $d$ is odd, the partition
\[\lambda^{2n+1,d}=(\underbrace{k+1,\cdots,k+1}_{d'},\underbrace{k,\cdots, k}_{d-d'}), \]
lies in $\Part_{B,\nu}(2n+1)$.  It therefore corresponds to a nilpotent orbit $\cO_{\nu}$ in $\so(2n+1)$ which is $\cO_{J,\reg}$ for a $d$-allowable $J\subset \D$. Since $\l^{2n+1,d}$ is the minimal partition (under the dominance order) with at most $d$ parts, $\cO_{\nu}$ is the minimal nilpotent orbit among those whose Jordan type is in $\Part_{B, \nu}(2n+1)$, i.e., $\cO_{\nu}$ is minimal among those $\cO_{J,\reg}$ for $d$-allowable $J\subset \D$. This proves Theorem \ref{th:h main} in type $B$.

\subsection{Type $C_n$} The argument is very similar to the type $B$ case. Let $\nu = d/2n$ with $\gcd(d,2n)=1$. Nilpotent orbits of $\sp(2n)$ are in bijection with the subset $\Part_{C}(2n)\subset \Part(2n)$ consisting of partitions in which odd parts appear with even multiplicity.  As in \S\ref{ss:typeB}, for $x\in (\ZZ/d\ZZ)^{n}$ with $W_{x}$ conjugate to $W_{J}$, the Jordan type of $\cO_{J,\reg}$ has at most $d$ parts. Writing $2n=kd+d'$ with $ 0 \le d' < d$ we obtain the partition
\[\lambda^{2n,d}=(\underbrace{k+1,\cdots,k+1}_{d'},\underbrace{k,\cdots,k}_{d-d'}), \]
Again, since $d$ is odd, this automatically satisfies the parity constraint so it defines a nilpotent orbit $\cO_{\nu}$ in $\sp(2n)$. Now, $\lambda^{2n,d}$ is the unique minimal partition of $2n$ with at most $d$ parts. Therefore $\cO_{\nu}$ is the minimal orbit among $\cO_{J,\reg}$ for $d$-allowable $J\subset \D$. This proves Theorem \ref{th:h main} in type $C$.

\subsection{Type $D_n$} \label{ss:typeD}The argument is similar to types $B$ and $C$, with a minor modification. Let $\nu = d/(2n-2)$ with $\gcd(d,2(n-1))=1$. There is a surjection from the set of nilpotent orbits in $\so(2n)$ to the subset $\Part_{D}(2n)\subset \Part(2n)$ consisting of partitions for which even parts occur with even multiplicity. This map is $1$ to $1$ except over very even partitions (having only even parts), in which case there are exactly two nilpotent orbits mapping to the same very even partition.

For $x\in \L/d\L=(\ZZ/d\ZZ)^{n}$ (since $d$ is odd), using the same notation as in \S\ref{ss:typeB}, $W_{x}$ is 
\[W_{x} \cong S_{n_1} \times \cdots\times S_{n_{\frac{d-1}{2}}} \times W(D_{n_0}). \]
The corresponding standard Levi subgroup is isomorphic to
\begin{equation*}
 \GL(n_1)\times \cdots\times \GL(n_{\frac{d-1}{2}}) \times \SO(2n_{0}).
 \end{equation*}
Note that  the regular nilpotent orbit in $\so(2n_0)$ is subregular in $\sl(2n_0)$, hence corresponds to the partition $(2n_0-1,1)$. Therefore, $\cO_{J,\reg}$ for $d$-allowable $J\subset \D$ has Jordan type  (up to reordering and removing zeros)
\[\mu = \begin{cases}  
(n_1,n_1,n_2,n_2,\cdots,n_{\frac{d-1}{2}},n_{\frac{d-1}{2}},2n_0-1,1), & n_0 \ge 1; \\
(n_1,n_1,n_2,n_2,\cdots,n_{\frac{d-1}{2}},n_{\frac{d-1}{2}}), & n_0 = 0.
\end{cases}
\]

Such partitions form a subset $\Part_{D,\nu}(2n)\subset \Part_{D}(2n)$: it consists of partitions $\l\in \Part_{D}(2n)$ satisfying {\em one of the following conditions}
\begin{itemize}
\item Either $\l$ has at most $d-1$ parts and all parts appear with even multiplicities.
\item Or $\l$ has at most $d+1$ parts with the smallest part being $1$, and all parts appear with even multiplicities.
\item Or $\l$ has at most $d+1$ parts with the smallest part being $1$,  and, except for two distinct odd parts, one of them being $1$, all other parts of $\l$ appear with even multiplicities.
\end{itemize}

\begin{lemma}\label{l:partitionD} Let $2n-1 = kd+d'$ with $0 \le d' \le d$. Consider the partition of $2n$
\begin{equation*}
\l_{\nu} = (\underbrace{k+1,\cdots,k+1}_{d'},\underbrace{k,\cdots,k}_{d-d'},1).
\end{equation*}
Then $\l_{\nu}\in \Part_{D,\nu}(2n)$ and it is the unique minimal element in $\Part_{D,\nu}(2n)$ under the dominance order. 
\end{lemma}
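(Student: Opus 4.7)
The plan is to prove the lemma in two stages: first verify the membership $\l_{\nu}\in\Part_{D,\nu}(2n)$, then show that $\l_{\nu}$ is the minimum of $\Part_{D,\nu}(2n)$ under the dominance order. Uniqueness of the minimal element then follows automatically from the existence of a minimum.

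For membership, observe that $\gcd(d,2n-2)=1$ forces $d$ to be odd, so the equation $2n-1=kd+d'$ relates two odd numbers and gives $d'\equiv k+1\pmod{2}$. I would then check $\l_{\nu}=((k+1)^{d'},k^{d-d'},1)$ against the three-part definition of $\Part_{D,\nu}(2n)$ given in \S\ref{ss:typeD} by cases on the parity of $k$. When $k$ is even, $d'$ is odd, so the even part $k$ appears with even multiplicity $d-d'$, while $k+1$ and $1$ are two distinct odd parts with odd multiplicities, matching the third defining condition. When $k\ge 3$ is odd, $d'$ is even, so the even part $k+1$ has even multiplicity $d'$, while the two distinct odd parts $k$ and $1$ again have odd multiplicities, again matching condition three. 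The boundary cases $k=1$ (where the trailing $1$ merges into the $k$-column, yielding all even multiplicities and condition two) and $k=0$ (where $\l_{\nu}=(1^{d'+1})$, also condition two) are handled separately.

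For minimality, let $\l\in\Part_{D,\nu}(2n)$, and split by whether its smallest part equals $1$. In Case A (smallest part $=1$, covering conditions two and three), write $\l=(\l',1)$. The bound $|\l|\le d+1$ forces $\l'$ to be a partition of $2n-1$ with at most $d$ parts, so the standard majorization fact that the most balanced partition of $N$ into at most $s$ parts is the minimum under dominance among such partitions yields $\l'\cge((k+1)^{d'},k^{d-d'})$; re-attaching the $1$ gives $\l\cge\l_{\nu}$. In Case B (smallest part $>1$, forcing $\l$ to satisfy condition one), $\l$ has at most $d-1$ parts and thus dominates the most balanced partition $\mu^{*}=((p+1)^{q},p^{d-1-q})$ of $2n$ into $d-1$ parts, where $2n=(d-1)p+q$ with $0\le q<d-1$. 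I would then verify $\mu^{*}\cge\l_{\nu}$ directly by a piecewise comparison of the partial sums $a_{i}(\mu^{*})$ and $a_{i}(\l_{\nu})$ for $i=1,\ldots,d-1$ (the inequalities for $i\ge d-1$ being automatic). The key numerical inputs are $p\ge k$ (which follows from $2n/(d-1)\ge(2n-1)/d$) and the observation that if $p=k$ then necessarily $q=k+d'+1>d'$; these together rule out the only problematic sub-case in the piecewise comparison.

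The hard part will be Case B, since $\l_{\nu}$ has a trailing $1$ and is not itself a most balanced partition into any fixed number of parts. Thus the dominance $\mu^{*}\cge\l_{\nu}$ does not come from a clean abstract majorization statement but requires the explicit partial-sum comparison above together with the arithmetic inputs on $p,q,k,d'$. Once this auxiliary inequality is in hand, the membership check and Case A are essentially bookkeeping.
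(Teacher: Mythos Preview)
Your proposal is correct, and your treatment of Case B takes a genuinely different route from the paper's. The paper does not compare partial sums directly; instead, given $\mu$ satisfying condition one (at most $d-1$ parts, all multiplicities even), it iteratively replaces the largest part $u$ to produce a strictly smaller partition with more parts, repeating until the length reaches $d+1$ and the smallest part is $1$, at which point the argument merges with your Case A. Your approach bypasses this iterative construction: you observe that any such $\mu$ dominates the most-balanced partition $\mu^*=\l^{2n,d-1}$, and then verify $\mu^*\cge\l_{\nu}$ by the explicit partial-sum inequality $ip+\min(i,q)\ge ik+\min(i,d')$, which reduces to the two arithmetic facts $p\ge k$ and (when $p=k$) $q=k+d'+1>d'$. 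This is cleaner and avoids the somewhat ad hoc splitting moves in the paper's proof; the paper's version has the advantage of being more uniform (everything funnels through Case A) but at the cost of those case-by-case constructions. Your membership check is also slightly more careful than the paper's, which only records the parity observation without isolating the boundary cases $k\in\{0,1\}$.
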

\begin{proof}  
First we check $\l_{\nu}\in \Part_{D,\nu}(2n)$.  Note that since $d$ is odd, if $k$ is even (resp. odd), then $d'$ is odd (resp. even), and even parts in $\l^{\nu}$ occur with even multiplicity. 

Let $\mu\in \Part_{D,\nu}(2n)$, and let $\ell(\mu)$ be the number of parts of $\mu$. We first show that we may assume that $\ell(\mu)=d+1$. If $\ell(\mu) < d+1$, then we already need to have $\ell(\mu) \le d-1$. Let $u$ be the largest part of $\mu$. Now $u \ge 2$. If $u$ is even, it occurs with even multiplicity. Then, we obtain a strictly smaller partition $\mu'$ by replacing two instances of $u$ by $(u-1,1)$. If $u$ is odd, we have $u \ge 3$. Now there are two cases. Either, $u$ occurs at least twice, in which case we argue as in the even case, or $u$ occurs only once. In this case, the last part of $\mu$ has to be $1$, and we obtain a strictly smaller partition $\mu'$ by replacing both $u$ and the last part of $\mu$ by $(u-1)/2$, reordering, and adding two parts equal to $1$ at the end. 

We are thus reduced to the case that $\mu$ has $d+1$ parts, hence its smallest part is equal to $1$. Denote by $\tilde{\mu}$ the partition of $2n-1$ obtained by removing the last part. Then minimizing $\mu$ over partitions of $2n$ with  $d+1$ parts whose last part is $1$ is equivalent to minimizing $\tilde{\mu}$ over partitions of $2n-1$ with $d$ parts. Since $\l^{2n-1,d}$ is minimal among partitions of $2n-1$ with $d$ parts, we conclude that $\l_{\nu}=\l^{2n-1,d}\cup\{1\}$ is minimal in $\Part_{D,\nu}(2n)$. This finishes the proof of the lemma.
\end{proof}

Since $\l_{\nu}$ is not very even, it corresponds to a unique nilpotent orbit $\cO_{\nu}$ of $\so(2n)$, which by Lemma \ref{l:partitionD} is the minimal orbit among $\cO_{J,\reg}$ for $d$-allowable $J\subset \D$. This finishes the proof of Theorem \ref{th:h main} in type $D$.

\subsection{Exceptional types}
By Corollary \ref{c:crit h}, we need to identify the nilpotent orbits $\cO_{J,\reg}$ for $d$-allowable $J\subset \D$, and find the minimal ones among them. In Bala-Carter's labelling of nilpotent orbits \cite[\S 13.1.]{Carter}, if an orbit is labelled by a subdiagram (possibly reducible) of the Dynkin diagram of $G$ with vertices $J$,  it means this orbit is $\cO_{J,\reg}$. We list the minimal $d$-allowable $J$'s and find that the corresponding set of $\cO_{J,\reg}$'s has a unique minimal element $\cO_{\nu}$.

The computations to list the relevant parabolic subgroups are straightforward, and we leave them to the reader. Note that in type $E_7$, one encounters the following subtlety. For $E_7$ and $d=5$ one needs to check that out of the orbits $(A_5)'$ and $(A_5)''$ only the first one appears. Similarly, for $d=7$ one needs to rule out $(A_3+A_1)''$ and for $d=11$ one needs to rule out $(3A_1)''$. This can be done by noting that the $(-)'$-versions are contained in the subgroup of type $E_6$, while the $(-)''$-versions are not.

\begin{remark} 
In Tables \ref{t:clCox} and \ref{t:excCox} we list the quantity $\frac{1}{2}(\nu|\Phi|-\dim C(\cO_{\nu}))$, where $\Phi$ is the set of roots of $G$ and $C(\cO_{\nu})$ is the centralizer of any $e\in \cO_{\nu}$. This number measures how far a solution is from being cohomologically rigid by the discussion in \S \ref{ss:coho rig}, in particular Proposition \ref{p:num rig}. The equality $\frac{1}{2}(\nu|\Phi|-\dim C(\cO_{\nu}))=0$ is equivalent to saying that the solutions to the corresponding Deligne-Simpson problem is cohomologically rigid.  Note that $\nu|\Phi|=(d/h)\cdot hr=dr$. The dimension of $C(\cO_{\nu})$ can be found in \cite[\S 13.1.]{Carter}.
\end{remark}

By inspection of the results in all types, we arrive at the following observation.
\begin{prop} For any simple Lie algebra $\frg$, we have $\cO_{1-1/h}=\cO_{\min}$, the minimal nilpotent orbit in $\frg$. In other words, for an adjoint orbit $\cO$ (not necessarily nilpotent),  $DS(\nu,\cO)$ has an affirmative answer if and only if $\cO\ne\{0\}$.

\end{prop}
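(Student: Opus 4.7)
The plan is to invoke Theorem \ref{th:h main} with $d = h-1$ and verify directly that $\cO_{\min}$ is the minimum of $\{\cO_{J,\reg} : J \subset \D \text{ is } (h-1)\text{-allowable}\}$. The key combinatorial input is the Coxeter identity $\sum_{\alpha \in \D^{\aff}} n_{\alpha} = h$ together with $n_{\alpha_0} = 1$.

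First I will observe that $J = \varnothing$ is \emph{not} $(h-1)$-allowable: the minimum of $\sum_{\alpha \in \D^{\aff}} k_{\alpha} n_{\alpha}$ over positive integers $k_{\alpha}$ equals $h > h - 1$. Consequently every $(h-1)$-allowable $J \subset \D$ is nonempty, so every $\cO_{J,\reg}$ arising in the classification is a nonzero nilpotent orbit and therefore contains $\cO_{\min}$ in its closure.

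Next I will exhibit $\cO_{\min}$ as one of the orbits $\cO_{J,\reg}$. For any simple root $\alpha \in \D$, set $k_{\alpha_0} = n_{\alpha}$ and $k_{\beta} = 1$ for the remaining $\beta \in \D \setminus \{\alpha\}$; then
\[
\sum_{\beta \in \D^{\aff} \setminus \{\alpha\}} k_{\beta} n_{\beta} = n_{\alpha} \cdot 1 + \sum_{\beta \in \D \setminus \{\alpha\}} n_{\beta} = n_{\alpha} + (h - 1 - n_{\alpha}) = h - 1,
\]
so $\{\alpha\}$ is $(h-1)$-allowable. I will then take $\alpha$ to be a long simple root, which exists in any irreducible root system because the highest root is long and is $W$-conjugate to a simple root. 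Then $\cO_{\{\alpha\},\reg}$ is the $G$-orbit of $e_{\alpha}$, which for long $\alpha$ coincides with $\cO_{\min}$. Combined with the previous paragraph, Theorem \ref{th:h main} yields $\cO_{1-1/h} = \cO_{\min}$.

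For the ``in other words'' statement, Theorem \ref{th:h main} reduces it to showing $\cO_{\min} \cle \cO^{\nil}$ iff $\cO \ne \{0\}$, which in turn is equivalent to $\cO^{\nil} \ne \{0\}$ since $\cO_{\min}$ is the unique minimal nonzero nilpotent orbit. I will verify this using the description in \S\ref{ss:Onil}: for $x = x_s + x_n \in \cO$ with Jordan decomposition, $\cO^{\nil} = \Ind_{\frl}^{\frg}(\cO^L_{x_n})$ with $L = C_G(x_s)$. If $x_n \ne 0$ then $\cO^L_{x_n}$ is a nonzero nilpotent orbit in $\frl$ and its Lusztig--Spaltenstein induction is nonzero. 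If $x_n = 0$ but $x_s \ne 0$ then $x_s$ is non-central (since $\frg$ is simple), so $L \subsetneq G$ and $\cO^{\nil}$ is the nonzero Richardson orbit for any parabolic with Levi $L$. The reverse implication is immediate. There is no real obstacle in the argument; the entire content reduces to the one-line combinatorial identity witnessing $(h-1)$-allowability of $\{\alpha\}$ for every simple $\alpha$.
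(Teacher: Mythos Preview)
Your proof is correct and is in fact a cleaner, uniform argument than the paper's. The paper establishes this proposition simply ``by inspection of the results in all types'', i.e.\ by reading off $\cO_{(h-1)/h}$ from Tables \ref{t:clCox} and \ref{t:excCox}. You instead bypass the tables entirely and work directly with the criterion of Corollary \ref{c:crit h}: using the Coxeter identity $\sum_{\alpha\in\D^{\aff}} n_\alpha = h$, you show uniformly that (i) $J=\varnothing$ is not $(h-1)$-allowable, so every relevant $\cO_{J,\reg}$ is nonzero, and (ii) $J=\{\alpha\}$ is $(h-1)$-allowable for every simple $\alpha$, so taking $\alpha$ long yields $\cO_{\{\alpha\},\reg}=\cO_{\min}$ as the minimum. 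This avoids any type-by-type verification. Your treatment of the ``in other words'' clause via the definition of $\cO^{\nil}$ is also fine; one could shorten it slightly by invoking the dimension equality $\dim C(\cO)=\dim C(\cO^{\nil})$ from \cite[Theorem 1.3(a)]{LS} (so $\cO^{\nil}=\{0\}$ iff $\dim\cO=0$ iff $\cO=\{0\}$), but your direct argument works.
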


\section{Complete solutions for classical groups}\label{s:cl}

In this section we use the criterion in Theorem \ref{th:DS Gr} and the idea of ``skeleton'' introduced in \cite{Yun-KL} to give complete and explicit answers to $DS(\nu,\cO)$ when $G$ is a classical group. The main result can be summarized as follows.

\begin{theorem}\label{th:cl} Let $G$ be an almost simple classical group, and $\nu=d/m>0$ with $m$ a regular number for $W$ and $d\ge1$ is prime to $m$. Then there is a (necessarily unique) nilpotent orbit $\cO_{\nu}$ in $\frg$ such that $DS(\nu,\cO)$ has an affirmative answer if and only if $\cO_{\nu}\cle \cO^{\nil}$. 

When $\nu\ge1$ we have $\cO_{\nu}=\{0\}$. When $\nu<1$, the Jordan types of $\cO_{\nu}$ are given by Table \ref{t:completecl}. \footnote{Note in type $D$ none of the Jordan types that appear in Table \ref{t:completecl} is very even, therefore the Jordan types determine the nilpotent orbits $\cO_{\nu}$.}

\begin{table}
\begin{center}
\begin{tabular}[t]{|C|C|}
\hline
\mbox{type} & \cO_{\nu} 	\\ 	
\hline

A_{n-1} & \begin{cases}\l^{n,n\nu}, & m|n;\\
\l^{n-1,(n-1)\nu}\cup\{1\}, & m|n-1.\end{cases}\\
\hline
B_{n} & \begin{cases}\l^{2n+1,2n\nu}, & 2n\nu \mbox{ odd};\\
\l^{2n,2n\nu}\cup\{1\}, & 2n\nu \mbox{ even}, d>1 \mbox{ or $d=1$ and $m$ odd};\\
(m+1, m,\cdots, m, m-1,1), & 2n\nu \mbox{ even}, d=1 \mbox{ and $m$ even}.\end{cases}\\
\hline
C_{n}  & \l^{2n,2n\nu} \\
\hline
D_{n}  & \begin{cases}\l^{2n,2n\nu}, & m|n, d>1 \mbox{ or $d=1$ and $m$ odd};\\
(m+1, m,\cdots, m, m-1), & m|n,  d=1 \mbox{ and $m$ even};\\
\l^{2n-1,(2n-2)\nu}\cup\{1\}, & m|2n-2, (2n-2)\nu \mbox{ odd}; \\
\l^{2n-2, (2n-2)\nu}\cup\{1,1\}, & m|n-1, d>1 \mbox{ or $d=1$ and $m$ odd};\\
(m+1,m,\cdots, m, m-1, 1,1)& m|n-1, d=1 \mbox{ and $m$ even}.\end{cases}\\
\hline
 \end{tabular} 
\end{center}
\caption{Complete solutions in the classical types}
\label{t:completecl}
\end{table}

\end{theorem}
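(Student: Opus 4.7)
By the geometric criterion (Theorem~\ref{th:DS Gr}) together with Lemma~\ref{l:N eq Gr}, $DS(\nu,\cO)$ has an affirmative answer if and only if some nilpotent orbit in $\RT(\psi)$ is contained in $\ov{\cO^{\nil}}$, for $\psi$ any homogeneous regular semisimple element of slope $\nu$ in $\frg\lr{t}$. Thus the theorem reduces to showing that $\RT(\psi)$ admits a unique dominance-order minimum $\cO_\nu$ and that this minimum matches Table~\ref{t:completecl}. The case $\nu \ge 1$ is handled by Corollary~\ref{c:sp case}(2), so one may restrict to $\nu < 1$.

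For each classical $G$ I would fix an explicit homogeneous representative $\psi$ as a direct sum of twisted companion blocks whose sizes are forced by the regular number $m$ (with self-duality in types $B$, $C$, $D$); the admissible block structures match precisely the divisibility conditions in the rows of Table~\ref{t:completecl}. The technical engine is the skeleton of $\Gr_\psi$ from~\cite{Yun-KL}: the fixed locus of the contracting $\Gm$-action induced by homogeneity is non-empty in every irreducible component of $\Gr_\psi$, and its components are indexed by parahoric subgroups $\bP_x \subset G\lr{t}$ whose Moy--Prasad grading places $\psi$ in the $-\nu$-piece. For each such $\bP_x$, the image under $\ev_\psi$ of the associated component of the skeleton is the Richardson orbit $\Ind_{\fm}^{\frg}(0)$ of the Levi quotient $\fm$ of $\bP_x$. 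Consequently $\RT(\psi)$ equals the set of these Richardson orbits, a combinatorially explicit collection.

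In type $A_{n-1}$ this collection is exactly the set of partitions of $n$ with $n\nu$ parts (or with an extra trivial part in the non-elliptic case $m\mid n-1$), and the most-even partition $\l^{n,n\nu}$, respectively $\l^{n-1,(n-1)\nu}\cup\{1\}$, is manifestly the unique dominance-minimum. In types $B$, $C$, $D$ the admissible Levi quotients have the form $\prod_i \GL(n_i) \times G'$ with $G'$ a classical group of smaller rank, so the Richardson orbits lie in the parity-constrained sets $\Part_B$, $\Part_C$, $\Part_D$. A case-by-case analysis shows that the most-even partition $\l^{N,N\nu}$ of the appropriate total $N$ either automatically respects the parity --- yielding the generic entries of Table~\ref{t:completecl} --- or is forbidden, in which case the minimum is forced to the next-smallest admissible partition of the form $(m+1,m,\ldots,m,m-1,\ldots)$; the latter occurs precisely when $d=1$ and $m$ is even in types $B$ and $D$.

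The main obstacle will be verifying uniqueness of the dominance-minimum once the parity constraints are imposed, and patiently running through the sub-cases ($m\mid n$ vs.\ $m\mid n-1$ vs.\ $m\mid 2n-2$, $N\nu$ even vs.\ odd, $d=1$ with $m$ even) to match the explicit form of $\cO_\nu$ in Table~\ref{t:completecl}. In type $D$ one should additionally confirm that none of the Jordan types appearing in the table is very even, so that $\cO_\nu$ is unambiguously specified by its partition; this is visible from the table since every partition listed contains a part occurring with odd multiplicity.
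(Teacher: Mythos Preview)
Your overall framework is right --- reduce via Theorem~\ref{th:DS Gr} and Lemma~\ref{l:N eq Gr} to showing $\RT_{\min}(\psi)$ is a singleton, and use the skeleton of \cite{Yun-KL} --- but two points are off. First, the skeleton $\cX_G^{\cT}$ is not the $\Gm$-fixed locus for the homogeneity action; it is the fixed locus of the parahoric $L^{+}\cT$ of the \emph{centralizer torus} $\cT=C_{G_F}(\psi)$. In classical types $\cX_G^{\cT}$ is described concretely as a set of (self-dual) $\cO_F$-lattices $\L$, and $\ev_\psi(\L)$ is the Jordan type of $\psi$ acting on $\L/t\L$, computed directly --- not a Richardson orbit attached to a parahoric Levi. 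In types $A$ and $C$ the skeleton is a single lattice, so $\RT_{\min}(\psi)$ is immediate; in types $B$ and $D$ the skeleton is a family of Lagrangians $L$ in a quadratic space $Q$, and the Jordan type genuinely varies with $L$.

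The more serious gap is your explanation of the exceptional entries for $d=1$ with $m$ even. These do \emph{not} arise from parity obstructions. For example, in type $B$ with $\ell=2n/m$ even and $d=1$, the candidate $(m^{\ell})\cup\{1\}$ lies in $\Part_B(2n+1)$ (the even part $m$ has even multiplicity $\ell$), so parity does not forbid it. What actually happens is that over the Lagrangian $L\subset Q$ the Jordan type of $\psi$ on $\L'/t\L'$ is $((m{+}1)^{s},m^{\ell-2s},(m{-}1)^{s})$, where $s$ is the rank of the composite $c_L\colon L\hookrightarrow Q\xrightarrow{c}Q\twoheadrightarrow Q/L$ for a fixed regular semisimple self-adjoint endomorphism $c$. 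The key lemma is that $c_L$ can never have rank $0$ (if $cL\subset L$ then $c$ has an isotropic eigenvector, contradicting regularity with respect to an orthogonal eigenbasis), while rank $1$ is achievable by choosing $L=\Span\{x,cx,\dots,c^{\ell/2-1}x\}$ for a suitable isotropic $x$. Hence the minimum is $s=1$, giving $(m{+}1,m^{\ell-2},m{-}1,1)$ rather than $(m^{\ell},1)$. Your parity argument would output the wrong $\cO_\nu$ here. The analogous rank lemma governs the other $d=1$, $m$ even rows in types $B$ and $D$. You should also make the reduction to the elliptic case explicit (pass to a Levi $M$ where $m$ is elliptic via \cite[Lemma~3.2]{Yun-KL}) before running the skeleton argument.
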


We make the condition $\cO_{\nu} \preccurlyeq \cO^{\nil}$ more explicit at the end of this section in Theorem \ref{th:hq main}, answering a question of D. Sage \cite[Conjecture 5.7.]{Sage}.

\subsection{Strategy of proof}
Let $\psi$ be a homogeneous element of slope $\nu$. Theorem \ref{th:DS Gr} implies that $DS(\nu,\cO)$ is affirmative if and only if  $\cO^{\nil}\cge\cO$ for some $\cO\in \RT_{\min}(\psi)$. Therefore, to prove Theorem \ref{th:cl}, we need to show that $\RT_{\min}(\psi)$ consists of a single orbit $\cO_{\nu}$ specified by Table \ref{t:completecl}. 

When $m$ is not elliptic, then we may assume $\psi$ is elliptic in some proper Levi subgroup $M\subset G$. By \cite[Lemma 3.2]{Yun-KL}, $\RT_{\min}(\psi)\subset \io_{M,G}(\RT^{M}_{\min}(\psi))$, where $\RT^{M}_{\min}(\psi)$ is the minimal reduction type of $\psi$ viewed as an element of $\fm\lr{t}$ (so it consists of nilpotent orbits of $\fm=\Lie M$), and $\io_{M,G}: \cN_{M}/M\to \cN/G$ is the natural map induced by the embedding $M\incl G$.  For classical groups $G$, $M$ is a product of groups of type $A$ and a smaller group of the same type as $G$. This observation allows us to reduce the calculation of $\RT_{\min}(\psi)$ to that of $\RT^{M}_{\min}(\psi)$. Therefore, it suffices to treat the case $m$ is elliptic.

\subsection{Minimal reduction type and skeleton}
We recall some definitions and constructions in \cite{Yun-KL}. Let $F=\CC\lr{t}$ and $\cO_{F}=\CC\tl{t}$. Let $\psi\in \frg(F)$ be a  regular semisimple element. Assume $\psi$ is compact, namely its image in $(\frg\sslash G)(F)$ lies in $(\frg\sslash G)(\cO_{F})$.

Let $\cT$ be the centralizer of $\psi$ in $G\ot\CC\lr{t}$, viewed as a torus over $F$. Let $L^{+}\cT$ be the unique parahoric subgroup of the loop group $L\cT$, see \cite[\S 7]{Yun-KL}. The {\em $\cT$-skeleton} of the affine Grassmannian $\Gr_{G}$ is the (reduced) fixed points
\begin{equation*}
\cX_{G}^{\cT}:=(\Gr^{L^{+}\cT}_{G})^{\red}\subset \Gr_{G}.
\end{equation*}
Since $\psi$ is compact, $\psi\in \Lie L^{+}\cT$, therefore $\cX_{G}^{\cT}\subset \Gr_{\psi}$.

The following simple observation allows us to reduce the calculation of $\RT_{\min}(\psi)$ to points on the skeleton.
\begin{lemma}[{\cite[Lemma 7.3]{Yun-KL}}]
For any $\cO\in \RT_{\min}(\psi)$, $\cX^{\cT}_{G}\cap \Gr_{\psi,\cO}\ne\vn$.
\end{lemma}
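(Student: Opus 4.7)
The plan is to find an $L^{+}\cT$-fixed point inside the locally closed subscheme $\Gr_{\psi,\cO}\subset\Gr_\psi$, since any such point lies in the skeleton $\cX_G^{\cT}=(\Gr_G^{L^{+}\cT})^{\red}$ by definition. The argument naturally splits into three steps: equivariance of the stratification under $L^{+}\cT$, closedness of the minimal stratum, and the existence of a fixed point on that closed stratum.

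For equivariance, since $\cT$ is the centralizer of $\psi$ in $G\ot F$, every $h\in L^{+}\cT\subset\cT(F)$ commutes with $\psi$. Consequently left multiplication by $L^{+}\cT$ on $\Gr_G$ preserves $\Gr_\psi$, and for $gG\tl{t}\in\Gr_\psi$ one has $\Ad((hg)^{-1})\psi = \Ad(g^{-1})\psi$. Reducing modulo $t$, the evaluation map $\ev_\psi\colon\Gr_\psi\to [\cN/G]$ is $L^{+}\cT$-invariant, so each $\Gr_{\psi,\cO'}$ is $L^{+}\cT$-stable, and by definition the $L^{+}\cT$-fixed locus inside $\Gr_G$ is exactly $\cX_G^{\cT}$. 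Next, for $\cO\in\RT_{\min}(\psi)$ I would show that $\Gr_{\psi,\cO}$ is closed in $\Gr_\psi$: the closure of $\Gr_{\psi,\cO}$ in $\Gr_\psi$ is carried by $\ev_\psi$ into $\ov\cO\subset\cN/G$, hence is contained in $\bigcup_{\cO'\cle\cO}\Gr_{\psi,\cO'}$, and minimality of $\cO$ in $\RT(\psi)$ forces $\Gr_{\psi,\cO'}=\vn$ for every $\cO'\cle\cO$ with $\cO'\ne\cO$.

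It then remains to produce an $L^{+}\cT$-fixed point on the non-empty closed substack $\Gr_{\psi,\cO}$. The pro-algebraic group $L^{+}\cT$ sits in an extension $1\to U\to L^{+}\cT\to T_{\cT}^{\red}\to 1$ in which $U$ is pro-unipotent and $T_{\cT}^{\red}$ is a torus over $\CC$. Reducing to the case of elliptic $\psi$ (where $\Gr_\psi$ is a projective variety by Kazhdan--Lusztig) via the centralizer Levi of the split part of $\cT$, a Borel-type argument yields a non-empty closed $U$-fixed subscheme of $\Gr_{\psi,\cO}$, and Borel's theorem applied to the residual $T_{\cT}^{\red}$-action on this fixed locus produces the desired fixed point. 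The main obstacle is precisely this final step, namely rigorously justifying a Borel-type fixed-point theorem for the pro-algebraic group $L^{+}\cT$ acting on the potentially non-quasi-compact ind-proper scheme $\Gr_{\psi,\cO}$; the $\Gm$-action arising from the homogeneity of $\psi$ can be exploited to exhaust $\Gr_{\psi,\cO}$ by $L^{+}\cT$-stable finite-dimensional projective subvarieties on which the classical Borel theorem directly applies.
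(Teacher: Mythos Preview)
Your argument is essentially correct and follows the standard line (the paper itself does not reproduce a proof but cites \cite[Lemma 7.3]{Yun-KL}). Steps one and two are clean: the $L^{+}\cT$-invariance of $\ev_{\psi}$ and the closedness of $\Gr_{\psi,\cO}$ for $\cO\in\RT_{\min}(\psi)$ are exactly as you state.

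Your treatment of the fixed-point step is more elaborate than necessary, and the detour through the elliptic reduction and the $\Gm$-exhaustion is not needed. A more direct argument: pick any point $x\in\Gr_{\psi,\cO}$ and let $Z$ be the closure of the orbit $L^{+}\cT\cdot x$ inside $\Gr_{G}$. Since $\Gr_{\psi,\cO}$ is closed and $L^{+}\cT$-stable, $Z\subset\Gr_{\psi,\cO}$. Now $Z$ is contained in some finite union of $G\tl{t}$-orbits on $\Gr_{G}$ (the orbit $L^{+}\cT\cdot x$ is, because $L^{+}\cT\subset G\tl{t}$ and $G\tl{t}$-orbits are $G\tl{t}$-stable), hence $Z$ is a projective variety. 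Moreover the $L^{+}\cT$-action on $Z$ factors through a finite-dimensional quotient of $L^{+}\cT$, since sufficiently deep congruence subgroups of $G\tl{t}$ act trivially on any bounded piece of $\Gr_{G}$. That quotient is connected and solvable (being a quotient of the connected solvable pro-group $L^{+}\cT$), so the classical Borel fixed-point theorem applies directly to $Z$ and produces a point of $\cX^{\cT}_{G}\cap\Gr_{\psi,\cO}$. This avoids any appeal to homogeneity of $\psi$ or to the elliptic case, and in particular the lemma holds for arbitrary compact regular semisimple $\psi$, not just homogeneous ones.
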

In other words, to compute $\RT_{\min}(\psi)$, we only need to compute the image of $\cX^{\cT}_{G}$ under the evaluation map $\ev_{\psi}$, and take the minimal orbits in the image.

Below, for each elliptic regular number $m$ for $W$, we choose a torus $\cT$ of type $[w]$ where $[w]$ is the conjugacy class in $W$ consisting of elliptic elements of order $m$,  choose a homogeneous element $\psi\in \Lie L^{+}\cT$ of slope $\nu=d/m$,  and calculate the minimal nilpotent orbits in the  image of $\ev_{\psi}|_{\cX^{\cT}_{G}}: \cX^{\cT}_{G}\to [\cN/G]$.

\subsection{Type $A_{n}$} The only elliptic regular number $m=n$. This case has been treated in \S\ref{s:ex}. It is also easy to compute $\RT_{\min}(\psi)$ using the skeleton. We work with $G=\SL_{n}$. Let $E=\CC\lr{t^{1/n}}$, viewed as an $F$-vector space of dimension $n$. Let $\Tr:E\to F$ and $\Nm:E^{\times}\to F^{\times}$ be the trace and norm maps. Then $\cT=(\Res_{E/F}\Gm)^{\Nm=1}$ is a maximal elliptic torus in $G=\SL(E)$, and $L^{+}\cT=(\cO_{E})^{\Tr=0}$.  We identify $\Gr_{G}$ with the set of $\cO_{F}$-lattices in $E$ with the same relative dimension as $\cO_{E}$. The action of $L^{+}\cT$ on $\Gr_{G}$ has a unique fixed point, namely the lattice $\cO_{E}$. The element $\psi=t^{d/n}\in (\cO_{E})^{\Tr=0}$ is homogeneous of slope $d/n$. It is easy to see that the action of $\psi=t^{d/n}$ on $\cO_{E}/t\cO_{E}$ has Jordan type $\l^{n,d}$.

\subsection{Type $C_{n}$}\label{ss:sk C}
The elliptic regular numbers are even divisors $m$ of $2n$. We use the notations from \cite[8.3]{Yun-KL}. Let $\ell=2n/m$. Let $E_{1}=E_{2}=\cdots=E_{\ell}=\CC\lr{t^{1/m}}$, and let $F_{1}=\cdots =F_{\ell}=\CC\lr{t^{2/m}}$. We view $F_{i}$ as a subfield of $E_{i}$ with trace map $\Tr_{i}: E_{i}\to F_{i}$ and norm map $\Nm_{i}: E_{i}^{\times}\to F^{\times}_{i}$. Define an $F$-torus $\cT$ to be $\prod_{i=1}^{\ell}(\Res_{E_{i}/F}\Gm)^{\Nm_{i}=1}$ (here we view $\Nm_{i}$ as a homomorphism of $F$-tori $\Res_{E_{i}/F}\Gm\to \Res_{F_{i}/F}\Gm$). It is explained in \cite[8.3]{Yun-KL} that the $2n$-dimensional $F$-vector space $V=\op_{i=1}^{\ell} E_{i}$  is equipped with a symplectic form $\j{-,-}$ (with some choices) such that $\cT$ with its natural action on $V$ (by the multiplication of $E_{i}^{\times}$ on $E_{i}$) is a maximal torus in $G=\Sp(V)$. The symplectic form $\j{-,-}$ is chosen such that $\j{E_{i},E_{j}}=0$ for $i\ne j$, and that $\cO_{E_{i}}$ is a self-dual lattice in $E_{i}$.

Note that 
\begin{equation*}
\Lie L^{+}\cT=\op_{i=1}^{\ell}(\cO_{E_{i}})^{\Tr_{i}=0}.
\end{equation*}
Let $(c_{1},\cdots, c_{\ell})\in \CC^{\times,\ell}$  be such that $c_{1}^{m},\cdots, c_{\ell}^{m}$ are distinct. Let
\begin{equation}\label{B psi}
\psi=(c_{1}t^{d/m}, \cdots, c_{\ell}t^{d/m})\in \Lie L^{+}\cT.
\end{equation}
Then $\psi$ is homogeneous of slope $\nu$ in $\frg(F)$.

It is shown in \cite[8.3]{Yun-KL} that $\cX_{G}^{\cT}$ consists of a single point, i.e., the standard lattice $\L_{0}=\op_{i}\cO_{E_{i}}\subset V$. Then  $\ev_{\psi}(\L_{0})$ is the nipotent element given by the action of $\psi$ on $\L_{0}/t\L_{0}$, or the direct sum of the actions of $c_{i}t^{d/m}$ on $\cO_{E_{i}}/t\cO_{E_{i}}=\CC\tl{t^{1/m}}/t\CC\tl{t^{1/m}}$. It is easy to see that the Jordan type of $c_{i}t^{d/m}$ on $\CC\tl{t^{1/m}}/t\CC\tl{t^{1/m}}$ is $\l^{m,d}$. Therefore the Jordan type of $\psi$ on $\L_{0}/t\L_{0}$ is $\l^{m,d}\cup \l^{m,d}\cup\cdots\cup \l^{m,d}$ ($\ell$ times), which is $\l^{2n, d\ell}=\l^{2n, 2n\nu}$.

\subsection{Type $B_{n}$}
The elliptic regular numbers are even divisors $m$ of $2n$. We use the notations from \cite[9.1]{Yun-KL}, and the notations $E_{i}, F_{i}$ and $\cT$ from \S\ref{ss:sk C}. The $2n+1$-dimensional vector space $V=F\op(\op_{i=1}^{\ell} E_{i})$ is equipped with a quadratic form (with some choices) such that the different summands are orthogonal. Moreover, the quadratic form is chosen to have the following property: the lattice $\cO_{E_{i}}$ is dual to $t^{-1/m}\cO_{E_{i}}$ in $E_{i}$; if $\ell=2n/m$  is even, then $\cO_{F}$ is self-dual in $F$, and if $\ell$ is odd, $\cO_{F}$ is dual to $t^{-1}\cO_{F}$. 

The natural action of $\cT$ on $V$ (trivially on $F$ and by multiplication on $E_{i}$) realizes $\cT$ as a maximal torus of $G=\SO(V)$ of type $[w]$, where $w$ is regular of order $m$.  We use the same $\psi$ as in \S\ref{ss:sk C}.

\sss{$\ell$ even}\label{sss:B ell even} We first consider the case where $\ell$ is even. In this case, $\cX^{\cT}_{G}$ consists of self-dual lattices $\L=\cO_{F}\op \L'$, where $\L'$ is a self-dual lattice in $\op E_{i}$ between two given lattices:
\begin{equation*}
\op_{i=1}^{\ell}\cO_{E_{i}}=\L'_{0}\subset\L'\subset t^{-1/m}\L'_{0}.
\end{equation*}
Let $Q=\L'_{0}/t^{1/m}\L'_{0}$, an $\ell$-dimensional vector space equipped with a non-degenerate quadratic form such that the lines $\cO_{E_{i}}/t^{1/m}\cO_{E_{i}}$ are orthogonal to each other for different $i$. Then $\cX^{\cT}_{G}$ can be identified with the space of Lagrangian subspaces $L\subset Q$ (hence it has two components). Such a Lagrangian $L$ corresponds to the lattice $\L=\cO_{F}\op\L'$ where $\L'=\L'_{0}+t^{-1/m}L$. 

We compute the Jordan type of $\psi$ on $\L'/t\L'=(\L'_{0}+t^{-1/m}L)/(t\L'_{0}+t^{(m-1)/m)}L)$. There is a grading by valuation on $\L'/t\L'$ with pieces
\begin{equation}\label{decomp even}
(\L'/t\L')_{i}=\begin{cases}t^{-1/m}L, & i=-1;\\
t^{i/m}Q, & 0\le i\le m-2;\\
t^{(m-1)/m}(Q/L), & i=m-1.\end{cases}
\end{equation}

Then $\psi$ sends the degree $i$ piece to degree $i+d$ piece. Moreover, the map $\psi_{i}: (\L'/t\L')_{i}\to (\L'/t\L')_{i+d}$ is an isomorphism if $0\le i\le m-2-d$, injective for $i=-1$ and surjective for $m-1-d$. From this we see that the possible lengths of Jordan blocks are between $\lfloor(m-1)/d\rfloor$ and $\lceil(m+1)/d\rceil$.

When $d>1$, then $m/d\notin\ZZ$, which implies $\lceil(m+1)/d\rceil-\lfloor(m-1)/d\rfloor=1$. Therefore, the lengths of Jordan blocks differ at most by $1$. Moreover, there are $\ell d$ Jordan blocks since we can arrange that each Jordan block contains a unique line in $t^{i/m}Q$ for $0\le i\le d-1$. Therefore the Jordan type of $\psi$ on $\L'/t\L'$ is $\l^{2n, \ell d}=\l^{2n,2n\nu}$. The Jordan type of $\psi$ on $\L/t\L$ is then $\l^{2n,2n\nu}\cup\{1\}$. 
We conclude that when $d>1$, $\RT_{\min}(\psi)$ consists of a single nilpotent orbit with Jordan type $\l^{2n,2n\nu}\cup\{1\}$.

When $d=1$. Let $s$ be the rank of $\psi^{m}: (\L'/t\L')_{-1}=t^{-1/m}L\to (\L'/t\L')_{m-1}=t^{(m-1)/m}(Q/L)$. Then the action of $\psi$ on $\L'/t\L'$ has Jordan type
\begin{equation*}
(\underbrace{m+1,\cdots, m+1}_{s}, \underbrace{m,\cdots, m}_{\ell-2s},\underbrace{m-1,\cdots, m-1}_{s}).
\end{equation*}
So the minimal reduction type of $\psi$ corresponds to the minimal value of $s$. Now $c=\psi^{m}/t=\diag (c_{1}^{m},\cdots, c_{\ell}^{m}): Q\to Q$ is regular semisimple and diagonal with respect to an orthogonal basis of $Q$, hence $c$ is self-adjoint with respect to the symmetric bilinear form $(-,-)$ on $Q$ corresponding to its quadratic form. 

\begin{lemma}\label{l:min s}
Let $Q$ be a quadratic space over $\CC$ with even dimension $\ell$. Let $c:Q\to Q$ be regular semisimple and self-adjoint. Let $L\subset Q$ be a Lagrangian and consider the composition $c_{L}:  L\subset Q\xr{c}Q\surj Q/L$. Then the minimal possible rank of $c_{L}$ is $1$.
\end{lemma}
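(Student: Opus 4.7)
The plan is to prove the lemma in two parts, a lower bound and an explicit construction.

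For the lower bound $\mathrm{rank}(c_L) \geq 1$, I would argue that $\mathrm{rank}(c_L) = 0$ forces $cL \subseteq L$, making $L$ a $c$-invariant Lagrangian. Self-adjointness of $c$ implies eigenvectors for distinct eigenvalues are orthogonal: if $cu = \lambda u$ and $cv = \mu v$ with $\lambda \neq \mu$, then $\lambda(u,v) = (cu,v) = (u,cv) = \mu(u,v)$ forces $(u,v) = 0$. Combined with $c$ being regular semisimple and $(-,-)$ non-degenerate, $Q$ admits an orthogonal eigenbasis $v_1, \ldots, v_\ell$ with $(v_i, v_i) \neq 0$ for all $i$. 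Any $c$-invariant subspace is a sum of eigenlines, which cannot be isotropic, contradicting the assumption that $L$ is Lagrangian.

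For the construction, I would first reformulate using the natural isomorphism $Q/L \cong L^*$ induced by $(-,-)$ (since $L^\perp = L$): under this, $c_L$ becomes the bilinear form $\beta_L(u,v) := (cu,v)$ on $L$, which is symmetric by self-adjointness of $c$. So the goal is to exhibit a Lagrangian on which $\beta_L$ has rank exactly $1$. Build a Witt basis by pairing the $c$-eigenvectors: set $e_i = (v_{2i-1} + \sqrt{-1}\, v_{2i})/\sqrt{2}$ and $f_i = (v_{2i-1} - \sqrt{-1}\, v_{2i})/\sqrt{2}$ for $i = 1, \ldots, n$, so that $(e_i, f_j) = \delta_{ij}$, $(e_i, e_j) = (f_i, f_j) = 0$, and
\begin{equation*}
c = \begin{pmatrix} M & N \\ N & M \end{pmatrix}, \quad M = \mathrm{diag}(\mu_i),\ N = \mathrm{diag}(\nu_i),
\end{equation*}
with $\mu_i = (\lambda_{2i-1}+\lambda_{2i})/2$ and $\nu_i = (\lambda_{2i-1}-\lambda_{2i})/2$, every $\nu_i \neq 0$ since the $\lambda_j$ are pairwise distinct. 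Lagrangians transverse to $\mathrm{span}(f_i)$ are parametrized by antisymmetric $n \times n$ matrices $A$ via $L_A = \{u + Au : u \in \mathrm{span}(e_i)\}$, and a short calculation identifies $\beta_{L_A}$ with the symmetric matrix $B_A := N + [M,A] - ANA$ (symmetry of $B_A$ uses antisymmetry of $A$ and the diagonality of $M,N$).

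It remains to produce an antisymmetric $A$ with $\mathrm{rank}(B_A) = 1$. The antisymmetric matrices form a space of dimension $\binom{n}{2}$, matching the codimension of the rank-$\leq 1$ locus in symmetric $n \times n$ matrices, so the problem is a $0$-dimensional determinantal intersection; note also that $B_A$ depends polynomially (of degree at most $2$) on $A$, with $B_0 = N$ of full rank $n$. The main step, and the principal expected obstacle, is to establish non-emptiness of the intersection for every $n$. I would verify the base case $n = 2$ directly by solving $\det B_A = 0$, which becomes a nontrivial degree-$4$ equation in the single parameter $a = a_{12}$, yielding four rank-$1$ Lagrangians over $\CC$. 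For general $n$ I would proceed either by induction on $n$, decomposing $Q = P \perp P^\perp$ with $P$ a $c$-invariant $2$-plane, taking an isotropic line in $P$ together with a rank-$1$ Lagrangian in $P^\perp$ supplied by the inductive hypothesis, and gluing them via a carefully chosen mixing so that the extra rank contribution from $P$ is absorbed into the image of $c$ on the inductively built Lagrangian; or via a Porteous-type characteristic-class computation on the Lagrangian Grassmannian showing the cycle class of the rank-$\leq 1$ locus has nonzero degree. The inductive gluing is more elementary but the mixing coefficient must be calibrated using both the residue data of $c$ on $P$ and the image direction of $c$ on the inductively chosen Lagrangian, which is the delicate part of the argument.
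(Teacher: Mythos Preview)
Your lower bound is correct and coincides with the paper's argument. The gap is in the construction. In your inductive scheme, once you pick $A'$ with $B'_{A'}=ww^{T}$ of rank $1$ on $P^{\perp}$, the $(2,2)$-block of $B_{A}$ becomes $ww^{T}+\nu_{1}bb^{T}$; a short computation (choose coordinates with $w=e_{1}$, $b=\alpha e_{1}+\beta e_{2}$) shows this has rank $\ge 2$ whenever $b\not\parallel w$, since the top $2\times2$ minor equals $\nu_{1}\beta^{2}\ne0$. So you are forced to take $b=\gamma w$, and then the off-diagonal block becomes $\gamma(\mu_{1}I-M'+A'N')w$, which must be parallel to $w$ for $B_{A}$ to be rank $1$. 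This is an eigenvector condition on $w$ relative to the matrix $M'-A'N'$, and nothing in your inductive hypothesis provides it. The ``mixing'' you allude to is therefore not a calibration problem but a genuine strengthening of the statement being proved, which you have not formulated. The Porteous route is more viable in principle (the class on each component of $OG(\ell/2,\ell)$ is nonzero), but you have not carried it out, and it is heavier machinery than the problem warrants.

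The paper avoids all of this with a direct construction. Pick any nonzero $x\in Q$ satisfying the $\ell-1$ homogeneous quadrics $(x,c^{i}x)=0$ for $0\le i\le \ell-2$; such $x$ exists since $\ell-1$ quadrics in $\PP^{\ell-1}$ always meet. Set $L=\Span\{x,cx,\ldots,c^{\ell/2-1}x\}$. Self-adjointness gives $(c^{i}x,c^{j}x)=(x,c^{i+j}x)=0$ for $i+j\le\ell-2$, so $L$ is isotropic. A Vandermonde argument forces $(x,c^{\ell-1}x)\ne0$, and the anti-triangular pairing matrix $(c^{i}x,c^{j+\ell/2}x)$ then shows $\dim L=\ell/2$. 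Finally $c(L)=\Span\{cx,\ldots,c^{\ell/2}x\}$ maps to a single line in $Q/L$. No coordinates, no induction, no intersection theory.
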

\begin{proof}
First, $c_{L}$ cannot be zero for otherwise $cL\subset L$ which implies $c$ has isotropic eigenvectors.  

Now we construct a Lagrangian $L$ such that $c_{L}$ has rank one.
Let $x\in Q$ be a nonzero vector satisfying the system of $\ell-1$ homogeneous quadratic equations $(x,c^{i}x)=0$ for $0\le i\le \ell-2$. Solutions to such $x$ in $\PP(Q)=\PP^{\ell-1}$ is the intersection of $\ell-1$ quadrics in $\PP^{\ell-1}$ so it is non-empty. Let $L$ be the span of $x,cx,\cdots, c^{\ell/2-1}x$. Then $L$ is isotropic. Also $(x,c^{\ell-1}x)\ne 0$ for otherwise all coordinates of $x$ are forced to be zero by Vandermonde determinant. We claim $\dim L=\ell/2$. Indeed, consider the $\ell/2\times \ell/2$ matrix $A$ with entries $A_{ij}=(c^{i}x, c^{j+\ell/2}x)=(x,c^{i+j+\ell/2}x)$ for $0\le i,j\le \ell/2-1$. We see that $A$ has zero entries above the anti-diagonal, and the anti-diagonal entries are $A_{i,\ell/2-i-1}=(x,c^{\ell-1}x)\ne0$. This implies $x,cx,\cdots, c^{\ell/2-1}x$ are linear independent, hence $L$ is a Lagrangian in $Q$. Since $c(L)=\Span\{cx,\cdots, c^{\ell/2}x\}$, its image in $Q/L$ is one-dimensional, hence $c_{L}: L\to Q/L$ has rank one. 
\end{proof}

By this lemma, the minimal possible $s$ is $1$.  We conclude that when $d=1$, $\RT_{\min}(\psi)$ consists of a single nilpotent orbit with Jordan type $(m+1,m,\cdots, m, m-1,1)$.

\sss{$\ell$ odd} The calculation is similar to the $\ell$ even case so we only give a sketch. In this case, $\cX^{\cT}_{G}$ consists of self-dual lattices $\L$ between two given lattices:
\begin{equation*}
\cO_{F}\op (\op_{i=1}^{\ell}\cO_{E_{i}})=:\L_{0}\subset\L\subset \L_{1}:=t^{-1}\cO_{F}\op t^{-1/m}(\op_{i=1}^{\ell}\cO_{E_{i}}).
\end{equation*}
Let $Q_{0}=\cO_{F}/t\cO_{F}$ and $Q_{i}=\cO_{E_{i}}/t^{1/m}\cO_{E_{i}}$. Let $Q^{0}=\op_{i=1}^{\ell}Q_{i}$. Then $Q:=\L_{1}/\L_{0}=t^{-1}Q_{0}\op t^{-1/m}Q^{0}$ is a $(\ell+1)$-dimensional vector space with a non-degenerate quadratic form such the lines $t^{-1}Q_{0},t^{-1/m}Q_{1},\cdots, t^{-1/m}Q_{\ell}$ are orthogonal to each other. We can identify $\cX^{\cT}_{G}$ with the space of Lagrangian subspaces  $L\subset Q$: each such $L$ corresponds to the lattice $\L=\L_{0}\op  L$. Similar to the $\ell$ even case, $\L/t\L$ has a grading $\L/t\L=\op_{i=-1}^{m-1}(\L/t\L)_{i}$ with
\begin{equation}\label{decomp odd}
(\L/t\L)_{i}=\begin{cases} L, & i=-1;\\
t^{i/m}Q^{0}, & 0\le i\le m-2;\\
t(Q/L), & i=m-1.\end{cases}
\end{equation}
Now $\psi$ sends $(\L/t\L)_{i}$ to $(\L/t\L)_{i+d}$. Note $\psi_{-1}: (\L/t\L)_{-1}=L\to Q^{(d-1)/m}Q^{0}$ is still injective because the only possible kernel is $t^{-1}Q_{0}$, which is not contained in $L$ since $L$ is  isotropic. Similarly, $\psi_{m-1-d}: (\L/t\L)_{m-1-d}=t^{(m-1-d)/m}Q^{0}\to (\L/t\L)_{m-1}=t(Q/L)$ is still surjective. We conclude that the sizes of the  Jordan blocks of $\psi$ on $\L/t\L$ are again between $\lfloor(m-1)/d\rfloor$ and $\lceil(m+1)/d\rceil$. 

When $d>1$, $\lceil(m+1)/d\rceil-\lfloor(m-1)/d\rfloor=1$. We conclude that  $\RT_{\min}(\psi)$ consists of a single nilpotent orbit with Jordan type $\l^{2n+1,2n\nu}$.

When $d=1$, let $s$ be the rank of $\psi^{m}/t: L\to Q/L$. Then $\psi$ on $\L/t\L$ has Jordan type
\begin{equation*}
(\underbrace{m+1,\cdots, m+1}_{s}, \underbrace{m,\cdots ,m}_{\ell+1-2s}, \underbrace{m-1,\cdots, m-1}_{s-1}).
\end{equation*}
The minimal possible $s$ is $1$ by Lemma \ref{l:min s}. Therefore in this case $\RT_{\min}(\psi)$ consists of a single nilpotent orbit with Jordan type $(m+1,m,\cdots, m)=\l^{2n+1, 2n\nu}$.

\subsection{Type $D_{n}$}
There are two cases where $m$ is an elliptic regular number for the Weyl group $W$ of type $D_{n}$. We consider them separately.

\sss{$m|n$ and $m$ is even}\label{sss:D m|n} Let $\ell=2n/m$ which is even. The torus $\cT$ of type $[w]$ (where $w$ is regular of order $m$) has the same description as in the type $B$ case, and we freely use notations from \S\ref{sss:B ell even}. The skeleton $\cX^{\cT}_{G}$ in this case consists of self-dual lattices between two given lattices:
\begin{equation*}
\op_{i=1}^{\ell}\cO_{E_{i}}=\L_{0}\subset \L\subset \L_{1}=t^{-1/m}\L_{0}.
\end{equation*}
Again $\cX^{\cT}_{G}$ can be identified with Lagrangians $L\subset t^{-1/m}Q$, with $Q=\L_{0}/t^{1/m}\L_{0}$, i.e.,  $\L=\L_{0}+t^{-1/m}L$. We have a decomposition $\L/t\L=\op_{i=-1}^{m-1}(\L/t\L)_{i}$ as in \eqref{decomp even}. We take the same element $\psi$ as given in \eqref{B psi}. Then $\psi$ sends $(\L/t\L)_{i}\to (\L/t\L)_{i+d}$. The sizes of Jordan blocks of $\psi$ on $\L/t\L$ are between $\lfloor (m-1)/d\rfloor$ and $\lceil(m+1)/d\rceil$. When $d>1$, $\lceil(m+1)/d\rceil-\lfloor (m-1)/d\rfloor=1$ and we conclude that the Jordan type  of $\psi$ on $\L/t\L$ is $\l^{2n,\ell d}=\l^{2n,2n\nu}$. When $d=1$, let $s$ be the rank of $\psi^{m}/t: L\to Q/L$. Then the Jordan type of $\psi$ on $\L/t\L$ is 
\begin{equation*}
(\underbrace{m+1,\cdots, m+1}_{s}, \underbrace{m,\cdots, m}_{\ell-2s}, \underbrace{m-1,\cdots, m-1}_{s}).
\end{equation*}
The minimal possible $s$ is $1$ by Lemma \ref{l:min s}. Therefore $\RT_{\min}(\psi)$ consists of a single orbit with Jordan type $(m+1,m,\cdots, m, m-1)$.

\sss{$m|2n-2$ and $\frac{2n-2}{m}$ is odd} The case $m=2$ is a special case considered in \S\ref{sss:D m|n}. Below we assume $m>2$.

Let $\ell=(2n-2)/m$ which is odd by assumption. Let $E_{0}=\CC\lr{t^{1/2}}$ with norm map $\Nm_{0}: E_{0}^{\times}\to F^{\times}$. Then $\cT=\prod_{i=0}^{\ell}(\Res_{E_{i}/F}\Gm)^{\Nm_{i}=1}$. Let $V=\op_{i=0}^{\ell}E_{i}$, then under a suitable quadratic form on $V$, $E_{i}$ are orthogonal to each other, and $\L_{0}=\op \cO_{E_{i}}$ has dual lattice $\L_{1}=t^{-1/2}\cO_{E_{0}}\op(\op_{i=1}^{\ell}t^{-1/m}\cO_{E_{i}})$. The torus $\cT$ is realized as a maximal torus of $G=\SO(V)$ by the multiplication action of $E^{\times}_{i}$ on $E_{i}$. In this case $\cX^{\cT}_{G}$ consists of self-dual lattices $\L$ satisfying
\begin{equation*}
\op_{i=0}^{\ell} \cO_{E_{i}}=\L_{0}\subset \L\subset \L_{1}=t^{-1/2}\cO_{E_{0}}\op(\op_{i=1}^{\ell}t^{-1/m}\cO_{E_{i}}).
\end{equation*}
Let $Q_{0}=\cO_{E_{0}}/t^{1/2}\cO_{E_{0}}$, $Q_{i}=\cO_{E_{i}}/t^{1/m}\cO_{E_{i}}$ for $1\le i\le \ell$, and $Q^{0}=\op_{i=1}^{\ell}Q_{i}$. Then $Q:=\L_{1}/\L_{0}=t^{-1/2}Q_{0}\op t^{-1/m}Q^{0}$ carries a quadratic form such that the lines $t^{-1/2}Q_{0}$ and $t^{-1/m}Q_{i}$ are orthogonal to each other. The self-dual lattice $\L\in \cX^{\cT}_{G}$ corresponds to a Lagrangian $L\subset Q$, so that $\L=\L_{0}+L$. Write $\L/t\L=Q_{0}\op (\L/t\L)'$, where $ (\L/t\L)'$ has a grading with pieces
\begin{equation*}
(\L/t\L)'_{i}=\begin{cases}L, & i=-1;\\
t^{i/m}Q^{0}, & 0\le i\le m-2;\\
t(Q/L), & i=m-1.\end{cases}
\end{equation*}
We take $\psi$ to be
\begin{equation*}
\psi=(0,c_{1}t^{d/m},\cdots, c_{\ell}t^{d/m})\in \Lie L^{+}\cT=\op_{i=0}^{\ell}(\cO_{E_{i}})^{\Tr_{i}=0},
\end{equation*}
where $(c_{1}^{m},\cdots, c_{\ell}^{m})$ are distinct and all nonzero. Since the $E_{0}$ component of $\psi$ is zero, both $Q_{0}$ and $(\L/t\L)'$ are stable under $\psi$. It therefore  suffices to compute the Jordan type $\l'$ of $\psi$ on $(\L/t\L)'$ as a partition of $2n-1$, and the Jordan type of $\psi$ on $\L/t\L$ will be $\l'\cup\{1\}$.

Now $\psi$ sends $(\L/t\L)'_{i}$ to $(\L/t\L)'_{i+d}$. Note $\psi_{-1}: L\to t^{(d-1)/m}Q^{0}$ is still injective because the only possible kernel is $t^{-1/2}Q_{0}$, which is not contained in $L$ since $L$ is isotropic. Similarly, $\psi_{m-1-d}: t^{(m-1-d)/m}Q^{0}\to t(Q/L)$ is still surjective. We conclude that the sizes of the  Jordan blocks of $\psi$ on $(\L/t\L)'$ are again between $\lfloor(m-1)/d\rfloor$ and $\lceil(m+1)/d\rceil$.  When $d>1$, $\lceil(m+1)/d\rceil-\lfloor(m-1)/d\rfloor=1$. We conclude that  $\RT_{\min}(\psi)$ consists of a single nilpotent orbit with Jordan type $\l^{2n-1,2n\nu}\cup\{1\}$. When $d=1$, let $s$ be the rank of $\psi^{m}/t: L\to Q/L$. Then $\psi$ on $(\L/t\L)'$ has Jordan type
\begin{equation*}
(\underbrace{m+1,\cdots, m+1}_{s}, \underbrace{m,\cdots, m}_{\ell+1-2s}, \underbrace{m-1,\cdots, m-1}_{s-1}).
\end{equation*}
The minimal possible $s$ is $1$ by Lemma \ref{l:min s}. Therefore in this case $\RT_{\min}(\psi)$ consists of a single nilpotent orbit with Jordan type $(m+1,m,\cdots, m,1)=\l^{2n-1, 2n\nu}\cup\{1\}$.

\subsection{Comparison with \cite{KLMNS} and \cite{Sage}} Our criterion recovers previous results of M. Kulkarni, N. Livesay, J. Matherne, B. Nguyen and D. Sage \cite{KLMNS}. In addition, we partially prove D. Sage's \cite[Conjecture 5.7.]{Sage} in the classical types, and we give answers to all possible slopes whose denominators are not necessarily the Coxeter number. First, we need to set up some notation.

Denote by $\Orb = \Orb(\frg)$ the set of adjoint orbits in $\frg$. The set $\Orb$ is ordered according to orbit closure, and there is a poset decomposition 
\[\Orb(\frg) = \coprod_{q \in \frt\sslash W} \Orb_{q}. \]
 Here $\frt$ is a Cartan subalgebra of $\frg$, and $\Orb_{q}$ denotes the preimage of $q$ under $\frg \to \frg \sslash G \cong \frt \sslash W $. Denote by $DS(\nu,\cO,q)$ the Deligne-Simpson problem where $\cO$ ranges only over $\Orb_{q}$. In classical types, we can identify $q$ with a characteristic polynomial. They are of the form 
 \begin{align*} \begin{cases} q=\prod_{i=1}^{s} (x-a_i)^{m_i}, & \mbox{type }A_{n-1} \\
  q=x^{2m_s+\e} \prod_{i=1}^{s-1} (x^{2}-a_{i}^{2})^{m_i},& \mbox{type }B_n, C_n, D_n.  \end{cases}
 \end{align*}
Here $\e=1$ in type $B_{n}$ and $\e=0$ in types $C_{n}$ and $D_{n}$. The $a_i$ are pairwise distinct in type $A$. In types $B,C$ and $D$ each non-zero eigenvalue appears together with its negative, and we require $a_i \neq \pm a_j$ for all $i,j\in\{1,\cdots, s-1\}$. 
 
Each $q$ determines a Levi subgroup $L_{q}\cong\prod_{i=1}^{s}G_{i}\subset G$ up to conjugacy as 
the centralizer of a semisimple element $x\in \frg$ with characteristic polynomial $q$. When $G$ is of type $A_{n-1}$, all factors $G_{i}$ are of type $A_{m_{i}-1}$, and we call all of them the linear factors of $L_{q}$. When $G$ is of type $B_{n},C_{n}$ or $D_{n}$, the factors $G_{i}$ ($1\le i\le s-1$) are of type $A_{m_{i}-1}$, and the factor $G_{s}$ is of the same type as $G$ and has rank $m_{s}$. We call $G_{1},\cdots, G_{s-1}$ the linear factors of $L_{q}$ in these cases.

Specifying an adjoint orbit $\cO$ in $\Orb_{q}$ is the same as specifying a nilpotent orbit for each factor of $L_q$, up to permutation of the linear factors of the same size;  i.e. $\cO$ is given by a collection of partitions $(\l^{1},\cdots, \l^{s})\in \prod_{i=1}^{s}\Part(m_{i})$ up to possible permutations of all factors in type $A$, and by a collection $(\l^{1},\cdots, \l^{s})\in \prod_{i=1}^{s-1}\Part(m_{i}) \times \Part(2m_{s}+\e)$ up to possible permutations of all but the last factor in types $B,C$ and $D$, with $\e$ as above. 

In order to state the next result, we introduce some notation. Recall for positive integers $n$ and $r$, there is a unique minimal partition $\l^{n,r}$ of $n$ with at most $r$ parts. Write $n=kr+r'$ with $0\le r' <r$. Then $\l^{n,r} = (\underbrace{k+1,...,k+1}_{r'},\underbrace{k,...,k}_{r-r'}),$ spread as evenly as possible. We define 
$$\tilde{\l}^{n,r} = (\underbrace{k+1,...,k+1}_{r'+1},\underbrace{k,...,k}_{r-r'},k-1).$$ 
This partition is the unique smallest partition with $r$ parts which is not $\l^{n,r}$. 

 \begin{theorem} \label{th:hq main}
Let $G$ be an almost simple classical group, and $\nu=d/m>0$ with $m$ a regular number for $W$ and $d\ge1$ is prime to $m$, and let $q\in \frt\sslash W$. We keep the above notation for $q$. Then there are explicit orbits $\{\cO_{\nu}^{q,j}\}_{j\in J} \subset \Orb_{q}$ ($J$ is some finite index set) such that $DS(\nu,\cO,q)$ is affirmative if and only if $\cO_{\nu}^{q,j} \preccurlyeq \cO$ for some $j\in J$. These orbits are given in Table \ref{t:clq}. In the second column we give conditions on $\nu=d/m$ (compare with Theorem \ref{th:cl}). In the final column we give the Jordan types of $\cO_{\nu}^{q,j}\in\Orb_{q}$. If there is no dependence on $j$, the orbit is unique. 
\begin{table}

\begin{tabular}[t]{|C|L|L|}
\hline
\mbox{type} & \mbox{conditions on }d,m &  \cO_{\nu}^{q,j}	\\ 	
\hline

A_{n-1} 	&  m|n 													&  (\l^{m_1,n\nu}, ..., \l^{m_s,n\nu}) \\
		\cline{2-3}
		& m|n-1 													& (\l^{m_1,(n-1)\nu},..., \l^{m_j-1,(n-1)\nu} \cup \{1\} ,..., \l^{m_s,(n-1)\nu}), j=1,...,s \\
\hline
B_{n} 	& m |2n, 2n\nu \mbox{ odd} 									&  (\l^{m_1,2n\nu}, \dots, \l^{2m_s+1,2n\nu})\\
\cline{2-3}
		& m |2n, 2n\nu \mbox{ even} & (\l^{m_1,2n\nu}, \dots, \l^{2m_s,2n\nu}\cup\{1\})  \\
		& d>1 \mbox{ or $d=1$ and $m$ odd} 	&	\\
		\cline{2-3}
		& m |2n, d=1 \mbox{ and $m$ even}, 
		& 
		(\l^{m_1,\ell},...,\l^{m_{s-1},\ell},\l^{2m_{s},\ell}\cup \{1\}) 
		\mbox{ if } \ell\nmid\gcd(m_{1},\cdots, m_{s-1},2m_{s}),\\
		& \ell:=2n/m \mbox{ even}	&
		\mbox{change any one of the $\l$'s above to $\wt\l$ if $\ell|\gcd(m_{1},\cdots, m_{s-1},2m_{s})$}\\

\hline
C_{n}  	& m| 2n 													&  (\l^{m_1,2n\nu}, \dots, \l^{2m_s,2n\nu})  \\
\hline
D_{n}  	& m|n & (\l^{m_1,2n\nu}, \dots, \l^{2m_s,2n\nu}) \\
		& d>1 \mbox{ or $d=1$ and $m$ odd}& \\
\cline{2-3}
		& m|n, d=1 \mbox{ and $m$ even}				& 
		(\l^{m_1,\ell},...,\l^{m_{s-1},\ell},\l^{2m_{s},\ell})  \mbox{ if } \ell\nmid\gcd(m_{1},\cdots, m_{s-1},2m_{s}) \\
		& \ell:=2n/m &
		\mbox{change any one of the $\l$'s above to $\wt\l$ if $\ell|\gcd(m_{1},\cdots, m_{s-1},2m_{s})$} \\
		\cline{2-3}
		& m|2n-2, (2n-2)\nu \mbox{ odd} 								& (\l^{m_1,(2n-2)\nu}, \dots, \l^{m_s-1,(2n-2)\nu}, \l^{2m_{s-1},(2n-2)\nu}\cup\{1\})\\
		\cline{2-3}
		& m|n-1			& (\l^{m_1,(2n-2)\nu},..., \l^{m_j-1,(2n-2)\nu} \cup \{1\} ,..., \l^{2m_{s},(2n-2)\nu}), j=1,...,s-1,\\ 
		& d>1 \mbox{ or $d=1$ and $m$ odd} & \mbox{ and } (\l^{m_1,(2n-2)\nu},...,\l^{m_{s-1},(2n-2)\nu}, \l^{2m_s-2,(2n-2)\nu} \cup \{1,1\})  \\
		\cline{2-3}
		
 		& m|n-1, d=1 \mbox{ and $m$ even}			
		&  
		(\l^{m_1,\ell},..., \l^{m_j-1,\ell} \cup \{1\} ,..., \l^{2m_{s},\ell}), j=1,...,s-1, \mbox{ and }  \\	
		& \ell:=(2n-2)/m & (\l^{m_1,\ell},...,\l^{m_{s-1},\ell}, \l^{2m_s-2,\ell} \cup \{1,1\}), \mbox{ if }\ell\nmid\gcd(m_{1},\cdots, m_{s-1},2m_{s}-2) \\
		& & \mbox{change any one of the $\l$'s above to $\wt\l$ if $\ell|\gcd(m_{1},\cdots, m_{s-1},2m_{s}-2)$}\\
\hline
 \end{tabular} 
\caption{Explicit solutions in the classical types for fixed characteristic polynomial}

\label{t:clq}

\end{table}
\end{theorem}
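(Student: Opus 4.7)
The approach is to reduce to a combinatorial problem about Lusztig--Spaltenstein induction and solve it type by type. By Theorem \ref{th:cl}, $DS(\nu,\cO,q)$ has an affirmative answer if and only if $\cO_\nu \preccurlyeq \cO^{\nil}$, where $\cO^{\nil}=\Ind_{\frl_q}^{\frg}(\cO^1\times\cdots\times \cO^s)$ and $\cO^i$ is the nilpotent orbit in the $i$-th factor $G_i$ of $L_q$ encoding $\cO$. Since LS induction is monotone with respect to orbit closure and the componentwise partial order on $\Orb_q$ corresponds to componentwise dominance on the tuple $(\cO^1,\ldots,\cO^s)$, the set of $\cO\in \Orb_q$ giving affirmative answers is upward-closed, and the theorem amounts to identifying its minimal elements.

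For classical types, nilpotent orbits correspond to partitions with type-dependent parity constraints, and LS induction admits an explicit combinatorial recipe. In type $A$, $\Ind(\lambda^1,\ldots,\lambda^s)$ equals the componentwise sum of partitions (after padding with zeros). In types $B$, $C$, $D$, each linear factor $\GL(m_i)$ contributes its partition twice, the result is sorted, and then $B/C/D$-collapsed to enforce the parity condition on multiplicities of parts. The plan is to translate $\cO_\nu\preccurlyeq \Ind(\cO^1,\ldots,\cO^s)$ into dominance inequalities on partial sums of the factor partitions, and then enumerate the minimal solutions.

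I would begin with type $A$, case $m\mid n$. Here $\cO_\nu = \lambda^{n,n\nu}$ has $n\nu$ parts, so dominance $\cO_\nu\preccurlyeq \sum \lambda^i$ forces $\ell(\sum \lambda^i)\le n\nu$, hence $\ell(\lambda^i)\le n\nu$ for all $i$, and the unique minimum is $\lambda^i=\lambda^{m_i,n\nu}$. In the case $m\mid n-1$, $\cO_\nu$ has an extra part equal to $1$, and a minimal tuple is balanced in all but one factor, say the $j$-th, where one uses $\lambda^{m_j-1,(n-1)\nu}\cup\{1\}$; each choice of $j$ produces a genuinely minimal tuple, yielding the $s$ listed orbits. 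The classical types $B$, $C$, $D$ follow the same template, with additional bookkeeping due to the doubled contributions of linear factors and the $B/C/D$-collapse; the multiple-tuple phenomenon appears whenever $\cO_\nu$ has an odd number of part sizes that must be distributed among the factors.

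The principal difficulty arises when $d=1$ and $m$ is even, in types $B$ and $D$. Here $\cO_\nu$ is not of the balanced shape $\lambda^{N,\ell}$ but has three part sizes $m+1$, $m$, and $m-1$, and the minimal tuples split into two regimes determined by whether $\ell$ divides $\gcd(m_1,\ldots,m_{s-1},2m_s)$ (or the analogous gcd in the last type-$D$ subcase): when $\ell$ does not divide the gcd, the balanced tuple $(\lambda^{m_i,\ell})$ already induces up to a partition dominating $\cO_\nu$, because the $B/C/D$-collapse together with imbalance among the $m_i$ forces the requisite $\pm 1$ asymmetry; when $\ell$ does divide the gcd, the balanced tuple induces only to the strictly smaller $\lambda^{N,\ell}$, so some factor must be replaced by $\tilde\lambda^{m_i,\ell}$, the next smallest partition of that size with $\ell$ parts. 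Verifying that this enumeration is complete---that the listed tuples are pairwise incomparable, that each induces to a partition dominating $\cO_\nu$, and that no strictly smaller tuple can do so---is the combinatorial heart of the proof and requires careful tracking of partial sums through the collapse operation.
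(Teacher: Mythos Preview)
Your proposal is correct and follows essentially the same approach as the paper: reduce via Theorem \ref{th:cl} to the condition $\cO_\nu\preccurlyeq\cO^{\nil}$, then use Kempken's explicit formulas for Lusztig--Spaltenstein induction in classical types to translate this into a condition on the number of parts (and parity constraints) of the factor partitions $\lambda^i$, and enumerate the minimal tuples. The paper carries this out in detail only for the Coxeter case $m=h$ (citing Theorem \ref{th:h main} and leaving the remaining cases to the reader), whereas your outline addresses the general slope and in particular sketches the delicate $d=1$, $m$ even subcases that the paper omits; in this sense your plan is slightly more complete in scope, though both are sketches at the level of the combinatorial verification.
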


\begin{remark} By \cite[Theorem 5.4]{KLMNS}, for $q= \prod_{i=1}^{s} (x-a_i)^{m_i}$, the Deligne-Simpson problem $DS(d/h,\cO,q)$ for isoclinic $\GL(n)$-connections has a solution if and only if $\Res(\Tr(A)) = -\sum_{i=1}^{s} m_i a_i $ and $\cO_{d/h}^{q} \preccurlyeq \cO$. This result is recovered by Theorem \ref{th:hq main}. 
\end{remark}

\begin{proof} \label{pr:hq main} We give the proof only in the case where $m=h$ is the Coxeter number. In these cases $\cO_{\nu}^{q}$ is unique. The general cases are similar, and we leave the details to the reader. Given $\cO \in \Orb_{q}$, by Theorem \ref{th:h main} we need to prove that $\cO_{d/h} \preccurlyeq \cO^{\nil}$ if and only if $\cO_{d/h}^{q} \preccurlyeq \cO$. This is a matter of explicitly computing the Lusztig-Spaltenstein induction, which we do separately in each type. 
\subsubsection*{Type $A_{n-1}$}
Let $q=\prod_{i=1}^{s} (x-a_i)^{m_i}$. Using \cite[1.7. Corollary 2]{K}, we can explicitly calculate $\cO^{\nil}$ for any $\cO \in \Orb_{q}$.  For $i=1,\dots,s$, let $\l^{i}$ be a partition of $m_i$ and write $\l^{i}=(\l^{i}_{1},\l^{i}_{2},\cdots)$ with $\l^{i}_{1}\ge \l^{i}_{2}\ge\cdots$. Then we form a new partition \[\Sigma\lambda = (\sum \l^{j}_{1}, \sum \l^{j}_{2}, \sum \l^{j}_{3}, \cdots ) \]
of $n$. If $\cO$ corresponds to $(\l^{1},\l^{2},\cdots)\in \prod_{i=1}^{s}\Part(m_{i})$, then $\cO^{\nil}$ corresponds to $\Sigma\l$. Clearly, if each $\l^{i}$ has at most $d$ parts, the same holds for $\Sigma\l$. Vice versa, if $\Sigma\l$ has at most $d$ parts, no $\l^{i}$ can have more than $d$ parts. In other words, $\cO_{d/n}\preccurlyeq \cO^{\nil}$ if and only if $\cO_{d/h}^{q} \preccurlyeq \cO  $. 
\subsubsection*{Type $B_{n}$} We assume $G=\SO(2n+1)$. Let $q=x^{2m_s+1} \prod_{i=1}^{s-1} (x^{2}-a^{2}_i)^{m_i}$. Then $L_{q}$ is of the form
\[L_{q}=\GL(m_1) \times \cdots \times \GL(m_{s-1}) \times \SO(2m_s+1) \]
for positive integers $m_i$ with $\sum m_i =n$. Define $\cO_{d/h}^{q}$ to be the orbit corresponding to the collection of partitions $(\l^{m_1,d},\cdots,\l^{m_{s-1},d},\l^{2m_s+1,d})$, where each partition is as evenly distributed as possible. We will prove that $\cO_{d/h} \preccurlyeq \cO^{\nil}$ if and only if $\cO_{d/h}^{q} \preccurlyeq \cO$. More precisely, that means the following. The orbit $\cO$ corresponds to a collection of partitions $\l = (\l^1,\cdots,\l^{m_s})$, with $\l^{i}$ a partition of $m_i$ for $1\le i \le s-1$, and $\l^{s}$ a type $B$ partition of $2m_s+1$. We have to show that the partition corresponding to $\cO^{\nil}$ has at most $d$ parts if and only if each $\l^{i}$ has at most $d$ parts. By \cite[\S 3]{K}, the Lusztig-Spaltenstein induction in type $B$ is computed explicitly as follows. Let $d'$ be a collection of partitions corresponding to a nilpotent orbit for $L$. First, induce as in type $A$ from $\GL(m_1) \times \cdots \times \GL(m_{s-1}) \times \SO(2m_s+1)$ to $\GL(m_1+\cdots+m_{s-1}) \times \SO(2m_s+1)$. Denote the resulting partition of $m_1+\cdots+m_{s-1}$ by $d$ (with parts $d_i$) and the partition of $2m_s+1$ by $f$ (with parts $f_i$). The Lusztig-Spaltenstein induction is then the $B$-collapse of 
\[  p=(2d_1+f_1,2d_2+f_2,\cdots). \]
Call an even part that occurs with odd multiplicity a $B$-violation. Note that taking the $B$-collapse will either leave the number of parts invariant (in case there is an even number of $B$-violations) or it will increase the number of parts by one, adding one part equal to $1$ at the end. 

Now one direction is clear: the Lusztig-Spaltenstein induction can only increase the number of parts, so if $\cO_{d/h} \preccurlyeq \cO^{\nil}$, then no partition occuring in $\cO$ in some factor of the Levi can have more than d parts. This shows  $\cO_{d/h} \preccurlyeq \cO^{\nil}$ implies $\cO_{d/h}^{q} \preccurlyeq \cO$. 

For the other direction, take some nilpotent orbit for $L$, corresponding (as above) to $(\l^1,\cdots,\l^{s-1},\l^{s})$, with $\l^{i}$ a partition of $m_i$ for $1\le i \le s-1$, and $\l^{s}$ a type $B$ partition of $2m_s+1$. 

Taking the induction may increase the number of parts by $1$, but we can only go beyond $d$ in the following situation: 
\begin{enumerate}
\item There is an $i$ for which $\l^i$ has exactly $d$ parts and
\item there is an odd number of $B$-violations  in the partition 
\[p = (2\sum_j \l^{j}_1+\l^{s}_1, 2\sum_j\l^{j}_2+\l^{s}_2,\cdots, 2\sum_j\l^{j}_{d_s}+\l^{s}_{d_s}, 2\sum_j\l^{j}_{d_{s}+1}, \cdots , 2\sum_j \l^{j}_d)\]

where $d_s$ is the number of parts of $\l^s$. 
\end{enumerate}

Because $\l^s$ satisfies the type $B$ parity constraint, there is an even number of $B$-violations in the first $d_s$ parts of $p$. Denote by $p'$ the last $d-d_{s}$ parts of $p$. For 2) to be satisfied, there has to be an odd number of $B$-violations in $p'$. 

But now, $d$ is odd, and $\sum \l^{s}_i$ is odd, so $d_s$ is odd. Therefore, the number of parts of $p'$ is even. But $p'$ consists only of even parts, and if their number is even, there can be no odd number of $B$-violations. 

So 1) and 2) can never hold together, and we see that $\cO_{d/h}^{q} \preccurlyeq \cO$ implies $\cO_{d/h} \preccurlyeq \cO^{\nil}$. 

\subsubsection*{Type $C_n$} The Lusztig-Spaltenstein induction is computed as in type $B_n$, replacing $\SO(2m_s+1)$ by $\Sp(2m_s)$.  In this case, taking the collapse does not change the number of parts. Thus, the claim is clear.
\subsubsection*{Type $D_n$} The argument is very similar to type $B_n$. Let $q=x^{2m_s} \prod_{i=1}^{s-1} (x^{2}-a^{2}_i)^{m_i}$, and assume  $\cO$ correspond to the collection of partitions $(\l^1,...,\l^{m_s})$. For $1\le i\le s-1$, we have that $\l^{i}$ is a partition of $m_i$ and $\l^{s}$ is a partition of $2m_s$ in which each even part occurs with even multiplicity. Let $p$ be the partition corresponding to $\cO^{\nil}$. Note that $\cO_{d/h} \preccurlyeq \cO^{\nil}$ if and only if $p$ has at most $(d+1)$-parts, and if it has exactly $(d+1)$-parts, then its last part has to be $1$. 

We prove the contrapositive of $\cO_{d/h} \preccurlyeq \cO^{\nil}$ implies $\cO_{d}^{q} \preccurlyeq \cO$.  If $\cO^{d}_{q}$ is not a lower bound for $\cO$, then there is $1\le i \le s-1$ such that $\l^{i}$ has more than $d$ parts, or $\l^{s}$ has at least $(d+1)$ parts, and if it has $(d+1)$-parts, its last part is greater than $1$. In either case, by the explicit formula for the Lusztig-Spaltenstein induction, the $(d+1)-st$ part is at least $2$. This shows that $\cO_{d/h}$ is not a lower bound for $\cO^{\nil}$. 

To prove $\cO_{d}^{q} \preccurlyeq \cO$ implies $\cO_{d/h}\preccurlyeq \cO^{\nil}$ we argue as follows. Let $d_s+1$ be the number of parts of $\l^{s}$. If $d_s=d$, then $\l^{s} = (\l^{s}_{1}, ..., \l^{s}_{d}, 1)$, and 
\[p = (2\sum_j \l^{j}_1+\l^{s}_1, 2\sum_j\l^{j}_2+\l^{s}_2,\cdots, 2\sum_j\l^{j}_{d}+\l^{s}_{d}, 1),\]
where we possibly add trivial parts, to get the correct indices. This has an even number of $D$-violations, and they appear in the first $d$ parts, so the $D$-collapse will be bounded from below by $\l^{2n-1,d}\cup\{1\}$. If $d_s \le d-1$, one argues as in type $B$.
\end{proof}

\section{More cases in $F_{4}$}\label{s:F4soln}

In this section we will give answers to $DS(\nu,\cO)$ for $G=F_{4}$ and $\nu=d/m$,  where  $d$ is coprime to $2$ and $3$ (i.e. $d$ is very good in the sense of \cite[\S 3.]{Som}).

By Corollary \ref{c:sp case}, we only need to consider $\nu < 1$. So apart from the epipelagic cases (which correspond to $d=1$) constructed in \cite{Ch} (based on the $\ell$-adic construction in \cite{Yun}), and the Coxeter cases solved in the previous section,  there are three remaining possibilities
\[\nu \in \{5/6, 5/8, 7/8\}. \]

The results are summarized as follows.

\begin{theorem}\label{th:F4} For $\nu \in \{5/6, 5/8, 7/8\}$, there is a nilpotent $\cO_{\nu}$ of $F_{4}$, tabulated in Table \ref{table:DSsolnF4}, such that $DS(\nu,\cO)$ has an affirmative answer if and only if $\cO_{\nu}\preccurlyeq \cO^{\nil}$. (For the definition of $\D_{\nu}$ in the table, see \eqref{Dnu}.)
\end{theorem}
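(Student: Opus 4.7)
The plan is to apply the algebraic criterion of Theorem \ref{th:DS} directly. For each slope $\nu \in \{5/6, 5/8, 7/8\}$, the denominator $m \in \{6, 8\}$ is an elliptic regular number of $W(F_4)$ (the Coxeter number is $12$, and $6,8$ both arise as orders of elliptic regular elements), so Theorem \ref{th:DS} applies: $DS(\nu, \cO)$ is affirmative if and only if $E_{\cO^{\nil}}$ occurs in the $W(F_4)$-module $L_{\nu}(\triv)$. The task thus reduces to determining the $W$-isotypic decomposition of $L_{d/m}(\triv)$ for the three relevant $(d, m)$, and to identifying the unique minimal nilpotent orbit $\cO_{\nu}$ among those $\cO$ for which $E_{\cO}$ appears.

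Since $d \in \{5, 7\}$ is coprime to the bad primes $2, 3$ of $F_4$ (and to the order of $\pi_1(F_4^{\ad}) = 1$), $d$ is very good, so Corollary \ref{cor:charform} gives the character factorization
\[
\chi_{L_{d/m}(\triv)}(w) \;=\; \chi_{\CC[\L/d\L]}(w)\,\chi_{L_{1/m}(\triv)}(w),
\]
and Corollary \ref{cor:commonsummand} reduces the problem to a restriction-matching test: $E_{\cO^{\nil}}$ appears in $L_{d/m}(\triv)$ if and only if there exists a $d$-allowable subset $J \subset \D$ and an irreducible constituent $\chi_i$ of $L_{1/m}(\triv)$ such that $\langle \Res_{W_J}^{W} E_{\cO^{\nil}}, \Res_{W_J}^{W} \chi_i \rangle > 0$. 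By Remark \ref{rem:minpara}, it suffices to check the \emph{minimal} $d$-allowable subsets of $\D$.

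The concrete steps are as follows. First, using the affine Coxeter labels $(n_{\alpha_0}, \dots, n_{\alpha_4}) = (1, 2, 3, 4, 2)$ of $F_4$, I enumerate all minimal $d$-allowable subsets $J \subset \D^{\aff}$ for $d = 5$ and $d = 7$, keeping only those $J \subset \D$ (permitted by Proposition \ref{p:stab par}). For $d = 5$, the condition $\sum k_{\alpha} n_{\alpha} = 5$ with positive integers admits only a handful of solutions; analogously for $d = 7$. Second, I import the decompositions of the epipelagic modules $L_{1/6}(\triv)$ and $L_{1/8}(\triv)$ into $W(F_4)$-irreducibles (these are known from the theory of finite-dimensional simple modules over rational Cherednik algebras; alternatively they may be extracted from the character formula and the graded dimension formulas of Dunkl--Opdam). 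Third, for each pair $(J, \chi_i)$ and each candidate nilpotent orbit $\cO$, I compute the inner products of restrictions to $W_J$, using the character tables of $W(F_4)$ and its parabolic subgroups together with the Springer correspondence (Carter, \cite{Carter}, Ch.~13).

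The main obstacle is not any single conceptual step but the bookkeeping: one must verify in each of the three cases that the set of nilpotent orbits $\cO$ with $E_{\cO}$ appearing in $L_{\nu}(\triv)$ has a \emph{unique minimum} $\cO_{\nu}$ under the closure order, and then identify that orbit in Bala--Carter notation. Once the combinatorics of $d$-allowable subsets and the isotypic decomposition of $L_{1/m}(\triv)$ are in hand, this is a finite and explicit check, parallel to the Coxeter case in Theorem \ref{th:h main}, and produces the orbits $\cO_{\nu}$ listed in Table \ref{table:DSsolnF4}. Finally, the numerical invariant $\Delta_{\nu}$ is computed from the formula \eqref{Dnu} using the Bala--Carter centralizer dimensions from \cite[\S 13.1]{Carter}.
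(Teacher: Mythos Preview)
Your proposal is correct and follows essentially the same route as the paper: reduce via Theorem \ref{th:DS} and Corollary \ref{cor:commonsummand} to the restriction-matching test against minimal $d$-allowable parabolics for $d\in\{5,7\}$, feed in the known $W(F_4)$-decompositions of $L_{1/6}(\triv)$ and $L_{1/8}(\triv)$ (the paper cites Norton \cite{Norton1} for these), and then check the finitely many inner products using Carter's tables. The paper's proof is exactly this bookkeeping made explicit in Tables \ref{table:paraf4}--\ref{table:reschars}; the only point you leave implicit is the actual tabulation of restrictions, which is where the unique minimum $\cO_\nu$ is read off.
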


\begin{table}[h]
\begin{center}
\begin{tabular}{|C|C|C|}
\hline
\nu & \cO_{\nu}  & \D_{\nu} \\
\hline
5/6 		& A_1		&	2 \\
5/8 		& \tilde{A}_1	&	0 \\
7/8		& A_1		&	3\\

\hline
\end{tabular}
\end{center}
\caption{Solutions to $DS(\nu,\cO)$ in certain cases for $G=F_4$}
\label{table:DSsolnF4}
\end{table}

The rest of the section is devoted to the proof of the theorem.

For convenience of the reader, in Table \ref{table:springerf4} we give the Hasse diagram for nilpotent orbits in $F_{4}$ and the respective irreducible characters of $W$ under the Springer correspondence, where we copy the ordering of nilpotent orbits for the sake of readability. Both can be found in \cite[\S 13.3., \S 13.4.]{Carter}. Here, we only consider characters coming from pairs $(\cO, \triv)$, consisting of a nilpotent orbit $\cO$ with the trivial character on $A_{\cO}=\pi_{0}(C(\cO))$. The character table for a Weyl group of type $F_4$ was calculated by Kondo and can be found in \cite[\S 13.2.]{Carter}. We follow Kondo's original labelling of irreducible characters. 

\begin{table}[h]
\begin{center}
\begin{tikzpicture}[scale=.8]
\draw (3.5,3.5) -- (3.5,-10.5);
\draw (-3,2.5) -- (10,2.5);

\node (orbits) at (0,3) {Nilpotent orbits for $F_4$};

  \node (F4) at (0,2) {$F_{4}$};
  \node (F4a1) at (0,1) {$F_{4}(a_{1})$};
  \node (F4a2) at (0,0) {$F_{4}(a_{2})$};
  \node (B3) at (-1,-1) {$B_3$};
  \node (C3) at (1,-1) {$C_3$};
  \node (F4a3) at (0,-2) {$F_4(a_3)$};
  \node (C3a1) at (0,-3) {$C_3(a_1)$}; 
  \node (A2s+A1) at (1,-4) {$\tilde{A}_2+A_1$};
  \node (B2) at (-1,-4) {$B_2$}; 
  \node (A2+A1s) at (0,-5) {$A_2+\tilde{A}_1$};
  \node (A2s) at (1,-6) {$\tilde{A}_2$};
  \node (A2) at (0,-6) {$A_2$};
  \node (A1+A1s) at (.5,-7) {$A_1+\tilde{A}_1$};
  \node (A1s) at (.5,-8) {$\tilde{A}_1$};
  \node (A1) at (.5,-9) {$A_1$};
  \node (0) at (.5,-10) {$0$};
  
\draw (F4) -- (F4a1) --(F4a2) --(B3) -- (F4a3) -- (C3a1) -- (B2) -- (A2+A1s) -- (A2) -- (A1+A1s) -- (A1s) -- (A1) -- (0) ;
\draw (F4a2) -- (C3) -- (F4a3);
\draw (C3a1) -- (A2s+A1) -- (A2+A1s);
\draw (A2s+A1)  -- (A2s) -- (A1+A1s);

\node (chars) at (7,3) {Irreducible characters of $W$};

  \node (F4) at (7,2) {$\chi_{1,1}$};
  \node (F4a1) at (7,1) {$\chi_{4,2}$};
  \node (F4a2) at (7,0) {$\chi_{9,1}$};
  \node (B3) at (6,-1) {$\chi_{8,1}$};
  \node (C3) at (8,-1) {$\chi_{8,3}$};
  \node (F4a3) at (7,-2) {$\chi_{12,1}$};
  \node (C3a1) at (7,-3) {$\chi_{16,1}$}; 
  \node (A2s+A1) at (8,-4) {$\chi_{6,1}$};
  \node (B2) at (6,-4) {$\chi_{9,2}$}; 
  \node (A2+A1s) at (7,-5) {$\chi_{4,3}$};
  \node (A2s) at (8,-6) {$\chi_{8,2}$};
  \node (A2) at (7,-6) {$\chi_{8,4}$};
  \node (A1+A1s) at (7.5,-7) {$\chi_{9,4}$};
  \node (A1s) at (7.5,-8) {$\chi_{4,5}$};
  \node (A1) at (7.5,-9) {$\chi_{2,4}$};
  \node (0) at (7.5,-10) {$\chi_{1,4}$};
  
\draw (F4) -- (F4a1) --(F4a2) --(B3) -- (F4a3) -- (C3a1) -- (B2) -- (A2+A1s) -- (A2) -- (A1+A1s) -- (A1s) -- (A1) -- (0) ;
\draw (F4a2) -- (C3) -- (F4a3);
\draw (C3a1) -- (A2s+A1) -- (A2+A1s);
\draw (A2s+A1)  -- (A2s) -- (A1+A1s);
\end{tikzpicture} 
\end{center}
\caption{Springer correspondence for $F_4$ }
 \label{table:springerf4}
\end{table}

We use the following labelling for simple reflections $s_i$ for $F_4$ as depicted in the Dynkin diagram below.
\begin{center} \dynkin[labels={s_1,s_2,s_3,s_4}, scale = 2] F4 \end{center}
Note that this convention differs from the convention in \cite{Carter}, but agrees with the one used in \cite{GP}. 

Using the discussion in \ref{ss:stab}, we need to list $d$-allowable subsets $J\subset \D=\{1,2,3,4\}$ for $d=5, 7$. The results are recorded in the first two columns of Table \ref{table:paraf4}, where parabolic subgroups printed in bold are minimal in its column.

In addition, we can find some irreducible characters which occur in $\Ind_{W_J}^{W} (\triv)$ in \cite[Table C.3.]{GP}. Among those, we list all characters from Table \ref{table:springerf4}. 

\begin{table}[h]
\begin{center}
\begin{tabular}{|C|C|C|}
\hline
d=5 & d=7 & \textup{Characters} 	\\
\hline
			&	\mathbf{12} 	& 				\\
			& 	\mathbf{13}	&	\chi_{8,4}, \chi_{9,4}			\\
\mathbf{23}	&	\mathbf{23}	& 				\\
			&	\mathbf{24}	&				\\
			&	\mathbf{34}	&	\chi_{8,2}			\\
123			&	123	&				\\
\mathbf{124}	&	124	&	\chi_{4,3}, \chi_{9,2}	\\
\mathbf{134}	&	134	&	\chi_{6,1}, \chi_{8,1},\chi_{12,1},\chi_{16,1}		\\
234			&	234	&	\chi_{4,2},\chi_{8,3},\chi_{9,1} 	\\
\hline
\end{tabular}
\end{center}
\caption{Some characters of parabolic type}
\label{table:paraf4}
\end{table}

We use results of Norton who computed the character of $L_{1/m}(\triv)$ for exceptional groups. By \cite[\S 5.]{Norton1}, we have the following character formulas: 
\begin{align}\label{N1/6} \chi_{L_{1/6}(\triv)}(w,t) &= \chi_{1,1}(t^{-2}+1+t^2)+\chi_{4,2}(t^{-1}+t)+\chi_{9,1}, \\
\label{N1/8}
\chi_{L_{1/8}(\triv)}(w,t) &= \chi_{1,1}(t^{-1}+t)+\chi_{4,2}. 
\end{align}

By Corollary \ref{cor:commonsummand}, to determine which $\chi$ from Table \ref{table:springerf4} appear in $L_{d/m}(\triv)$, we only need to test whether $\chi$ has a common component with $L_{1/m}(\triv)$ after restricting to $W_{J}$ for some $J$ in Table \ref{table:paraf4}. Moreover, by Remark \ref{rem:minpara}, it suffices to check for those $J$ printed in bold. Since the trivial character $\chi_{1,1}$ always appears in $L_{1/m}(\triv)$, those $\chi$ that appear in Table \ref{table:paraf4} already pass the test.  The remaining $\chi$'s from Table \ref{table:springerf4} are:
\begin{equation}\label{eqn:charsleft}
\chi_{1,4},\chi_{2,4},\chi_{9,4},\chi_{4,5},\chi_{8,2},\chi_{8,4}.
\end{equation}

In Table \ref{table:characterspara} we list the characters of the parabolic subgroups $W_{J}$ for bold $J$, and fix an ordering among $\Irr(W_{J})$. In the table, $\triv$ is the trivial character, $\std$ is the reflection representation, $\sgn$ is the sign character, $\varepsilon_{i}$ is the non-trivial character of $\langle s_i \rangle$, $\xi$ is the long sign character, and $\eta$ is the short sign character in type $B_2$. 
\begin{table}[h]
\begin{center}
\begin{tabular}{|C|C|C|}
\hline
J 		& \textup{Type} 	& \Irr(W_{J})											\\
\hline
12		& A_2			&\triv, \std, \sgn											\\
13 		& A_1+\tilde{A}_1	&1, \varepsilon_1, \varepsilon_3, \varepsilon_1 \varepsilon_3		\\
23		& B_2			&1, \sgn, \xi, \eta, \std								\\
24		& A_1+\tilde{A}_1 	&1, \varepsilon_2, \varepsilon_4, \varepsilon_2 \varepsilon_4		\\	
34		& \tilde{A}_2		&\triv, \std, \sgn			\\
124		& A_2+\tilde{A}_1	&\triv, \std, \sgn, \varepsilon_4\triv, \varepsilon_4\std, \varepsilon_4\sgn				\\
134		& A_1+\tilde{A}_2	&\triv, \std, \sgn, \varepsilon_1\triv, \varepsilon_1\std, \varepsilon_1\sgn			\\
\hline
\end{tabular}
\end{center}
\caption{Characters of parabolic subgroups}
\label{table:characterspara}
\end{table}

Using the character table for $F_4$ given in \cite[\S13.2.]{Carter}, it is not difficult to compute the restriction of the characters in \ref{eqn:charsleft} and those appear in \eqref{N1/6} and \eqref{N1/8} to $W_{J}$. We obtain Table \ref{table:reschars} with restrictions of these $\chi$'s to $W_{J}$,  given as tuples of multiplicities of $\Irr(W_{J})$ under the ordering fixed in Table \ref{table:characterspara}. For example, the upper left entry means $\Res^{W}_{W_{12}}\chi_{1,4}=\sgn$.

\begin{table}[h]
\begin{center}
\begin{tabular}{|c|C|C|C|C|C|C|C|}
\hline
\diagbox{$\chi$}{$J$}	& 12 		& 13 				& 23 				& 24 			& 34 		& 124 		& 134 		\\
\hline
$\chi_{1,4} $			& (0,0,1)	 & 	(0,0,0,1) 		&  (0,1,0,0,0) 		& (0,0,0,1)		&(0,0,1)	&(0,0,0,0,0,1) 	&(0,0,0,0,0,1) 	\\
$\chi_{2,4}	$		& (0,1,0)	&	(0,0,1,1)		& (0,1,1,0,0)		& (0,0,1,1)		&(0,0,2)	&(0,0,0,0,1,0)	&(0,0,1,0,0,1)	\\	
$\chi_{9,4}	$		& (0,3,3)	&	(1,2,2,4)		&(0,3,1,1,2)		&(1,2,2,4)		&(0,3,3)	&(0,1,1,0,2,2)	&(0,1,1,0,2,2)	\\
$\chi_{4,5}	$		& (0,1,2)	&	(0,1,1,2)		&(0,2,0,0,1)		&(0,1,1,2)		&(0,1,2)	&(0,0,1,0,1,1)	&(0,0,1,0,1,1)		\\	
$\chi_{8,2}	$		&(0,4,2)	&	(1,3,1,3)		&(0,2,0,2,2)		&(1,3,1,3)		&(1,3,1)	&(0,1,2,0,1,2)	&(0,1,0,1,2,1)		\\
$\chi_{8,4}	$		&(1,3,1)	&	(1,1,3,3)		&(0,2,2,0,2)		&(1,1,3,3)		&(0,4,2)	&(0,1,0,1,2,1)	&(0,1,2,0,1,2)		\\
\hline
$\chi_{1,1} 	$		&(1,0,0) 	&	(1,0,0,0)		&(1,0,0,0,0)		&(1,0,0,0)		&(1,0,0)	&(1,0,0,0,0,0)	&(1,0,0,0,0,0)			\\
$\chi_{4,2} 	$		&(2,1,0)	&	(2,1,1,0)		&(2,0,0,0,1)		&(2,1,1,0)		&(2,1,0)	&(1,1,0,1,0,0)	&(1,1,0,1,0,0)			\\
$\chi_{9,1} 	$		&(3,3,0)	&	(4,2,2,1)		&(3,0,1,1,2)		&(4,2,2,1)		&(3,3,0)	&(2,2,0,1,1,0)	&(2,2,0,1,1,0)			\\
\hline
\end{tabular}
\end{center}
\caption{Restrictions to parabolic subgroups}
\label{table:reschars}
\end{table}

From Table \ref{table:reschars}, we can simply read off which characters among \eqref{eqn:charsleft} share a common component with $L_{1/m}(\triv)$  when restricted to some $W_{J}$. This completes the proof of Theorem \ref{th:F4} using Theorem \ref{th:DS} and Corollary \ref{cor:commonsummand}.

\begin{remark} Since the character tables of all exceptional Weyl groups are known, one can in principle use the same strategy to solve the isoclinic Deligne-Simpson problem for $E_6, E_7$ and $E_8$ (assuming $\nu = d/m$, $d$ very good, $m$ regular elliptic). However, this requires a substantial amount of calculation.
\end{remark}

\section{Rigid isoclinic connections}\label{s:rig}

In this section we give some potential examples in which the isoclinic Deligne--Simpson problem has a solution that is cohomologically rigid. We assume for simplicity that $G$ is almost simple. 

\subsection{Local differential Galois group}

Let $\G_{\infty}$ be the local differential Galois group at $\infty$ with its wild inertia subgroup $\G^{+}_{\infty}$. This normal subgroup is a pro-torus, whose character group is the group of Laurent tails $\cQ = \bigcup_{k \ge 1} \tau^{-1/k} \CC[\tau^{-1/k}]$. The tame quotient $\G_{\infty}/\G_{\infty}^{+}$ is a pro-algebraic group over $\CC$, which is a product of $\Ga$ with a pro-group of multiplicative type with character group $\CC/\ZZ$.

Let $(\cE,\nb)$ be an isoclinic $G$-connection of slope $\nu$ over $D_{\infty}^{\times}$. We would like to compute the dimension of horizontal sections of the adjoint bundle $\Ad(\cE)^{\nb}$. Now $(\cE,\nb)$ gives rise to a monodromy representation of $\G_{\infty}$ into $G$ (unique up to conjugacy):
\begin{equation*}
\r_{(\cE,\nb)}: \G_{\infty}\to G.
\end{equation*}
We have $\Ad(\cE)^{\nb}=\frg^{\r_{(\cE,\nb)}(\G_{\infty})}$, the latter we abbreviate as $\frg^{\G_{\infty}}$. 

\begin{lemma}\label{l:irr at infty}
In the above situation, we have:
\begin{enumerate}
\item The irregularity of $\Ad(\cE)$ is $\Irr_{\infty}(\Ad(\cE))=\nu|\Phi|$, where $\Phi$ is the set of roots of $G$ with respect to a maximal torus $T$.
\item $\dim\Ad(\cE)^{\nb}=\dim\frg^{\G_{\infty}}=\dim\frt^{w}$, where $w\in W$ is a regular element of order $m$.
\end{enumerate}
 
\end{lemma}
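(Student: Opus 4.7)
\emph{Proof plan.} The strategy for both parts is to pull back to the $m$-fold ramified cover $D^{(m),\times}_\infty$ where the connection admits a $T$-reduction, decompose the adjoint bundle into torus and root parts, and then descend along the Galois group $\mu_m$.

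By Lemma \ref{l:A} together with the gauge-equivalence in the proof of Lemma \ref{l:irr type A gauge}, the pullback $f^{*}(\cE,\nb)$ is gauge-equivalent to a $T$-connection whose polar part is encoded by $A$. This yields a canonical decomposition
\[
f^{*}\Ad(\cE) \;\cong\; (\frt\otimes\cO) \oplus \bigoplus_{\alpha\in\Phi}\cL_{\alpha},
\]
in which $\frt\otimes\cO$ carries the trivial connection and $\cL_\alpha$ is the rank-one meromorphic connection obtained by pairing the $T$-connection with the character $\alpha$. The leading polar coefficient of $\cL_\alpha$ is proportional to $\alpha(A_{-\nu})$, which is nonzero because $A_{-\nu}$ is regular in $\frt$; hence each $\cL_\alpha$ is isoclinic of slope $d=m\nu$ on $D^{(m),\times}_\infty$.

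For (1), summing over the summands gives $\Irr(f^{*}\Ad(\cE)) = 0\cdot\dim\frt + d\cdot|\Phi| = d|\Phi|$. Since $\Irr(f^{*}V)=m\cdot\Irr(V)$ for a degree-$m$ ramified cover, this yields $\Irr_{\infty}(\Ad(\cE)) = d|\Phi|/m = \nu|\Phi|$.

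For (2), each $\cL_{\alpha}$ has positive slope and therefore admits no horizontal section, so $(f^{*}\Ad(\cE))^{\nb}=\frt$. Galois descent along $f$ with group $\mu_m$ then gives $\Ad(\cE)^{\nb}=\frt^{\mu_m}$. The only non-routine input, and the main obstacle, is the identification of the $\mu_m$-action on $\frt$: by the descent-datum computation carried out in the proof of Lemma \ref{l:A}(1), a lift of $\zeta_m$ to $\Gamma_\infty$ has monodromy image in $wT\subset N_G(T)$, where $w\in W$ is the regular element of order $m$ attached to $A$. Therefore $\zeta_m$ acts on $\frt$ through $w$ and $\frt^{\mu_m}=\frt^{w}$, of dimension $\dim\frt^{w}$ as claimed.
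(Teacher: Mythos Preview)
Your proof is correct and follows essentially the same strategy as the paper: reduce the adjoint bundle to a $T$-connection, decompose into the torus piece and the root lines, observe each root line has slope $\nu$, and identify the residual action on $\frt$ as the regular element $w$. The only difference is the framing---you work via pullback along the $m$-fold cover and the formula $\Irr(f^{*}V)=m\cdot\Irr(V)$, whereas the paper phrases the same reduction by noting that the wild-inertia subgroup $\G_{\infty}^{+}\subset\G_{\infty}$ is a pro-torus whose image may be assumed to lie in $T$, and then passes to the tame quotient.
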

\begin{proof}

In our situation, the leading term is regular semisimple after pull-back along an $m$-fold cover. Therefore, by \cite[Lemma 2.1.]{Boalch}, the formal type is diagonalizable, and in particular the formal monodromy is semisimple. Thus, the $\Ga$-factor acts trivially. 

Since $\G_{\infty}^{+}$ is a pro-torus, we may assume $\r_{(\cE,\nb)}(\G_{\infty}^{+})\subset T$ for a maximal torus $T\subset G$. Consider the root space decomposition $\frg = \frt \oplus \bigoplus_{\alpha \in \Phi} \frg_{\alpha}$, where $\frt=\Lie T$. Thus, $\G_{\infty}^{+}$ acts trivially on $\frt$ and on each $\frg_{\alpha}$ it acts with slope $\nu > 0$. This immediately implies (1). 

Since $\G_{\infty}^{+}$ is normal in $\G_{\infty}$, we get
\[\frg^{\G_{\infty}} = (\frg^{\G_{\infty}^{+}})^{\G_{\infty}/\G_{\infty}^{+}} = \frt^{\G_{\infty}/\G_{\infty}^{+}}.\]
In the proof of Lemma \ref{l:A}, we showed that the tame quotient $\G_{\infty}/\G_{\infty}^{+}$ acts on $\frt$ through the quotient $\mu_{m}=\Gal(\CC\lr{\t^{1/m}}/\CC\lr{\t})$, and a generator of $\mu_{m}$ acts through a regular element $w\in W$ of order $m$. We conclude that $\frg^{\G_{\infty}} = \frt^{w}$. 
\end{proof}

 \subsection{Cohomological rigidity}\label{ss:coho rig}
 
Recall from \cite[\S5.0.1]{Katz} and \cite[Definition 3.2.4]{Yun-CDM} that an algebraic connection $(\cE,\nb)$ on $\Gm$ is cohomologically rigid if $\dim H^1(\PP^1, j_{!*}\Ad(\cE) ) = 0$. Now $(\cE,\nb)$ gives rise to a monodromy representation of the differential Galois group $\G$ of $\Gm$
\begin{equation*}
\r_{(\cE,\nb)}:\G\to G. 
\end{equation*}
The local differential Galois groups $\G_{0}$ and $\G_{\infty}$ embed into $\G$ up to conjugacy.  We have (see \cite[p.29, after Prop.11]{FG}):
\begin{equation}\label{dimH1}
\dim H^1(\PP^1, j_{!*}\Ad(\cE) ) = \Irr_{0}(\Ad(\cE)) +\Irr_{\infty}(\Ad(\cE))  - \dim \frg^{\G_{0}}-\dim \frg^{\G_{\infty}} + 2\dim \frg^{\G}. 
\end{equation}

We say that an adjoint orbit $\cO$ is {\em non-resonant} if the eigenvalues of any $x\in \cO$ under the adjoint action on $\frg$ do not differ by a nonzero integer. Let $C(\cO)$ denote the centralizer of any $x\in \cO$ in $G$; it is a subgroup of $G$ well-defined up to conjugation.

\begin{prop}\label{p:num rig} Suppose $m$ is an elliptic regular number and $\cO$ is non-resonant.  Then an isoclinic $G$-connection $(\cE,\nb)$ on $\Gm$ of type $(A,\cO)$, where $A$ has slope $\nu=d/m$ (in lowest terms), is cohomologically rigid if and only if
\begin{equation}\label{eqn:rigidity}
\nu|\Phi| = \dim C(\cO).
\end{equation}
\end{prop}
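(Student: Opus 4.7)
The strategy is to apply formula \eqref{dimH1} and evaluate each term under the standing hypotheses. Since $(\cE,\nb)$ has at worst regular singularity at $0$, we have $\Irr_{0}(\Ad(\cE))=0$, while Lemma \ref{l:irr at infty}(1) gives $\Irr_{\infty}(\Ad(\cE))=\nu|\Phi|$. For the local invariants at $\infty$, Lemma \ref{l:irr at infty}(2) identifies $\dim\frg^{\G_{\infty}}=\dim\frt^{w}$ for a regular element $w\in W$ of order $m$. Since $m$ is an elliptic regular number, $w$ is elliptic, and because $G$ is almost simple (so $\frg$ is simple and $\frt$ is the reflection representation of $W$), this forces $\frt^{w}=0$. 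Consequently $\dim\frg^{\G_{\infty}}=0$, and the inclusion $\frg^{\G}\subseteq\frg^{\G_{\infty}}$ gives $\dim\frg^{\G}=0$ as well.

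The one substantive step is the identification $\dim\frg^{\G_{0}}=\dim C(\cO)$. The regular-singular monodromy at $0$ is $G$-conjugate to $\exp(2\pi i\,x)$ for some $x\in\cO$, so $\frg^{\G_{0}}=\ker\bigl(\exp(2\pi i\,\ad x)-1\bigr)$. Writing the Jordan decomposition $x=x_{s}+x_{n}$ (so that $\ad x_{s}$ and $\ad x_{n}$ are the semisimple and nilpotent parts of $\ad x$, respectively), non-resonance asserts that no two eigenvalues of $\ad x$ on $\frg$ differ by a nonzero integer; in particular, no nonzero eigenvalue of $\ad x_{s}$ is itself an integer. Hence $\ker(\exp(2\pi i\,\ad x_{s})-1)=\ker(\ad x_{s})$. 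Restricting to this subspace, $\ad x_{n}$ is nilpotent and commutes with $\ad x_{s}$, and we can factor
\begin{equation*}
\exp(2\pi i\,\ad x_{n})-1 \;=\; \ad x_{n}\cdot\Bigl(2\pi i+\tfrac{(2\pi i)^{2}}{2}\ad x_{n}+\cdots\Bigr),
\end{equation*}
where the parenthesised factor is invertible on the finite-dimensional space $\ker(\ad x_{s})$ because its leading term is the nonzero scalar $2\pi i$ and the remainder is nilpotent. Thus its kernel equals $\ker(\ad x_{n})$, and we conclude $\frg^{\G_{0}}=\ker(\ad x_{s})\cap\ker(\ad x_{n})=\frg^{x}$, whose dimension is $\dim C(\cO)$.

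Substituting the four computed values into \eqref{dimH1} yields
\begin{equation*}
\dim H^{1}(\PP^{1},j_{!*}\Ad(\cE))=\nu|\Phi|-\dim C(\cO),
\end{equation*}
which vanishes if and only if \eqref{eqn:rigidity} holds. The only step that genuinely uses the hypotheses beyond invoking prior lemmas is the non-resonance analysis of the fixed space at $0$; the rest is bookkeeping.
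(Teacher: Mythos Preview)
Your proof is correct and follows essentially the same approach as the paper's: apply \eqref{dimH1}, use Lemma \ref{l:irr at infty} for the terms at $\infty$, note $\frt^{w}=0$ by ellipticity (and almost-simplicity of $G$), and identify $\frg^{\G_{0}}$ with the centralizer of $x\in\cO$ via the monodromy $\exp(2\pi i\,x)$. Your treatment of the non-resonance step via the Jordan decomposition and the factorisation of $\exp(2\pi i\,\ad x_{n})-1$ is more explicit than the paper, which simply asserts that non-resonance gives $\dim\frg^{\G_{0}}=\dim C(\cO)$.
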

\begin{proof}
We apply \eqref{dimH1} to $(\cE,\nb)$. Since $\cE$ is regular at $0$, $\Irr_{0}(\Ad(\cE))=0$. Moreover, $\G_{0}$ acts through its tame quotient, where a genenator acts on $\frg$ via $\exp(2\pi i x)$ for $x\in \cO$. Since $\cO$ is non-resonant, we have $\dim \frg^{\G_{0}}=\dim C(\cO)$.

The local invariants at $\infty$ are calculated in Lemma \ref{l:irr at infty}. In particular, since $w$ is elliptic, $\dim\frg^{\G_{\infty}} = \dim\frt^{w} = 0$, therefore $\dim \frg^{\G} = 0$. These calculations show that the right hand side of \eqref{dimH1} is $\nu|\Phi|-\dim C(\cO)$. Therefore the left side is zero if and only if $\nu|\Phi|=\dim C(\cO)$.
\end{proof}

\subsection{Index of rigidity}
Let $\nu=d/m$ (in lowest terms) where $m$ is a regular number of $W$.  For an adjoint orbit  $\cO$ of $\frg$ ,define 
\begin{equation*}
\D_{\nu,\cO}=\frac{1}{2}(\nu|\Phi|-\dim C(\cO)+\dim \frt^{w})
\end{equation*}
where $w$ is a regular element of order $m$, the denominator of $\nu$.  Then Proposition \ref{p:num rig} says that if an isoclinic $G$-connection $(\cE,\nu)$ has type $(\nu,\cO)$, and $m$ is elliptic and $\cO$ is non-resonant, then $\cE$ is cohomologically rigid if and only if $\D_{\nu,\cO}=0$. 

By \cite[Theorem 1.3(a)]{LS}, $\dim C(\cO)=\dim C(\cO^{\nil})$. Therefore  
$$\D_{\nu,\cO}=\D_{\nu,\cO^{\nil}}.$$

On the other hand, let $\psi\in\frg\lr{t}$ be homogeneous of slope $\nu$ and suppose $\cO$ is a nilpotent orbit such that $\Gr_{\psi, \ov\cO}\ne\vn$. Let $r=\dim\frt$ be the rank of $G$. By Bezrukavnikov \cite{Be}, we have
\begin{equation*}
\dim \Fl_{\psi}=\dim \Gr_{\psi}=\frac{1}{2}(\nu|\Phi|-r+\dim \frt^{w}).
\end{equation*}
On the other hand, for an element $e\in \cO$, 
\begin{equation*}
\dim \cB_{e}=\frac{1}{2}(\dim C(\cO)-r).
\end{equation*}
Combining these equalities, we get
\begin{equation}\label{diff FlB}
\D_{\nu,\cO}=\dim\Fl_{\psi}-\dim \cB_{e}.
\end{equation}
In particular, $\D_{\nu,\cO}\ge0$. Note that $\cB_{e}$ is the fiber of the projection $\Fl_{\psi}\to \Gr_{\psi}$ over any point of $\Gr_{\psi,\cO}$. Therefore we have 
\begin{equation*}
\D_{\nu,\cO}=\dim \Fl_{\psi}-\dim \cB_{e}\ge \dim \Gr_{\psi,\ov\cO}.
\end{equation*}
When $\D_{\nu,\cO}=0$,  we have that $\Gr_{\psi,\ov\cO}=\Gr_{\psi, \cO}$ is finite.

\subsection{Calculations of $\D_{\nu}$ for classical groups}
When $G$ is an almost simple classical group,  we know from Theorem \ref{th:cl} that $DS(\nu,\cO)$ has an affirmative answer if and only if $\cO^{\nil}\cge \cO_{\nu}$ for some nilpotent $\cO_{\nu}$. Define
\begin{equation}\label{Dnu}
\D_{\nu}=\D_{\nu,\cO_{\nu}}.
\end{equation}
By \eqref{diff FlB}, $\D_{\nu}\ge0$. Therefore for any adjoint orbit $\cO$ such that $\cO^{\nil}\cge\cO_{\nu}$, $\D_{\nu,\cO}=\D_{\nu,\cO^{\nil}}\ge\D_{\nu}\ge0$.  To look for $(\nu,\cO)$ such that $\D_{\nu,\cO}=0$, we must have $\cO=\cO_{\nu}$ and $\D_{\nu}=0$. 

The values of $\D_{\nu}$ in each classical type is listed in Table \ref{t:cl index rig}. In the table, we always set
\begin{equation*}
k=\lfloor \frac{m}{d}\rfloor, d'=m-kd.
\end{equation*}

\begin{table}\label{tab:delta_nu}
\begin{center}
\begin{tabular}[t]{|C|C|C|C|}
\hline
\mbox{type} & \nu & \D_{\nu} & \mbox{Notation} 	\\ 	
\hline

A_{n-1} & m|n & \frac{\ell(\ell-1)}{2}d'(d-d')+\frac{\ell}{2}(d'-1)(d-d'-1) & n=m\ell \\
\cline{2-4}
 & m|n-1 & \frac{\ell(\ell-1)}{2}d'(d-d')+\frac{\ell}{2}(d'-1)(d-d'-1)  & n=m\ell+1 \\
\hline

B_{n} &  m|2n, m \mbox{ even} & 
\begin{cases} \frac{\ell(\ell-1)}{4}d'(d-d')+\frac{\ell}{4}((d'-2)(d-d'-1)-2), & \ell \mbox{ even}, k \mbox{ even}, d>1\\
0, & \ell \mbox{ even}, d=1\\ 
\frac{\ell(\ell-1)}{4}d'(d-d')+\frac{\ell}{4}((d'-1)(d-d'-2)-2), & \ell \mbox{ even}, k \mbox{ odd}\\
\frac{\ell(\ell-1)}{4}d'(d-d')+ \frac{\ell}{4}d'(d-d'-1), & \ell \mbox{ odd}, k \mbox{ even}\\
\frac{\ell(\ell-1)}{4}d'(d-d')+\frac{\ell}{4}(d'+1)(d-d'-2)+\frac{\ell-1}{2}, & \ell \mbox{ odd}, k \mbox{ odd}
\end{cases}& 2n=m\ell\\
\cline{2-4}
& m|n, m \mbox{ odd} & 
\begin{cases}
\frac{\ell(\ell-1)}{4}d'(d-d')+\frac{\ell}{4}((d'-2)(d-d'-1)-1), & k \mbox{ even}\\
\frac{\ell(\ell-1)}{4}d'(d-d')+\frac{\ell}{4}((d'-1)(d-d'-2)-1), & k \mbox{ odd}
\end{cases}
& 2n=m\ell \\
\hline

C_{n}  & m|2n, m \mbox{ even} &  \begin{cases}\frac{\ell}{4}d’(\ell(d-d’)-1) &
k \mbox{ even}  \\ \frac{\ell}{4}(d-d’)(\ell d’-1),& k \mbox{ odd} \end{cases} & 2n=m\ell \\
\cline{2-4}
& m|n, m \mbox{ odd} &  \begin{cases}\frac{\ell}{4}(\ell d’(d-d’)-d’+1) &
k \mbox{ even}  \\ \frac{\ell}{4}(\ell d’(d-d’)-(d-d’)+1),& k \mbox{ odd} \end{cases} & 2n=m\ell \\

\hline
D_{n}  & m|n, m \mbox{ even} & 
\begin{cases}
\frac{\ell^{2}-1}{4}d'(d-d')+\frac{\ell}{4}((d'-2)(d-d'-1)-2), & k \mbox{ even}, d>1\\
\frac{\ell^{2}-1}{4}d'(d-d')+\frac{\ell}{4}((d'-1)(d-d'-2)-2), & k \mbox{ odd}\\
0, & d=1
\end{cases} & 2n=m\ell  \\
\cline{2-4}

& m|n, m \mbox{ odd} & 
\begin{cases}
\frac{\ell^{2}-1}{4}d'(d-d')+\frac{\ell}{4}((d'-2)(d-d'-1)-1), & k \mbox{ even}\\
\frac{\ell^{2}-1}{4}d'(d-d')+\frac{\ell}{4}((d'-1)(d-d'-2)-1), & k \mbox{ odd}
\end{cases} & 2n=m\ell \\

\cline{2-4}

& m|2n-2, m\nmid n-1 & 
\begin{cases}
\frac{\ell(\ell-1)}{4}d'(d-d')+\frac{\ell}{4}d'(d-d'-1), & k \mbox{ even}\\
\frac{\ell(\ell-1)}{4}d'(d-d')+\frac{\ell}{4}(d'+1)(d-d'-2)+\frac{\ell-1}{2}, & k \mbox{ odd}
\end{cases} & 2n-2=m\ell\\

\cline{2-4}
& m|n-1,  m \mbox{ even} & 
\begin{cases}
\frac{\ell(\ell-1)}{4}d'(d-d')+\frac{\ell}{4}((d'-2)(d-d'-1)-2), & k \mbox{ even}, d>1\\
0, & d=1\\
\frac{\ell(\ell-1)}{4}d'(d-d')+\frac{\ell}{4}((d'-1)(d-d'-2)-2), & k \mbox{ odd}
\end{cases} & 2n-2=m\ell\\

\cline{2-4}
& m|n-1,  m \mbox{ odd} & 
\begin{cases}
\frac{\ell(\ell-1)}{4}d'(d-d')+\frac{\ell}{4}((d'-2)(d-d'-1)-1), & k \mbox{ even}\\
\frac{\ell(\ell-1)}{4}d'(d-d')+\frac{\ell}{4}((d'-1)(d-d'-2)-1), & k \mbox{ odd}
\end{cases} & 2n-2=m\ell\\

\hline
 \end{tabular} 
\end{center}
\caption{Index of rigidity in classical types}
\label{t:cl index rig}
\end{table}

\subsection{Rigid cases in classical types}
For classical groups, we list in Table \ref{t:cl ell rig} those slopes $\nu$ with elliptic denominator $m$ such that an isoclinic $G$-connection of type $(\nu,\cO_{\nu})$ is cohomologically rigid, i.e., $\D_{\nu}=0$.

\begin{table}
\begin{center}
\begin{tabular}[t]{|C|C|}
\hline
\mbox{type} & \nu=d/m \\ 	
\hline

A_{n-1} & m=n, d|n\pm 1 \\
\hline

B_{n} & m=2n, d|n+1 \mbox{ or }d|2n+1 \\
\cline{2-2}
& m=n \mbox{ even}, d=3 \\
\cline{2-2}
& m|2n, m \mbox{ even}, d=1 \\
\hline

D_{n}  & m|n, m \mbox{ even}, d=1 \\
\cline{2-2}
& m=n \mbox{ even}, d=3 \\
\cline{2-2}
& m|2n-2, m\nmid n-1, d=1 \\
\cline{2-2}
& m=2n-2, d|2n \mbox{ or } d|2n-1 \\
\hline

\end{tabular} 
\end{center}
\caption{Slopes of rigid isoclinic connections in classical types ($m$ elliptic)}
\label{t:cl ell rig}
\end{table}

We observe:
\begin{itemize}
\item Most of the cases in Table \ref{t:cl ell rig} are either $m=h$ (Coxeter number) or $d=1$ (epipelagic cases), except for the following cases:
\begin{enumerate}
\item Type $B_{n}$, $m=n$ even, $d=3$. 
\item Type $D_{n}$, $m=n$ even,  $d=3$.
\end{enumerate}
It is an interesting problem to construct the $\ell$-adic analog of these cohomologically rigid connections.
\item The only cases where $\nu>1$ are $\nu=1+1/h$, which appears uniformly for any $G$ and $\cO_{\nu}=\{0\}$. This are the de Rham analogue of the Airy sheaves studied in \cite{JKY}. \end{itemize}

\begin{remark} When $m=h$, the Coxeter number of $G$, and $\cO$ is a nilpotent orbit,  the pairs $(\nu=d/h, \cO)$  satisfying $\D_{\nu,\cO}=0$ have been classified by Kamgarpour and Sage \cite[\S 4.3.]{KS}. In addition to the classical cases described before, they discovered one exceptional case, namely in type $E_7$ and $\nu=7/18$.  The cohomologically rigid connections in type $B_n$ (for $\cO$ nilpotent) with $d=3$ were first constructed in A. Azhang's thesis \cite{AA}. 

\end{remark}

\subsection{Exceptional types} 
In Table \ref{t:potigexc}, we give a full list of potential examples of rigid isoclinic connections in exceptional types satisfying the numerical criterion \eqref{eqn:rigidity}, with $\nu = d/m$ and $1<d<m$ coprime to $m$. In the final column we state whether there is such a connection.  If it is unknown we leave the entry empty. The single confirmed case in type $F_4$ ($\nu=5/8$) was obtained in \S\ref{s:F4soln}, and so was the classification in the Coxeter case. The classification in Tables \ref{t:cl ell rig} and \ref{t:potigexc} proves (and generalizes) \cite[Conjecture 5.9.]{Sage}.

The only cases with $\nu >1$ are the Airy cases $\nu=1+1/h$ described in \cite{JKY}.

\begin{table}[h]
\begin{center}
\begin{tabular}[t]{|C|C|C|c|}
\hline
$type$ 	& \nu 	& \cO^{\nil} 	& existence 	\\
\hline
G_2 		& 2/3		& A_1		&	\\
\hline
F_4		& 3/4		& B_4		& 	\\
		& 3/8		& A_2+\tilde{A}_1 &	\\
		& 5/8 	& \tilde{A}_1 	& yes\\
		\hline
E_6		& 2/9 	& A_4+A_1	&	\\
		& 4/9		&A_2+A_1	& 	\\
 		& 7/9 	& A_1 		& 	\\
		& 5/12 	& 2A_2 		& no 	\\
		\hline
E_7		& 3/14	& D_5(a_1)	&	\\
		& 3/14 	& A_4+A_2	& 	\\
		& 9/15	& 2A_1		& 	\\
		& 11/14	& A_1		&  	\\
		& 5/18 	& A_3+A_2	& no\\
		& 7/18	& 2A_2		& no	 \\
		& 7/18 	& A_2+3A_1 	& yes	\\
		\hline
 \end{tabular} 	\quad	
 \begin{tabular}[t]{|C|C|C|c|}
 \hline
 $type$ 	& \nu 	& \cO^{\nil} 	& existence 	\\
\hline
E_8		& 3/10	& D_4(a_1)+A_1 &	\\
		& 5/12	& A_3 		& 	\\	
		& 2/15	& D_6		&	\\
		& 2/15	& E_6		&	\\
		& 2/15	& D_7(a_2)	& 	\\
		& 4/15	& D_4+A_1	&	\\
		& 4/15	& D_4(a_1)+A_2 & 	\\
		& 7/15	& A_2+A_1	&  	\\
		& 3/20	& A_6+A_1	& 	\\
		& 3/20	& E_7(a_4) 	& 	\\
		& 7/20	& A_3+A_1	& 	\\
		& 13/20 	& 2A_1		& 	\\
		& 5/24 	& D_4+A_2	& 	\\
		& 5/24 	& E_6(a_3)	& 	\\
		& 7/24	& A_3+A_2	& 	\\
		& 19/24 	& A_1		& 	\\
		& 7/30	& A_4+2A_1	& no	\\
		& 17/30	& 3A_1		& no	\\
		\hline
 \end{tabular} 
\end{center} 
\caption{Numerics for potential examples of rigid isoclinic connections in exceptional types}
\label{t:potigexc}
\end{table}


\begin{thebibliography}{99}


\bibitem{AV}
J. Adams, D. Vogan.
Associated varieties for real reductive groups.
arXiv: 2103.11836.

\bibitem{AA}
A. Azhang.
Rigid connections on the projective line with elliptic toral singularities. 
LSU Doctoral Dissertations. 5644 (2021)
\url{https://digitalcommons.lsu.edu/gradschool_dissertations/5644}

\bibitem{BV}
D. G. Babbit, V. S. Varadarajan.
Formal reduction theory of meromorphic differential equations: a group theoretic view.
Pacific J. Math. 109(1): 1-80 (1983).

\bibitem{BEG}
Y. Berest, P. Etingof, V. Ginzburg.
Finite-Dimensional Representations of Rational Cherednik Algebras.
Int. Math. Res. Not., no. 19, 1053--1088 (2003).

\bibitem{Be}
R. Bezrukavnikov.
The dimension of the fixed point set on affine flag manifolds. 
Math. Res. Lett. 3 (1996), no. 2, 185--189. 

\bibitem{BBAMY}
R. Bezrukavnikov, P. Boixeda Alvarez, M. McBreen, Z. Yun.
Non-abelian Hodge moduli spaces for homogeneous affine Springer fibers.
Pure Appl. Math. Q. 21 (2025), no. 1, 61--130.


\bibitem{Boalch}
P. P. Boalch.
$G$-bundles, isomonodromy and quantum Weyl groups.
Int. Math. Res. Not. 2002, no. 22, 1129--1166.

\bibitem{Boalch2}
P. P. Boalch.
Symplectic Manifolds and Isomonodromic Deformations.
Adv. Math. 163 (2001), 137--205.


\bibitem{Carter}
R. W. Carter.
Finite groups of Lie type: Conjugacy classes and complex characters.
Reprint of the 1985 original. Wiley Classics Library. A Wiley-Interscience Publication. John Wiley \& Sons, Ltd., Chichester, 1993. xii+544 pp.

\bibitem{Ch}
T-H. Chen.
Vinberg's $\th$-groups and rigid connections. 
Int. Math. Res. Not. 2017, no. 23, 7321--7343.

\bibitem{CB}
W. Crawley-Boevey.
Indecomposable parabolic bundles.
Publ. Math. Inst. Hautes \'Etudes Sci. No. 100 (2004), pp. 171--207.

\bibitem{EG}
P. Etingof, V. Ginzburg.
Symplectic reflection algebras, Calogero-Moser space, and deformed Harish-Chandra homomorphism. 
Invent. Math. 147 (2002), no. 2, 243--348. 

\bibitem{FG}
E. Frenkel, B. Gross.
A rigid irregular connection on the projective line.
Ann. of Math. (2), 170 (2009), no. 3, 1469--1512.


\bibitem{GP}
M. Geck, G. Pfeiffer.
Characters of Finite Coxeter Groups and Iwahori-Hecke Algebras.
London Mathematical Society Monographs. New Series, 21. The Clarendon Press, Oxford University Press, New York, 2000. xvi+446 pp. 

\bibitem{HNY}
J. Heinloth, B.-C. Ng\^o, Z. Yun.
Kloosterman sheaves for reductive groups.
Ann. of Math. (2), 177 (2013), no.1, 241--310.

\bibitem{Hiroe}
K. Hiroe.
Linear differential equations on the Riemann sphere and representations of quivers.
Duke Math. J. 166 (2017), 855--935.


\bibitem{HY}
K. Hiroe, D. Yamakawa.
Moduli spaces of meromorphic connections and quiver varieties.
Adv. Math. 266 (2014), 120--151.


\bibitem{JKY}
K. Jakob, M. Kamgarpour, L. Yi.
Airy sheaves for reductive groups.
Proc. London Math. Soc. (2023),
\url{https://doi.org/10.1112/plms.12494}


\bibitem{KS}
M. Kamgarpour, D. Sage.
Rigid connections on $\mathbb{P}^1$ via the Bruhat-Tits building.
Proc. Lond. Math. Soc. (3) 122 (2021), no. 3, 359--376. 


\bibitem{Katz}
N. Katz.
Rigid local systems. 
Annals of Mathematics Studies, 139. Princeton University Press, Princeton, NJ, 1996. viii+223 pp. 

\bibitem{K}
G. Kempken.
Induced conjugacy classes in classical Lie algebras.
Abh. Math. Sem. Univ. Hamburg 53 (1983), 53--83.

\bibitem{Kostov}
V. P. Kostov.
The Deligne-Simpson problem--a survey.
Journal of Algebra 281, Issue 1 (2004), 83--108.


\bibitem{KLMNS} 
M. Kulkarni, N. Livesay, J. Matherne, B. Nguyen, D. Sage.
The Deligne-Simpson problem for connections on $\Gm$ with a maximally ramified singularity.
Adv. Math. 408 (2022), part B, Paper No. 108596, 28 pp.


\bibitem{LS}
G. Lusztig, N. Spaltenstein.
Induced unipotent classes. 
J. London Math. Soc. (2) 19 (1979), no. 1, 41--52.

\bibitem{L-ind}
G. Lusztig.
An induction theorem for Springer's representations. 
Representation theory of algebraic groups and quantum groups, 253--259,
Adv. Stud. Pure Math., 40, Math. Soc. Japan, Tokyo, 2004.


\bibitem{NgoFL}
B.-C. Ng\^o.
Le lemme fondamental pour les alg\`ebres de Lie. 
Publ. Math. Inst. Hautes \'Etudes Sci. No. 111 (2010), 1--169.


\bibitem{Norton1}
E. Norton.
Irreducible representations of rational Cherednik algebras for exceptional Coxeter groups, Part I.
arXiv:1411.7990.


\bibitem{OY}
A. Oblomkov, Z. Yun.
Geometric Representations of graded and rational Cherednik algebras.
Adv. Math. 292 (2016), 601--706.


\bibitem{RY}
M. Reeder, J-K. Yu.
Epipelagic representations and invariant theory. 
J. Amer. Math. Soc. 27 (2014), no. 2, 437--477.
 
\bibitem{Rouq}
R. Rouquier.
$q$-Schur algebras and complex reflection groups.
Mosc. Math. J. 8 (2008), no. 1, 119--158, 184.

\bibitem{Sage}
D. S. Sage.
Meromorphic connections on the projective line with specified local behaviour.
arXiv:2212.14108.

\bibitem{Simpson}
C. T. Simpson.
Products of matrices. Differential geometry, global analysis, and topology (Halifax, NS, 1990), 157–185, 
CMS Conf. Proc., 12, Amer. Math. Soc., Providence, RI, 1991


\bibitem{Som}
E. Sommers.
A family of affine Weyl group representations.
Transform. Groups 2 (1997), no. 4, 375--390.



\bibitem{Spr}
T. A. Springer.
Regular elements of finite reflection groups.
Invent. Math. 25 (1974), 159--198.



\bibitem{Yun-CDM}
Z. Yun.
Rigidity in automorphic representations and local systems. Current developments in mathematics 2013, 73--168, Int. Press, Somerville, MA, 2014. 

\bibitem{Yun}
Z. Yun.
Epipelagic representations and rigid local systems.
Selecta Math. (N.S.) 22 (2016), no. 3, 1195--1243.


\bibitem{Yun-KL}
Z. Yun.
Minimal reduction type and the Kazhdan-Lusztig map. 
Indag. Math. (N.S.) 32 (2021), no. 6, 1240--1274.



\end{thebibliography}
\end{document}